\newcommand{\filename}{}
\newcommand{\mg}{\color{magenta}}
\newcommand{\mycomment}[1]{}
\newtheorem{lemma}{LEMMA}[section]
\newtheorem{theorem}[lemma]{THEOREM}
\newtheorem{definition}[lemma]{DEFINITION}
\newtheorem{corollary}[lemma]{COROLLARY}
\newtheorem{remark}[lemma]{REMARK}
\newcommand{\be}{\begin{equation}}
\newcommand{\ee}{\end{equation}}
\newcommand{\bes}{\begin{equation*}}
\newcommand{\ees}{\end{equation*}}
\newcommand{\bea}{\begin{eqnarray}}
\newcommand{\eea}{\end{eqnarray}}
\newcommand{\beas}{\begin{eqnarray*}}
\newcommand{\eeas}{\end{eqnarray*}}
 \newcommand{\nc}{\newcommand}
 \nc{\ha}{\frac{1}{2}}
 \nc{\tha}{\frac{3}{2}}
 \nc{\ov}{\overline}
 \nc{\pa}{\partial}
 \nc{\pO}{{\partial\T}}
\nc{\C}{{\mathbb{C}}}
\newcommand{\F}{{\cal F}}
 \nc{\N}{{\mathbb{N}}}
\newcommand{\R}{{\mathbb R}}
\newcommand{\T}{{\mathbb T}}
\newcommand{\Z}{\mathbb{Z}}
\newcommand{\sZ}{\sum_{\xi\in\Z^n}}
\newcommand{\bs}{\boldsymbol}
\renewcommand{\div}{{\rm div}}
\makeatletter \@addtoreset{equation}{section}\makeatother
\renewcommand{\@oddhead}{\vbox{\hbox to\textwidth{\scriptsize %
 \hfill S.E.Mikhailov\hfill \arabic{page}\vspace{1ex}}
 \hrule
 }}
 \renewcommand{\@evenhead}{\vbox{\hbox to\textwidth{\scriptsize %
 \filename\hfill NS-torus
 }
\hrule
 }}
 \numberwithin{equation}{section}
\begin{document}

\title
{\bf
Spatially-Periodic Solutions for Evolution Anisotropic 
Variable-Coefficient Navier-Stokes Equations: \\
II. Serrin-Type Solutions}

\author
{Sergey E. Mikhailov\footnote{
e-mail: {\sf sergey.mikhailov@brunel.ac.uk}, 
}\\
     Brunel University London,
     Department of Mathematics,\\
     Uxbridge, UB8 3PH, UK}

 \maketitle


\begin{abstract}\noindent
We consider evolution (non-stationary) space-periodic solutions to the $n$-dimensional non-linear Navier-Stokes equations of anisotropic fluids with the viscosity coefficient tensor variable in space and time and satisfying the relaxed ellipticity condition. 
Employing the Galerkin algorithm, we prove the existence of  Serrin-type solutions, that is,  weak solutions with the velocity in the periodic space $L_2(0,T;\dot{\mathbf H}^{n/2}_{\#\sigma})$, $n\ge 2$.
 The solution uniqueness and regularity results are also discussed. \\

\noindent{\bf Keywords}. Partial differential equations;  Evolution Navier-Stokes equations; Anisotropic Navier-Stokes; Spatially periodic solutions; Variable coefficients; Relaxed ellipticity condition; Serrin-type solutions.

\noindent {\bf MSC classes}:	35A1, 35B10, 35K45, 35Q30, 76D05
\end{abstract}

\section{Introduction}\label{S1}
Analysis of Stokes and Navier-Stokes equations is an established and active field of research in applied mathematical analysis; see, e.g., \cite{Ladyzhenskaya1969, Lions1969, Constantin-Foias1988,  Lemarie-Rieusset2002, Lemarie-Rieusset2016, Galdi2011, RRS2016, Seregin2015, Sohr2001, Temam1995, Temam2001} and many other publications.
These works were mainly devoted to the flows of isotropic fluids with constant viscosity coefficient, and some of the employed methods were heavily based on these properties.

On the other hand, in many cases the fluid viscosity can vary in time and spatial coordinates, e.g., due to variable ambient  temperature. 
Moreover, from the point of view of rational mechanics of continuum, fluids can be anisotropic and this feature is indeed observed in liquid crystals, electromagnetic fluids, etc., see, e.g.,  \cite{Duffy1978} and references therein.
In \cite{KMW2020, KMW-DCDS2021, KMW-LP2021, KMW-transv2022, Mikhailov2022, Mikhailov2023} the classical Navier-Stokes analysis has been extended to the transmission and boundary-value problems for {\it stationary} Stokes and Navier-Stokes equations of anisotropic fluids, particularly with relaxed ellipticity condition on the viscosity tensor. 

In Part I, \cite{Mikhailov2024}, we considered {\it evolution (non-stationary)} spatially-periodic solutions in $\R^n$, $n\ge 2$, to the  Navier-Stokes equations of an anisotropic fluid with the viscosity coefficient tensor variable in spatial coordinates and time and satisfying the relaxed ellipticity condition. 
We implemented  the Galerkin algorithm but unlike the traditional approach, for example in \cite{Temam1995, Temam2001},  where the Galerkin basis consisted of the eigenfunctions of the corresponding isotropic constant-coefficient Stokes operator, we employed the basis constituted by the eigenfunctions of the periodic Bessel-potential operator having an advantage that it is universal, i.e., independent of the analysed anisotropic variable-coefficient Navier-Stokes operator.
To analyse the solution in higher dimensions, the definition of the weak solution was generalised to some extent.
Then the periodic weak solution existence was considered in the spaces of Banach-valued functions mapping a finite time interval to  periodic Sobolev (Bessel-potential) spaces on $n$-dimensional flat torus, $L_{\infty}(0,T;\dot{\mathbf H}_{\#\sigma}^{0})\cap L_2(0,T;\dot{\mathbf H}_{\#\sigma}^{1})$. 
The periodic setting is interesting on its own, modelling fluid flow in periodic composite structures, and is also a common element of homogenisation theories for inhomogeneous fluids and in the Large Eddy Simulation. 

In this paper, Part II, we prove  the existence, uniqueness and regularity of the  weak solutions  that belongs to the  space $L_2(0,T;\dot{\mathbf H}^{n/2}_{\#\sigma})$ (we call them Serrin-type solutions).
It is well known that the regularity results available at the moment for evolution Navier-Stokes equations are rather different  for dimensions $n=2$ and $n=3$, even for isotropic constant-viscosity fluids. 
The weak solution global regularity under arbitrarily large smooth input data for $n=2$ is proved and can be found, e.g., in \cite{Ladyzhenskaya1969, Lions1969, Constantin-Foias1988, Lemarie-Rieusset2002, Galdi2011,   RRS2016, Seregin2015, Sohr2001, Temam1995, Temam2001}. However  for $n=3$ it is still an open question and constitutes one of the Clay Institute famous Millennium problems. 
Our motivation for considering arbitrary $n\ge 2$ is particularly to place the cases $n=2$ and $n=3$ in a more general set and to see which of them is an exception.

The paper material is presented as follows. In Section \ref{S1.1} we provide essentials on anisotropic Stokes and Navier-Stokes equations.
Section \ref{S2} gives an introduction to the periodic Sobolev (Bessel-potential) functions spaces in spatial coordinates on $n$-dimensional flat torus and to the corresponding Banach-valued functions mapping a finite time interval to these periodic Sobolev spaces.
In Section \ref{S3} we describe the existence results for evolution spatially-periodic anisotropic Navier-Stokes problem available from Part I,  \cite{Mikhailov2024}.
Sections \ref{S3-Serrin-intro}, \ref{S5ERC} and \ref{S5ERV} contain the main results of the paper.
In Section \ref{S3-Serrin-intro} we define the Serrin-type solutions and prove the energy equality for them and also their uniqueness, for the $n$-dimensional periodic setting, $n\ge2$. We also remark on their relations with the strong solutions and show that for $n=2$, any weak solution is a Serrin-type solution.
In Section \ref{S5ERC} we analyse the Serrin-type solution existence and regularity for constant anisotropic viscosity coefficients,
while in Section \ref{S5ERV} we generalise these results to variable anisotropic viscosity coefficients.
In Section \ref{Appendix} we collect some technical results used in the main text of the paper, several of which might be new.

\subsection{Anisotropic Stokes and Navier-Stokes PDE systems}\label{S1.1}

Let $n\ge 2$ be an integer, $\mathbf x\in\mathbb R^n$ denote the space coordinate vector, and $t\in\R$ be time.
Let
$\boldsymbol{\mathfrak L}$ denote the second-order differential operator represented in the component-wise divergence form as
\begin{align}
\label{L-oper}
&(\boldsymbol{\mathfrak L}{\mathbf u})_k:=
\partial _\alpha\big(a_{kj}^{\alpha \beta }E_{j\beta }({\mathbf u})\big),\ \ k=1,\ldots ,n,
\end{align}
where ${\mathbf u}\!=\!(u_1,\ldots ,u_n)^\top$, 
\begin{align}\label{strain-r}
E_{j\beta }({\mathbf u})\!:=\!\frac{1}{2}(\partial_j u_\beta +\partial _\beta u_j)
\end{align}
are the entries of the symmetric part, ${\mathbb E}({\mathbf u})$, of the gradient,  $\nabla {\mathbf u}$,  in space coordinates,
and $a_{kj}^{\alpha \beta }(\mathbf x,t)$ are variable components of the tensor viscosity coefficient, cf. \cite{Duffy1978}, 
${\mathbb A}(\mathbf x,t)=\!\left\{{a_{kj}^{\alpha \beta }}(\mathbf x,t)\right\}_{1\leq i,j,\alpha ,\beta \leq n}$, depending on the space coordinate vector $\mathbf x$ and time $t$.
We also denoted $\partial_j=\dfrac{\partial}{\partial x_j}$, $\partial_t=\dfrac{\partial}{\partial t}$.
Here and further on, the Einstein  convention on summation in repeated indices from $1$ to $n$ is used unless stated otherwise.

The following symmetry conditions are assumed (see \cite[(3.1),(3.3)]{Oleinik1992}),
\begin{align}
\label{Stokes-sym}
a_{kj}^{\alpha \beta }(\mathbf x,t)=a_{\alpha j}^{k\beta }(\mathbf x,t)=a_{k\beta }^{\alpha j}(\mathbf x,t).
\end{align}

In addition, we require that tensor ${\mathbb A}$ satisfies the relaxed ellipticity condition  in terms of all {\it symmetric} matrices in ${\mathbb R}^{n\times n}$ with {\it zero matrix trace}, see \cite{KMW-DCDS2021}, \cite{KMW-LP2021}. Thus, we assume that there exists a constant $C_{\mathbb A} >0$ such that, 
\begin{align}
\label{mu}
&C_{\mathbb A}a_{kj}^{\alpha \beta }(\mathbf x,t)\zeta _{k\alpha }\zeta _{j\beta }\geq |\bs\zeta|^2\,,
\ \ 
\mbox{for a.e. } \mathbf x, t,\\
&\forall\ \bs\zeta =\{\zeta _{k\alpha }\}_{k,\alpha =1,\ldots ,n}\in {\mathbb R}^{n\times n}
\mbox{ such that }\, \bs\zeta=\bs\zeta^\top \mbox{ and }
\sum_{k=1}^n\zeta _{kk}=0,
\nonumber
\end{align}
where $|\bs\zeta |=|\bs\zeta |_F:=(\zeta _{k\alpha }\zeta _{k\alpha })^{1/2}$ is the Frobenius matrix norm and the superscript $^\top $ denotes the transpose of a  matrix.
Note that in the more common, strong ellipticity condition (called S-ellipticity condition in \cite[Definition 4.1]{Mitrea-Mitrea2013}), inequality \eqref{mu} should be satisfied for all matrices (not only symmetric with zero trace), which makes it much more restrictive (cf. also E-class in \cite[Section 3.1]{Oleinik1992}, where condition \eqref{mu} is assumed for all symmetric matrices).

We assume that $a_{ij}^{\alpha \beta}\in L_\infty(\R^n\times [0,T])$, where $[0,T]$ is some finite time interval, and the tensor ${\mathbb A}$  is endowed with the norm
\begin{align}\label{TensNorm}
\|{\mathbb A}\|:=\|{\mathbb A}\|_{L_\infty (\R^n\times[0,T]),F}
:=\left| \left\{\|a_{ij}^{\alpha \beta}\|_{L_\infty (\T\times[0,T])}\right\}_{\alpha,\beta,i,j=1}^n\right|_F<\infty,
\end{align}
where $\left|\left\{b_{ij}^{\alpha \beta}\right\}_{\alpha,\beta,i,j=1}^n\right|_F:=\left(b_{ij}^{\alpha \beta}b_{ij}^{\alpha \beta}\right)^{1/2}$ is the Frobenius norm of a 4-th order tensor.

Symmetry conditions \eqref{Stokes-sym} lead to the following equivalent form of the operator $\boldsymbol{\mathfrak L}$
\begin{equation}
\label{Stokes-0}
(\boldsymbol{\mathfrak L}{\mathbf u})_k=\partial _\alpha\big(a_{kj}^{\alpha \beta }\partial _\beta u_j\big),\ \ k=1,\ldots ,n.
\end{equation}

Let ${\mathbf u}(\mathbf x,t)$ be an unknown vector velocity field, $p(\mathbf x,t)$ be an unknown (scalar) pressure field, and ${\mathbf f}(\mathbf x,t)$ be a given vector field 
$\R^n $, where $t\in\R$ is the time variable. 
%
The nonlinear system
\begin{align*}
\partial_t\mathbf u -\boldsymbol{\mathfrak L}{\mathbf u}+\nabla p+({\mathbf u}\cdot \nabla ){\mathbf u}={\mathbf f}\,, \ \ {\rm{div}} \, {\mathbf u}=0
\end{align*}
is the {\it evolution anisotropic incompressible Navier-Stokes system}, where we use the notation $({\mathbf u}\cdot \nabla ):=u_j\partial_j$.

\subsection{Periodic function spaces}\label{S2}

Let us introduce some function spaces on torus, i.e., periodic function spaces (see, e.g., 
\cite[p.26]{Agmon1965},  \cite{Agranovich2015}, \cite{McLean1991}, \cite[Chapter 3]{RT-book2010}, \cite[Section 1.7.1]{RRS2016},  
\cite[Chapter 2]{Temam1995}
for more details).

Let $n\ge 1$  be an integer and $\T$ be the $n$-dimensional flat torus that can be parametrized as the semi-open cube $\T=\T^n=[0,1)^n\subset\R^n$, cf.  \cite[p. 312]{Zygmund2002}.
In what follows, ${\mathcal D}(\T)=\mathcal C^\infty(\T)$ denotes the (test) space of infinitely smooth real or complex functions on the torus.
As usual, $\N$ denotes the set of natural numbers,  $\N_0$ the set of natural numbers augmented by 0, and $\mathbb{Z}$ the set of integers.

Let   $\bs\xi \in \mathbb{Z}^n$ denote the $n$-dimensional vector with integer components. 
We will further need also the set 
$$\dot\Z^n:=\Z^n\setminus\{\mathbf 0\}.$$
Extending the torus parametrisation to $\R^n$, it is often useful to identify $\T$ with the quotient space $\R^n\setminus \Z^n$. 
Then the space of functions $\mathcal C^\infty(\T)$ on the torus can be identified with the space of $\T$-periodic (1-periodic) functions 
$\mathcal C^\infty_\#=\mathcal C^\infty_\#(\R^n)$ that consists of functions $\phi\in \mathcal C^\infty(\R^n)$ such that
\begin{align}
\label{E3.1}
\phi(\mathbf x+\bs\xi)=\phi(\mathbf x)\quad \forall\,  \bs\xi \in \mathbb{Z}^n.
\end{align}
Similarly, the Lebesgue space on the torus $L_{p}(\T)$, $1\le p\le\infty$,  can be identified with the periodic Lebesgue space $L_{p\#}=L_{p\#}(\R^n)$ that consists of functions $\phi\in L_{p,\rm loc}(\R^n)$, which satisfy the periodicity condition \eqref{E3.1} for a.e. $\mathbf x$.

The space dual  to $\mathcal D(\T)$, i.e.,  the space of linear bounded functionals on $\mathcal D(\T)$,  called the space of torus distributions, is denoted by $\mathcal D'(\T)$ and can be identified with the space of periodic distributions $\mathcal D'_\#$ acting on $\mathcal C^\infty_\#$.

The toroidal/periodic Fourier transform 
mapping a  function $g\in \mathcal C_\#^\infty$ to a set of its Fourier coefficients $\hat g$ is defined as (see, e.g., \cite[Definition 3.1.8]{RT-book2010})  
\begin{align*}
 \hat g(\bs\xi)=[\F_{\T} g](\bs\xi):=\int_{\T}e^{-2\pi i \mathbf x\cdot\bs\xi}g(\mathbf x)d\mathbf x,\quad \bs\xi\in\Z^n,
\end{align*}
and can be generalised to  the Fourier transform acting on a distribution $g\in\mathcal D'_\#$.

For any $\bs\xi\in\Z^n$, let $|\bs\xi|:=(\sum_{j=1}^n \xi_j^2)^{1/2}$ be the Euclidean norm in $\Z^n$ and let us denote
 \begin{align}\label{rho}
\varrho(\bs\xi):=2\pi(1+|\bs\xi|^2)^{1/2}.
\end{align}
Evidently,
\begin{align}\label{eq:mik9}
\frac{1}{2}\varrho(\bs\xi)^2\le |2\pi\bs\xi|^2\le \varrho(\bs\xi)^2\quad\forall\,\bs\xi\in \dot\Z^n.
\end{align}

Similar to \cite[Definition 3.2.2]{RT-book2010}, for $s\in\R$ we define the {\em periodic/toroidal Sobolev (Bessel-potential) spaces} $H_\#^s:=H_\#^s(\R^n):=H^s(\T)$ that consist of the torus distributions $ g\in\mathcal D'(\T)$, for which the norm
\begin{align}\label{eq:mik10}
\| g\|_{H_\#^s}:=\| \varrho^s\widehat g\|_{\bs\ell_2(\Z^n)}:=\left(\sum_{\bs\xi\in\Z^n}\varrho(\bs\xi)^{2s}|\widehat g(\bs\xi)|^2\right)^{1/2}
\end{align}
is finite, i.e., the series in \eqref{eq:mik10} converges.
Here $\| \cdot\|_{\bs\ell_2(\Z^n)}$ is the standard norm in the space of square summable sequences with indices in $\Z^n$.
By \cite[Proposition 3.2.6]{RT-book2010}, $H_\#^s$ is the Hilbert space with 
the inner (scalar) product in $H_\#^{s}$  defined as
\begin{align}\label{E3.3i}
(g,f)_{H_\#^s}
:=\sum_{\bs\xi\in\Z^n}\varrho(\bs\xi)^{2s}\hat g(\bs\xi)\overline{\hat f(\bs\xi)},\quad \forall\, g,f\in H_\#^{s},\ s\in\R,
\end{align}
where the bar denotes complex conjugate.
Evidently, $H_\#^{0}=L_{2\#}$.

The dual product between $g\in H_\#^s$ and $f\in H_\#^{-s}$, $s\in\R$, is defined (cf. \cite[Definition 3.2.8]{RT-book2010}) as
\begin{align}\label{E3.3}
\langle g,f\rangle_{\T}
:=\sum_{\bs\xi\in\Z^n}\hat g(\bs\xi)\hat f(-\bs\xi).
\end{align}
If $s=0$, i.e., $g,f\in L_{2\#}$,  then \eqref{E3.3i} and \eqref{E3.3} reduce to
$$
\langle g,f\rangle_{\T}=\int_{\T}g(\mathbf x)f(\mathbf x)d\mathbf x=(g,\bar f)_{L_{2\#}}.
$$
For real function $g,f\in L_{2\#}$ we, of course, have $\langle g,f\rangle_{\T}=(g,f)_{L_{2\#}}$.

For any $s\in\R$, the space  $H_\#^{-s}$ is Banach adjoint (dual) to  $H_\#^{s}$, i.e., $H_\#^{-s}=(H_\#^{s})^*$.
Similar to, e.g., \cite[p.76]{McLean2000} one can show that 
\begin{align*}
\| g\|_{H_\#^s}=\sup_{f\in H_\#^{-s}, f\ne 0}\frac{|\langle g,f\rangle_{\T}|}{\|f\|_{H_\#^{-s}}}.
\end{align*}

For $g\in H_\#^s$, $s\in\R$, and  $m\in\N_0$, let us consider the partial sums 
$$g_m(\mathbf x)=\sum_{\bs\xi\in\Z^n, |\bs\xi|\le m}\hat g(\bs\xi)e^{2\pi i \mathbf x\cdot\bs\xi}.$$ 
 Evidently, $g_m\in \mathcal C_\#^\infty$, $\hat g_m(\bs\xi)=\hat g(\bs\xi)$ if $|\bs\xi|\le m$ and  $\hat g_m(\bs\xi)=0$ if $|\bs\xi|> m$.
This implies that $\|g-g_m\|_{H_\#^s}\to 0$ as $m\to\infty$ and hence we can write 
\begin{align}\label{eq:mik11}
g(\mathbf x)=\sum_{\bs\xi\in\Z^n}\hat g(\bs\xi)e^{2\pi i \mathbf x\cdot\bs\xi},
\end{align}
where the Fourier series converges in the sense of norm \eqref{eq:mik10}.
Moreover, since $g$ is an arbitrary distribution from $H_\#^s$, this also implies that the space ${\mathcal C}^\infty_\#$ is dense in $H_\#^s$ for any $s\in\R$ (cf. \cite[Exercise 3.2.9]{RT-book2010}).

There holds the compact embedding $H_\#^t\hookrightarrow H_\#^s$ if $t>s$,  embeddings $H_\#^s\subset \mathcal C_\#^m$ if $m\in\N_0$, $s>m+\frac{n}{2}$, and moreover, $\bigcap_{s\in\R}H_\#^s={\mathcal C}^\infty_\#$ (cf. \cite[Exercises 3.2.10, 3.2.10, and Corollary 3.2.11]{RT-book2010}). 

Note that for each $s$, the periodic norm \eqref{eq:mik10} is equivalent to the periodic norm that we used in \cite{Mikhailov2022, Mikhailov2023}, which is obtained from \eqref{eq:mik10} by replacing there $\varrho(\bs\xi)=2\pi(1+|\bs\xi|^2)^{1/2}$ with $\rho(\bs\xi)=(1+|\bs\xi|^2)^{1/2}$.
We employ here the norm \eqref{eq:mik10} to simplify some norm estimates further in the paper.
Note also that the periodic norms on $H_\#^s$ are equivalent to the corresponding standard (non-periodic) Bessel potential norms on $\T$ as an $n-$cubic domain, see, e.g., \cite[Section 13.8.1]{Agranovich2015}.

Let 
\begin{align}
\label{E3.14}
 \left(\Lambda_\#^{r}\,{g}\right)(\mathbf x)
&:=\sum_{\bs\xi\in\Z^n}\varrho(\bs\xi)^{r}\hat g(\bs\xi)e^{2\pi i \mathbf x\cdot\bs\xi}\quad \forall\, g\in  H_\#^{s}.
\end{align}
denote the periodic Bessel-potential operator of the order $r\in\R$. 
For any $s\in\R$, the operator 
\begin{align}\label{3.23-0}
\Lambda_\#^{r}&: {H}_{\#}^s\to {H}_{\#}^{s-r},
\end{align}
is  continuous, see, e.g., \cite[Section 13.8.1]{Agranovich2015}.

By \eqref{eq:mik10}, 
$\| g\|^2_{H_\#^s}=|\widehat g(\mathbf 0)|^2 +| g|^2_{H_\#^s},$ 
where
\begin{align*}
| g|_{H_\#^s}:=\| \varrho^s\widehat g\|_{\bs\ell_2(\dot\Z^n)}:=\left(\sum_{\bs\xi\in\dot\Z^n}\varrho(\bs\xi)^{2s}|\widehat g(\bs\xi)|^2\right)^{1/2}
\end{align*}
is the seminorm in $H_\#^s$.

For any $s\in\R$, let us also introduce  the space
\begin{align*}
\dot H_\#^s:=\{g\in H_\#^s: \langle g,1\rangle_{\T}=0\}.
\end{align*}
The definition implies that if $g\in \dot H_\#^s$, then $\widehat g(\mathbf 0)=0$ and 
\begin{align}\label{eq:mik12}
\| g\|_{\dot H_\#^s}=\| g\|_{H_\#^s}=| g|_{H_\#^s}=\| \varrho^s\widehat g\|_{\bs\ell_2(\dot\Z^n)}\ .
\end{align}
The space $\dot H_\#^s$ is the Hilbert space with inner product inherited from \eqref{E3.3i}, that is,
\begin{align}\label{E3.3idot}
(g_1,g_2)_{\dot H_\#^s}
:=\sum_{\bs\xi\in\dot\Z^n}\varrho(\bs\xi)^{2s}\hat g_1(\bs\xi)\overline{\hat g_2(\bs\xi)},\quad \forall\, g_1,g_2\in \dot H_\#^{s},\ s\in\R.
\end{align}
The dual product between $g_1\in \dot H_\#^s$ and $f_2\in (\dot H_\#^{s})^*$, $s\in\R$, is represented as 
\begin{align*}
\langle g_1,f_2\rangle_{\T}
:=\sum_{\bs\xi\in\dot\Z^n}\hat g_1(\bs\xi)\hat f_2(-\bs\xi).
\end{align*}
If $\hat g(\mathbf 0)=0$ then \eqref{E3.14} implies that $\widehat{\Lambda_\#^{r}\,g}(\mathbf 0)=0$, and 
thus the operator 
\begin{align}\label{E3.15}
\Lambda_\#^{r}: \dot H_\#^s\to \dot H_\#^{s-r}
\end{align}
is continuous as well.
Due to the Riesz representation theorem, 
$(\dot H_\#^{s})^*=\dot H_\#^{-s}$, as shown in \cite[Section 2]{Mikhailov2024}.

Denoting 
$
\dot {\mathcal C}^\infty_\#:=\{g\in {\mathcal C}^\infty_\#: \langle g,1\rangle_{\T}=0\}
$,
then $\bigcap_{s\in\R}\dot H_\#^s=\dot {\mathcal C}^\infty_\#$.
The corresponding spaces of $n$-component vector functions/distributions are denoted as $\mathbf L_{q\#}:=(L_{q\#})^n$, $\mathbf H_\#^s:=(H_\#^s)^n$, etc.

Note that  the norm $\|\nabla (\cdot )\|_{{\mathbf H}_\#^{s-1}}$
is an equivalent norm in $\dot H_\#^s$. 
Indeed, by \eqref{eq:mik11}
\begin{align*}
\nabla g(\mathbf x)=2\pi i\sum_{\bs\xi\in\dot\Z^n}\bs\xi e^{2\pi i \mathbf x\cdot\bs\xi}\hat g(\bs\xi),\quad
\widehat{\nabla g}(\bs\xi)=2\pi i\bs\xi \hat g(\bs\xi)\quad \forall\,g\in \dot H_\#^s,
\end{align*}
and 
then \eqref{eq:mik9} and \eqref{eq:mik12} imply
\begin{align}
\frac12|g|^2_{H_\#^s}&\le \| \nabla g\|^2_{{\mathbf H}_\#^{s-1}}
\le |g|^2_{H_\#^s} \quad \forall\,g\in H_\#^s,
\nonumber\\
\label{eq:mik13}
\frac12\|g\|^2_{H_\#^s}=\frac12\|g\|^2_{\dot H_\#^s}=\frac12|g|^2_{H_\#^s}&\le \| \nabla g\|^2_{{\mathbf H}_\#^{s-1}}
\le |g|^2_{H_\#^s}=\|g\|^2_{\dot H_\#^s}=\| g\|^2_{H_\#^s} \quad \forall\,g\in \dot H_\#^s.
\end{align}
The vector counterpart of \eqref{eq:mik13} takes form
\begin{align}\label{eq:mik14}
\frac12\| \mathbf v\|^2_{{\mathbf H}_\#^s}=\frac12\| \mathbf v\|^2_{\dot{\mathbf H}_\#^s}
\le \| \nabla \mathbf v\|^2_{(H_\#^{s-1})^{n\times n}}
\le \| \mathbf v\|^2_{\dot{\mathbf H}_\#^s}=\| \mathbf v\|^2_{{\mathbf H}_\#^s} \quad \forall\,\mathbf v\in \dot {\mathbf H}_\#^s.
\end{align}
Note that the second inequalities in \eqref{eq:mik13} and \eqref{eq:mik14} are valid also in the more general cases, i.e., for $\,g\in H_\#^s$ and $\mathbf v\in  {\mathbf H}_\#^s$, respectively.

We will further need the first Korn inequality
\begin{align}
\label{eq:mik15}
\|\nabla {\mathbf v}\|^2_{(L_{2\#})^{n\times n}}\leq 2\|\mathbb E ({\mathbf v})\|^2_{(L_{2\#})^{n\times n}}\quad\forall\, \mathbf v\in {\mathbf H}_\#^1 
\end{align}
that can be easily proved by adapting, e.g., the proof in \cite[Theorem 10.1]{McLean2000} to the periodic Sobolev space; cf. also \cite[Theorem 2.8]{Oleinik1992}.

Let us also define the Sobolev spaces of divergence-free functions and distributions,
\begin{align*}
\dot{\mathbf H}_{\#\sigma}^{s}
&:=\left\{{\bf w}\in\dot{\mathbf H}_\#^{s}:{\div}\, {\bf w}=0\right\},\quad s\in\R,
\end{align*}
endowed with the same norm \eqref{eq:mik10}.
Similarly, ${\mathbf C}^\infty_{\#\sigma}$ and $\mathbf L_{q\#\sigma}$ denote the subspaces of divergence-free vector-functions from 
${\mathbf C}^\infty_{\#}$ and $\mathbf L_{q\#}$, respectively, etc.

The space $\dot{\mathbf H}_{\#\sigma}^s$ is the Hilbert space with inner product inherited from \eqref{E3.3i} and \eqref{E3.3idot}.
In addition, see \cite[Section 2]{Mikhailov2024},
$
(\dot{\mathbf H}_{\#\sigma}^{s})^*=\dot{\mathbf H}_{\#\sigma}^{-s}.
$
Not that for any $r,s\in\R$ the operator 
\begin{align}\label{3.23}
\Lambda_\#^{r}: \dot{\mathbf H}_{\#\sigma}^s\to \dot{\mathbf H}_{\#\sigma}^{s-r}
\end{align}
defined as in \eqref{E3.14} is continuous.

Let us also introduce the space 
\begin{align*}
\dot{\mathbf H}_{\# g}^{s}
&:=\left\{{\bf w}=\nabla q,\ q\in\dot{H}_\#^{s+1}\right\},\quad s\in\R,
\end{align*}
endowed with the norm \eqref{eq:mik10}.

The following assertion is produced in \cite[Theorem 1]{Mikhailov2024}.
\begin{theorem} \label{Th2.1}
Let $s\in\R$ and $n\ge 2$. 

(a) The space $\dot{\mathbf H}_{\#}^{s}$ has the Helmholtz-Weyl decomposition,
$
\dot{\mathbf H}_{\#}^{s}=\dot{\mathbf H}_{\# g}^{s}\oplus\dot{\mathbf H}_{\# \sigma}^{s},
$
that is, any $\mathbf F\in \dot{\mathbf H}_{\#}^{s}$ can be uniquely represented as 
$\mathbf F=\mathbf F_g+ \mathbf F_\sigma$, where $\mathbf F_g\in \dot{\mathbf H}_{\# g}^{s}$ and 
$\mathbf F_\sigma\in \dot{\mathbf H}_{\#\sigma}^{s}$.

(b) The spaces $\dot{\mathbf H}_{\# g}^{s}$ and $\dot{\mathbf H}_{\# \sigma}^{s}$ are orthogonal subspaces of $\dot{\mathbf H}_{\#}^{s}$ in the sense of inner product, i.e., $(\mathbf w,\mathbf v)_{H_\#^s}=0$ for any $\mathbf w\in \dot{\mathbf H}_{\# g}^{s}$ and $\mathbf v\in \dot{\mathbf H}_{\# \sigma}^{-s}$.

(c) The spaces $\dot{\mathbf H}_{\# g}^{s}$ and $\dot{\mathbf H}_{\# \sigma}^{-s}$ are orthogonal in the sense of dual product, i.e., $\langle\mathbf w,\mathbf v\rangle=0$ for any $\mathbf w\in \dot{\mathbf H}_{\# g}^{s}$ and 
$\mathbf v\in \dot{\mathbf H}_{\# \sigma}^{-s}$.

(d) There exist the bounded orthogonal projector operators $\mathbb P_g:\dot{\mathbf H}_\#^{s}\to \dot{\mathbf H}_{\#g}^{s}$ and $\mathbb P_\sigma:\dot{\mathbf H}_\#^{s}\to \dot{\mathbf H}_{\#\sigma}^{s}$ (the Leray projector), while $\mathbf F=\mathbb P_g\mathbf F+\mathbb P_\sigma\mathbf F$ for any $\mathbf F\in \dot{\mathbf H}_\#^{s}$.
\end{theorem}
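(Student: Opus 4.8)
The plan is to prove Theorem~\ref{Th2.1} by working entirely in Fourier space, where divergence-free and gradient conditions become pointwise (in $\bs\xi$) linear-algebraic conditions. For $\mathbf F\in\dot{\mathbf H}_\#^s$ with Fourier coefficients $\hat{\mathbf F}(\bs\xi)\in\C^n$ ($\bs\xi\in\dot\Z^n$, since $\hat{\mathbf F}(\mathbf 0)=0$), I would define, for each fixed $\bs\xi\in\dot\Z^n$, the orthogonal decomposition of $\C^n$ into the line spanned by $\bs\xi$ and its orthogonal complement: set $\widehat{\mathbf F_g}(\bs\xi):=\dfrac{(\hat{\mathbf F}(\bs\xi)\cdot\bs\xi)}{|\bs\xi|^2}\,\bs\xi$ and $\widehat{\mathbf F_\sigma}(\bs\xi):=\hat{\mathbf F}(\bs\xi)-\widehat{\mathbf F_g}(\bs\xi)$, with both set to $\mathbf 0$ at $\bs\xi=\mathbf 0$. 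First I would check that these are legitimate definitions of elements of the stated spaces: $\widehat{\mathbf F_\sigma}(\bs\xi)\cdot\bs\xi=0$ for all $\bs\xi$ means $\div\mathbf F_\sigma=0$ (recall $\widehat{\div\mathbf w}(\bs\xi)=2\pi i\,\bs\xi\cdot\hat{\mathbf w}(\bs\xi)$), so $\mathbf F_\sigma\in\dot{\mathbf H}_{\#\sigma}^s$; and $\widehat{\mathbf F_g}(\bs\xi)$ is a scalar multiple of $\bs\xi$, so $\mathbf F_g=\nabla q$ with $\hat q(\bs\xi):=\dfrac{\hat{\mathbf F}(\bs\xi)\cdot\bs\xi}{2\pi i|\bs\xi|^2}$, and since $|2\pi\bs\xi|\asymp\varrho(\bs\xi)$ by~\eqref{eq:mik9}, $q\in\dot H_\#^{s+1}$, giving $\mathbf F_g\in\dot{\mathbf H}_{\#g}^s$. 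The norm bound $\|\mathbf F_g\|_{H_\#^s}^2+\|\mathbf F_\sigma\|_{H_\#^s}^2=\|\mathbf F\|_{H_\#^s}^2$ follows by summing the pointwise Pythagorean identity $|\widehat{\mathbf F_g}(\bs\xi)|^2+|\widehat{\mathbf F_\sigma}(\bs\xi)|^2=|\hat{\mathbf F}(\bs\xi)|^2$ against the weights $\varrho(\bs\xi)^{2s}$; in particular both series converge, so the components genuinely lie in $\dot{\mathbf H}_\#^s$.

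Next I would establish uniqueness in~(a). Suppose $\mathbf F_g+\mathbf F_\sigma=\mathbf F_g'+\mathbf F_\sigma'$ with $\mathbf F_g,\mathbf F_g'\in\dot{\mathbf H}_{\#g}^s$ and $\mathbf F_\sigma,\mathbf F_\sigma'\in\dot{\mathbf H}_{\#\sigma}^s$. Then $\mathbf F_g-\mathbf F_g'=\mathbf F_\sigma'-\mathbf F_\sigma$ lies in both $\dot{\mathbf H}_{\#g}^s$ and $\dot{\mathbf H}_{\#\sigma}^s$; in Fourier space its coefficient at each $\bs\xi$ is simultaneously a multiple of $\bs\xi$ and orthogonal to $\bs\xi$, hence $\mathbf 0$, so the difference vanishes. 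This simultaneously gives $\dot{\mathbf H}_{\#g}^s\cap\dot{\mathbf H}_{\#\sigma}^s=\{\mathbf 0\}$, completing~(a).

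For~(b), given $\mathbf w\in\dot{\mathbf H}_{\#g}^s$ and $\mathbf v\in\dot{\mathbf H}_{\#\sigma}^{-s}$ (note the exponent; as stated in~(b) one partner sits in the $-s$ space so that the inner product pairing is finite — I would double-check the intended indices against the paper's convention, but the argument is the same), write $\hat{\mathbf w}(\bs\xi)=c(\bs\xi)\bs\xi$ and $\hat{\mathbf v}(\bs\xi)\cdot\bs\xi=0$; then each term $\varrho(\bs\xi)^{2s}\hat{\mathbf w}(\bs\xi)\cdot\overline{\hat{\mathbf v}(\bs\xi)}=\varrho(\bs\xi)^{2s}c(\bs\xi)\,\bs\xi\cdot\overline{\hat{\mathbf v}(\bs\xi)}=0$, so the sum defining $(\mathbf w,\mathbf v)_{H_\#^s}$ vanishes term by term. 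Part~(c) is the analogous computation with the dual pairing $\langle\mathbf w,\mathbf v\rangle_\T=\sum_{\bs\xi}\hat{\mathbf w}(\bs\xi)\cdot\hat{\mathbf v}(-\bs\xi)$: since $\hat{\mathbf w}(\bs\xi)$ is parallel to $\bs\xi$ and $\hat{\mathbf v}(-\bs\xi)$ is orthogonal to $(-\bs\xi)$, hence to $\bs\xi$, every term is zero. Finally~(d): define $\mathbb P_g\mathbf F:=\mathbf F_g$ and $\mathbb P_\sigma\mathbf F:=\mathbf F_\sigma$ via the Fourier formulas above; linearity is clear, idempotency ($\mathbb P_g^2=\mathbb P_g$, $\mathbb P_\sigma^2=\mathbb P_\sigma$, $\mathbb P_g\mathbb P_\sigma=0$) is the pointwise statement that radial and tangential projections in $\C^n$ are complementary idempotents, boundedness (indeed norm~$\le 1$) follows from the pointwise bounds $|\widehat{\mathbf F_g}(\bs\xi)|\le|\hat{\mathbf F}(\bs\xi)|$ and $|\widehat{\mathbf F_\sigma}(\bs\xi)|\le|\hat{\mathbf F}(\bs\xi)|$ weighted by $\varrho(\bs\xi)^{2s}$, and orthogonality of the projectors is exactly~(b). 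The identity $\mathbf F=\mathbb P_g\mathbf F+\mathbb P_\sigma\mathbf F$ is the decomposition from~(a).

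I do not expect a serious obstacle here — the whole proof is a diagonalisation in the Fourier variable, reducing everything to the elementary orthogonal decomposition $\C^n=\C\bs\xi\oplus\bs\xi^\perp$. The only points requiring a little care are: (i) justifying that the formal Fourier series for $\mathbf F_g$, $\mathbf F_\sigma$ and $q$ actually define elements of the claimed spaces, which is the norm-summation step and uses~\eqref{eq:mik9} to pass between $|\bs\xi|$ and $\varrho(\bs\xi)$; (ii) keeping the Sobolev exponents straight between the inner-product statement~(b) and the dual-product statement~(c), since these live on $\dot{\mathbf H}_\#^{\pm s}$ respectively; and (iii) noting that $\div\mathbf w=0$ as a periodic distribution is genuinely equivalent to $\bs\xi\cdot\hat{\mathbf w}(\bs\xi)=0$ for all $\bs\xi$, which is immediate from the Fourier characterisation of distributions on the torus. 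Since this result is quoted from \cite[Theorem 1]{Mikhailov2024}, an alternative is simply to cite it; but the Fourier-space proof above is short and self-contained.
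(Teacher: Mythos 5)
Your proof is correct, and it is the standard Fourier-mode argument that one would expect to find (and, in essence, does find) behind \cite[Theorem 1]{Mikhailov2024}, which the present paper simply cites for this result: decompose each coefficient $\hat{\mathbf F}(\bs\xi)\in\C^n$ into its component along $\bs\xi$ and its component in $\bs\xi^\perp$, check divergence-free and gradient structure on the Fourier side, sum the pointwise Pythagorean identity against the weights $\varrho(\bs\xi)^{2s}$, and read off uniqueness, orthogonality and the projector bounds from the pointwise orthogonal decomposition of $\C^n$. You were also right to be suspicious of the exponent in part (b): the phrase ``orthogonal subspaces of $\dot{\mathbf H}_{\#}^s$'' and the use of the $H_\#^s$ inner product require $\mathbf v\in\dot{\mathbf H}_{\#\sigma}^{s}$, and the $-s$ appearing there is a misprint carried over from part (c), where the dual pairing genuinely pairs $\dot{\mathbf H}_{\#g}^s$ with $\dot{\mathbf H}_{\#\sigma}^{-s}$; your computation handles both versions correctly since every term vanishes identically.
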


For the evolution problems we will systematically use the spaces  $L_q(0,T;H^s_\#)$, $s\in\R$, $1\le q\le\infty$, $0<T<\infty$, which consists of functions that map $t\in(0,T)$ to a function or distributions from $H^s_\#$.
For $1\le q<\infty$, the space $L_q(0,T;H^s_\#)$ is endowed with the norm
\begin{align*}
 \|h\|_{L_q(0,T;H^s_\#)}=\left(\int_0^T\|h(\cdot,t)\|^q_{H^s_\#} d t\right)^{1/q}
=\left(\int_0^T\left[\sZ\varrho(\bs\xi)^{2s}|\widehat h(\bs\xi,t)|^2\right]^{q/2} d t\right)^{1/q}<\infty,
\end{align*}
and for $q=\infty$ with the norm
\begin{align*}
 \|h\|_{L_\infty(0,T;H^s_\#)}={\rm ess\,sup}_{t\in(0,T)}\|h(\cdot,t)\|_{H^s_\#}
={\rm ess\,sup}_{t\in(0,T)}\left[\sZ\varrho(\bs\xi)^{2s}|\widehat h(\bs\xi,t)|^2\right]^{1/2}<\infty.
\end{align*}

For a function (or distribution) $h(\mathbf x,t)$, we will use the notation 
$$
h'(\mathbf x,t):=\partial_t h(\mathbf x,t):=\dfrac{\partial}{\partial t}h(\mathbf x,t),\quad
h^{(j)}(\mathbf x,t):=\partial^j_t h(\mathbf x,t):=\dfrac{\partial^j}{\partial t^j}h(\mathbf x,t)
$$ 
for the partial derivatives in the time variable $t$. 

Let $X$ and $Y$ be some Hilbert spaces. We will further need the space 
\begin{align*}
W^1(0,T;X, Y):=\{u\in L_2(0,T; X) : u'\in L_2(0,T; Y)\}
\end{align*}
endowed with the norm
\begin{align*}
\|u\|_{W^1(0,T;X, Y)}=(\|u\|_{L_2(0,T; X)}^2 +\|u'\|_{L_2(0,T; Y)}^2)^{1/2}.
\end{align*}
Spaces of such type are considered in \cite[Chapter 1, Section 2.2]{Lions-Magenes1}. 
We will particularly need the spaces $W^1(0,T;H^s_\#, H^{s'}_\#)$  and their vector counterparts.

We will also employ the following spaces for $k\in\N$, cf., e.g., \cite[Chapter 1, Section 1.3, Remark 1.5]{Lions-Magenes1},
\begin{align*}
W^k(0,T;X):=\{u\in L_2(0,T; X) : \partial_t^j {u}\in X,\ j=1,\ldots,k\}
\end{align*}
endowed with the norm
\begin{align*}
\|u\|_{W^k(0,T;X)}=\left(\sum_{j=0}^k\|\partial_t^j u\|_{L_2(0,T; X)}^2 \right)^{1/2}.
\end{align*}

Unless stated otherwise, we will assume in this paper that all vector and scalar variables are real-valued (however with complex-valued Fourier coefficients).

\section{Existence results available for evolution spatially-periodic anisotropic Navier-Stokes problem}\label{S3}

Let us consider the following Navier-Stokes problem
for the real-valued unknowns $({\mathbf u},p )$, 
\begin{align}
\label{NS-problem-div0}
\mathbf u' -\bs{\mathfrak L}{\mathbf u}+\nabla p+({\mathbf u}\cdot \nabla ){\mathbf u}&=\mathbf{f}
\quad \mbox{in } \T\times(0,T), 
\\
\label{NS-problem-div0-div}
{\rm{div}}\, {\mathbf u}&=0\quad \mbox{in } \T\times(0,T),
\\
\label{NS-problem-div0-IC}
\mathbf u(\cdot,0)&=\mathbf u^0\ \mbox{in } \T ,
\end{align}
with given data
${\mathbf f}\in L_2(0,T;\dot{\mathbf H}_\#^{-1})$,  $\mathbf u^0\in \dot{\mathbf H}_{\#\sigma}^{0}$. 
Note that the time-trace $\mathbf u(\cdot,0)$ for $\mathbf u$ solving the weak form of \eqref{NS-problem-div0}--\eqref{NS-problem-div0-div} is well defined, see Definition \ref {D6.1} and Remark \ref{R4.3}.

Let us introduce the bilinear form
\begin{align}
\label{NS-a-vsigma}
a_{\T}({\mathbf u},{\mathbf v})=
a_{\T}(t;{\mathbf u},{\mathbf v})
:=
\left\langle a_{ij}^{\alpha \beta }(\cdot,t)E_{j\beta }({\mathbf u}),E_{i\alpha }({\mathbf v})\right\rangle _{\T}
&\,\
\forall \ {\mathbf u}, {\mathbf v}\in \dot{\mathbf H}_{\#}^1.
\end{align}
By the boundedness condition \eqref{TensNorm} and inequality \eqref{eq:mik14} we have
\begin{align}
\label{NS-a-1-v2-S-0v}
|a_{\T}(t;{\mathbf u},{\mathbf v})|
\le \|\mathbb A\| \|{\mathbb E}({\mathbf u})\|_{L_{2\#}^{n\times n}}\|{\mathbb E}({\bf v})\|_{L_{2\#}^{n\times n}}
&\le \|\mathbb A\| \|\nabla{\mathbf u}\|_{L_{2\#}^{n\times n}}\|\nabla{\bf v}\|_{L_{2\#}^{n\times n}}
\nonumber\\
&\le \|\mathbb A\| \|{\mathbf u}\|_{\dot{\mathbf H}_{\#}^1}\|{\bf v}\|_{\dot{\mathbf H}_{\#}^1}
\quad
\forall \ {\mathbf u}, {\mathbf v}\in \dot{\mathbf H}_{\#}^1.
\end{align}
If the relaxed ellipticity condition \eqref{mu} holds, taking into account the relation $\sum_{i=1}^nE_{ii}({\bf w})=\div {\bf w}=0$ for  ${\bf w}\in 
{\dot{\mathbf H}_{\#\sigma}^1}$, 
equivalence of the norm $\|\nabla (\cdot )\|_{L_{2\#}^{n\times n}}$
to the norm $\|\cdot \|_{\dot{\mathbf H}_{\#\sigma}^1}$ in $\dot{\mathbf H}_{\#\sigma}^1$, see \eqref{eq:mik14}, and the first Korn inequality \eqref{eq:mik15},
we obtain
\begin{align}
\label{NS-a-1-v2-S-0}
a_{\T}(t;{\bf w},{\mathbf w})=\left\langle a_{ij}^{\alpha \beta}(\cdot,t)E_{j\beta }({\bf w}),E_{i\alpha }({\bf w})\right\rangle _{\T}
&\geq C_{\mathbb A}^{-1}\|{\mathbb E}({\bf w})\|_{L_{2\#}^{n\times n}}^2
\nonumber\\
&\geq\frac{1}{2}C_{\mathbb A}^{-1}\|\nabla {\bf w}\|_{L_{2\#}^{n\times n}}^2
\geq \frac14 C_{\mathbb A}^{-1}\|{\bf w}\|_{\dot{\mathbf H}_{\#\sigma}^1}^2
\quad\forall \, {\bf w}\in \dot{\mathbf H}_{\#\sigma}^1.
\end{align}
Then  \eqref{NS-a-1-v2-S-0v} and \eqref{NS-a-1-v2-S-0} give
\begin{align}
\label{NS-a-1-v2-S-}
\frac14 C_{\mathbb A}^{-1}\|{\bf w}\|_{\dot{\mathbf H}_{\#\sigma}^1}^2
\le a_{\T}(t;{\bf w},{\mathbf w})
\le\|\mathbb A\| \|{\bf w}\|_{\dot{\mathbf H}_{\#\sigma}^1}^2\quad
\forall \, {\bf w}\in \dot{\mathbf H}_{\#\sigma}^1.
\end{align}

Let us denote
\begin{align}
\label{Eq-F}
\mathbf F:=\mathbf{f}+\bs{\mathfrak L}{\mathbf u}-({\mathbf u}\cdot \nabla ){\mathbf u}.
\end{align}
Let $\mathbf u\in\dot{\mathbf H}_{\#\sigma}^1$.
Acting on \eqref{NS-problem-div0} by the Leray projector $\mathbb P_\sigma$ and taking into account that 
$\mathbb P_\sigma\mathbf u'=\mathbf u'$ and $\mathbb P_\sigma\nabla p=\mathbf 0$, we obtain
\begin{align}
\label{NS-problem-just}
\mathbf u' =\mathbb P_\sigma\mathbf F
&=\mathbb P_\sigma[\mathbf{f}+\bs{\mathfrak L}{\mathbf u}-({\mathbf u}\cdot \nabla ){\mathbf u}]
\quad \mbox{in } \T\times(0,T).
\end{align}
On the other hand, acting on \eqref{NS-problem-div0} by the projector $\mathbb P_g$ and taking into account that 
$\mathbb P_g\mathbf u'=0$ and $\mathbb P_g\nabla p=\nabla p$, we obtain
\begin{align}
\label{Eq-p}
\nabla p=
\mathbb P_g\mathbf F=\mathbb P_g[\mathbf{f}+\bs{\mathfrak L}{\mathbf u}-({\mathbf u}\cdot \nabla ){\mathbf u}]
\quad \mbox{in } \T\times(0,T).
\end{align}

We use the following definition of weak solution given in \cite[Definition 1]{Mikhailov2024}.
\begin{definition}\label{D6.1} Let $n\ge 2$, $T>0$, ${\mathbf f}\in L_2(0,T;\dot{\mathbf H}_\#^{-1})$ and  
$\mathbf u^0\in \dot{\mathbf H}_{\#\sigma}^{0}$.
A function ${\mathbf u}\in  L_{\infty}(0,T;\dot{\mathbf H}_{\#\sigma}^{0})\cap L_2(0,T;\dot{\mathbf H}_{\#\sigma}^{1})$ is called a weak solution of the evolution space-periodic anisotropic Navier-Stokes initial value problem \eqref{NS-problem-div0}--\eqref{NS-problem-div0-IC} if it solves  the initial-variational problem
\begin{align}
\label{NS-eq:mik51a}
&\left\langle {\mathbf u}'(\cdot,t)+\mathbb P_\sigma[({\mathbf u}(\cdot,t)\cdot \nabla ){\mathbf u}(\cdot,t)],{\mathbf w}\right\rangle _{\T }
+a_{\T}({\mathbf u}(\cdot,t),{\mathbf w})
\nonumber\\
&\hspace{16em}
=\langle\mathbf{f}(\cdot,t), \mathbf w\rangle_{\T},\ 
\text{for a.e. } t\in(0,T),\
\forall\,\mathbf w\in \dot{\mathbf H}_{\#\sigma}^{1},
\\ 
\label{NS-eq:mik51in}
&\langle {\mathbf u}(\cdot,0), \mathbf w\rangle_{\T}=\langle\mathbf u^0, \mathbf w\rangle_{\T},\
\forall\,\mathbf w\in \dot{\mathbf H}_{\#\sigma}^{0}.
\end{align}
The associated pressure $p$ is a distribution on $\T\times(0,T)$ satisfying \eqref{NS-problem-div0} in the sense of distributions, 
i.e.,
\begin{align}
\label{NS-eq:mik51a-d}
\left\langle {\mathbf u}'(\cdot,t)+({\mathbf u}(\cdot,t)\cdot \nabla ){\mathbf u}(\cdot,t),{\mathbf w}\right\rangle _{\T }
+a_{\T}({\mathbf u}(\cdot,t),{\mathbf w})
&+\langle\nabla p(\cdot,t),{\mathbf w}\rangle _{\T }
\nonumber\\
&=\langle\mathbf{f}(\cdot,t), \mathbf w\rangle_{\T},\ 
\text{for a.e. }  t\in(0,T),\ 
\forall\,\mathbf w\in {\mathbf C}^\infty_{\#}.
\end{align}
\end{definition}

The following assertion is proved in \cite[Lemma 1]{Mikhailov2024}.
\begin{lemma}\label{L4.2}
Let $n\ge 2$, $T>0$, $a_{ij}^{\alpha \beta }\in L_\infty(0,T;L_{\infty\#})$, ${\mathbf f}\in L_2(0,T;\dot{\mathbf H}_\#^{-1})$ and  
$\mathbf u^0\in \dot{\mathbf H}_{\#\sigma}^{0}$.
Let ${\mathbf u}\in  L_{\infty}(0,T;\dot{\mathbf H}_{\#\sigma}^{0})\cap L_2(0,T;\dot{\mathbf H}_{\#\sigma}^{1})$ solve equation  \eqref{NS-eq:mik51a}. 

(i) Then 
\begin{align}\label{E4.13}
\mathbf{Du}:={\mathbf u}'+\mathbb P_\sigma[({\mathbf u}\cdot \nabla ){\mathbf u}]\in L_2(0,T;\dot{\mathbf H}_{\#\sigma}^{-1})\quad
\text{and}\quad \mathbf{Du}(\cdot,t)\in \dot{\mathbf H}_{\#\sigma}^{-1} \text{ for a.e. } t\in [0,T],
\end{align}
while 
\begin{align}\label{E4.13A}
&({\mathbf u}\cdot \nabla ){\mathbf u}\in L_2(0,T;\dot{\mathbf H}_{\#}^{-n/2})\quad\text{and}\quad 
({\mathbf u}\cdot \nabla ){\mathbf u}(\cdot,t)\in \dot{\mathbf H}_{\#}^{-n/2} \quad \text{ for a.e. } t\in [0,T],
\\
\label{E4.13B}
&
{\mathbf u}'\in L_2(0,T;\dot{\mathbf H}_{\#\sigma}^{-n/2})
\quad \text{and}\quad 
{\mathbf u}'(\cdot,t)\in \dot{\mathbf H}_{\#\sigma}^{-n/2}\quad \text{ for a.e. } t\in [0,T],
\end{align}
and hence ${\mathbf u}\in W^1(\dot{\mathbf H}_{\#\sigma}^{1},\dot{\mathbf H}_{\#\sigma}^{-n/2})$.

In addition,
\begin{align}
\label{E4.29-3an}
&\partial_t\|{\mathbf u}\|_{\dot{\mathbf H}^{{-(n-2)/4}}_{\#\sigma}}^2
=2\langle \Lambda_\#^{-n/2} \mathbf u', \Lambda_\# {\mathbf u}\rangle _{\T } 
=2\langle \mathbf u',\Lambda_\#^{1-n/2}{\mathbf u}\rangle _{\T } 
=2\langle \Lambda_\#^{1-n/2}\mathbf u',{\mathbf u}\rangle _{\T } 
\end{align}
for a.e. $t\in(0,T)$ and also in the distribution sense on $(0,T)$.

(ii) Moreover, $\mathbf u$ is almost everywhere on $[0,T]$ equal to a function $\widetilde{\mathbf u}\in \mathcal C^0([0,T];\dot{\mathbf H}_{\#\sigma}^{-(n-2)/4})$,
and $\widetilde{\mathbf u}$ is also $\dot{\mathbf H}^0_{\#\sigma}$-weakly continuous in time on $[0,T]$, that is,
$$
\lim_{t\to t_0}\langle\widetilde{\mathbf u}(\cdot,t),\mathbf w\rangle_\T=\langle\widetilde{\mathbf u}(\cdot,t_0),\mathbf w\rangle_\T\quad \forall\, \mathbf w\in \mathbf H^0_{\#}, \ \forall\, t_0\in[0,T].
$$

(iii) There exists the associated pressure $p\in L_2(0,T;\dot{H}_{\#}^{-n/2+1})$ that for the given $\mathbf u$ is the unique solution of equation \eqref{NS-problem-div0} in this space. 
\end{lemma}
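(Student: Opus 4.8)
The plan is to prove (i)--(iii) in turn, with the regularity of the convective term $(\mathbf u\cdot\nabla)\mathbf u$ doing essentially all the work, and to treat all $n\ge2$ simultaneously.

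\emph{Step 1 (the convective term).} First I would establish, uniformly for $n\ge2$, the auxiliary estimate
\begin{align*}
\|(\mathbf v\cdot\nabla)\mathbf v\|_{\dot{\mathbf H}_\#^{-n/2}}\le C\,\|\mathbf v\|_{\dot{\mathbf H}_{\#\sigma}^{0}}\,\|\mathbf v\|_{\dot{\mathbf H}_{\#\sigma}^{1}}\qquad\forall\,\mathbf v\in\dot{\mathbf H}_{\#\sigma}^{1}.
\end{align*}
Since $(\mathbf v\cdot\nabla)\mathbf v$ has zero mean and $(\dot{\mathbf H}_\#^{n/2})^*=\dot{\mathbf H}_\#^{-n/2}$, it suffices to bound $\langle(\mathbf v\cdot\nabla)\mathbf v,\bs\phi\rangle_{\T}$ for smooth zero-mean test fields $\bs\phi$; using $\div\mathbf v=0$ and integration by parts, $\langle(\mathbf v\cdot\nabla)\mathbf v,\bs\phi\rangle_{\T}=-\int_{\T}v_jv_k\,\partial_j\phi_k\,d\mathbf x$, and then Hölder's inequality with exponents $(\tfrac{2n}{n-1},\tfrac{2n}{n-1},n)$ together with the critical Sobolev embeddings $\dot H_\#^{1/2}\hookrightarrow L_{q\#}$ with $q=\tfrac{2n}{n-1}$ and $\dot H_\#^{n/2-1}\hookrightarrow L_{n\#}$ gives $|\langle(\mathbf v\cdot\nabla)\mathbf v,\bs\phi\rangle_{\T}|\le C\|\mathbf v\|_{\dot{\mathbf H}_\#^{1/2}}^2\|\bs\phi\|_{\dot{\mathbf H}_\#^{n/2}}$; finally the Cauchy--Schwarz interpolation $\|\mathbf v\|_{\dot{\mathbf H}_\#^{1/2}}^2\le\|\mathbf v\|_{\dot{\mathbf H}_\#^{0}}\|\mathbf v\|_{\dot{\mathbf H}_\#^{1}}$ on the Fourier side closes it. Applying this for a.e.\ $t$ to $\mathbf v=\mathbf u(\cdot,t)\in\dot{\mathbf H}_{\#\sigma}^{1}$ and integrating in $t$ with $\|\mathbf u\|_{L_\infty(0,T;\dot{\mathbf H}_{\#\sigma}^{0})}$ factored out yields \eqref{E4.13A}; boundedness of the Leray projector (Theorem~\ref{Th2.1}(d)) puts $\mathbb P_\sigma[(\mathbf u\cdot\nabla)\mathbf u]$ in the same space $L_2(0,T;\dot{\mathbf H}_{\#\sigma}^{-n/2})$.

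\emph{Step 2 (part (i)).} Rewriting \eqref{NS-eq:mik51a} as $\langle\mathbf{Du}(\cdot,t),\mathbf w\rangle_{\T}=\langle\mathbf f(\cdot,t),\mathbf w\rangle_{\T}-a_{\T}(t;\mathbf u(\cdot,t),\mathbf w)$ for all $\mathbf w\in\dot{\mathbf H}_{\#\sigma}^{1}$ and invoking \eqref{NS-a-1-v2-S-0v} gives $|\langle\mathbf{Du}(\cdot,t),\mathbf w\rangle_{\T}|\le(\|\mathbf f(\cdot,t)\|_{\dot{\mathbf H}_\#^{-1}}+\|\mathbb A\|\,\|\mathbf u(\cdot,t)\|_{\dot{\mathbf H}_\#^{1}})\|\mathbf w\|_{\dot{\mathbf H}_{\#\sigma}^{1}}$, so $\mathbf{Du}(\cdot,t)\in(\dot{\mathbf H}_{\#\sigma}^{1})^*=\dot{\mathbf H}_{\#\sigma}^{-1}$ for a.e.\ $t$ and, squaring and integrating, $\mathbf{Du}\in L_2(0,T;\dot{\mathbf H}_{\#\sigma}^{-1})$, which is \eqref{E4.13}. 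Because $-1\ge-n/2$ for $n\ge2$, also $\mathbf{Du}\in L_2(0,T;\dot{\mathbf H}_{\#\sigma}^{-n/2})$, hence $\mathbf u'=\mathbf{Du}-\mathbb P_\sigma[(\mathbf u\cdot\nabla)\mathbf u]\in L_2(0,T;\dot{\mathbf H}_{\#\sigma}^{-n/2})$, which is \eqref{E4.13B}, and, with $\mathbf u\in L_2(0,T;\dot{\mathbf H}_{\#\sigma}^{1})$, $\mathbf u\in W^1(\dot{\mathbf H}_{\#\sigma}^{1},\dot{\mathbf H}_{\#\sigma}^{-n/2})$. For \eqref{E4.29-3an} I would set $\mathbf v:=\Lambda_\#^{-(n-2)/4}\mathbf u$; by \eqref{3.23}, $\mathbf v\in L_2(0,T;\dot{\mathbf H}_{\#\sigma}^{(n+2)/4})$ and $\mathbf v'\in L_2(0,T;\dot{\mathbf H}_{\#\sigma}^{-(n+2)/4})=L_2(0,T;(\dot{\mathbf H}_{\#\sigma}^{(n+2)/4})^*)$, so the standard Lions--Magenes differentiation lemma (cf.\ \cite{Lions-Magenes1}) for the Gelfand triple $\dot{\mathbf H}_{\#\sigma}^{(n+2)/4}\subset\mathbf L_{2\#\sigma}\subset\dot{\mathbf H}_{\#\sigma}^{-(n+2)/4}$ gives $\mathbf v\in\mathcal C^0([0,T];\mathbf L_{2\#\sigma})$ and $\partial_t\|\mathbf v\|_{\mathbf L_{2\#}}^2=2\langle\mathbf v',\mathbf v\rangle_{\T}$; substituting $\mathbf v=\Lambda_\#^{-(n-2)/4}\mathbf u$ back, using $\|\mathbf v\|_{\mathbf L_{2\#}}=\|\mathbf u\|_{\dot{\mathbf H}_{\#\sigma}^{-(n-2)/4}}$ and the self-adjointness of $\Lambda_\#^{r}$ with respect to $\langle\cdot,\cdot\rangle_{\T}$ produces the three displayed forms in \eqref{E4.29-3an}.

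\emph{Step 3 (parts (ii) and (iii)).} The continuity of $\mathbf v$ in Step~2 transfers to $\mathbf u$ being a.e.\ equal to $\widetilde{\mathbf u}:=\Lambda_\#^{(n-2)/4}\mathbf v\in\mathcal C^0([0,T];\dot{\mathbf H}_{\#\sigma}^{-(n-2)/4})$; combining this with $\widetilde{\mathbf u}\in L_\infty(0,T;\dot{\mathbf H}_{\#\sigma}^{0})$, the continuous dense embedding $\dot{\mathbf H}_{\#\sigma}^{0}\hookrightarrow\dot{\mathbf H}_{\#\sigma}^{-(n-2)/4}$ (valid since $n\ge2$) and reflexivity of $\dot{\mathbf H}_{\#\sigma}^{0}$, the classical lemma on weak continuity in the stronger space (cf.\ \cite{Temam1995}) gives the $\dot{\mathbf H}_{\#\sigma}^{0}$-weak continuity of $\widetilde{\mathbf u}$ (for $n=2$ this is simply the strong continuity already obtained), and orthogonality of $\dot{\mathbf H}_{\#\sigma}^{0}$ to gradients and constants in $\mathbf L_{2\#}$ lets one test against arbitrary $\mathbf w\in\mathbf H_\#^{0}$. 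For (iii): Theorem~\ref{Th2.1}(c),(d) let one drop the gradient part when testing against $\dot{\mathbf H}_{\#\sigma}^{1}$, so \eqref{NS-eq:mik51a} gives $\mathbf{Du}=\mathbb P_\sigma[\mathbf f+\bs{\mathfrak L}\mathbf u]$ in $\dot{\mathbf H}_{\#\sigma}^{-1}$, whence $\mathbf u'=\mathbb P_\sigma\mathbf F$ with $\mathbf F$ from \eqref{Eq-F}, and one puts $\nabla p:=\mathbb P_g\mathbf F=\mathbf F-\mathbf u'$. Since $\bs{\mathfrak L}:\dot{\mathbf H}_\#^{1}\to\dot{\mathbf H}_\#^{-1}$ is bounded, $\mathbf f\in L_2(0,T;\dot{\mathbf H}_\#^{-1})\subset L_2(0,T;\dot{\mathbf H}_\#^{-n/2})$ (as $n\ge2$), and $(\mathbf u\cdot\nabla)\mathbf u\in L_2(0,T;\dot{\mathbf H}_\#^{-n/2})$ by Step~1, we get $\mathbf F\in L_2(0,T;\dot{\mathbf H}_\#^{-n/2})$, so $\nabla p=\mathbb P_g\mathbf F\in L_2(0,T;\dot{\mathbf H}_{\#g}^{-n/2})$ by Theorem~\ref{Th2.1}(d); the isomorphism $q\mapsto\nabla q$ of $\dot H_\#^{-n/2+1}$ onto $\dot{\mathbf H}_{\#g}^{-n/2}$ (the $s=-n/2$ case of the norm equivalence \eqref{eq:mik13}) then yields a unique $p\in L_2(0,T;\dot H_\#^{-n/2+1})$ --- uniqueness because $\nabla q=0$ with $\widehat q(\mathbf 0)=0$ forces $q=0$ --- and by construction this $p$ satisfies \eqref{NS-problem-div0} in the distributional sense \eqref{NS-eq:mik51a-d}.

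\emph{Expected main obstacle.} The only genuinely delicate point is Step~1: proving $\|(\mathbf v\cdot\nabla)\mathbf v\|_{\dot{\mathbf H}_\#^{-n/2}}\le C\|\mathbf v\|_{\dot{\mathbf H}_\#^{0}}\|\mathbf v\|_{\dot{\mathbf H}_\#^{1}}$ uniformly for every $n\ge2$ forces one through the critical (endpoint, but finite-target-exponent) Sobolev embeddings $\dot H_\#^{1/2}\hookrightarrow L_{q\#}$, $q=\tfrac{2n}{n-1}$, and $\dot H_\#^{n/2-1}\hookrightarrow L_{n\#}$ on the torus, which do hold but must be quoted carefully; in particular one must resist using an $L_\infty$ bound on $\nabla\bs\phi$, which fails for even $n$. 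Everything downstream --- the exact pivot space $\dot{\mathbf H}_{\#\sigma}^{-(n-2)/4}$ in (i) and (ii), and the pressure exponent $-n/2+1$ in (iii) --- is then just bookkeeping around the index $n/2$ produced by that single estimate.
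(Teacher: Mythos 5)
Your proof is correct and complete.  One remark on provenance first: the paper itself does not contain a proof of Lemma~\ref{L4.2} --- it is quoted verbatim from Part~I (\cite[Lemma~1]{Mikhailov2024}) --- so there is no in-text proof to compare against line by line.  Judging by the infrastructure this paper does build, though, the intended route is visible: the bilinear convective estimate should come from the pointwise-multiplication Theorem~\ref{RS-T1-S4.6.1}(b) applied to $\mathbf u\otimes\mathbf u$ (exactly as in \eqref{E4.91-0T0EE} in the proof of Lemma~\ref{L5.2}, just one Sobolev index lower: with $s_1=s_2=1/2$ one gets $\|\mathbf u\otimes\mathbf u\|_{(H_\#^{1-n/2})^{n\times n}}\le C_*\|\mathbf u\|_{\mathbf H_\#^{1/2}}^2$ and then Theorem~\ref{T-SII2}), and the differentiation identity \eqref{E4.29-3an} should simply be the $s=1$, $s'=-n/2$ instance of Lemma~\ref{L4.9}.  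You instead obtain the same $\dot{\mathbf H}_\#^{-n/2}$ estimate for $(\mathbf u\cdot\nabla)\mathbf u$ through H\"older with exponents $(\tfrac{2n}{n-1},\tfrac{2n}{n-1},n)$ plus the critical embeddings $\dot H_\#^{1/2}\hookrightarrow L_{\#}^{2n/(n-1)}$, $\dot H_\#^{n/2-1}\hookrightarrow L_{\#}^{n}$, and you re-derive \eqref{E4.29-3an} from the Lions--Magenes Gelfand-triple lemma by passing through $\mathbf v=\Lambda_\#^{-(n-2)/4}\mathbf u$.  Both routes give the same conclusion; yours is more elementary (no product theorem needed), while the paper's buys uniformity with the estimates used later for Serrin solutions.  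Two small bookkeeping notes: the surjectivity part of the ``isomorphism $q\mapsto\nabla q$'' you invoke in (iii) is really Lemma~\ref{div-grad-is} (norm equivalence \eqref{eq:mik13} only gives injectivity with closed range; surjectivity onto $\dot{\mathbf H}_{\#g}^{-n/2}$ needs the definition of that space), and when passing from $L_\infty(0,T;\dot{\mathbf H}_{\#\sigma}^{0})\cap\mathcal C^0([0,T];\dot{\mathbf H}_{\#\sigma}^{-(n-2)/4})$ to $\dot{\mathbf H}_{\#\sigma}^{0}$-weak continuity you should note that the $L_\infty$ bound is first only a.e.\ and extends to all $t$ by reflexivity and the $\mathcal C^0$ control --- the classical Strauss/Temam lemma you cite does cover this, but it is worth flagging.
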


\begin{remark}\label{R4.3}
The initial condition \eqref{NS-eq:mik51in} should be understood for the function $\mathbf u$ re-defined as the function 
$\widetilde{\mathbf u}$ that was introduced in Lemma \ref{L4.2}(ii) and is $\mathbf H^0_{\#}$-weakly continuous in time.
\end{remark}

The following existence theorem was proved in \cite[Theorem 2]{Mikhailov2024}.
\begin{theorem}[Existence]
\label{NS-problemTh-sigma}
Let $n\ge 2$ and $T>0$. 
Let $a_{ij}^{\alpha \beta }\in L_\infty(0,T;L_{\infty\#})$
and the relaxed ellipticity condition \eqref{mu} hold.  
Let ${\mathbf f}\in L_2(0,T;\dot{\mathbf H}_\#^{-1})$, $\mathbf u^0\in \dot{\mathbf H}_{\#\sigma}^{0}$.

(i) Then there exists a weak solution 
${\mathbf u}\in  L_{\infty}(0,T;\dot{\mathbf H}_{\#\sigma}^{0})\cap L_2(0,T;\dot{\mathbf H}_{\#\sigma}^{1})$
of the anisotropic Navier-Stokes 
initial value problem  \eqref{NS-problem-div0}--\eqref{NS-problem-div0-IC} in the sense of Definition \ref{D6.1}.
Particularly,
$
\lim_{t\to 0}\langle{\mathbf u}(\cdot,t),\mathbf v\rangle_\T
=\langle{\mathbf u}^0,\mathbf v\rangle_\T\quad \forall\, \mathbf v\in \dot{\mathbf H}^0_{\#\sigma}.
$
There exists also the unique pressure $p\in L_2(0,T;\dot{H}_{\#}^{-n/2+1})$ associated with the obtained $\mathbf u$, that is the solution of equation \eqref{NS-problem-div0} in $L_2(0,T;\dot{H}_{\#}^{-n/2+1})$. 

(ii) Moreover, $\mathbf u$ satisfies  the following (strong) energy inequality,
\begin{align}
\frac12\| {\mathbf u}(\cdot,t)\| _{\mathbf L_{2\#}}^2
+\int_{t_0}^{t}a_{\T}({\mathbf u}(\cdot,\tau),{\mathbf u}(\cdot,\tau)) d\tau
\label{NS-eq:mik51ai=a}
\le \frac12 \| {\mathbf u}(\cdot,t_0)\| ^2_{\mathbf L_{2\#}}
+\int_{t_0}^{t}\langle\mathbf{f}(\cdot,\tau), {\mathbf u}(\cdot,\tau)\rangle_{\T}d\tau
\end{align}
for any $[t_0,t]\subset[0,T]$. It particularly implies
the standard energy inequality,
\begin{align}
\label{E4.9}
\frac12\| {\mathbf u}(\cdot,t)\| _{\mathbf L_{2\#}}^2
+\int_0^ta_{\T}({\mathbf u}(\cdot,\tau),{\mathbf u}(\cdot,\tau)) d\tau
\le \frac12\| {\mathbf u}^0\| ^2_{\mathbf L_{2\#}}
+\int_0^t\langle\mathbf{f}(\cdot,\tau), {\mathbf u}(\cdot,\tau)\rangle_{\T}\,d\tau
\quad \forall\,
t\in[0,T].
\end{align}
\end{theorem}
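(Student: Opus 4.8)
The plan is a Galerkin approximation carried out in the \emph{universal} Fourier basis adapted to the Bessel scale. For each $\bs\xi\in\dot\Z^n$ pick an orthonormal basis of the hyperplane $\bs\xi^\perp\subset\C^n$ and assemble, from the modes $e^{\pm2\pi i\mathbf x\cdot\bs\xi}$, a real orthonormal system $\{\mathbf w_k\}_{k\in\N}\subset{\mathbf C}^\infty_{\#\sigma}$ that is simultaneously an orthonormal basis of $\mathbf L_{2\#}\cap\dot{\mathbf H}_{\#\sigma}^0$ and a family of eigenfunctions of $\Lambda_\#^2$, ordered so that $V_m:=\mathrm{span}\{\mathbf w_1,\dots,\mathbf w_m\}$ increases with $\bigcup_m V_m$ dense in $\dot{\mathbf H}_{\#\sigma}^s$ for every $s\in\R$. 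The key structural point, and the advantage over the Stokes eigenbasis used in \cite{Temam1995,Temam2001}, is that the orthogonal projector $P_m$ onto $V_m$ is then the same Fourier-truncation operator in every $\dot{\mathbf H}_{\#\sigma}^s$, with $\|P_m\|\le1$ uniformly in $m$ and in $s$, and commuting with $\Lambda_\#^r$. One then seeks $\mathbf u_m(\mathbf x,t)=\sum_{k=1}^m g_{km}(t)\mathbf w_k(\mathbf x)$ solving
$$
\langle\mathbf u_m'(\cdot,t),\mathbf w_k\rangle_\T+\langle(\mathbf u_m(\cdot,t)\cdot\nabla)\mathbf u_m(\cdot,t),\mathbf w_k\rangle_\T+a_\T(t;\mathbf u_m(\cdot,t),\mathbf w_k)=\langle\mathbf f(\cdot,t),\mathbf w_k\rangle_\T
$$
for $k=1,\dots,m$, with $g_{km}(0)=\langle\mathbf u^0,\mathbf w_k\rangle_\T$; this is a Carath\'eodory system of ODEs with constant-coefficient quadratic nonlinearity, $L_\infty(0,T)$ coefficients from $a_\T$, and $L_2(0,T)$ right-hand side, hence locally solvable.

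Testing the $k$-th equation with $g_{km}(t)$ and summing over $k$, the trilinear term vanishes because $\langle(\mathbf u_m\cdot\nabla)\mathbf u_m,\mathbf u_m\rangle_\T=0$ for divergence-free fields; coercivity and boundedness \eqref{NS-a-1-v2-S-} of $a_\T$, the estimate $\langle\mathbf f,\mathbf u_m\rangle_\T\le\|\mathbf f\|_{\dot{\mathbf H}_\#^{-1}}\|\mathbf u_m\|_{\dot{\mathbf H}_{\#\sigma}^1}$ and Young's inequality then yield
$$
\tfrac{d}{dt}\|\mathbf u_m(\cdot,t)\|_{\mathbf L_{2\#}}^2+\tfrac14 C_{\mathbb A}^{-1}\|\mathbf u_m(\cdot,t)\|_{\dot{\mathbf H}_{\#\sigma}^1}^2\le 4\,C_{\mathbb A}\|\mathbf f(\cdot,t)\|_{\dot{\mathbf H}_\#^{-1}}^2 ,
$$
and integration in $t$ (using $\|\mathbf u_m(\cdot,0)\|_{\mathbf L_{2\#}}=\|P_m\mathbf u^0\|_{\mathbf L_{2\#}}\le\|\mathbf u^0\|_{\mathbf L_{2\#}}$) bounds $\mathbf u_m$, uniformly in $m$, in $L_\infty(0,T;\dot{\mathbf H}_{\#\sigma}^0)\cap L_2(0,T;\dot{\mathbf H}_{\#\sigma}^1)$ and globalises the ODE solution to $[0,T]$. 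For the time derivative, integrating $a_\T$ by parts via \eqref{Stokes-0} rewrites the Galerkin identity as $\mathbf u_m'=\Pi_m\big(\mathbf f+\bs{\mathfrak L}\mathbf u_m-(\mathbf u_m\cdot\nabla)\mathbf u_m\big)$, where $\Pi_m$ is the Fourier-truncated Leray projection, bounded on $\dot{\mathbf H}_\#^{-n/2}$ uniformly in $m$; combined with $\bs{\mathfrak L}:\dot{\mathbf H}_{\#\sigma}^1\to\dot{\mathbf H}_\#^{-1}\hookrightarrow\dot{\mathbf H}_\#^{-n/2}$ and the bilinear estimate $\|(\mathbf v\cdot\nabla)\mathbf v\|_{\dot{\mathbf H}_\#^{-n/2}}\le C_n\|\mathbf v\|_{\dot{\mathbf H}_{\#\sigma}^0}\|\mathbf v\|_{\dot{\mathbf H}_{\#\sigma}^1}$ underlying \eqref{E4.13A} (valid for every $n\ge2$), this gives a uniform bound on $\mathbf u_m'$ in $L_2(0,T;\dot{\mathbf H}_{\#\sigma}^{-n/2})$, so $\{\mathbf u_m\}$ is bounded in $W^1(0,T;\dot{\mathbf H}_{\#\sigma}^1,\dot{\mathbf H}_{\#\sigma}^{-n/2})$.

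By Banach--Alaoglu a subsequence satisfies $\mathbf u_m\overset{*}{\rightharpoonup}\mathbf u$ in $L_\infty(0,T;\dot{\mathbf H}_{\#\sigma}^0)$, $\mathbf u_m\rightharpoonup\mathbf u$ in $L_2(0,T;\dot{\mathbf H}_{\#\sigma}^1)$ and $\mathbf u_m'\rightharpoonup\mathbf u'$ in $L_2(0,T;\dot{\mathbf H}_{\#\sigma}^{-n/2})$; since $\dot{\mathbf H}_{\#\sigma}^1\hookrightarrow\hookrightarrow\dot{\mathbf H}_{\#\sigma}^0\hookrightarrow\dot{\mathbf H}_{\#\sigma}^{-n/2}$ with the first embedding compact, the Aubin--Lions--Simon lemma yields $\mathbf u_m\to\mathbf u$ strongly in $L_2(0,T;\mathbf L_{2\#})$. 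Fixing $\mathbf w\in V_{m_0}$ and $\phi\in\mathcal C_c^\infty(0,T)$, multiplying the $m$-th identity ($m\ge m_0$) by $\phi$ and integrating in $t$, one passes to the limit: linear terms by weak convergence, the nonlinear term after rewriting $\langle(\mathbf u_m\cdot\nabla)\mathbf u_m,\mathbf w\rangle_\T=-\langle(\mathbf u_m\cdot\nabla)\mathbf w,\mathbf u_m\rangle_\T$ by strong $L_2$ convergence of $\mathbf u_m$ against the fixed smooth $\mathbf w$. Density of $\bigcup_m V_m$ gives \eqref{NS-eq:mik51a} for a.e.\ $t$ and all $\mathbf w\in\dot{\mathbf H}_{\#\sigma}^1$; $\mathbf u$ inherits the bounds, so by Lemma \ref{L4.2} it has a time-continuous, $\mathbf H_{\#}^0$-weakly continuous representative $\widetilde{\mathbf u}$, and integrating the tested identity by parts in $t$ against $\phi$ with $\phi(0)=1$, $\phi(T)=0$ and comparing with the Galerkin initial data identifies $\widetilde{\mathbf u}(\cdot,0)=\mathbf u^0$, i.e.\ \eqref{NS-eq:mik51in} and the stated limit. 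The associated pressure follows from $\nabla p=\mathbb P_g\big(\mathbf f+\bs{\mathfrak L}\mathbf u-(\mathbf u\cdot\nabla)\mathbf u\big)$, cf.\ \eqref{Eq-p}; the right-hand side lying in $L_2(0,T;\dot{\mathbf H}_{\#g}^{-n/2})$, the Helmholtz--Weyl decomposition of Theorem \ref{Th2.1} gives a unique $p\in L_2(0,T;\dot H_\#^{-n/2+1})$, as in Lemma \ref{L4.2}(iii).

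For part (ii), each $\mathbf u_m$ satisfies the energy \emph{equality} on every $[t_0,t]\subset[0,T]$ (multiply by $g_{km}$, sum, integrate). Passing $m\to\infty$: on the left, $\|\widetilde{\mathbf u}(\cdot,t)\|_{\mathbf L_{2\#}}^2\le\liminf_m\|\mathbf u_m(\cdot,t)\|_{\mathbf L_{2\#}}^2$ by weak $\mathbf L_{2\#}$-continuity of $\widetilde{\mathbf u}$, and $\mathbf v\mapsto\int_{t_0}^t a_\T(\tau;\mathbf v,\mathbf v)\,d\tau$ is a non-negative (by \eqref{NS-a-1-v2-S-}), hence convex, strongly continuous, and so weakly lower semicontinuous functional on $L_2(0,T;\dot{\mathbf H}_{\#\sigma}^1)$; on the right, $\langle\mathbf f,\mathbf u_m\rangle_\T$ passes by strong $L_2$ convergence, and $\mathbf u_m(\cdot,t_0)\to\widetilde{\mathbf u}(\cdot,t_0)$ in $\mathbf L_{2\#}$ for $t_0=0$ and for a.e.\ $t_0$, a limiting argument covering the remaining $t_0$; this gives \eqref{NS-eq:mik51ai=a}, and \eqref{E4.9} is the case $t_0=0$. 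I expect the main obstacle to be the inertial term in arbitrary dimension: one needs the periodic product/Sobolev bilinear estimate $\|(\mathbf v\cdot\nabla)\mathbf v\|_{\dot{\mathbf H}_\#^{-n/2}}\le C_n\|\mathbf v\|_{\dot{\mathbf H}_{\#\sigma}^0}\|\mathbf v\|_{\dot{\mathbf H}_{\#\sigma}^1}$ to hold for \emph{every} $n\ge2$ --- equivalently, after one integration by parts, that $\div(\mathbf v\otimes\mathbf v)$ maps $\mathbf L_{2\#}\cap\dot{\mathbf H}_{\#\sigma}^1$ into $\dot{\mathbf H}_\#^{-n/2}$ --- since it both produces the $\dot{\mathbf H}_{\#\sigma}^{-n/2}$-bound on $\mathbf u_m'$ feeding the Aubin--Lions compactness and pins down the exponent $n/2$; a secondary point is that in higher dimensions $(\mathbf u\cdot\nabla)\mathbf u(\cdot,t)$ is only a distribution in $\dot{\mathbf H}_\#^{-n/2}$, so $\mathbb P_\sigma$ must be applied there and the limit reconciled with the (suitably generalised) weak-solution notion of Definition \ref{D6.1}. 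Everything else is the periodic, variable-coefficient analogue of the classical Galerkin scheme, made uniform over the Bessel scale by the universal basis.
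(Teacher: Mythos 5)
Your proposal follows the same Galerkin strategy with the universal periodic Bessel-potential eigenbasis that the paper itself uses (Theorem~\ref{NS-problemTh-sigma} is cited from Part~I \cite{Mikhailov2024}, whose method is described in the Introduction and whose basis is set out in Theorem~\ref{TE.1r}). The key points — the $m$-uniform projector $P_m$ commuting with $\Lambda_\#^r$, the $L_\infty(\dot{\mathbf H}_{\#\sigma}^0)\cap L_2(\dot{\mathbf H}_{\#\sigma}^1)$ a~priori bound from testing with $\mathbf u_m$ and the coercivity \eqref{NS-a-1-v2-S-}, the $\dot{\mathbf H}_{\#\sigma}^{-n/2}$ bound on $\mathbf u_m'$ via the bilinear estimate of the type \eqref{E4.13A}, Aubin--Lions compactness, passage to the limit in the trilinear term after integration by parts against a fixed smooth test function, recovery of the pressure via the Helmholtz--Weyl projector $\mathbb P_g$ and Lemma~\ref{div-grad-is}, and the energy inequality by weak lower semicontinuity — are exactly the paper's steps, so the proposal is essentially the same proof.

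One place where you gesture rather than prove is the extension of the energy inequality from $t_0=0$ and a.e.\ $t_0$ to \emph{all} $t_0\in[0,T]$. Weak $\mathbf L_{2\#}$-continuity of $\widetilde{\mathbf u}$ gives only lower semicontinuity of $t_0\mapsto\|\widetilde{\mathbf u}(\cdot,t_0)\|_{\mathbf L_{2\#}}^2$, which sits on the \emph{larger} side of \eqref{NS-eq:mik51ai=a}; a naive limit $t_0'\to t_0$ therefore runs in the wrong direction, and the phrase ``a limiting argument covering the remaining $t_0$'' does not by itself close this. This does not affect the existence statement, the case $t_0=0$ giving \eqref{E4.9}, nor the subsequent use of the theorem in the present paper, but if you want the inequality literally for every interval you need to say more (or retreat to the standard ``a.e.\ $t_0$ and $t_0=0$'' formulation).
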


\section{Serrin-type solutions and their properties}\label{S3-Serrin-intro}
For the isotropic constant-coefficient homogeneous Navier-Stokes equations, it is well know  that the weak solution satisfying the famous Prodi-Serrin condition (accommodated here for the periodic vector setting)
\begin{align}\label{PScond}
\mathbf u\in L_{\widetilde q}(0,T;\dot{\mathbf L}_{\# q})
\end{align}
for some $\widetilde q$ and $ q$ such that 
\begin{align}\label{PScond1}
\frac{2}{\widetilde q}+\frac{n}{ q}=1, \quad n< q<\infty,
\end{align}
is unique in the class of weak solutions satisfying the energy inequality, for $n\le 4$;  the energy equality and the regularity results are also proved under the Prodi-Serrin conditions, see, e.g., \cite{Serrin1962}, \cite{Serrin1963}, \cite[Chapter 1, Theorem 6.9 and Remark 6.8]{Lions1969}, \cite[Section 14]{Lemarie-Rieusset2002}, \cite[Section 8.5]{RRS2016}, \cite[Theorem 7.17]{Seregin2015}, \cite[Section 1.5]{Sohr2001}. 

In this paper we limit ourself to the $L_2$-based Sobolev spaces with respect to the spatial variables and hence introduce a corresponding particular counterpart of the class of solutions satisfying the conditions close to \eqref{PScond}--\eqref{PScond1} and leading to the Serrin-type results.
\subsection{Serrin-type solutions and their properties for $n\ge 2$}
\begin{definition}\label{CSDef}
Let $n\ge 2$, $T>0$, ${\mathbf f}\in L_2(0,T;\dot{\mathbf H}_\#^{-1})$ and  
$\mathbf u^0\in \dot{\mathbf H}_{\#\sigma}^{0}$. 
If a solution $\mathbf u$ of the initial-variational problem \eqref{NS-eq:mik51a}--\eqref{NS-eq:mik51in}
belongs to $L_2(0,T;\dot{\mathbf H}_{\#\sigma}^{n/2})$, we will call it a {\bf Serrin-type solution}.
\end{definition}
The inclusion $\mathbf u \in L_2(0,T;\dot{\mathbf H}_{\#\sigma}^{n/2})$ can be considered as counterpart of the Prodi-Serrin  condition \eqref{PScond} in $L_2$-based Sobolev spaces. Indeed, by the Sobolev embedding theorem,
Theorem \ref{SET}, we obtain that the following continuous embeddings hold, 
$\dot{\mathbf H}_{\#\sigma}^{n/2}\subset {\mathbf H}_{\#}^{n/2}\subset \mathbf L_{\#q}$ for any $q\in(2,\infty)$.
Hence if $\mathbf u\in L_2(0,T;\dot{\mathbf H}_{\#\sigma}^{n/2})$, then for any $\epsilon>0$ there exists $q_\epsilon\in(2,\infty)$ such that  
$\mathbf u\in L_{\widetilde q}(0,T;\mathbf L_{\#q_\epsilon})$ 
for $\widetilde q=2$
and 
\begin{align}\label{PScond-eps}
\frac{2}{\widetilde q}+\frac{n}{ q_\epsilon}=1+\epsilon.
\end{align}
Condition \eqref{PScond-eps} is weaker than condition \eqref{PScond1} by the arbitrarily small $\epsilon>0$, but in spite of this, we will be able to prove that the inclusion $\mathbf u\in L_2(0,T;\dot{\mathbf H}_{\#\sigma}^{n/2})$ for the weak solution is sufficient to prove for it the Serrin-type results about the energy equality, uniqueness and regularity, which justifies the chosen Serrin-type solution name. 
We will also prove  the existence of such solutions, under appropriate conditions.

\begin{definition}\label{StrSDef}
Let $n\ge 2$, $T>0$, ${\mathbf f}\in L_2(0,T;\dot{\mathbf H}_\#^{-1})$ and  
$\mathbf u^0\in \dot{\mathbf H}_{\#\sigma}^{0}$. 
If a solution $\mathbf u$ of the initial-variational problem \eqref{NS-eq:mik51a}--\eqref{NS-eq:mik51in}
belongs to $L_2(0,T;\dot{\mathbf H}_{\#\sigma}^{2})$, we will call it a {\bf a strong solution}.
\end{definition}
The above definition of the strong solution is a bit weaker than, e.g., in \cite[Definition 6.1]{RRS2016}) or \cite[Chapter 1, Section 6.7]{Lions1969}, because it does not explicitly require the additional  inclusion $\mathbf u\in L_\infty(0,T;\dot{\mathbf H}_{\#\sigma}^{1})$ or $\mathbf u'\in L_\infty(0,T;\dot{\mathbf H}_{\#\sigma}^{0})$.

\begin{remark}
Definitions \ref{CSDef} and \ref{StrSDef} imply that  the strong solutions are also Serrin-type solutions if $n\in\{2,3,4\}$. 

The Serrin-type solutions are also strong solutions if $n\ge 4$. Some sufficient conditions for the Serrin-type solution existence are provided in  Section \ref{S5.3} further on in the paper.

If $n\in\{2,3\}$, then for a Serrin-type solution to be also a strong solution, the Serrin-type solution should have an additional regularity and the sufficient conditions for this are provided by the regularity Theorems and Corollaries in Sections \ref{S5ERC} and \ref{S5ERV},  with the parameter $r\ge 1$ there.
\end{remark} 
\begin{lemma}\label{L5.2}
Let $n\ge 2$, $T>0$, $a_{ij}^{\alpha \beta }\in L_\infty(0,T;L_{\infty\#})$,
 ${\mathbf f}\in L_2(0,T;\dot{\mathbf H}_\#^{-1})$ and  
$\mathbf u^0\in \dot{\mathbf H}_{\#\sigma}^{0}$.
Let $\mathbf{u}$ be a Serrin-type solution.
Then $({\mathbf u}\cdot \nabla ){\mathbf u}\in{L_2(0,T;\mathbf{H}_\#^{-1})}$,  
${\mathbf u}'\in{L_2(0,T;\mathbf{H}_{\#\sigma}^{-1})}$ and 
hence ${\mathbf u}\in W^1(\dot{\mathbf H}_{\#\sigma}^{n/2},\dot{\mathbf H}_{\#\sigma}^{-1})$ and
$
\mathbf u \in\mathcal C^0([0,T];\dot{\mathbf H}_{\#\sigma}^{n/4-1/2})\subset \mathcal C^0([0,T];\dot{\mathbf H}_{\#\sigma}^{0}).
$
Moreover,
\begin{align}
\label{NS-eq:mik51ar}
\left\langle {\mathbf u}'(\cdot,t),{\mathbf w}\right\rangle _{\T }
+\left\langle({\mathbf u}(\cdot,t)\cdot \nabla ){\mathbf u}(\cdot,t),{\mathbf w}\right\rangle _{\T }
+a_{\T}(t;{\mathbf u},{\mathbf w})
=\langle\mathbf{f}(\cdot,t), \mathbf w\rangle_{\T},\ 
\text{for a.e. } t\in(0,T),\
\forall\,\mathbf w\in \dot{\mathbf H}_{\#\sigma}^{1}.
\end{align}
The unique pressure  $p$ associated with the obtained $\mathbf u$ belongs to $L_2(0,T;\dot{H}_{\#}^{0})$. 
\end{lemma}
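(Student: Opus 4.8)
The plan is to bootstrap the regularity of the convective term from the Serrin-type inclusion $\mathbf u\in L_2(0,T;\dot{\mathbf H}_{\#\sigma}^{n/2})$ and then read off every claimed property from the structure of the equation. Throughout I use that $\mathbf u$, being a solution of \eqref{NS-eq:mik51a}--\eqref{NS-eq:mik51in}, is in particular a weak solution in the sense of Definition \ref{D6.1}, so $\mathbf u\in L_\infty(0,T;\dot{\mathbf H}_{\#\sigma}^{0})\cap L_2(0,T;\dot{\mathbf H}_{\#\sigma}^{1})$ and all conclusions of Lemma \ref{L4.2} are available.

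First I would estimate $(\mathbf u\cdot\nabla)\mathbf u$. Since $\div\,\mathbf u=0$, one has $(\mathbf u\cdot\nabla)\mathbf u=\div(\mathbf u\otimes\mathbf u)$, which has zero spatial mean, and $\div$ maps $(L_{2\#})^{n\times n}$ boundedly into $\dot{\mathbf H}_\#^{-1}$; since $\mathbf u(\cdot,t)\in\dot{\mathbf H}_\#^{n/2}$ for a.e.\ $t$, the tensor $\mathbf u(\cdot,t)\otimes\mathbf u(\cdot,t)$ then belongs to $(L_{2\#})^{n\times n}$ for a.e.\ $t$ and
\begin{align*}
\|(\mathbf u\cdot\nabla)\mathbf u(\cdot,t)\|_{\dot{\mathbf H}_\#^{-1}}
\le C\,\|\mathbf u(\cdot,t)\otimes\mathbf u(\cdot,t)\|_{(L_{2\#})^{n\times n}}
= C\,\|\mathbf u(\cdot,t)\|_{\mathbf L_{4\#}}^2 .
\end{align*}
By the Sobolev embedding Theorem \ref{SET} the (endpoint) embedding $\dot{\mathbf H}_\#^{n/4}\subset\mathbf L_{4\#}$ holds for every $n\ge 2$ (since $4<\infty$), and a one-line Cauchy--Schwarz interpolation on Fourier coefficients gives $\|\mathbf u\|_{\dot{\mathbf H}_\#^{n/4}}^2\le\|\mathbf u\|_{\dot{\mathbf H}_\#^{0}}\,\|\mathbf u\|_{\dot{\mathbf H}_\#^{n/2}}$; hence $\|\mathbf u(\cdot,t)\|_{\mathbf L_{4\#}}^2\le C\,\|\mathbf u(\cdot,t)\|_{\dot{\mathbf H}_\#^{0}}\,\|\mathbf u(\cdot,t)\|_{\dot{\mathbf H}_\#^{n/2}}$. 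Squaring, integrating in $t$, and using $\mathbf u\in L_\infty(0,T;\dot{\mathbf H}_{\#\sigma}^{0})$,
\begin{align*}
\int_0^T\|(\mathbf u\cdot\nabla)\mathbf u(\cdot,t)\|_{\dot{\mathbf H}_\#^{-1}}^2\,dt
\le C\,\|\mathbf u\|_{L_\infty(0,T;\dot{\mathbf H}_{\#\sigma}^{0})}^2\;\|\mathbf u\|_{L_2(0,T;\dot{\mathbf H}_{\#\sigma}^{n/2})}^2<\infty,
\end{align*}
so $(\mathbf u\cdot\nabla)\mathbf u\in L_2(0,T;\dot{\mathbf H}_\#^{-1})\subset L_2(0,T;\mathbf H_\#^{-1})$ (equivalently $\mathbf u\in L_4(0,T;\mathbf L_{4\#})$ and $\mathbf u\otimes\mathbf u\in L_2(0,T;(L_{2\#})^{n\times n})$).

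From here everything else is bookkeeping. For $\mathbf u'$: Lemma \ref{L4.2}(i) gives $\mathbf{Du}=\mathbf u'+\mathbb P_\sigma[(\mathbf u\cdot\nabla)\mathbf u]\in L_2(0,T;\dot{\mathbf H}_{\#\sigma}^{-1})$, and since $\mathbb P_\sigma:\dot{\mathbf H}_\#^{-1}\to\dot{\mathbf H}_{\#\sigma}^{-1}$ is bounded (Theorem \ref{Th2.1}(d)), the previous step yields $\mathbb P_\sigma[(\mathbf u\cdot\nabla)\mathbf u]\in L_2(0,T;\dot{\mathbf H}_{\#\sigma}^{-1})$, hence $\mathbf u'=\mathbf{Du}-\mathbb P_\sigma[(\mathbf u\cdot\nabla)\mathbf u]\in L_2(0,T;\dot{\mathbf H}_{\#\sigma}^{-1})$. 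Together with $\mathbf u\in L_2(0,T;\dot{\mathbf H}_{\#\sigma}^{n/2})$ this is exactly $\mathbf u\in W^1(\dot{\mathbf H}_{\#\sigma}^{n/2},\dot{\mathbf H}_{\#\sigma}^{-1})$, and the Lions--Magenes interpolation theorem for such $W^1$-spaces (exactly as invoked for Lemma \ref{L4.2}(ii)) gives $\mathbf u\in\mathcal C^0([0,T];[\dot{\mathbf H}_{\#\sigma}^{n/2},\dot{\mathbf H}_{\#\sigma}^{-1}]_{1/2})=\mathcal C^0([0,T];\dot{\mathbf H}_{\#\sigma}^{n/4-1/2})$, which embeds into $\mathcal C^0([0,T];\dot{\mathbf H}_{\#\sigma}^{0})$ since $n/4-1/2\ge 0$ for $n\ge 2$. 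For the identity \eqref{NS-eq:mik51ar}: as $(\mathbf u\cdot\nabla)\mathbf u(\cdot,t)\in\dot{\mathbf H}_\#^{-1}$ for a.e.\ $t$ and $\mathbf w\in\dot{\mathbf H}_{\#\sigma}^{1}$, the dual-product orthogonality of Theorem \ref{Th2.1}(c) gives $\langle(\mathbf u\cdot\nabla)\mathbf u,\mathbf w\rangle_\T=\langle\mathbb P_\sigma[(\mathbf u\cdot\nabla)\mathbf u],\mathbf w\rangle_\T$, so the terms of \eqref{NS-eq:mik51a} may be split and the identity rewrites as \eqref{NS-eq:mik51ar}. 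For the pressure: by \eqref{Eq-p}, $\nabla p=\mathbb P_g[\mathbf f+\bs{\mathfrak L}\mathbf u-(\mathbf u\cdot\nabla)\mathbf u]$, where $\mathbf f\in L_2(0,T;\dot{\mathbf H}_\#^{-1})$, $\bs{\mathfrak L}\mathbf u\in L_2(0,T;\dot{\mathbf H}_\#^{-1})$ because $\|\bs{\mathfrak L}\mathbf u(\cdot,t)\|_{\dot{\mathbf H}_\#^{-1}}\le\|\mathbb A\|\,\|\mathbf u(\cdot,t)\|_{\dot{\mathbf H}_\#^{1}}$ by \eqref{NS-a-1-v2-S-0v}, and $(\mathbf u\cdot\nabla)\mathbf u\in L_2(0,T;\dot{\mathbf H}_\#^{-1})$ by the first step; since $\mathbb P_g:\dot{\mathbf H}_\#^{-1}\to\dot{\mathbf H}_{\#g}^{-1}$ is bounded, $\nabla p\in L_2(0,T;\dot{\mathbf H}_{\#g}^{-1})$, and the equivalence of $\|\nabla(\cdot)\|_{\mathbf H_\#^{-1}}$ with the $\dot{H}_\#^{0}$-norm from \eqref{eq:mik13} gives $p\in L_2(0,T;\dot{H}_\#^{0})$; as $L_2(0,T;\dot{H}_\#^{0})\subset L_2(0,T;\dot{H}_\#^{-n/2+1})$ for $n\ge 2$, this $p$ is the unique associated pressure of Lemma \ref{L4.2}(iii).

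I expect the only genuine difficulty to be the convective-term bound of the second step: matching the spatial smoothness $\dot{\mathbf H}_\#^{n/2}$ against the interpolation inequality and the endpoint Sobolev embedding $\dot{\mathbf H}_\#^{n/4}\hookrightarrow\mathbf L_{4\#}$ so that the resulting estimate is integrable in $t$ thanks to $\mathbf u\in L_\infty(0,T;\dot{\mathbf H}_{\#\sigma}^{0})\cap L_2(0,T;\dot{\mathbf H}_{\#\sigma}^{n/2})$. All subsequent steps are immediate consequences of Theorem \ref{Th2.1}, Lemma \ref{L4.2} and the norm equivalences recorded in Section \ref{S2}, and the whole argument is uniform in the dimension $n\ge 2$.
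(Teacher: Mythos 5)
Your proposal is correct and follows essentially the same route as the paper's proof: both bound $(\mathbf u\cdot\nabla)\mathbf u=\div(\mathbf u\otimes\mathbf u)$ in $\dot{\mathbf H}_\#^{-1}$ by $\|\mathbf u\|_{\mathbf H_\#^{n/4}}^2\le\|\mathbf u\|_{\mathbf H_\#^0}\|\mathbf u\|_{\mathbf H_\#^{n/2}}$, integrate in $t$ against the $L_\infty(\dot{\mathbf H}^0)\cap L_2(\dot{\mathbf H}^{n/2})$ bounds, feed this into Lemma \ref{L4.2}(i) to get $\mathbf u'\in L_2(\dot{\mathbf H}_{\#\sigma}^{-1})$, invoke Theorem \ref{LM-T3.1} for the $\mathcal C^0$ claim, and treat the pressure via \eqref{Eq-p}. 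The only variation is cosmetic: to get $\|\mathbf u\otimes\mathbf u\|_{(L_{2\#})^{n\times n}}\lesssim\|\mathbf u\|_{\mathbf H_\#^{n/4}}^2$ you use the Sobolev embedding $H_\#^{n/4}\hookrightarrow L_{4\#}$ together with $\|\mathbf u\otimes\mathbf u\|_{L_{2\#}}=\|\mathbf u\|_{\mathbf L_{4\#}}^2$, whereas the paper cites the product estimate Theorem \ref{RS-T1-S4.6.1}(b) with $s_1=s_2=n/4$; these two routes produce the same bound.
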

\begin{proof}
By relation  \eqref{eq:mik14}, the multiplication Theorem \ref{RS-T1-S4.6.1}(b) and the Sobolev interpolation inequality \eqref{SII2},
\begin{align}\label{E4.91-0T0EE}
\left\|({\mathbf u}\cdot \nabla ){\mathbf u}\right\|_{\mathbf{H}_\#^{-1}}
=\left\|\nabla\cdot({\mathbf u} \otimes{\mathbf u})\right\|_{\mathbf{H}_\#^{-1}}
\le \left\|{\mathbf u} \otimes{\mathbf u})\right\|_{({H}_\#^{0})^{n\times n}}^2
\le C_{*n} \|\mathbf{u}\|_{{\mathbf H}_\#^{n/4}}^2
\le C_{*n}\|\mathbf{u}\|_{{\mathbf H}_\#^0} \|\mathbf{u}\|_{{\mathbf H}_\#^{n/2}},
\end{align}
where $C_{*n}=C_*(n/4,n/4, n)$.
Hence 
\begin{align}\label{E4.91-0T0EE1}
\left\|({\mathbf u}\cdot \nabla ){\mathbf u}\right\|_{L_2(0,T;\mathbf{H}_\#^{-1})}
\le C_{*n}\|\mathbf{u}\|_{L_\infty(0,T;{\mathbf H}_\#^0)} \|\mathbf{u}\|_{L_2(0,T;{\mathbf H}_\#^{n/2})},
\end{align}
that is, $({\mathbf u}\cdot \nabla ){\mathbf u}\in{L_2(0,T;\mathbf{H}_\#^{-1})}$ and \eqref{E4.13} implies that 
${\mathbf u}'\in{L_2(0,T;\mathbf{H}_{\#\sigma}^{-1})}$.
Hence ${\mathbf u}\in W^1(\dot{\mathbf H}_{\#\sigma}^{n/2},\dot{\mathbf H}_{\#\sigma}^{-1})$, and Theorem \ref{LM-T3.1} implies that 
$
\mathbf u \in\mathcal C^0([0,T];\dot{\mathbf H}_{\#\sigma}^{n/4-1/2})\subset \mathcal C^0([0,T];\dot{\mathbf H}_{\#\sigma}^{0})
$.
Because of this,  equation  \eqref{NS-eq:mik51a} for  function ${\mathbf u}$ now reduces to \eqref{NS-eq:mik51ar}.

To prove the lemma claim about the associated pressure $p$ we remark that it satisfies \eqref{Eq-p}, 
where $\mathbf F\in L_2(0,T;\dot{\mathbf H}_{\# }^{-1})$
due to the lemma conditions and the inclusion in $({\mathbf u}\cdot \nabla ){\mathbf u}\in{L_2(0,T;\mathbf{H}_\#^{-1})}$.
By Lemma \ref{div-grad-is} for gradient, with $s=0$, equation \eqref{Eq-p} has a unique solution $p$ in 
$L_2(0,T;\dot{H}_{\#}^{0})$.
\end{proof}

Let us prove, in the variable-coefficient anisotropic setting, the energy equality and solution uniqueness for the Serrin-type solutions.
\begin{theorem}[Energy equality for Serrin-type solutions]\label{Th4.4}
Let $n\ge 2$, $T>0$,  $a_{ij}^{\alpha \beta }\in L_\infty(0,T;L_{\infty\#})$, ${\mathbf f}\in L_2(0,T;\dot{\mathbf H}_\#^{-1})$ and  
$\mathbf u^0\in \dot{\mathbf H}_{\#\sigma}^{0}$. 
If $\mathbf u$ is a Serrin-type solution 
of the initial-variational problem \eqref{NS-eq:mik51a}--\eqref{NS-eq:mik51in},
then the following energy equality holds for any $[t_0,t]\subset[0,T]$,
\begin{align}\label{NS-eq:mik51ai=aTEE}
\frac12\| {\mathbf u}(\cdot,t)\| _{\mathbf L_{2\#}}^2
+\int_{t_0}^{t}a_{\T}(\tau;{\mathbf u}(\cdot,\tau),{\mathbf u}(\cdot,\tau)) d\tau
=  \frac12\| {\mathbf u}(\cdot,t_0)\| ^2_{\mathbf L_{2\#}}
+\int_{t_0}^{t}\langle\mathbf{f}(\cdot,\tau), {\mathbf u}(\cdot,\tau)\rangle_{\T}d\tau.
\end{align}
It particularly implies the standard energy equality,
\begin{align}
\label{E4.9TEE}
\frac12\| {\mathbf u}(\cdot,t)\| _{\mathbf L_{2\#}}^2
+\int_0^ta_{\T}(\tau;{\mathbf u}(\cdot,\tau),{\mathbf u}(\cdot,\tau)) d\tau
= \frac12\| {\mathbf u}^0\| ^2_{\mathbf L_{2\#}}
+\int_0^t\langle\mathbf{f}(\cdot,\tau), {\mathbf u}(\cdot,\tau)\rangle_{\T}\,d\tau
\quad \forall\,
t\in[0,T].
\end{align}
\end{theorem}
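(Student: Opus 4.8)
The plan is to test the reduced identity \eqref{NS-eq:mik51ar} pointwise in time with $\mathbf w=\mathbf u(\cdot,t)$ and then integrate over $[t_0,t]$, using the extra regularity of a Serrin-type solution to legitimise each step. First I would record the functional setting: by Lemma \ref{L5.2} a Serrin-type solution has $\mathbf u'\in L_2(0,T;\dot{\mathbf H}_{\#\sigma}^{-1})$, and since $\dot{\mathbf H}_{\#\sigma}^{n/2}\subset\dot{\mathbf H}_{\#\sigma}^{1}$ for $n\ge2$, also $\mathbf u\in W^1(\dot{\mathbf H}_{\#\sigma}^{1},\dot{\mathbf H}_{\#\sigma}^{-1})$ --- the classical Gelfand-triple situation $\dot{\mathbf H}_{\#\sigma}^{1}\hookrightarrow\dot{\mathbf H}_{\#\sigma}^{0}\hookrightarrow\dot{\mathbf H}_{\#\sigma}^{-1}$ with $(\dot{\mathbf H}_{\#\sigma}^{1})^*=\dot{\mathbf H}_{\#\sigma}^{-1}$ and $\|\cdot\|_{\dot{\mathbf H}_{\#\sigma}^{0}}=\|\cdot\|_{\mathbf L_{2\#}}$. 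Theorem \ref{LM-T3.1} then gives that $t\mapsto\|\mathbf u(\cdot,t)\|_{\mathbf L_{2\#}}^2$ is absolutely continuous on $[0,T]$ with $\frac{d}{dt}\|\mathbf u(\cdot,t)\|_{\mathbf L_{2\#}}^2=2\langle\mathbf u'(\cdot,t),\mathbf u(\cdot,t)\rangle_\T$ for a.e.\ $t$. By separability of $\dot{\mathbf H}_{\#\sigma}^{1}$ and continuity in $\mathbf w$ of every term of \eqref{NS-eq:mik51ar}, that identity holds, off a single null set, for all $\mathbf w\in\dot{\mathbf H}_{\#\sigma}^{1}$; hence for a.e.\ $t$ one may substitute the choice $\mathbf w=\mathbf u(\cdot,t)\in\dot{\mathbf H}_{\#\sigma}^{n/2}\subset\dot{\mathbf H}_{\#\sigma}^{1}$ (admissible for a.e.\ $t$), obtaining for a.e.\ $t\in(0,T)$
\[
\tfrac12\tfrac{d}{dt}\|\mathbf u(\cdot,t)\|_{\mathbf L_{2\#}}^2+\big\langle(\mathbf u(\cdot,t)\cdot\nabla)\mathbf u(\cdot,t),\mathbf u(\cdot,t)\big\rangle_\T+a_\T\big(t;\mathbf u(\cdot,t),\mathbf u(\cdot,t)\big)=\langle\mathbf f(\cdot,t),\mathbf u(\cdot,t)\rangle_\T .
\]

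Next I would dispose of the inertia term and integrate in $t$. For a.e.\ $t$ we have $\mathbf u(\cdot,t)\in\dot{\mathbf H}_{\#\sigma}^{n/2}\subset\mathbf L_{\#q}$ for every finite $q$ (Sobolev embedding, Theorem \ref{SET}) and $\nabla\mathbf u(\cdot,t)\in(L_{2\#})^{n\times n}$, so $u_ju_k\,\partial_ju_k\in L_{1\#}$ (two undifferentiated factors in $\mathbf L_{\#4}$, the derivative in $L_{2\#}$); thus the duality pairing is an honest integral and integration by parts on the torus (alternatively, approximation of $\mathbf u(\cdot,t)$ by trigonometric polynomials) gives
\[
\big\langle(\mathbf u\cdot\nabla)\mathbf u,\mathbf u\big\rangle_\T=\big\langle\nabla\!\cdot(\mathbf u\otimes\mathbf u),\mathbf u\big\rangle_\T=-\int_\T u_iu_j\,\partial_iu_j\,d\mathbf x=-\tfrac12\int_\T u_i\,\partial_i|\mathbf u|^2\,d\mathbf x=\tfrac12\int_\T(\div\mathbf u)\,|\mathbf u|^2\,d\mathbf x=0
\]
for a.e.\ $t$. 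It then remains to integrate the resulting pointwise identity over an arbitrary $[t_0,t]\subset[0,T]$: every term lies in $L_1(0,T)$, since $|a_\T(\tau;\mathbf u,\mathbf u)|\le\|\mathbb A\|\,\|\mathbf u(\cdot,\tau)\|_{\dot{\mathbf H}_\#^{1}}^2$ with $\mathbf u\in L_2(0,T;\dot{\mathbf H}_{\#\sigma}^{1})$, and $|\langle\mathbf f,\mathbf u\rangle_\T|\le\|\mathbf f(\cdot,\tau)\|_{\dot{\mathbf H}_\#^{-1}}\|\mathbf u(\cdot,\tau)\|_{\dot{\mathbf H}_\#^{1}}$ is a product of two $L_2(0,T)$ functions, while the time-derivative term integrates to $\tfrac12\|\mathbf u(\cdot,t)\|_{\mathbf L_{2\#}}^2-\tfrac12\|\mathbf u(\cdot,t_0)\|_{\mathbf L_{2\#}}^2$ by the absolute continuity above. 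This gives \eqref{NS-eq:mik51ai=aTEE}; and since $\mathbf u\in\mathcal C^0([0,T];\dot{\mathbf H}_{\#\sigma}^{0})$ (Lemma \ref{L5.2}) is $\mathbf H^0_\#$-weakly continuous with $\mathbf u(\cdot,0)=\mathbf u^0$ (Remark \ref{R4.3}, Lemma \ref{L4.2}), in fact $\mathbf u(\cdot,0)=\mathbf u^0$ in $\mathbf L_{2\#}$, so taking $t_0=0$ yields \eqref{E4.9TEE}.

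The genuinely delicate point is the first step: substituting the time-dependent test function $\mathbf w=\mathbf u(\cdot,t)$ and invoking the chain rule $\langle\mathbf u'(\cdot,t),\mathbf u(\cdot,t)\rangle_\T=\tfrac12\tfrac{d}{dt}\|\mathbf u(\cdot,t)\|_{\mathbf L_{2\#}}^2$. This is precisely where the Serrin-type hypothesis is essential: for a generic weak solution only $\mathbf u'\in L_2(0,T;\dot{\mathbf H}_{\#\sigma}^{-n/2})$ is available, which does not pair with $\dot{\mathbf H}_{\#\sigma}^{1}$, forcing the weaker (and, for an equality, insufficient) identity \eqref{E4.29-3an}; the inclusion $\mathbf u\in L_2(0,T;\dot{\mathbf H}_{\#\sigma}^{n/2})$ restores $\mathbf u'\in L_2(0,T;\dot{\mathbf H}_{\#\sigma}^{-1})$ and makes the standard Lions--Magenes argument applicable. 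By contrast, the vanishing of the trilinear term and the $L_1(0,T)$-integrability of the remaining terms are routine.
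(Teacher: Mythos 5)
Your proposal is correct and follows essentially the same route as the paper: invoke Lemma \ref{L5.2} to get $\mathbf u'\in L_2(0,T;\dot{\mathbf H}_{\#\sigma}^{-1})$ and the reduced identity \eqref{NS-eq:mik51ar}, test it with $\mathbf w=\mathbf u(\cdot,t)$, apply the Lions--Magenes chain rule in the Gelfand triple $\dot{\mathbf H}_{\#\sigma}^{1}\hookrightarrow\dot{\mathbf H}_{\#\sigma}^{0}\hookrightarrow\dot{\mathbf H}_{\#\sigma}^{-1}$ to handle the time-derivative term, kill the convection term via the standard antisymmetry, integrate in $t$, and use $\mathbf u\in\mathcal C^0([0,T];\dot{\mathbf H}_{\#\sigma}^{0})$ to upgrade from a.e.\ $t_0$ to all $t_0$. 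The only small correction is a citation detail: the chain rule $\partial_t\|\mathbf u\|_{\mathbf L_{2\#}}^2=2\langle\mathbf u',\mathbf u\rangle_\T$ is what the paper extracts from Lemma \ref{L4.9} with $s=1$, $s'=-1$, not from Theorem \ref{LM-T3.1} (which gives continuity of the trace but not the differentiation formula).
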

\begin{proof}
By Lemma \ref{L5.2}, 
the function ${\mathbf u}$ satisfies equation  \eqref{NS-eq:mik51ar}, 
where for a.e. $t\in(0,T)$ we can employ $\mathbf u$ as $\mathbf w$ to obtain
\begin{align}
\label{NS-eq:mik51arAEE}
&\left\langle {\mathbf u}'(\cdot,t),{\mathbf u}(\cdot,t)\right\rangle _{\T }
+\left\langle({\mathbf u}(\cdot,t)\cdot \nabla ){\mathbf u}(\cdot,t),{\mathbf u}(\cdot,t)\right\rangle _{\T }
+a_{\T}(t;{\mathbf u},{\mathbf u}(\cdot,t))
=\langle\mathbf{f}(\cdot,t), {\mathbf u}(\cdot,t)\rangle_{\T},\ 
\text{for a.e. } t\in(0,T).
\end{align}
Taking into account Lemma \ref{L4.9} with $s=1$, $s'=-1$ for the first dual product and relation \eqref{eq:mik55}  for the second dual product, we get 
\begin{align}
\label{NS-eq:mik51arBEE}
\frac12\partial_t\left\| {\mathbf u}(\cdot,t)\right\|_{\mathbf L_{2\#}}^2
+a_{\T}(t;{\mathbf u},{\mathbf u}(\cdot,t))
=\langle\mathbf{f}(\cdot,t), {\mathbf u}(\cdot,t)\rangle_{\T},\ 
\text{for a.e. } t\in(0,T).
\end{align}
By the inclusions obtained in Lemma \ref{L5.2}, each dual product  and the bilinear form $a_\T$ in \eqref{NS-eq:mik51arAEE} and hence in \eqref{NS-eq:mik51arBEE} are integrable in $t$. 
After integrating \eqref{NS-eq:mik51arBEE}, we obtain \eqref{NS-eq:mik51ai=aTEE}
for a.e. $t_0$.

By Lemma \ref{L5.2}
$
\mathbf u \in \mathcal C^0([0,T];\dot{\mathbf H}_{\#\sigma}^{0}),
$
while the integrals in \eqref{NS-eq:mik51ai=aTEE} are continuous in  $t_0$ as well. 
Then we conclude that the energy equality \eqref{NS-eq:mik51ai=aTEE} holds for any $t_0\in[0,T)$, implying also \eqref{E4.9TEE}.
\end{proof}
\begin{theorem}[Uniqueness of Serrin-type solutions]\label{Th4.5}
Let $n\ge 2$, $T>0$,  $a_{ij}^{\alpha \beta }\in L_\infty(0,T;L_{\infty\#})$, ${\mathbf f}\in L_2(0,T;\dot{\mathbf H}_\#^{-1})$ and  
$\mathbf u^0\in \dot{\mathbf H}_{\#\sigma}^{0}$. 
Let $\mathbf u$ be a Serrin-type solution 
of the initial-variational problem \eqref{NS-eq:mik51a}--\eqref{NS-eq:mik51in} on the interval $[0,T]$
and $\mathbf v$ be any solution 
of the initial-variational problem \eqref{NS-eq:mik51a}--\eqref{NS-eq:mik51in} satisfying the energy inequality \eqref{E4.9} on the interval $[0,T]$.
Then $\mathbf u = \mathbf v$ on $[0,T]$.
\end{theorem}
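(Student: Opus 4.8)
The plan is the classical weak-strong uniqueness argument, adapted to the anisotropic variable-coefficient periodic setting, but with the roles played so that the energy \emph{equality} available for $\mathbf u$ (Theorem \ref{Th4.4}) compensates for the fact that only an energy \emph{inequality} is assumed for $\mathbf v$. Set $\mathbf w:=\mathbf v-\mathbf u$. Both $\mathbf u$ and $\mathbf v$ satisfy the variational identity \eqref{NS-eq:mik51a} (for $\mathbf u$ in the stronger form \eqref{NS-eq:mik51ar} by Lemma \ref{L5.2}), and both share the initial datum $\mathbf u^0$, so $\mathbf w(\cdot,0)=\mathbf 0$. First I would record the three basic ingredients: (i) the energy equality for $\mathbf u$, \eqref{NS-eq:mik51ai=aTEE} with $t_0=0$; (ii) the energy inequality for $\mathbf v$, \eqref{E4.9}; and (iii) a ``cross'' identity obtained by testing the equation for $\mathbf u$ with $\mathbf v$ and the equation for $\mathbf v$ with $\mathbf u$ and adding — this is where one needs $\mathbf u$ to be a Serrin-type solution, since $\mathbf v\in L_2(0,T;\dot{\mathbf H}^1_{\#\sigma})$ is only an admissible test function for the $\mathbf u$-equation \eqref{NS-eq:mik51ar} after one checks that all terms, in particular $\langle\mathbf u',\mathbf v\rangle_\T$ and $\langle(\mathbf u\cdot\nabla)\mathbf u,\mathbf v\rangle_\T$, are integrable in $t$; the inclusions $\mathbf u'\in L_2(0,T;\mathbf H^{-1}_{\#\sigma})$ and $(\mathbf u\cdot\nabla)\mathbf u\in L_2(0,T;\mathbf H^{-1}_\#)$ from Lemma \ref{L5.2} supply exactly this. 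Testing the $\mathbf v$-equation with $\mathbf u$ requires knowing $\frac{d}{dt}\langle\mathbf v,\mathbf u\rangle_\T=\langle\mathbf v',\mathbf u\rangle_\T+\langle\mathbf u',\mathbf v\rangle_\T$, which I would justify via a density/mollification argument in time (or by quoting the relevant abstract integration-by-parts result of Lions–Magenes type, as used for Lemma \ref{L4.9}), using that $\mathbf u'\in L_2(0,T;\mathbf H^{-1}_{\#\sigma})$ and $\mathbf u\in L_2(0,T;\dot{\mathbf H}^{n/2}_{\#\sigma})\subset L_2(0,T;\dot{\mathbf H}^1_{\#\sigma})$.

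Combining (i), (ii), (iii) in the standard way yields, for a.e. $t\in[0,T]$,
\begin{align*}
\tfrac12\|\mathbf w(\cdot,t)\|_{\mathbf L_{2\#}}^2
+\int_0^t a_\T(\tau;\mathbf w,\mathbf w)\,d\tau
\le -\int_0^t \big\langle(\mathbf w(\cdot,\tau)\cdot\nabla)\mathbf u(\cdot,\tau),\mathbf w(\cdot,\tau)\big\rangle_\T\,d\tau,
\end{align*}
where the right-hand side comes from the trilinear-form bookkeeping: after using the antisymmetry relation \eqref{eq:mik55} to kill $\langle(\mathbf v\cdot\nabla)\mathbf v,\mathbf v\rangle_\T$ and $\langle(\mathbf u\cdot\nabla)\mathbf u,\mathbf u\rangle_\T$ and the divergence-free identity to reorganise the remaining convective terms, the only surviving contribution is the one with $\mathbf u$ (the strong solution) in the ``gradient'' slot. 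The left-hand side is bounded below using the coercivity \eqref{NS-a-1-v2-S-0}, $a_\T(\tau;\mathbf w,\mathbf w)\ge\frac14 C_{\mathbb A}^{-1}\|\mathbf w\|_{\dot{\mathbf H}^1_{\#\sigma}}^2$.

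Next I would estimate the convective remainder. Writing it as $\langle(\mathbf w\cdot\nabla)\mathbf w,\mathbf u\rangle_\T$ after integration by parts (legitimate since $\mathbf u\in\dot{\mathbf H}^{n/2}_{\#\sigma}$ is smooth enough), I would bound
$|\langle(\mathbf w\cdot\nabla)\mathbf w,\mathbf u\rangle_\T|\le \|\mathbf w\otimes\mathbf w\|_{(H^0_\#)^{n\times n}}\|\nabla\mathbf u\|_{(H^0_\#)^{n\times n}}\lesssim \|\mathbf w\|_{\mathbf L_{4\#}}^2\|\mathbf u\|_{\dot{\mathbf H}^{n/2}_{\#\sigma}}$,
then interpolate $\|\mathbf w\|_{\mathbf L_{4\#}}^2\lesssim \|\mathbf w\|_{\mathbf L_{2\#}}^{2-n/2}\|\mathbf w\|_{\dot{\mathbf H}^1_{\#\sigma}}^{n/2}$ for $2\le n\le 4$ (the Gagliardo–Nirenberg/Ladyzhenskaya inequality, available via the multiplication Theorem \ref{RS-T1-S4.6.1} and the interpolation inequality \eqref{SII2}), and absorb the $\|\mathbf w\|_{\dot{\mathbf H}^1_{\#\sigma}}^{n/2}$ factor into the coercive term by Young's inequality with exponents $(\tfrac{4}{n},\tfrac{4}{4-n})$. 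This produces
\begin{align*}
\tfrac12\|\mathbf w(\cdot,t)\|_{\mathbf L_{2\#}}^2
\le \int_0^t \phi(\tau)\,\|\mathbf w(\cdot,\tau)\|_{\mathbf L_{2\#}}^2\,d\tau,
\qquad
\phi(\tau):=C\,\|\mathbf u(\cdot,\tau)\|_{\dot{\mathbf H}^{n/2}_{\#\sigma}}^{\,4/(4-n)},
\end{align*}
with $\phi\in L_1(0,T)$ precisely because $\mathbf u\in L_2(0,T;\dot{\mathbf H}^{n/2}_{\#\sigma})$ and $\tfrac{2\cdot 4}{4-n}\ge 4\ge 2$ — wait, one checks the exponent: $\mathbf u\in L_2$ in time gives $\|\mathbf u\|_{\dot{\mathbf H}^{n/2}}^{4/(4-n)}\in L_{2(4-n)/4}$, which lies in $L_1(0,T)$ iff $2(4-n)/4\ge 1$, i.e. $n\le 2$. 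For $n=3,4$ the naive splitting of powers is too lossy and one must instead keep the term as $\|\mathbf w\|_{\mathbf L_{2\#}}^{2-n/2}\|\mathbf w\|_{\dot{\mathbf H}^1}^{n/2}\|\mathbf u\|_{\dot{\mathbf H}^{n/2}}$ and apply Young so that $\|\mathbf u\|_{\dot{\mathbf H}^{n/2}}$ enters with power $2$; then $\phi=C\|\mathbf u\|_{\dot{\mathbf H}^{n/2}}^{2}\in L_1(0,T)$ for \emph{every} $n\ge 2$, which is the correct normalisation and the reason Definition \ref{CSDef} uses exponent $\widetilde q=2$ in time. Granting $\phi\in L_1(0,T)$, Grönwall's inequality forces $\|\mathbf w(\cdot,t)\|_{\mathbf L_{2\#}}=0$ for a.e. $t$, hence $\mathbf u=\mathbf v$; continuity in time of both (Lemma \ref{L5.2} and Lemma \ref{L4.2}(ii)) upgrades this to equality on all of $[0,T]$.

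The main obstacle, and the step deserving the most care, is the cross identity (iii): justifying $\frac{d}{dt}\langle\mathbf u(\cdot,t),\mathbf v(\cdot,t)\rangle_\T=\langle\mathbf u',\mathbf v\rangle_\T+\langle\mathbf v',\mathbf u\rangle_\T$ in the function-space setting at hand, where $\mathbf u'$ and $\mathbf v'$ a priori live in different dual spaces ($\mathbf H^{-1}_{\#\sigma}$ for $\mathbf u$ by Lemma \ref{L5.2}, but only $\dot{\mathbf H}^{-n/2}_{\#\sigma}$ for the generic weak solution $\mathbf v$ by Lemma \ref{L4.2}). The resolution is that $\mathbf v$ need only be paired against $\mathbf u\in L_2(0,T;\dot{\mathbf H}^{n/2}_{\#\sigma})$, so $\langle\mathbf v',\mathbf u\rangle_\T$ makes sense via the $\dot{\mathbf H}^{-n/2}_{\#\sigma}$–$\dot{\mathbf H}^{n/2}_{\#\sigma}$ duality and is integrable in $t$ (Lemma \ref{L4.2}(i) gives $\mathbf v'\in L_2(0,T;\dot{\mathbf H}^{-n/2}_{\#\sigma})$), and symmetrically $\langle\mathbf u',\mathbf v\rangle_\T$ via $\mathbf H^{-1}_{\#\sigma}$–$\dot{\mathbf H}^1_{\#\sigma}$; the identity then follows by approximating $\mathbf u$ and $\mathbf v$ by smooth-in-time truncations of their Fourier series and passing to the limit, or equivalently by invoking the abstract result behind Lemma \ref{L4.9} with the asymmetric pairing $s=n/2$, $s'=-n/2$ on one side and $s=1$, $s'=-1$ on the other. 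Everything else — the antisymmetry of the trilinear term, the coercivity bound, the interpolation inequality, and Grönwall — is routine given the machinery already assembled in Sections \ref{S2}, \ref{S3} and \ref{S3-Serrin-intro}.
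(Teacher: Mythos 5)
Your overall scheme is the one the paper uses — weak–strong uniqueness via the cross-testing identity, energy equality for $\mathbf u$ against energy inequality for $\mathbf v$, and a Gr\"onwall argument — and your discussion of the time integration-by-parts (via Lemma \ref{L4.9}(ii) with the asymmetric indices) is on the right track. The gap is in the convective estimate, and it is not a minor bookkeeping slip: your choice of spaces makes the estimate fail for $n>2$.

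You bound $|\langle(\mathbf w\cdot\nabla)\mathbf w,\mathbf u\rangle_\T|\le\|\mathbf w\otimes\mathbf w\|_{(H^0_\#)^{n\times n}}\|\nabla\mathbf u\|_{(H^0_\#)^{n\times n}}\lesssim\|\mathbf w\|^2_{\mathbf L_{4\#}}\|\mathbf u\|_{\dot{\mathbf H}^{n/2}_{\#\sigma}}$ and then apply Ladyzhenskaya $\|\mathbf w\|^2_{\mathbf L_{4\#}}\lesssim\|\mathbf w\|^{2-n/2}_{\mathbf L_{2\#}}\|\mathbf w\|^{n/2}_{\dot{\mathbf H}^1_{\#\sigma}}$. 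This requires $n\le 4$ to begin with, and even there the subsequent Young step cannot be made to produce $\|\mathbf u\|^2_{\dot{\mathbf H}^{n/2}}$ together with a $\|\mathbf w\|^2_{\dot{\mathbf H}^1}$ term that can be absorbed by the coercivity. If you insist on keeping $\|\mathbf w\|^{2-n/2}_{\mathbf L_{2\#}}\|\mathbf w\|^{n/2}_{\dot{\mathbf H}^1}\|\mathbf u\|_{\dot{\mathbf H}^{n/2}}$ and fixing the exponent of $\|\mathbf u\|$ at $2$, then the conjugate exponent forces $\|\mathbf w\|^{n}_{\dot{\mathbf H}^1}$, which is absorbable only if $n=2$; a three-factor Young split $(p_1,p_2,p_3)$ with $p_2=4/n$ (to tame $\|\mathbf w\|^{n/2}_{\dot{\mathbf H}^1}$) and $p_3=2$ (to get $\|\mathbf u\|^2$) yields $1/p_1=(2-n)/4<0$ for $n>2$, an impossibility. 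So your asserted fix — ``apply Young so that $\|\mathbf u\|_{\dot{\mathbf H}^{n/2}}$ enters with power $2$'' — is not available in the $L_4$/Ladyzhenskaya framework; it would need to be proved, and it is false as stated.

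The paper avoids this by \emph{not} descending to $L_2$–$L_4$ spaces in the trilinear form. Instead it estimates through the $\dot{\mathbf H}^{-n/2}_{\#\sigma}$–$\dot{\mathbf H}^{n/2}_{\#\sigma}$ duality \emph{first}:
\begin{align*}
|\langle(\widetilde{\mathbf w}\cdot\nabla)\widetilde{\mathbf w},\mathbf u\rangle_\T|
\le\|\nabla\cdot(\widetilde{\mathbf w}\otimes\widetilde{\mathbf w})\|_{\dot{\mathbf H}^{-n/2}_{\#\sigma}}\|\mathbf u\|_{\dot{\mathbf H}^{n/2}_{\#\sigma}}
\le\|\widetilde{\mathbf w}\otimes\widetilde{\mathbf w}\|_{(H^{1-n/2}_\#)^{n\times n}}\|\mathbf u\|_{\dot{\mathbf H}^{n/2}_{\#\sigma}},
\end{align*}
and then applies the multiplication Theorem \ref{RS-T1-S4.6.1}(b) with $s_1=s_2=1/2$ (which needs only $1/2<n/2$, i.e.\ $n\ge 2$) to get $\|\widetilde{\mathbf w}\otimes\widetilde{\mathbf w}\|_{(H^{1-n/2}_\#)^{n\times n}}\le C_*\|\widetilde{\mathbf w}\|^2_{\dot{\mathbf H}^{1/2}_{\#\sigma}}$, followed by the interpolation $\|\widetilde{\mathbf w}\|^2_{\dot{\mathbf H}^{1/2}}\le\|\widetilde{\mathbf w}\|_{\dot{\mathbf H}^0}\|\widetilde{\mathbf w}\|_{\dot{\mathbf H}^1}$. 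Now $\|\mathbf u\|_{\dot{\mathbf H}^{n/2}}$ appears with power $1$, a balanced $2$–$2$ Young inequality gives $\frac14 C_{\mathbb A}^{-1}\|\widetilde{\mathbf w}\|^2_{\dot{\mathbf H}^1}+C_{\mathbb A}C_*^2\|\widetilde{\mathbf w}\|^2_{\dot{\mathbf H}^0}\|\mathbf u\|^2_{\dot{\mathbf H}^{n/2}}$, and $\|\mathbf u\|^2_{\dot{\mathbf H}^{n/2}}\in L_1(0,T)$ is exactly the Serrin-type assumption. This is the step you need to redo; once it is in place, the rest of your outline matches the paper's proof.
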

\begin{proof}
We will here generalise the proof of  Theorem 6.10 in \cite{RRS2016}.

By Lemma \ref{L5.2}, the function ${\mathbf u}$ satisfies equation  \eqref{NS-eq:mik51ar}, 
where for a.e. $t\in(0,T)$ we can employ $\mathbf v$ as $\mathbf w$ to obtain
\begin{align}
\label{NS-eq:mik51arAEF}
\left\langle {\mathbf u}'(\cdot,t),{\mathbf v}(\cdot,t)\right\rangle_{\T }
+\left\langle({\mathbf u}(\cdot,t)\cdot \nabla ){\mathbf u}(\cdot,t),{\mathbf v}(\cdot,t)\right\rangle _{\T }
+a_{\T}(t;{\mathbf u}(\cdot,t),{\mathbf v}(\cdot,t))
=\langle\mathbf{f}(\cdot,t), {\mathbf v}(\cdot,t)\rangle_{\T}.
\end{align}
On the other hand, equation \eqref{NS-eq:mik51a} for $\mathbf v$ with $\mathbf u$ employed for $\mathbf w$ can be written for a.e. $t$ as
\begin{align}
\label{NS-eq:mik51arAEG}
\left\langle {\mathbf v}'(\cdot,t),{\mathbf u}(\cdot,t)\right\rangle_{\T }
+\left\langle({\mathbf v}(\cdot,t)\cdot \nabla ){\mathbf v}(\cdot,t),{\mathbf u}(\cdot,t)\right\rangle _{\T }
+a_{\T}(t;{\mathbf v}(\cdot,t),{\mathbf u}(\cdot,t))
=\langle\mathbf{f}(\cdot,t), {\mathbf u}(\cdot,t)\rangle_{\T},
\end{align}
where we took into account that 
${\mathbf u}(t)\in\dot{\mathbf H}_{\#\sigma}^{n/2}\subset\dot{\mathbf H}_{\#\sigma}^{1}$ for a.e. $t$.
Adding equations \eqref{NS-eq:mik51arAEF} and  \eqref{NS-eq:mik51arAEG} and integrating in time, we obtain
\begin{multline}
\label{NS-eq:mik51arAEH}
\int_0^t\left[\langle {\mathbf u}'(\cdot,\tau),{\mathbf v}(\cdot,\tau)\rangle_{\T } 
+ \langle {\mathbf v}'(\cdot,\tau),{\mathbf u}(\cdot,\tau)\rangle_{\T}\right]d\tau
+2\int_0^ta_{\T}(\tau;{\mathbf u}(\cdot,\tau),{\mathbf v}(\cdot,\tau))dt
\\
+\int_0^t\left[\langle({\mathbf u}(\cdot,\tau)\cdot \nabla ){\mathbf u}(\cdot,\tau),{\mathbf v}(\cdot,\tau)\rangle _{\T }
+\langle({\mathbf v}(\cdot,\tau)\cdot \nabla ){\mathbf v}(\cdot,\tau),{\mathbf u}(\cdot,\tau)\rangle _{\T }\right]d\tau
\\
=\int_0^t\langle\mathbf{f}(\cdot,\tau), {\mathbf u}(\cdot,\tau)\rangle_{\T}d\tau
+\int_0^t\langle\mathbf{f}(\cdot,\tau), {\mathbf v}(\cdot,\tau)\rangle_{\T}d\tau
\end{multline}

By Lemma \ref{L4.2}(i),  $\mathbf v\in W^1(0,T;\dot{\mathbf H}_{\#\sigma}^{1}, \dot{\mathbf H}_{\#\sigma}^{-n/2})$ and hence by Lemma \ref{LM-T3.1}  the traces  ${\mathbf v}(\cdot,0), {\mathbf v}(\cdot,t)\in \dot{\mathbf H}_{\#\sigma}^{1/2-n/4}$ are well defined. 
On the other hand, by Lemma \ref{L5.2} $\mathbf u\in W^1(0,T;\dot{\mathbf H}_{\#\sigma}^{n/2}, \dot{\mathbf H}_{\#\sigma}^{-1})$ and hence by Lemma \ref{LM-T3.1}  the traces  ${\mathbf u}(\cdot,0), {\mathbf u}(\cdot,t)\in \dot{\mathbf H}_{\#\sigma}^{n/4-1/2}$ are well defined.
Then due to Lemma \ref{L4.9}(ii) with $s=n/2$ and $s'=-1$,
\begin{align}
\label{E5.1}
\int_0^t\left[\langle {\mathbf u}'(\cdot,\tau),{\mathbf v}(\cdot,\tau)\rangle_{\T } 
+ \langle {\mathbf v}'(\cdot,\tau),{\mathbf u}(\cdot,\tau)\rangle_{\T}\right]d\tau
=\left\langle {\mathbf u}(\cdot,t),{\mathbf v}(\cdot,t)\right\rangle_{\T }
-\left\langle {\mathbf u}(\cdot,0),{\mathbf v}(\cdot,0)\right\rangle _{\T }.
\end{align}

Let us denote $\widetilde{\mathbf w}:=\mathbf u - \mathbf v$. 
Since 
${\mathbf u}(\cdot,0)={\mathbf v}(\cdot,0)=\mathbf u^0\in \dot{\mathbf H}_{\#\sigma}^{0}$
and $\mathbf u,\mathbf v, \widetilde{\mathbf w} \in L_\infty(0,T;\dot{\mathbf H}_{\#\sigma}^{0})$, we obtain
\begin{align}
\left\langle {\mathbf u}(\cdot,0),{\mathbf v}(\cdot,0)\right\rangle _{\T }
&=\left\|{\mathbf u}^0\right\|^2_{\dot{\mathbf H}_{\#\sigma}^{0}}
=\left\|{\mathbf v}^0\right\|^2_{\dot{\mathbf H}_{\#\sigma}^{0}},
\\
\left\langle {\mathbf u}(\cdot,t),{\mathbf v}(\cdot,t)\right\rangle_{\T }
&=\frac12\left\langle {\mathbf u}(\cdot,t),{\mathbf u}(\cdot,t)\right\rangle_{\T }
+\frac12\left\langle {\mathbf v}(\cdot,t),{\mathbf v}(\cdot,t)\right\rangle_{\T }
-\frac12\left\langle {\widetilde{\mathbf w}}(\cdot,t),{\widetilde{\mathbf w}}(\cdot,t)\right\rangle_{\T }
\nonumber\\
&=\frac12\left\|{\mathbf u}(\cdot,t)\right\|^2_{\dot{\mathbf H}_{\#\sigma}^{0}}
+\frac12\left\|{\mathbf v}(\cdot,t)\right\|^2_{\dot{\mathbf H}_{\#\sigma}^{0}}
-\frac12\left\|{\widetilde{\mathbf w}}(\cdot,t)\right\|^2_{\dot{\mathbf H}_{\#\sigma}^{0}}
\text{ for a.e. } t\in(0,T).
\end{align}
Due to \eqref{NS-a-vsigma},
$$
2a_{\T}(\tau;{\mathbf u}(\cdot,\tau),{\mathbf v}(\cdot,\tau))
=a_{\T}(\tau;{\mathbf u}(\cdot,\tau),{\mathbf u}(\cdot,\tau))
+a_{\T}(\tau;{\mathbf v}(\cdot,\tau),{\mathbf v}(\cdot,\tau))
-a_{\T}(\tau;{\widetilde{\mathbf w}}(\cdot,\tau),{\widetilde{\mathbf w}}(\cdot,\tau)).
$$
By relations \eqref{E-B.20} and \eqref{eq:mik55}, we obtain
\begin{align}\label{E5.4}
\langle({\mathbf u}(\cdot,\tau)\cdot \nabla )&{\mathbf u}(\cdot,\tau),{\mathbf v}(\cdot,\tau)\rangle _{\T }
+\langle({\mathbf v}(\cdot,\tau)\cdot \nabla ){\mathbf v}(\cdot,\tau),{\mathbf u}(\cdot,\tau)\rangle _{\T }
\nonumber\\
&=\langle({\mathbf v}(\cdot,\tau)\cdot \nabla ){\mathbf v}(\cdot,\tau)
-({\mathbf u}(\cdot,\tau)\cdot \nabla) {\mathbf v}(\cdot,\tau),{\mathbf u}(\cdot,\tau)\rangle _{\T }
\nonumber\\
&=\langle({\widetilde{\mathbf w}}(\cdot,\tau)\cdot \nabla ){\widetilde{\mathbf w}}(\cdot,\tau)
-({\widetilde{\mathbf w}}(\cdot,\tau)\cdot \nabla) {\mathbf u}(\cdot,\tau),{\mathbf u}(\cdot,\tau)\rangle _{\T }
=\langle({\widetilde{\mathbf w}}(\cdot,\tau)\cdot \nabla ){\widetilde{\mathbf w}}(\cdot,\tau),{\mathbf u}(\cdot,\tau)\rangle _{\T }.
\end{align}
Substituting \eqref{E5.1}--\eqref{E5.4} into \eqref{NS-eq:mik51arAEH}, we get
\begin{multline}\label{E5.15}
\frac12\left\|\widetilde{\mathbf w}(\cdot,t)\right\|^2_{\dot{\mathbf H}_{\#\sigma}^{0}}
+\int_0^ta_{\T}(\tau;\widetilde{\mathbf w}(\cdot,\tau),\widetilde{\mathbf w}(\cdot,\tau))dt
-\int_0^t\langle({\widetilde{\mathbf w}}(\cdot,\tau)\cdot \nabla ){\widetilde{\mathbf w}}(\cdot,\tau),{\mathbf u}(\cdot,\tau)\rangle _{\T }d\tau
\\
=A(\mathbf u;t) +A(\mathbf v;t)
\text{ for a.e. } t\in(0,T).
\end{multline}
Here 
\begin{align}
A(\mathbf u):=\frac12\left\|{\mathbf u}(\cdot,t)\right\|^2_{\dot{\mathbf H}_{\#\sigma}^{0}}
+\int_0^ta_{\T}(\tau;{\mathbf u}(\cdot,\tau),{\mathbf u}(\cdot,\tau))dt
-\frac12\left\|{\mathbf u}^0\right\|^2_{\dot{\mathbf H}_{\#\sigma}^{0}}
-\int_0^t\langle\mathbf{f}(\cdot,\tau), {\mathbf u}(\cdot,\tau)\rangle_{\T}d\tau=0
\end{align}
by the energy equality condition \eqref{E4.9TEE} for $\mathbf u$, while 
\begin{align}\label{E5.17}
A(\mathbf v):=\frac12\left\|{\mathbf v}(\cdot,t)\right\|^2_{\dot{\mathbf H}_{\#\sigma}^{0}}
+\int_0^ta_{\T}(\tau;{\mathbf v}(\cdot,\tau),{\mathbf v}(\cdot,\tau))dt
-\frac12\left\|{\mathbf v}^0\right\|^2_{\dot{\mathbf H}_{\#\sigma}^{0}}
-\int_0^t\langle\mathbf{f}(\cdot,\tau), {\mathbf v}(\cdot,\tau)\rangle_{\T}d\tau\le 0
\end{align}
by the energy inequality condition \eqref{E4.9} for $\mathbf v$.

Taking into account inequality \eqref{NS-a-1-v2-S-0} for the quadratic form $a$, \eqref{E5.15}--\eqref{E5.17} imply
\begin{align}\label{E5.18}
\frac12\left\|\widetilde{\mathbf w}(\cdot,t)\right\|^2_{\dot{\mathbf H}_{\#\sigma}^{0}}
+\frac{1}{4}C_{\mathbb A}^{-1}\int_0^t\|\widetilde{\mathbf w}(\cdot,\tau)\|^2_{\dot{\mathbf H}_{\#\sigma}^{1}}dt
\le\int_0^t\left|\langle({\widetilde{\mathbf w}}(\cdot,\tau)\cdot \nabla ){\widetilde{\mathbf w}}(\cdot,\tau),
{\mathbf u}(\cdot,\tau)\rangle _{\T }\right|d\tau.
\end{align}
By the multiplication Theorem \ref{RS-T1-S4.6.1}(b),  the Sobolev interpolation inequality \eqref{SII2} and Young's inequality, we obtain
\begin{multline}\label{E5.19}
\left|\langle({\widetilde{\mathbf w}}(\cdot,\tau)\cdot \nabla ){\widetilde{\mathbf w}}(\cdot,\tau),
{\mathbf u}(\cdot,\tau)\rangle _{\T }\right|
\le\|({\widetilde{\mathbf w}}(\cdot,\tau)\cdot \nabla ){\widetilde{\mathbf w}}(\cdot,\tau)\|_{\dot{\mathbf H}_{\#\sigma}^{-n/2}}
\|{\mathbf u}(\cdot,\tau)\|_{\dot{\mathbf H}_{\#\sigma}^{n/2}}
\\
\le\|\nabla\cdot[{\widetilde{\mathbf w}}(\cdot,\tau)\otimes {\widetilde{\mathbf w}}(\cdot,\tau)]\|_{\dot{\mathbf H}_{\#\sigma}^{-n/2}}
\|{\mathbf u}(\cdot,\tau)\|_{\dot{\mathbf H}_{\#\sigma}^{n/2}}
\le\|{\widetilde{\mathbf w}}(\cdot,\tau)\otimes {\widetilde{\mathbf w}}(\cdot,\tau)]\|_{({ H}_{\#}^{-n/2+1})^{n\times n}}
\|{\mathbf u}(\cdot,\tau)\|_{\dot{\mathbf H}_{\#\sigma}^{n/2}}
\\
\shoveright{
\le C_*\|{\widetilde{\mathbf w}}(\cdot,\tau)\|^2_{\dot{\mathbf H}_{\#\sigma}^{1/2}}
\|{\mathbf u}(\cdot,\tau)\|_{\dot{\mathbf H}_{\#\sigma}^{n/2}}
\le C_*\|{\widetilde{\mathbf w}}(\cdot,\tau)\|_{\dot{\mathbf H}_{\#\sigma}^{0}}
\|{\widetilde{\mathbf w}}(\cdot,\tau)\|_{\dot{\mathbf H}_{\#\sigma}^{1}}
\|{\mathbf u}(\cdot,\tau)\|_{\dot{\mathbf H}_{\#\sigma}^{n/2}}
}
\\
\le \frac{1}{4}C_{\mathbb A}^{-1}\|{\widetilde{\mathbf w}}(\cdot,\tau)\|^2_{\dot{\mathbf H}_{\#\sigma}^{1}}
+C_{\mathbb A}C_*^2\|{\widetilde{\mathbf w}}(\cdot,\tau)\|^2_{\dot{\mathbf H}_{\#\sigma}^{0}}
\|{\mathbf u}(\cdot,\tau)\|^2_{\dot{\mathbf H}_{\#\sigma}^{n/2}},
\end{multline}
where $C_*:=C_*(1/2,1/2,n)$ if from Theorem \ref{RS-T1-S4.6.1}(b). 
Implementing \eqref{E5.19} in \eqref{E5.18}, we obtain
\begin{align}\label{E4.20}
\frac12\left\|\widetilde{\mathbf w}(\cdot,t)\right\|^2_{\dot{\mathbf H}_{\#\sigma}^{0}}
\le C_{\mathbb A}C_*^2\int_0^t\|{\widetilde{\mathbf w}}(\cdot,\tau)\|^2_{\dot{\mathbf H}_{\#\sigma}^{0}}
\|{\mathbf u}(\cdot,\tau)\|^2_{\dot{\mathbf H}_{\#\sigma}^{n/2}}d\tau.
\end{align}
Since
$$
\int_0^T\|{\widetilde{\mathbf w}}(\cdot,\tau)\|^2_{\dot{\mathbf H}_{\#\sigma}^{0}}
\|{\mathbf u}(\cdot,\tau)\|^2_{\dot{\mathbf H}_{\#\sigma}^{n/2}}d\tau
\le \|{\widetilde{\mathbf w}}\|^2_{L_\infty(0,T;\dot{\mathbf H}_{\#\sigma}^{0})}
\|\mathbf u\|^2_{L_2(0,T;\dot{\mathbf H}_{\#\sigma}^{n/2})}<\infty,
$$
we can employ to \eqref{E4.20} the integral Gronwal's inequality from Lemma \ref{IGTL} to conclude that 
$\|{\widetilde{\mathbf w}}(\cdot,\tau)\|_{\dot{\mathbf H}_{\#\sigma}^{0}}=0$.
\end{proof}

\subsection{Serrin-type property of the two-dimensional weak solution}
By Definitions \ref{D6.1} and \ref{CSDef}, any week solution of the evolution space-periodic anisotropic Navier-Stokes initial value problem \eqref{NS-problem-div0}--\eqref{NS-problem-div0-IC} is a Serrin-type solution for $n=2$.
Then Lemmas \ref{L4.2} and \ref{L5.2} along with Theorems \ref{Th4.4} and \ref{Th4.5} lead to the following results for any $T>0$ and arbitrarily large data (unlike the higher dimensions discussed further on).

\begin{theorem}
\label{NS-problemTh-sigma2}
Let $n=2$, $T>0$, 
$a_{ij}^{\alpha \beta }\in L_\infty(0,T;L_{\infty\#})$
and the relaxed ellipticity condition \eqref{mu} hold.  
Let ${\mathbf f}\in L_2(0,T;\dot{\mathbf H}_\#^{-1})$, $\mathbf u^0\in \dot{\mathbf H}_{\#\sigma}^{0}$.

Then the solution $\mathbf u\in L_{\infty}(0,T_*;\dot{\mathbf H}_{\#\sigma}^{0})\cap L_2(0,T_*;\dot{\mathbf H}_{\#\sigma}^{1})$ of the anisotropic Navier-Stokes 
initial value problem  \eqref{NS-problem-div0}--\eqref{NS-problem-div0-IC} obtained in Theorem \ref{NS-problemTh-sigma} is of  Serrin-type and hence $({\mathbf u}\cdot \nabla ){\mathbf u}\in{L_2(0,T;\mathbf{H}_\#^{-1})}$,  
${\mathbf u}\in W^1(\dot{\mathbf H}_{\#\sigma}^{1},\dot{\mathbf H}_{\#\sigma}^{-1})$,
$\mathbf u$ is almost everywhere on $[0,T]$ equal to a function belonging to  $\mathcal C^0([0,T];\dot{\mathbf H}_{\#\sigma}^{0})$ and
$$
\lim_{t\to 0}\| \mathbf u(\cdot,t)-{\mathbf u}^0\| _{\dot{\mathbf H}_{\#\sigma}^{0}}= 0.
$$
In addition,
\begin{align*}
\left\langle {\mathbf u}'(\cdot,t),{\mathbf w}\right\rangle _{\T }
+\left\langle({\mathbf u}(\cdot,t)\cdot \nabla ){\mathbf u}(\cdot,t),{\mathbf w}\right\rangle _{\T }
+a_{\T}(t;{\mathbf u},{\mathbf w})
=\langle\mathbf{f}(\cdot,t), \mathbf w\rangle_{\T},\ 
\text{for a.e. } t\in(0,T),\
\forall\,\mathbf w\in \dot{\mathbf H}_{\#\sigma}^{1},
\end{align*}
and the following energy equality holds,
\begin{align*}
\frac12\|{\mathbf u}(\cdot,t)\| _{\mathbf L_{2\#}}^2
+\int_0^ta_{\T}(\tau;{\mathbf u}(\cdot,\tau),{\mathbf u}(\cdot,\tau)) d\tau
= \frac12\|{\mathbf u}^0\| ^2_{\mathbf L_{2\#}}
+\int_0^t\langle\mathbf{f}(\cdot,\tau), {\mathbf u}(\cdot,\tau)\rangle_{\T}\,d\tau
\quad \forall\,
t\in[0,T].
\end{align*}
Moreover, the solution $\mathbf u$ is unique in the class of solutions from
$ L_{\infty}(0,T;\dot{\mathbf H}_{\#\sigma}^{0})\cap L_2(0,T;\dot{\mathbf H}_{\#\sigma}^{1})$
satisfying the energy inequality \eqref{E4.9}. 
The unique pressure  $p$ associated with the obtained $\mathbf u$ belongs to $L_2(0,T;\dot{H}_{\#}^{0})$. 
\end{theorem}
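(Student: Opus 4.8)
The plan is to recognise that this theorem is essentially a specialisation to $n=2$ of the machinery already assembled for general $n\ge2$, and that the only feature distinguishing $n=2$ is the arithmetic identity $n/2=1$. Indeed, when $n=2$ the spaces $\dot{\mathbf H}_{\#\sigma}^{n/2}$ and $\dot{\mathbf H}_{\#\sigma}^{1}$ coincide, so the weak solution $\mathbf u\in L_{\infty}(0,T;\dot{\mathbf H}_{\#\sigma}^{0})\cap L_2(0,T;\dot{\mathbf H}_{\#\sigma}^{1})$ furnished by Theorem \ref{NS-problemTh-sigma} automatically satisfies $\mathbf u\in L_2(0,T;\dot{\mathbf H}_{\#\sigma}^{n/2})$ and is therefore a Serrin-type solution in the sense of Definition \ref{CSDef}. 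No extra regularity estimate is needed for this step; this is exactly why arbitrarily large data and arbitrary $T>0$ are admissible here, in contrast to the higher-dimensional cases treated later.

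Once $\mathbf u$ is known to be of Serrin type, I would simply invoke the results proved for Serrin-type solutions. Lemma \ref{L5.2} applied with $n=2$ gives $({\mathbf u}\cdot\nabla){\mathbf u}\in L_2(0,T;\mathbf H_\#^{-1})$, ${\mathbf u}'\in L_2(0,T;\mathbf H_{\#\sigma}^{-1})$, hence ${\mathbf u}\in W^1(\dot{\mathbf H}_{\#\sigma}^{1},\dot{\mathbf H}_{\#\sigma}^{-1})$, and, through Theorem \ref{LM-T3.1}, that $\mathbf u$ is a.e.\ equal to a function in $\mathcal C^0([0,T];\dot{\mathbf H}_{\#\sigma}^{n/4-1/2})=\mathcal C^0([0,T];\dot{\mathbf H}_{\#\sigma}^{0})$ (using $n/4-1/2=0$ for $n=2$); the reduced variational equation \eqref{NS-eq:mik51ar} and the membership of the associated pressure in $L_2(0,T;\dot H_{\#}^{0})$ come from the same lemma. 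The energy equality in the displayed (standard) form follows from Theorem \ref{Th4.4}, equation \eqref{E4.9TEE} with $t_0=0$, and the uniqueness statement follows from Theorem \ref{Th4.5}: the solution at hand plays the role of the Serrin-type solution $\mathbf u$ there, while any other solution in $L_{\infty}(0,T;\dot{\mathbf H}_{\#\sigma}^{0})\cap L_2(0,T;\dot{\mathbf H}_{\#\sigma}^{1})$ satisfying the energy inequality \eqref{E4.9} is an admissible $\mathbf v$. One should just check that the hypotheses of these three results are literally met for $n=2$, which they are since $a_{ij}^{\alpha\beta}\in L_\infty(0,T;L_{\infty\#})$, the relaxed ellipticity condition \eqref{mu} holds, and the data satisfy ${\mathbf f}\in L_2(0,T;\dot{\mathbf H}_\#^{-1})$, $\mathbf u^0\in\dot{\mathbf H}_{\#\sigma}^{0}$.

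It remains to upgrade the weak initial-trace condition to the strong limit $\lim_{t\to0}\|\mathbf u(\cdot,t)-\mathbf u^0\|_{\dot{\mathbf H}_{\#\sigma}^{0}}=0$. Here I would argue that the $\mathcal C^0([0,T];\dot{\mathbf H}_{\#\sigma}^{0})$ representative of $\mathbf u$ is in particular norm-continuous at $t=0$, hence $\dot{\mathbf H}^0_{\#\sigma}$-weakly continuous there; by Lemma \ref{L4.2}(ii) together with Remark \ref{R4.3} the weak time-trace of $\mathbf u$ at $0$ equals $\mathbf u^0$, and since this norm-continuous representative is the one for which the initial condition \eqref{NS-eq:mik51in} is understood, its value at $t=0$ is forced to be $\mathbf u^0$; norm continuity then yields the asserted strong convergence. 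I do not expect any genuine analytic obstacle in this proof — the content is all borrowed from the preceding lemmas and theorems. The one point requiring care is purely bookkeeping: keeping consistent track of which (a.e.-equal) representative of $\mathbf u$ is being used when passing between the $L_2$-in-time statements, the continuity-in-time statements, and the pointwise initial condition.
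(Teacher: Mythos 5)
Your proposal is correct and coincides with the paper's own argument: the paper also observes that for $n=2$ the weak solution is automatically of Serrin type because $\dot{\mathbf H}_{\#\sigma}^{n/2}=\dot{\mathbf H}_{\#\sigma}^{1}$, and then invokes Lemmas~\ref{L4.2} and~\ref{L5.2} together with Theorems~\ref{Th4.4} and~\ref{Th4.5} for the remaining claims. Your treatment of the representative issue and the strong initial limit correctly fills in the detail the paper leaves implicit.
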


\section{Serrin-type solution existence and regularity for constant anisotropic viscosity coefficients}\label{S5ERC}
In this section we analyse the existence and regularity of Serrin-type solutions for any $n\ge 2$ in the anisotropic variable-coefficient case.
This gives a generalisation of Theorem 10.1 in \cite{RRS2016}, where similar results were obtained for $n=3$, for the smoothness index $r=1/2$, and for the isotropic constant viscosity coefficients.

\subsection{Vector heat equation}
Let us first consider the spatially periodic Cauchy problem for the (vector) heat equation, 
\begin{align}
\label{Heat}
\partial_t\mathbf v -\Delta{\mathbf v}=\mathbf{0}
\quad& \mbox{in } \T\times(0,\infty), 
\\
\label{Heat-IC}
\mathbf v(\cdot,0)=\mathbf u^0\quad& \mbox{in } \T .
\end{align}
Its periodic solution can be written as
\begin{align}\label{K-def}
\mathbf v(\mathbf x, t)=(K{\mathbf u}^0)(\mathbf x, t):=\sum_{\bs\xi\in\Z^n}\widehat{\mathbf u}^0(\bs\xi)e^{-(2\pi |\bs\xi|)^2t+2\pi i \mathbf x\cdot\bs\xi}.
\end{align}

If $\div\,{\mathbf u}^0(\mathbf x)=0$ and $\widehat{\mathbf u}^0(\mathbf 0)=\mathbf 0$ then 
$\div\,{\mathbf v}(\mathbf x,t)=0$ and $\widehat{\mathbf v}(\mathbf 0,t)=\mathbf 0$.
Particularly, let us assume that $\mathbf u^0\in \dot{\mathbf H}_{\#\sigma}^{r}$  for some $r\in\R$. Then taking dual product of the both sides of equation \eqref{Heat} with $\Lambda^{2r}_\#{\mathbf v}$ gives
\begin{align*}
&\langle \partial_t\Lambda^{r}_\#{\mathbf v},\Lambda^{r}_\#{\mathbf v}\rangle _{\T }
+\langle{\nabla\Lambda^{r}_\#\mathbf v},\nabla\Lambda^{r}_\#{\bf v}\rangle_\T
=\mathbf 0,\ 
\end{align*}
implying
\begin{align*}
\frac12\frac{d}{d t}\left\|{\mathbf v}\right\|_{\mathbf H^{r}_{\#}}^2
+ \|\nabla{\mathbf v}\|_{({H}_\#^{r})^{n\times n}}^2=0.
\end{align*}
After integration this gives the energy-type equality
\begin{align}\label{E5.4-1}
\frac12\left\|{\mathbf v}(\cdot,t)\right\|_{\mathbf H^{r}_{\#}}^2
+\int_0^t \|\nabla{\mathbf v}\|_{({H}_\#^{r})^{n\times n}}^2 d\tau
=\frac12\left\|{\mathbf u}^0\right\|_{\mathbf H^{r}_{\#}}^2, \quad t\ge 0.
\end{align}
The solution representation \eqref{K-def} and the norm definition \eqref{eq:mik10} imply that 
\begin{align}\label{E4.121}
&\left\|{\mathbf v}(\cdot,t)\right\|_{\mathbf H^{r}_{\#}}^2
\le \left\|{\mathbf u}^0\right\|_{\mathbf H^{r}_{\#}}^2 \quad t\ge 0,
\end{align}
On the other hand, \eqref{eq:mik14} and \eqref{E5.4-1} lead to
\begin{align}
\label{E4.121a}
&\int_0^t\left\|K{\mathbf u}^0\right\|_{\dot{\mathbf H}^{r+1}_{\#}}^2 d\tau
=\int_0^t \|\mathbf v\|_{\dot{\mathbf H}_\#^{r+1}}^2 d\tau
\le 2\int_0^t \|\nabla{\mathbf v}\|_{({H}_\#^{r})^{n\times n}}^2 d\tau
\le \left\|{\mathbf u}^0\right\|_{\mathbf H^{r}_{\#}}^2, \quad t\ge 0.
\end{align}
Estimates \eqref{E4.121} and \eqref{E4.121a} 
mean that ${\mathbf v}\in{L_\infty(0,T;\dot{\mathbf H}^{r}_{\#\sigma})}\cup {L_2(0,T;\dot{\mathbf{H}}_{\#\sigma}^{r+1})}$ 
for any $T> 0$ 
and
\begin{align}\label{E4.122v}
&\left\|{\mathbf v}\right\|_{L_\infty(0,T;\mathbf H^{r}_{\#})}^2 
=\left\|K{\mathbf u}^0\right\|_{L_\infty(0,T;\mathbf H^{r}_{\#})}^2
\le \left\|{\mathbf u}^0\right\|_{\mathbf H^{r}_{\#}}^2, 
\\
\label{E4.122va}
&\|{\mathbf v}\|_{L_2(0,T;\mathbf{H}_\#^{r+1})}^2
=\left\|K{\mathbf u}^0\right\|_{L_2(0,T;\mathbf H^{r+1}_{\#})}^2
\le \left\|{\mathbf u}^0\right\|_{\mathbf H^{r}_{\#}}^2, \quad \forall\,T> 0.
\end{align}
This implies that 
the operator $K: \dot{\mathbf H}^{r}_{\#\sigma}\to L_\infty(0,\infty;\dot{\mathbf H}^{r}_{\#\sigma})\cup L_2(0,\infty;\dot{\mathbf{H}}_{\#\sigma}^{r+1})$ is continuous.

\subsection{Preliminary results 
for constant anisotropic viscosity coefficients}\label{S.5.2C}

For some  $n\ge 2$, $r\ge{n}/{2}-1$, and $T>0$, let  the coefficients 
$a_{ij}^{\alpha\beta}$ be constant and the relaxed ellipticity condition \eqref{mu} hold.
Let  also ${\mathbf f}\in L_2(0,T;\dot{\mathbf H}_\#^{r-1})$  and $\mathbf u^0\in \dot{\mathbf H}_{\#\sigma}^{r}$.

Let us employ, as usual,  the Galerkin approximation, with the sequence 
$\{\mathbf w_\ell\}\subset \dot{\mathbf C}^\infty_{\#\sigma}$ of eigenfunctions of the Bessel potential operator $\Lambda_\#$ in $\dot{\mathbf H}^0_{\#\sigma}$, corresponding to eigenvalues $\lambda_\ell$ and constituting an orthonormal basis in  $\dot{\mathbf H}_{\#\sigma}^{0}$, see Section \ref{BPOS}. 
They also constitute an orthogonal basis in $\dot{\mathbf H}_{\#\sigma}^{r}$ and $\dot{\mathbf H}_{\#\sigma}^{r+1}$, see Theorem \ref{TE.1r}.
Let us construct the  $m$-term approximation to $\mathbf u^0$, 
$$\mathbf u_m^0:=P_m\mathbf u_m^0=\sum_{\ell=1}^m \langle\mathbf u^0,\mathbf w_\ell\rangle_\T \mathbf w_\ell,$$ 
where $P_m$ is the orthogonal projector  from ${\mathbf H}^{r}_{\#\sigma}$ to ${\rm Span}\{\mathbf w_1,\ldots,\mathbf w_m\}$, 
cf. \eqref{m-Projector}, that converges in 
$\dot{\mathbf H}_{\#\sigma}^{0}$, $\dot{\mathbf H}_{\#\sigma}^{r}$ and 
$\dot{\mathbf H}_{\#\sigma}^{r+1}$ as $m\to\infty$. 
Due to the basis orthogonality,  we have the inequalities
\begin{align*}
\|\mathbf u^0_m\|_{\dot{\mathbf H}_{\#\sigma}^{0}}\le \|\mathbf u^0\|_{\dot{\mathbf H}_{\#\sigma}^{0}},\quad
\|\mathbf u^0_m\|_{\dot{\mathbf H}_{\#\sigma}^{r}}\le \|\mathbf u^0\|_{\dot{\mathbf H}_{\#\sigma}^{r}},\quad
\|\mathbf u^0_m\|_{\dot{\mathbf H}_{\#\sigma}^{r+1}}\le \|\mathbf u^0\|_{\dot{\mathbf H}_{\#\sigma}^{r+1}}.
\end{align*}
Let $\{\mathbf u_m\}$ be the sequence employed to prove Theorem 2 in \cite{Mikhailov2024}, given here as Theorem \ref{NS-problemTh-sigma}. 
The sequence $\{\mathbf u_m\}$ converges to a solution 
$\mathbf u \in L_\infty(0,T;\dot{\mathbf H}_{\#\sigma}^{0})\cap L_2(0,T;\dot{\mathbf H}_{\#\sigma}^{1})$ of the initial-variational problem \eqref{NS-eq:mik51a}--\eqref{NS-eq:mik51in} weakly in $L_2(0,T;\dot{\mathbf H}_{\#\sigma}^{1})$, weakly-star in $L_\infty(0,T;\dot{\mathbf H}_{\#\sigma}^{0})$ and strongly in $L_2(0,T;\dot{\mathbf H}_{\#\sigma}^{0})$. 
Particularly,
$
\mathbf u_m(\mathbf x,t)=\sum_{\ell=1}^m\eta_{\ell,m}(t)\mathbf w_\ell
$
and solves the following nonlinear ODE problem 
 from Theorem 2 in \cite{Mikhailov2024},
\begin{align}
\label{NS-eq:mik51adTv}
&\langle \partial_t{\mathbf u}_m,{\mathbf w}_k\rangle _{\T }
+a_{\T}(t;{\mathbf u}_m,{\bf w}_k)
+\langle({\mathbf u}_m\cdot \nabla ){\mathbf u}_m,{\bf w}_k\rangle _{\T}
=\langle\mathbf{f}, \mathbf w_k\rangle_{\T},\ \text{a.e. } t\in(0,T),\ \forall\, k\in \{1,\ldots,m\},
\\
\label{NS-eq:mik51indTv}
&\langle\mathbf u_m, \mathbf w_k\rangle_{\T}(\cdot,0)=\langle\mathbf u^0, \mathbf w_k\rangle_{\T},\quad \forall\, k\in \{1,\ldots,m\}.
\end{align}
Similarly to the proof of Theorem 10.1 in \cite{RRS2016}, 
let us define
$
\mathbf v_m(\mathbf x,t):=P_m \mathbf v=\sum_{k=1}^m\langle\mathbf v,\mathbf w_k\rangle_\T\mathbf w_k
$
for $\mathbf v$ given by \eqref{K-def}.
Acting by the projector $P_m$ on \eqref{Heat}--\eqref{Heat-IC} and then taking the dual product with $\mathbf w_k$, we obtain that for any $m>1$, $\mathbf v_m$
solves the initial value ODE problem
\begin{align}
\label{E4.113vm}
&\langle \partial_t{\mathbf v}_m,{\mathbf w}_k\rangle _{\T }
+\langle{\nabla\mathbf v}_m,\nabla{\bf w}_k\rangle_\T
=\mathbf 0,\ \forall\, t\in(0,T),\ \forall\, k\in \{1,\ldots,m\},
\\
\label{E4.114v}
&\langle\mathbf v_m, \mathbf w_k\rangle_{\T}(\cdot,0)=\langle\mathbf u^0, \mathbf w_k\rangle_{\T},\quad \forall\, k\in \{1,\ldots,m\},
\end{align}
and by \eqref{E4.122v} satisfies the estimates
\begin{align}
\label{E4.122v-mC}
&\left\|{\mathbf v}_m\right\|_{L_\infty(0,T;\mathbf H^{r}_{\#})}^2
\le\left\|{\mathbf v}\right\|_{L_\infty(0,T;\mathbf H^{r}_{\#})}^2
\le  \left\|{\mathbf u}^0\right\|_{\mathbf H^{r}_{\#}}^2,\quad \forall\,T> 0,
\\
\label{E4.122va-mC}
&\|{\mathbf v_m}\|_{L_2(0,T;\mathbf{H}_\#^{r+1})}^2
\le\|{\mathbf v}\|_{L_2(0,T;\mathbf{H}_\#^{r+1})}^2
\le\left\|{\mathbf u}^0\right\|_{\mathbf H^{r}_{\#}}^2, \quad \forall\,T> 0.
\end{align}

To reduce the problem \eqref{NS-eq:mik51adTv}-\eqref{NS-eq:mik51indTv} to the one with zero initial conditions, let us represent $\mathbf u_m=\mathbf v_m+\widetilde{\mathbf u}_m$.
Then due to \eqref{NS-eq:mik51adTv}--\eqref{NS-eq:mik51indTv}, the auxiliary function 
$
\widetilde{\mathbf u}_m(\mathbf x,t)=\sum_{\ell=1}^m\widetilde{\eta}_{\ell,m}(t)\mathbf w_\ell
$
 satisfies the ODE problem
\begin{align}
\label{E4.115var}
&\langle \partial_t\widetilde{\mathbf u}_m,{\mathbf w}_k\rangle _{\T }
+a_{\T}(t;\widetilde{\mathbf u}_m,{\bf w}_k)
+\langle({\mathbf u}_m\cdot \nabla ){\mathbf u}_m,{\bf w}_k\rangle _{\T}
\nonumber\\
&\hspace{10em}
=\langle\mathbf{f}, \mathbf w_k\rangle_{\T}
+\langle{\nabla\mathbf v}_m,\nabla{\bf w}_k\rangle_\T
-a_{\T}(t;{\mathbf v}_m,{\bf w}_k)
,\ \forall\, k\in \{1,\ldots,m\},
\\
\label{E4.116var}
&\langle\widetilde{\mathbf u}_m, \mathbf w_k\rangle_{\T}(\cdot,0)=\mathbf 0,\quad \forall\, k\in \{1,\ldots,m\}.
\end{align}
After multiplying by $\lambda^{2r}_k$ and taking into account the property $\Lambda^{2r}_\# \mathbf w_k=\lambda^{2r}_k \mathbf w_k$, relation  \eqref{NS-a-vsigma}, and that the operator $\Lambda^{r}_\#$ commutate with operators $\nabla$ and $E_{j\beta }$, equation \eqref{E4.115var} leads to
\begin{multline}\label{E4.93Tvar0C}
\langle\partial_t\widetilde{\boldsymbol u}_m, \Lambda^{2r}_\#\mathbf{w}_k\rangle_\T
+\left\langle E_{j\beta }(\widetilde{\mathbf u}_m),
a_{ij}^{\alpha \beta }\Lambda^{r}_\#E_{i\alpha }(\Lambda^{r}_\#{\mathbf w}_k)\right\rangle _{\T}
+\langle({\mathbf u}_m\cdot \nabla ){\mathbf u}_m,\Lambda^{2r}_\#{\bf w}_k\rangle _{\T}
\\
\hspace{1em}
=\langle\mathbf f, \Lambda^{2r}_\# \mathbf{w}_k\rangle_\T
+\langle{\nabla\mathbf v}_m,\Lambda^{r}_\#\nabla\Lambda^{r}_\#{\bf w}_k\rangle_\T
-\left\langle E_{j\beta }({\bf v}_m),
a_{ij}^{\alpha \beta }\Lambda^{r}_\#E_{i\alpha }(\Lambda^{r}_\#{\mathbf w}_k)\right\rangle _{\T},
\quad \forall\, k\in \{1,\ldots,m\} .
\end{multline}
These equations can be re-written as
\begin{multline}\label{E4.93TvarC}
\langle\partial_t\Lambda^{r}_\#\widetilde{\boldsymbol u}_m, \Lambda^{r}_\#\mathbf{w}_k\rangle_\T
+\left\langle a_{ij}^{\alpha \beta }E_{j\beta }(\Lambda^{r}_\#\widetilde{\mathbf u}_m),
E_{i\alpha }(\Lambda^{r}_\#{\mathbf w}_k)\right\rangle _{\T}
+\langle\Lambda^{r-1}_\#[({\mathbf u}_m\cdot \nabla ){\mathbf u}_m],\Lambda^{r+1}_\#{\bf w}_k\rangle _{\T}
\\
=\langle\Lambda^{r-1}_\#\mathbf f, \Lambda^{r+1}_\#  \mathbf{w}_k\rangle_\T
+\langle{\nabla\Lambda^{r}_\#\mathbf v}_m,\nabla\Lambda^{r}_\#{\bf w}_k\rangle_\T
-\left\langle E_{j\beta }({\bf v}_m),
a_{ij}^{\alpha \beta }\Lambda^{r}_\#E_{i\alpha }(\Lambda^{r}_\#{\mathbf w}_k)\right\rangle _{\T}
\quad \forall\, k\in \{1,\ldots,m\} .
\end{multline}
Multiplying equations in \eqref{E4.93TvarC} by $\widetilde{\eta}_{k,m}(t)$ and summing them up over $k\in \{1,\ldots,m\}$, we obtain
\begin{multline}\label{E4.88TvarC}
\frac{1}{2} \partial_t\left\|\Lambda^{r}_\#\widetilde{\mathbf u}_m\right\|_{\mathbf H^{0}_{\#}}^2
+a_{\T}(\Lambda^{r}_\#\widetilde{\mathbf u}_m,\Lambda^{r}_\#\widetilde{\mathbf u}_m)
=\langle\Lambda^{r-1}_\#\mathbf f, \Lambda^{r+1}_\# \widetilde{\mathbf u}_m\rangle_\T
+\langle{\nabla\Lambda^{r}_\#\mathbf v}_m,\nabla\Lambda^{r}_\#\widetilde{\mathbf u}_m\rangle_\T
\\
-\left\langle E_{j\beta }({\bf v}_m),
a_{ij}^{\alpha \beta }\Lambda^{r}_\#E_{i\alpha }(\Lambda^{r}_\#\widetilde{\mathbf u}_m)\right\rangle _{\T}
-\langle\Lambda^{r-1}_\#[({\mathbf u}_m\cdot \nabla ){\mathbf u}_m],\Lambda^{r+1}_\#\widetilde{\mathbf u}_m\rangle _{\T}.
\end{multline}

From \eqref{NS-a-1-v2-S-} we have
\begin{align}
\label{NS-a-1-v2-S-TvarC}
a_{\T}(\Lambda^{r}_\#\widetilde{\mathbf u}_m,\Lambda^{r}_\#\widetilde{\mathbf u}_m)
&\geq \frac14 C_{\mathbb A}^{-1}\|\Lambda^{r}_\#\widetilde{\mathbf u}_m\|_{\dot{\mathbf H}_{\#\sigma}^1}^2
=\frac14 C_{\mathbb A}^{-1}\|\widetilde{\mathbf u}_m\|^2_{\dot{\mathbf H}_{\#\sigma}^{r+1}}.
\end{align}

Let us now estimate the terms in the right hand side of \eqref{E4.88TvarC}.
First,
\begin{align}\label{E4.123cvar}
\langle\Lambda^{r-1}_\#\mathbf f, \Lambda^{r+1}_\# \widetilde{\mathbf u}_m\rangle_\T
\le \|\Lambda^{r-1}_\#\mathbf f\|_ {\dot{\mathbf H}_{\#}^{0}} \|\Lambda^{r+1}_\# \widetilde{\mathbf u}_m\|_{\dot{\mathbf H}_{\#}^{0}}
\le \|\mathbf f\|_ {\dot{\mathbf H}_{\#}^{r-1}} \|\widetilde{\mathbf u}_m\|_{\dot{\mathbf H}_{\#}^{r+1}}.
\end{align}
Next, inequality \eqref{eq:mik14} implies 
\begin{align}\label{E4.121cvar}
\langle\nabla\Lambda^{r}_\#\mathbf v_m,\nabla\Lambda^{r}_\#\widetilde{\mathbf u}_m\rangle_\T
&\le \|\nabla\Lambda^{r}_\# \mathbf v_m\|_{L_{2\#}^{n\times n}} \|\nabla\Lambda^{r}_\#\widetilde{\mathbf u}_m\|_{L_{2\#}^{n\times n}}
\nonumber\\
&\le \|\Lambda^{r}_\# \mathbf v_m\|_{\dot{\mathbf H}_{\#}^1} \|\Lambda^{r}_\#\widetilde{\mathbf u}_m\|_{\dot{\mathbf H}_{\#}^1}
\le \|\mathbf v_m\|_{\dot{\mathbf H}_{\#}^{r+1}} \|\widetilde{\mathbf u}_m\|_{\dot{\mathbf H}_{\#}^{r+1}}.
\end{align}
Further, since  $\tilde\sigma+1>n/2$, we obtain by Theorem \ref{RS-T1-S4.6.1}(a), relation \eqref{strain-r}, and inequality \eqref{eq:mik14},
\begin{multline}\label{E4.122varC}
\left|\left\langle E_{j\beta }({\bf v}_m),
a_{ij}^{\alpha \beta }\Lambda^{r}_\#E_{i\alpha }(\Lambda^{r}_\#\widetilde{\mathbf u}_m)\right\rangle _{\T}\right|
\le \|E_{j\beta }({\mathbf v}_m)\|_{({H}_{\#}^{r})^{n\times n}}  
\|a_{ij}^{\alpha \beta }\Lambda^{r}_\#E_{i\alpha }(\Lambda^{r}_\#\widetilde{\mathbf u}_m)\|_{({H}_{\#}^{-r})^{n\times n}}
\\
\le  \|\mathbb A\| 
\|\widetilde{\mathbf u}_m\|_{\dot{\mathbf H}_{\#}^{r+1}},
\end{multline}
where 
$\|\mathbb A\|
:= \left\|\left\{a_{ij}^{\alpha \beta }\right\}_{\alpha,\beta,i,j=1}^n\right\|_{F}$.
Finally,
\begin{multline}\label{E4.124C}
\left|\langle\Lambda^{r-1}_\#[({\mathbf u}_m\cdot \nabla ){\mathbf u}_m],\Lambda^{r+1}_\#\widetilde{\mathbf u}_m\rangle _{\T}\right|
\le \|\Lambda^{r-1}_\#[({\mathbf u}_m\cdot \nabla ){\mathbf u}_m]\|_{\dot{\mathbf H}_{\#}^{0}}
 \|\Lambda^{r+1}_\#\widetilde{\mathbf u}_m\|_{\dot{\mathbf H}_{\#}^{0}}
\\
\le \|({\mathbf u}_m\cdot \nabla ){\mathbf u}_m\|_{\dot{\mathbf H}_{\#}^{r-1}} \|\widetilde{\mathbf u}_m\|_{\dot{\mathbf H}_{\#}^{r+1}}.
\end{multline}

Implementing \eqref{NS-a-1-v2-S-TvarC}--\eqref{E4.124C} in \eqref{E4.88TvarC} and using Young's inequality, we obtain
\begin{multline*}
\frac{d}{d t}\left\|\widetilde{\mathbf u}_m\right\|_{\mathbf H^{r}_{\#}}^2
+\frac12 C_{\mathbb A}^{-1}\|\widetilde{\mathbf u}_m\|_{\mathbf{H}_\#^{r+1}}^2
\leq 2\Big(\|\mathbf f\|_{\mathbf{H}_\#^{r-1}}
+\big[\|\mathbb A\|+1\big] \|\mathbf v_m\|_{\dot{\mathbf H}_{\#}^{r+1}}
+\left\|({\mathbf u}_m\cdot \nabla ){\mathbf u}_m\right\|_{\mathbf{H}_\#^{r-1}}
\Big) \| \widetilde{\mathbf u}_m\|_{\mathbf{H}_\#^{r+1}}
\\
\leq 4 C_{\mathbb A}\Big(\|\mathbf f\|_{\mathbf{H}_\#^{r-1}}
+\big[\|\mathbb A\|+1\big]  \|\mathbf v_m\|_{\dot{\mathbf H}_{\#}^{r+1}}
+\left\|({\mathbf u}_m\cdot \nabla ){\mathbf u}_m\right\|_{\mathbf{H}_\#^{r-1}} 
\Big)^2
+  \frac14 C_{\mathbb A}^{-1}\| \widetilde{\mathbf u}_m\|_{\mathbf{H}_\#^{r+1}}^2.
\end{multline*}
Hence by the inequality $(\sum_{i=1}^k a_i)^2\le k\sum_{i=1}^k a_i^2$ (following from the Cauchy–Schwarz inequality), \begin{align}\label{E4.82TC}
\frac{d}{d t}\left\|\widetilde{\mathbf u}_m\right\|_{\mathbf H^{r}_{\#}}^2
+\frac14 C_{\mathbb A}^{-1}\|\widetilde{\mathbf u}_m\|_{\mathbf{H}_\#^{r+1}}^2
\leq 16 C_{\mathbb A}\Big(\|\mathbf f\|_{\mathbf{H}_\#^{r-1}}^2
+ \big[\|\mathbb A\|^2+1\big]\|\mathbf v_m\|_{\dot{\mathbf H}_{\#}^{r+1}}^2
+\left\|({\mathbf u}_m\cdot \nabla ){\mathbf u}_m\right\|_{\mathbf{H}_\#^{r-1}}^2
\Big).
\end{align}

Note that by the similar reasoning, but without employing in \eqref{NS-eq:mik51adTv}--\eqref{NS-eq:mik51indTv} the function $\mathbf v$, we obtain that $\mathbf u_m$ satisfies the differential inequality
\begin{align}\label{E4.82T0C}
\frac{d}{d t}\left\|\mathbf u_m\right\|_{\mathbf H^{r}_{\#}}^2
+\frac14 C_{\mathbb A}^{-1}\|\mathbf u_m\|_{\mathbf{H}_\#^{r+1}}^2
\leq 8 C_{\mathbb A}\Big(\|\mathbf f\|_{\mathbf{H}_\#^{r-1}}^2
+\left\|({\mathbf u}_m\cdot \nabla ){\mathbf u}_m\right\|_{\mathbf{H}_\#^{r-1}}^2
\Big).
\end{align}

Let us also estimate the last term in \eqref{E4.82TC} and \eqref{E4.82T0C} for the case $n/2-1\le r<n/2$. 
By relation  \eqref{eq:mik14},  the multiplication Theorem \ref{RS-T1-S4.6.1}(b), and the Sobolev interpolation inequality \eqref{SII2} we obtain,
\begin{align}\label{E4.91-0T0cC}
\left\|({\mathbf u}_m\cdot \nabla ){\mathbf u}_m\right\|_{\mathbf{H}_\#^{r-1}}^2
=\left\|\nabla\cdot({\mathbf u}_m \otimes{\mathbf u}_m)\right\|_{\mathbf{H}_\#^{r-1}}^2
&\le \left\|{\mathbf u}_m \otimes{\mathbf u}_m)\right\|_{({H}_\#^{r})^{n\times n}}^2
\nonumber
\\
&
\le C^2_{*rn} \|\mathbf{u}_m\|_{{\mathbf H}_\#^{r/2+n/4}}^4
\le C^2_{*rn}\|\mathbf{u}_m\|_{{\mathbf H}_\#^r}^2 \|\mathbf{u}_m\|^2_{{\mathbf H}_\#^{n/2}},
\end{align}
where $C_{*rn}=C_*(r/2+n/4,r/2+n/4, n)$.

\subsection{Serrin-type solution existence for constant anisotropic viscosity coefficients}\label{S5.3C}
Employing the results from Section \ref{S.5.2C} for $r=n/2-1$, we are now in the position to prove the existence of Serrin-type solutions.
\begin{theorem}
\label{NS-problemTh-sigma-Lv-crC}
Let $n\ge 2$, and $T>0$. 
Let the coefficients $a_{ij}^{\alpha\beta}$ be constant and the relaxed ellipticity condition \eqref{mu} hold.
Let ${\mathbf f}\in L_2(0,T;\dot{\mathbf H}_\#^{n/2-2})$  and $\mathbf u^0\in \dot{\mathbf H}_{\#\sigma}^{n/2-1}$.

(i) Then there  exist constants $A_{1}\ge 0$, $A_{2}\ge0$ and $A_{3}>0$ that are independent of ${\mathbf f}$ and $\mathbf u^0$ but may depend on $T$, $n$,  $\|\mathbb A\|$ and $C_{\mathbb A}$, such that
if ${\mathbf f}$, $\mathbf u^0$ and $T_*\in(0,T]$ satisfy the inequality
\begin{align}\label{E4.143T*varC}
\int_0^{T_*}\|\mathbf f(\cdot,t)\|_{\mathbf{H}_\#^{n/2-2}}^2dt
+ \left (A_{1}\|{\mathbf u}^0\|^2_{{\mathbf H}_\#^{n/2-1}} + A_{2}\right) 
\int_0^{T_*}\|(K{\mathbf u}^0)(\cdot,t)\|_{\dot{\mathbf H}_{\#}^{n/2}}^2dt
<A_{3},
\end{align}  
where $K$ is the operator defined in \eqref{K-def}, 
then there exists a solution $\mathbf u$ of the anisotropic Navier-Stokes 
initial value problem  \eqref{NS-problem-div0}--\eqref{NS-problem-div0-IC} in 
$L_{\infty}(0,T_*;\dot{\mathbf H}_{\#\sigma}^{n/2-1})\cap L_2(0,T_*;\dot{\mathbf H}_{\#\sigma}^{n/2})$, which is thus a Serrin-type solution.

(ii) In addition,
$
\mathbf u'\in L_2(0,T_*;\dot{\mathbf H}_{\#\sigma}^{n/2-2}),
$
$
\mathbf u \in\mathcal C^0([0,T_*];\dot{\mathbf H}_{\#\sigma}^{n/2-1})
$,
$
\lim_{t\to 0}\| \mathbf u(\cdot,t)-{\mathbf u}^0\| _{\dot{\mathbf H}_{\#\sigma}^{n/2-1}}= 0,
$
and $p\in L_2(0,T_*;\dot{H}_{\#}^{n/2-1})$. 

(iii) Moreover, $\mathbf u$ satisfies  the following energy equality for any $[t_0,t]\subset[0,T_*]$,
\begin{align}\label{NS-eq:mik51ai=aTvarC}
\frac12\| {\mathbf u}(\cdot,t)\| _{\mathbf L_{2\#}}^2
+\int_{t_0}^{t}a_{\T}({\mathbf u}(\cdot,\tau),{\mathbf u}(\cdot,\tau)) d\tau
=  \frac12\| {\mathbf u}(\cdot,t_0)\| ^2_{\mathbf L_{2\#}}
+\int_{t_0}^{t}\langle\mathbf{f}(\cdot,\tau), {\mathbf u}(\cdot,\tau)\rangle_{\T}d\tau.
\end{align}
It particularly implies the standard energy equality,
\begin{align}
\label{E4.9TvarC}
\frac12\ {\mathbf u}(\cdot,t)\| _{\mathbf L_{2\#}}^2
+\int_0^ta_{\T}({\mathbf u}(\cdot,\tau),{\mathbf u}(\cdot,\tau)) d\tau
= \frac12\|{\mathbf u}^0\| ^2_{\mathbf L_{2\#}}
+\int_0^t\langle\mathbf{f}(\cdot,\tau), {\mathbf u}(\cdot,\tau)\rangle_{\T}\,d\tau
\quad \forall\,
t\in[0,T_*].
\end{align}

(iv) The solution $\mathbf u$ is unique in the class of solutions from
$ L_{\infty}(0,T_*;\dot{\mathbf H}_{\#\sigma}^{0})\cap L_2(0,T_*;\dot{\mathbf H}_{\#\sigma}^{1})$
satisfying the energy inequality \eqref{E4.9} on the interval $[0,T_*]$. 
\end{theorem}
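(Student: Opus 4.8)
The plan is to run the Galerkin scheme already set up in Section~\ref{S.5.2C} at the index $r=n/2-1$, to obtain bounds on the Galerkin approximations in $L_\infty(0,T_*;\dot{\mathbf H}_{\#\sigma}^{n/2-1})\cap L_2(0,T_*;\dot{\mathbf H}_{\#\sigma}^{n/2})$ that are uniform in $m$ by a continuity (bootstrap) argument, then to pass to the limit and identify the limit with the weak solution $\mathbf u$ of Theorem~\ref{NS-problemTh-sigma}, and finally to read off assertions (ii)--(iv) from the time-regularity of $\mathbf u$ and from Theorems~\ref{Th4.4} and \ref{Th4.5}. Recall from Section~\ref{S.5.2C} the splitting $\mathbf u_m=\mathbf v_m+\widetilde{\mathbf u}_m$, where $\mathbf v_m=P_m(K\mathbf u^0)$ is the Galerkin truncation of the heat flow, obeying \eqref{E4.122v-mC}--\eqref{E4.122va-mC}, and $\widetilde{\mathbf u}_m$ solves the zero-data problem \eqref{E4.115var}--\eqref{E4.116var} and satisfies the differential inequality \eqref{E4.82TC}. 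Write $y_m(t):=\|\widetilde{\mathbf u}_m(\cdot,t)\|^2_{\mathbf H_\#^{n/2-1}}$, $Z_m(t):=\|\widetilde{\mathbf u}_m(\cdot,t)\|^2_{\mathbf H_\#^{n/2}}$, $a:=\|\mathbf u^0\|^2_{\mathbf H_\#^{n/2-1}}$ and $\Phi(\tau):=\int_0^\tau\|(K\mathbf u^0)(\cdot,t)\|^2_{\dot{\mathbf H}_\#^{n/2}}\,dt$, so that $\Phi(T_*)$ is exactly the factor appearing in \eqref{E4.143T*varC}; by \eqref{E4.122v-mC}--\eqref{E4.122va-mC} one has $\|\mathbf v_m(\cdot,t)\|^2_{\mathbf H_\#^{n/2-1}}\le a$, $\int_0^{T_*}\|\mathbf v_m\|^2_{\dot{\mathbf H}_\#^{n/2}}\,dt\le\Phi(T_*)$, and $\Phi(T_*)\to0$ as $T_*\to0$.

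The crux is the a priori estimate. I would re-estimate the last (convective) term on the right of \eqref{E4.82TC} by writing $(\mathbf u_m\cdot\nabla)\mathbf u_m=\nabla\cdot(\mathbf u_m\otimes\mathbf u_m)$ and decomposing $\mathbf u_m\otimes\mathbf u_m=\mathbf v_m\otimes\mathbf v_m+(\mathbf v_m\otimes\widetilde{\mathbf u}_m+\widetilde{\mathbf u}_m\otimes\mathbf v_m)+\widetilde{\mathbf u}_m\otimes\widetilde{\mathbf u}_m$, using $\|\nabla\cdot(\,\cdot\,)\|_{\mathbf H_\#^{n/2-2}}\le\|\cdot\|_{(H_\#^{n/2-1})^{n\times n}}$, the multiplication Theorem~\ref{RS-T1-S4.6.1} at the symmetric indices $(n-1)/2$ (admissible since $n/2-1<(n-1)/2$), and the interpolation inequality \eqref{SII2}. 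This produces, for the three groups, estimates of the form $\|\nabla\cdot(\mathbf v_m\otimes\mathbf v_m)\|^2_{\mathbf H_\#^{n/2-2}}\lesssim a\,\|\mathbf v_m\|^2_{\dot{\mathbf H}_\#^{n/2}}$, $\|\nabla\cdot(\mathbf v_m\otimes\widetilde{\mathbf u}_m)\|^2_{\mathbf H_\#^{n/2-2}}\lesssim a^{1/2}y_m^{1/2}Z_m^{1/2}\|\mathbf v_m\|_{\dot{\mathbf H}_\#^{n/2}}$ (and likewise for the symmetric term), and $\|\nabla\cdot(\widetilde{\mathbf u}_m\otimes\widetilde{\mathbf u}_m)\|^2_{\mathbf H_\#^{n/2-2}}\lesssim y_mZ_m$. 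Applying Young's inequality to the cross terms, the $Z_m$-part is reabsorbed into the $\tfrac14 C_{\mathbb A}^{-1}\|\widetilde{\mathbf u}_m\|^2_{\mathbf H_\#^{n/2}}$ on the left of \eqref{E4.82TC}, leaving a Gronwall kernel $\lesssim a\,\|\mathbf v_m\|^2_{\dot{\mathbf H}_\#^{n/2}}\,y_m$; the third group is likewise absorbed provided $y_m$ stays below a fixed threshold $M$ chosen so that its $Z_m$-coefficient is $\le\tfrac{1}{16}C_{\mathbb A}^{-1}$. Integrating in $t$ then gives, on any subinterval $[0,\tau]$ on which $y_m\le M$,
\[
y_m(t)+\tfrac{1}{16}C_{\mathbb A}^{-1}\int_0^t Z_m(s)\,ds\ \le\ B\ +\ \int_0^t\kappa(s)\,y_m(s)\,ds,\qquad t\in[0,\tau],
\]
with $B\lesssim\int_0^{T_*}\|\mathbf f\|^2_{\mathbf H_\#^{n/2-2}}\,dt+\big(\|\mathbb A\|^2+1\big)\Phi(T_*)+a\,\Phi(T_*)$ and $\int_0^{T_*}\kappa(s)\,ds\lesssim a\,\Phi(T_*)$, the implicit constants depending only on $n$, $\|\mathbb A\|$ and $C_{\mathbb A}$; by the integral Gronwall inequality (Lemma~\ref{IGTL}), $y_m(t)\le B\exp\!\big(\int_0^{T_*}\kappa\big)$ on $[0,\tau]$.

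Now one fixes $A_1,A_2,A_3$ (depending only on $n$, $\|\mathbb A\|$, $C_{\mathbb A}$ and the implicit constants above, hence independent of $\mathbf f,\mathbf u^0$) so that the smallness condition \eqref{E4.143T*varC} forces $B\exp(\int_0^{T_*}\kappa)\le M/2$: concretely, $A_1$ bounds the Gronwall exponent and the $a\Phi(T_*)$ contribution to $B$, $A_2$ absorbs the $(\|\mathbb A\|^2+1)\Phi(T_*)$ contribution, and $A_3$ is then taken small in terms of $M$. Since $y_m(0)=0$ and $y_m$ is continuous, the continuity argument closes: with $\tau_*:=\sup\{\tau\in(0,T_*]:\ y_m(t)\le M\ \forall\,t\in[0,\tau]\}$ one has $y_m\le M/2<M$ on $[0,\tau_*]$, whence $\tau_*=T_*$. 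This yields $\|\widetilde{\mathbf u}_m\|^2_{L_\infty(0,T_*;\mathbf H_\#^{n/2-1})}\le M/2$ and $\|\widetilde{\mathbf u}_m\|^2_{L_2(0,T_*;\mathbf H_\#^{n/2})}\le 16C_{\mathbb A}B$ uniformly in $m$, and, adding $\mathbf v_m$ back, $\{\mathbf u_m\}$ is bounded in $L_\infty(0,T_*;\dot{\mathbf H}_{\#\sigma}^{n/2-1})\cap L_2(0,T_*;\dot{\mathbf H}_{\#\sigma}^{n/2})$ uniformly in $m$. Extracting a subsequence converging weakly-$*$ in $L_\infty(0,T_*;\dot{\mathbf H}_{\#\sigma}^{n/2-1})$ and weakly in $L_2(0,T_*;\dot{\mathbf H}_{\#\sigma}^{n/2})$, and recalling that the same sequence already converges (by the proof of Theorem~\ref{NS-problemTh-sigma}) strongly in $L_2(0,T_*;\dot{\mathbf H}_{\#\sigma}^{0})$ to the weak solution $\mathbf u$, we conclude $\mathbf u\in L_\infty(0,T_*;\dot{\mathbf H}_{\#\sigma}^{n/2-1})\cap L_2(0,T_*;\dot{\mathbf H}_{\#\sigma}^{n/2})$, that is, $\mathbf u$ is a Serrin-type solution; this is (i). For (ii), feeding this regularity into \eqref{NS-problem-just} gives $\mathbf u'=\mathbb P_\sigma[\mathbf f+\bs{\mathfrak L}\mathbf u-(\mathbf u\cdot\nabla)\mathbf u]\in L_2(0,T_*;\dot{\mathbf H}_{\#\sigma}^{n/2-2})$, since $\mathbf f\in L_2(0,T_*;\dot{\mathbf H}_\#^{n/2-2})$ by hypothesis, $\bs{\mathfrak L}\mathbf u\in L_2(0,T_*;\dot{\mathbf H}_\#^{n/2-2})$ because the constant-coefficient operator maps $\dot{\mathbf H}_\#^{n/2}$ to $\dot{\mathbf H}_\#^{n/2-2}$, and $(\mathbf u\cdot\nabla)\mathbf u\in L_2(0,T_*;\mathbf H_\#^{n/2-2})$ by \eqref{E4.91-0T0cC} with $r=n/2-1$; hence $\mathbf u\in W^1(\dot{\mathbf H}_{\#\sigma}^{n/2},\dot{\mathbf H}_{\#\sigma}^{n/2-2})$ and Theorem~\ref{LM-T3.1} gives $\mathbf u\in\mathcal C^0([0,T_*];\dot{\mathbf H}_{\#\sigma}^{n/2-1})$ with $\mathbf u(\cdot,0)=\mathbf u^0$ and $\|\mathbf u(\cdot,t)-\mathbf u^0\|_{\dot{\mathbf H}_{\#\sigma}^{n/2-1}}\to0$ as $t\to0$, while the right-hand side of \eqref{Eq-p} lies in $L_2(0,T_*;\dot{\mathbf H}_\#^{n/2-2})$, so $p\in L_2(0,T_*;\dot{H}_\#^{n/2-1})$ by Lemma~\ref{div-grad-is}. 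Finally, (iii) follows from the energy-equality Theorem~\ref{Th4.4} and (iv) from the uniqueness Theorem~\ref{Th4.5}, both applicable since $\mathbf u$ is a Serrin-type solution.

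I expect the second step to be the principal obstacle: constructing the nonlinear estimate and choosing $M$ and $A_1,A_2,A_3$ so that the bootstrap closes for \emph{arbitrarily large} data, only $\Phi(T_*)$ being required small. The delicate points are (a) the decomposition $\mathbf u_m=\mathbf v_m+\widetilde{\mathbf u}_m$ must be exploited so that every occurrence of the possibly large factor $a=\|\mathbf u^0\|^2_{\mathbf H_\#^{n/2-1}}$ in the convective term is accompanied by $\|\mathbf v_m\|^2_{\dot{\mathbf H}_\#^{n/2}}$, whose time-integral $\Phi(T_*)$ is exactly the quantity controlled by \eqref{E4.143T*varC}, so that the $\mathbf v_m\otimes\mathbf v_m$ piece becomes a small additive constant and the cross pieces become a small Gronwall kernel; and (b) the relevant Sobolev products must be shown to land in the borderline space $\mathbf H_\#^{n/2-1}$, which is where Theorem~\ref{RS-T1-S4.6.1} and the interpolation inequality \eqref{SII2} have to be invoked with exactly the right indices. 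The remaining steps --- weak/weak-$*$ compactness, identification of the limit, the time-regularity bootstrap, and the energy equality and uniqueness --- are routine consequences of results already established in the excerpt.
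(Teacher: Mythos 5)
Your overall strategy is the one the paper uses: run the Galerkin scheme of Section~\ref{S.5.2C} at $r=n/2-1$, split $\mathbf u_m=\mathbf v_m+\widetilde{\mathbf u}_m$ so that the possibly large initial datum is carried by the heat semigroup, prove a uniform-in-$m$ a priori bound in $L_\infty(0,T_*;\dot{\mathbf H}_{\#\sigma}^{n/2-1})\cap L_2(0,T_*;\dot{\mathbf H}_{\#\sigma}^{n/2})$ under the smallness condition, pass to weak/weak-$*$ limits and identify them with the weak solution of Theorem~\ref{NS-problemTh-sigma}, and then deduce (ii)--(iv) from the gained regularity together with Theorems~\ref{Th4.4} and~\ref{Th4.5}. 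The execution differs from the paper in two places, and it is worth comparing them. (a) To estimate the convective term you expand $\mathbf u_m\otimes\mathbf u_m$ into four pieces and bound each at the index $(n-1)/2$, obtaining a pure $\mathbf v_m$-term contributing to $B$, cross terms contributing a Gronwall kernel $\kappa\lesssim a\,\|\mathbf v_m\|^2_{\dot{\mathbf H}_\#^{n/2}}$, and a pure $\widetilde{\mathbf u}_m$-term giving the quadratic coupling $y_mZ_m$. The paper instead applies Theorem~\ref{RS-T1-S4.6.1} once to $\mathbf u_m\otimes\mathbf u_m$, then uses the crude $(a+b)^4\le 8(a^4+b^4)$ to swallow the cross terms into the two pure terms, arriving at \eqref{E4.91-0TC}. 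Your finer split is not wrong, but it buys nothing here: the cross terms end up giving the same kind of contribution that the paper's pure $\mathbf v_m$-piece already produces, so the extra bookkeeping and Young inequalities add no sharpness. (b) You close the bootstrap by hand, introducing a threshold $M$, showing the Gronwall bound keeps $y_m\le M/2$ on every subinterval where $y_m\le M$, and running the continuity argument from $y_m(0)=0$. The paper packages precisely this step into Lemma~\ref{RRS2016-L10.3alt}, applied to the differential inequality \eqref{E4.93BTbC}, which yields the smallness condition \eqref{E5.32bC} directly. Both are valid; the paper's route is cleaner and yields a final condition without an extra exponential of $a\Phi(T_*)$, whereas your version produces a condition of the form $B e^{\int\kappa}\le M/2$ (with $\int\kappa\lesssim a\Phi(T_*)$). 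That is reducible to the stated form \eqref{E4.143T*varC} --- since the hypothesis forces $a\Phi(T_*)$ small, the exponential factor is bounded --- but the constants are more entangled. Parts (ii)--(iv) in your proposal coincide with the paper's argument; one small slip is that for the $L_2$ bound on $Z_m$ you write $\le 16C_{\mathbb A}B$, but integrating also picks up the Gronwall term $\int\kappa\,y_m\le (M/2)\int\kappa$; this does not affect the conclusion.
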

\begin{proof}
(i)
Let $r=n/2-1$.
Estimate \eqref{E4.91-0T0cC} implies
\begin{align}\label{E4.91-0TC}
\left\|({\mathbf u}_m\cdot \nabla ){\mathbf u}_m\right\|_{\mathbf{H}_\#^{r-1}}^2
&\le C^2_{*rn} \|\mathbf{u}_m\|_{{\mathbf H}_\#^{r/2+n/4}}^4
\le 8C^2_{*rn}(\|\widetilde{\mathbf u}_m\|^4_{{\mathbf H}_\#^{r/2+n/4}} + \|{\mathbf v}_m\|^4_{{\mathbf H}_\#^{r/2+n/4}})
\nonumber\\
&\le 8C^2_{*rn}\|\widetilde{\mathbf u}_m\|^2_{{\mathbf H}_\#^r} \|\widetilde{\mathbf u}_m\|^2_{{\mathbf H}_\#^{n/2}}
+ 8C^2_{*rn}\|{\mathbf v}_m\|^2_{{\mathbf H}_\#^r}\|{\mathbf v}_m\|^2_{{\mathbf H}_\#^{n/2}}
\nonumber\\
&\le 8C^2_{*rn}\|\widetilde{\mathbf u}_m\|^2_{{\mathbf H}_\#^{n/2-1}} \|\widetilde{\mathbf u}_m\|^2_{{\mathbf H}_\#^{n/2}}
+ 8C^2_{*rn}\|{\mathbf v}_m\|^2_{{\mathbf H}_\#^{n/2-1}}\|{\mathbf v}_m\|^2_{{\mathbf H}_\#^{n/2}},
\end{align}
where $C^2_{*rn}:=C_{*n/2-1,n}=C_*(n/2-1/2,n/2-1/2, n)$.
 Then by \eqref{E4.91-0TC} we obtain from \eqref{E4.82TC}, 
\begin{multline}\label{E4.93BTbC}
\frac{d}{d t}\left\|\widetilde{\mathbf u}_m\right\|_{\mathbf H^{n/2-1}_{\#}}^2
+\frac14 C_{\mathbb A}^{-1}\|\widetilde{\mathbf u}_m\|_{\mathbf{H}_\#^{n/2}}^2
\leq 
128C^2_{*rn}C_{\mathbb A}
\|\widetilde{\mathbf u}_m\|^2_{{\mathbf H}_\#^{n/2}}
\|\widetilde{\mathbf u}_m\|^2_{\dot{\mathbf H}_{\#}^{n/2-1}} 
\\
+ 16 C_{\mathbb A}\left(\|\mathbf f\|_{\mathbf{H}_\#^{n/2-2}}^2
+ 8C^2_{*rn}\|{\mathbf v}_m\|^2_{{\mathbf H}_\#^{n/2-1}}\|{\mathbf v}_m\|^2_{{\mathbf H}_\#^{n/2}}
+ \big[\|\mathbb A\|^2+1\big] \|\mathbf v_m\|_{\dot{\mathbf H}_{\#}^{n/2}}^2
\right).
\end{multline}
Let us now apply to \eqref{E4.93BTbC} Lemma \ref{RRS2016-L10.3alt} with 
\begin{align*}
&\eta=\left\|\widetilde{\mathbf u}_m\right\|_{\mathbf H^{n/2-1}_{\#}}^2,\
\eta_0=0,\
y=\|\widetilde{\mathbf u}_m\|_{\mathbf{H}_\#^{n/2}}^2,\ 
b=\frac14 C_{\mathbb A}^{-1},\ 
c=128C^2_{*rn}C_{\mathbb A},\
\\
&\psi=16 C_{\mathbb A}\left(\|\mathbf f\|_{\mathbf{H}_\#^{n/2-2}}^2
+ 8C^2_{*rn}\|{\mathbf v}_m\|^2_{{\mathbf H}_\#^{n/2-1}}\|{\mathbf v}_m\|^2_{{\mathbf H}_\#^{n/2}}
+\big[\|\mathbb A\|^2+1\big]\|\mathbf v_m\|_{\dot{\mathbf H}_{\#}^{n/2}}^2
\right)
\end{align*}
to conclude that if $T_*$ is such that 
\begin{multline}\label{E5.32bC}
\int_0^{T_*}\left(\|\mathbf f(\cdot,t)\|_{\mathbf{H}_\#^{n/2-2}}^2
+ \left(8C^2_{*rn}\|{\mathbf v}_m(\cdot,t)\|^2_{{\mathbf H}_\#^{n/2-1}}
+\big[\|\mathbb A\|^2+1\big]\right) 
\|\mathbf {\mathbf v}_m(\cdot,t)\|_{\dot{\mathbf H}_{\#}^{n/2}}^2\right)dt
\\
<\left(512 e C_{\mathbb A}^2C^2_{*rn}\right)^{-1},
\end{multline}
 then
\begin{align}
&\|{\mathbf u}_m\|_{L_\infty(0,T_*;\dot{\mathbf H}_{\#\sigma}^{n/2-1})}
\le\|\widetilde {\mathbf u}_m\|_{L_\infty(0,T_*;\dot{\mathbf H}_{\#\sigma}^{n/2-1})} 
+\|{\mathbf v}_m\|_{L_\infty(0,T_*;\dot{\mathbf H}_{\#\sigma}^{n/2-1})}
\nonumber\\
&\hspace{24.5em}\le \left(16\sqrt{2}C_{\mathbb A}{C}_{*rn}\right)^{-1}+\left\|{\mathbf u}^0\right\|_{\dot{\mathbf H}^{n/2-1}_{\#\sigma}},
\label{E4.143avarbC}\\
&\|{\mathbf u}_m\|_{L_2(0,T_*;\dot{\mathbf H}_{\#\sigma}^{n/2})}
\le\|\widetilde {\mathbf u}_m\|_{L_2(0,T_*;\dot{\mathbf H}_{\#\sigma}^{n/2})} 
+\|{\mathbf v}_m\|_{L_2(0,T_*;\dot{\mathbf H}_{\#\sigma}^{n/2})}
\le \left(8\sqrt{2C_{\mathbb A}}{C}_{*rn}\right)^{-1}
+\left\|{\mathbf u}^0\right\|_{\dot{\mathbf H}^{n/2-1}_{\#\sigma}}.
\label{E4.14varbC}
\end{align}
Estimates \eqref{E4.122v-mC} and \eqref{E4.122va-mC} were taken into account in \eqref{E4.143avarbC} and \eqref{E4.14varbC}.

Taking into account inequality \eqref{E4.122v-mC} again, we obtain that condition \eqref{E5.32bC} is satisfied  if $T_*$ is such that
\begin{align}\label{E4.143bC}
\int_0^{T_*}\|\mathbf f(\cdot,t)\|_{\mathbf{H}_\#^{n/2-2}}^2dt
+ \left (8C^2_{*rn}\|{\mathbf u}^0\|^2_{{\mathbf H}_\#^{n/2-1}}
+\big[\|\mathbb A\|^2+1\big]\right) \int_0^{T_*}\|\mathbf v(\cdot,t)\|_{\dot{\mathbf H}_{\#}^{n/2}}^2dt
<\left(512 e C_{\mathbb A}^2C^2_{*rn}\right)^{-1}.
\end{align}

Note that condition \eqref{E4.143bC} gives condition \eqref{E4.143T*varC} with 
\begin{align*}
&A_{1}=8C^2_{*rn}, \quad 
A_{2}=\|\mathbb A\|^2+1,\quad
A_{3}=\left(512 e C_{\mathbb A}^2C^2_{*rn}\right)^{-1}.
\end{align*}

Inequalities \eqref{E4.143avarbC} and \eqref{E4.14varbC} imply that  there exists a subsequence of $\{\mathbf u_m\}$ converging weakly  in $L_2(0, T_* ; \dot{\mathbf H}_{\#\sigma}^{n/2})$ and weakly-star in $L_\infty(0, T_* ; \dot{\mathbf H}_{\#\sigma}^{n/2-1})$ to a function 
${\mathbf u}^\dag\in L_2(0, T_* ; \dot{\mathbf H}_{\#\sigma}^{n/2})\cup L_\infty(0, T_* ; \dot{\mathbf H}_{\#\sigma}^{n/2-1})$.
Then the subsequence converges to ${\mathbf u}^\dag$ also weakly in $L_2(0,T_*;\dot{\mathbf H}_{\#\sigma}^{1})$ and weakly-star in $L_\infty(0,T_*;\dot{\mathbf H}_{\#\sigma}^{0})$.
Since $\{\mathbf u_m\}$ is the subsequence of the sequence that converges weakly in $L_2(0,T;\dot{\mathbf H}_{\#\sigma}^{1})$ and weakly-star in $L_\infty(0,T;\dot{\mathbf H}_{\#\sigma}^{0})$ to the weak solution, $\mathbf u$, of problem \eqref{NS-problem-div0}--\eqref{NS-problem-div0-IC} on $\left[0, T_*\right]$, we conclude that 
$\mathbf u={\mathbf u}^\dag\in L_\infty(0, T_* ; \dot{\mathbf H}_{\#\sigma}^{n/2-1})\cup L_2(0, T_* ; \dot{\mathbf H}_{\#\sigma}^{n/2})$. 

This implies that $\mathbf u$ is a Serrin-type solution on the interval $[0,T_*]$ and we thus proved item (i) of the theorem.

(ii) Repeating for $\mathbf u$ the reasoning related to inequality \eqref{E4.91-0T0cC}, we obtain
\begin{align*}
&\left\|({\mathbf u}\cdot \nabla ){\mathbf u}\right\|_{\mathbf{H}_\#^{n/2-2}}^2
\le  C^2_{*rn}\|{\mathbf u}\|^2_{{\mathbf H}_\#^{n/2-1}}\|{\mathbf u}\|^2_{{\mathbf H}_\#^{n/2}}.
\end{align*}
Hence 
\begin{align}\label{E4.91-0T0EE1r-uC1}
\left\|({\mathbf u}\cdot \nabla ){\mathbf u}\right\|_{L_2(0,T_*;\mathbf{H}_\#^{n/2-2})}
\le C_{*rn}\|\mathbf{u}\|_{L_\infty(0,T_*;{\mathbf H}_\#^{n/2-1})} \|\mathbf{u}\|_{L_2(0,T_*;{\mathbf H}_\#^{n/2})},
\end{align}
that is, $({\mathbf u}\cdot \nabla ){\mathbf u}\in{L_2(0,T_*;\dot{\mathbf H}_\#^{n/2-2})}$.
By \eqref{L-oper} and \eqref{TensNorm} we have
\begin{align*}
\left\|\bs{\mathfrak L}\mathbf u\right\|^2_{{\mathbf H}_{\#}^{n/2-2}}
\le\|a_{ij}^{\alpha \beta }E_{i\alpha }({\mathbf u})\|_{({H}_{\#}^{n/2-1})^{n\times n}}
\le \|\mathbb A\|^2\|\mathbf u\|^2_{{\mathbf H}_{\#}^{n/2}}
\end{align*}
and thus
\begin{align*}
\|\bs{\mathfrak L}\mathbf u\|^2_{L_2(0,T_*;\dot{\mathbf H}_{\#}^{n/2-2})}
&\le \|\mathbb A\|^2\|\mathbf u\|^2_{L_2(0,T_*;{\mathbf H}_{\#}^{n/2})},
\end{align*}
i.e., $\bs{\mathfrak L}\mathbf u\in{L_2(0,T_*;\dot{\mathbf H}_{\#\sigma}^{n/2-2})}$.
We also have ${\mathbf f}\in L_2(0,T;\dot{\mathbf H}_\#^{n/2-2})$.

Then \eqref{NS-problem-just} implies that 
${\mathbf u}'\in{L_2(0,T_*;\dot{\mathbf H}_{\#\sigma}^{n/2-2})}$ and hence by Theorem \ref{LM-T3.1} we obtain that 
$
\mathbf u \in\mathcal C^0([0,T_*];\dot{\mathbf H}_{\#\sigma}^{n/2-1})
$,
which also means 
that $
\| \mathbf u(\cdot,t)-{\mathbf u}^0\| _{\dot{\mathbf H}_{\#\sigma}^{n/2-1}}\to 0
$
as ${t\to 0}$.

To prove the theorem claim about the associated pressure $p$ we remark that it satisfies \eqref{Eq-p}, 
where  $\mathbf F\in L_2(0,T;\dot{\mathbf H}_{\#}^{n/2-2})$
due to the theorem conditions and the inclusion $({\mathbf u}\cdot \nabla ){\mathbf u}\in{L_2(0,T_*;\dot{\mathbf H}_\#^{n/2-2})}$.
By Lemma \ref{div-grad-is} for gradient, with $s=n/2-1$, equation \eqref{Eq-p} has a unique solution $p$ in 
$L_2(0,T_*;\dot{H}_{\#}^{n/2-1})$.

(iii) The energy equalities \eqref{NS-eq:mik51ai=aTvarC} and \eqref{E4.9TvarC} immediately follow from Theorem \ref{Th4.4}. 

(iv) The solution uniqueness follows from Theorem \ref{Th4.5}.
\end{proof}
 
\begin{remark}
Since $\|\mathbf f(\cdot,t)\|_{\dot{\mathbf H}_\#^{n/2-2}}^2$ is integrable on $(0,T]$ by the theorem condition and 
$\|(K{\mathbf u}^0)(\cdot,t)\|_{\dot{\mathbf H}_{\#}^{n/2}}^2$ is integrable on $(0,\infty)$ by the inequality \eqref{E4.121},  we conclude that due to the absolute continuity of the Lebesgue integrals, for arbitrarily large data ${\mathbf f}\in L_2(0,T;\dot{\mathbf H}_\#^{n/2-2})$  and $\mathbf u^0\in \dot{\mathbf H}_{\#\sigma}^{n/2-1}$ there exists $T_*>0$ such that condition \eqref{E4.143T*varC} holds.
\end{remark}

Estimating the integrand in the second integral in \eqref{E4.143T*varC} according to \eqref{E4.121}, we arrive at the following assertion allowing an explicit estimate of $T_*$ for arbitrarily large data if ${\mathbf f}\in L_\infty(0,T;\dot{\mathbf H}_\#^{n/2-2})$.
\begin{corollary}[Serrin-type solution  for arbitrarily large data but small time or vice versa.]
\label{NS-problemCor-sigma-2C}
Let $n\ge 2$ and $T>0$. 
Let the coefficients $a_{ij}^{\alpha\beta}$ be constant and the relaxed ellipticity condition hold.
Let ${\mathbf f}\in L_\infty(0,T;\dot{\mathbf H}_\#^{n/2-2})$  and $\mathbf u^0\in \dot{\mathbf H}_{\#\sigma}^{n/2}$.

Then there  exist constants $A_1, A_2, A_3>0$ that are independent of ${\mathbf f}$ and $\mathbf u^0$ but may depend on $T$, $n$,  $\|\mathbb A\|$ and $C_{\mathbb A}$, such that
if $T_*\in(0,T]$ satisfies the inequality
\begin{align}\label{E4.143T*C}
T_*\left[\|\mathbf f\|_{L_\infty(0,T;\dot{\mathbf H}_\#^{n/2-2})}^2
+ \left (A_1\|{\mathbf u}^0\|^2_{{\mathbf H}_\#^{n/2-1}} + A_2\right) \|{\mathbf u}^0\|^2_{{\mathbf H}_\#^{n/2}}\right]
<A_3,
\end{align}  
then there exists a Serrin-type solution  $\mathbf u\in L_{\infty}(0,T_*;\dot{\mathbf H}_{\#\sigma}^{n/2-1})\cap L_2(0,T_*;\dot{\mathbf H}_{\#\sigma}^{n/2})$  of the anisotropic Navier-Stokes  initial value problem.
This solution satisfies items (ii)-(iv) in Theorem \ref{NS-problemTh-sigma-Lv-crC}
\end{corollary}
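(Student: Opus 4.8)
The plan is to obtain the corollary as an immediate consequence of Theorem \ref{NS-problemTh-sigma-Lv-crC}, the only new ingredient being the pointwise-in-time bound \eqref{E4.121} applied to the heat-semigroup term appearing in condition \eqref{E4.143T*varC}.

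First I would verify that the hypotheses of Theorem \ref{NS-problemTh-sigma-Lv-crC} hold under those of the corollary. Since $T_*\le T<\infty$, the inclusion $L_\infty(0,T;\dot{\mathbf H}_\#^{n/2-2})\subset L_2(0,T;\dot{\mathbf H}_\#^{n/2-2})$ gives $\mathbf f\in L_2(0,T;\dot{\mathbf H}_\#^{n/2-2})$, and the continuous embedding $\dot{\mathbf H}_{\#\sigma}^{n/2}\subset\dot{\mathbf H}_{\#\sigma}^{n/2-1}$ gives $\mathbf u^0\in\dot{\mathbf H}_{\#\sigma}^{n/2-1}$; the coefficients are constant and satisfy \eqref{mu}. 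Hence Theorem \ref{NS-problemTh-sigma-Lv-crC} furnishes constants $A_1,A_2\ge0$, $A_3>0$ depending only on $T,n,\|\mathbb A\|,C_{\mathbb A}$, and it suffices to show that \eqref{E4.143T*C} implies \eqref{E4.143T*varC} with (a harmless enlargement of) these same constants.

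Next I would estimate the two integrals in the left-hand side of \eqref{E4.143T*varC} on $[0,T_*]$. For the forcing term, $\int_0^{T_*}\|\mathbf f(\cdot,t)\|_{\mathbf H_\#^{n/2-2}}^2\,dt\le T_*\,\|\mathbf f\|_{L_\infty(0,T;\dot{\mathbf H}_\#^{n/2-2})}^2$. For the heat term, I would recall (as observed after \eqref{K-def}) that $\div\,(K\mathbf u^0)=0$ and $\widehat{K\mathbf u^0}(\mathbf 0,t)=\mathbf 0$, so the seminorm coincides with the full norm and \eqref{E4.121} with $r=n/2$ gives $\|(K\mathbf u^0)(\cdot,t)\|_{\dot{\mathbf H}_\#^{n/2}}^2=\|(K\mathbf u^0)(\cdot,t)\|_{\mathbf H_\#^{n/2}}^2\le\|\mathbf u^0\|_{\mathbf H_\#^{n/2}}^2$ for all $t\ge0$; integration then yields $\int_0^{T_*}\|(K\mathbf u^0)(\cdot,t)\|_{\dot{\mathbf H}_\#^{n/2}}^2\,dt\le T_*\,\|\mathbf u^0\|_{\mathbf H_\#^{n/2}}^2$. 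Combining the two bounds, the left side of \eqref{E4.143T*varC} is dominated by $T_*\big[\|\mathbf f\|_{L_\infty(0,T;\dot{\mathbf H}_\#^{n/2-2})}^2+(A_1\|\mathbf u^0\|_{\mathbf H_\#^{n/2-1}}^2+A_2)\,\|\mathbf u^0\|_{\mathbf H_\#^{n/2}}^2\big]$, which is exactly the left side of \eqref{E4.143T*C} with the same $A_1,A_2$ and the same $A_3$. Therefore \eqref{E4.143T*C} forces \eqref{E4.143T*varC}, and the existence of the Serrin-type solution $\mathbf u\in L_\infty(0,T_*;\dot{\mathbf H}_{\#\sigma}^{n/2-1})\cap L_2(0,T_*;\dot{\mathbf H}_{\#\sigma}^{n/2})$ together with properties (ii)--(iv) follows verbatim from Theorem \ref{NS-problemTh-sigma-Lv-crC}.

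There is no genuine obstacle here; the argument is essentially bookkeeping. The only minor points to attend to are: the elementary embeddings needed to match the theorem's hypotheses; the identification of the $\dot{\mathbf H}_\#^{n/2}$-norm with the $\mathbf H_\#^{n/2}$-norm for mean-zero fields when invoking \eqref{E4.121}; and, if one insists on the strictly positive constants $A_1,A_2,A_3>0$ stated in the corollary rather than the possibly-zero $A_1,A_2$ of the theorem, replacing $A_1$ by $\max\{A_1,1\}$ and $A_2$ by $\max\{A_2,1\}$, which only strengthens \eqref{E4.143T*C} and hence preserves the implication.
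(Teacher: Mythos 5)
Your proposal is correct and follows exactly the route the paper indicates: estimate the integrand of the heat-term integral in \eqref{E4.143T*varC} pointwise in $t$ via \eqref{E4.121} with $r=n/2$, bound the forcing integral by $T_*$ times the $L_\infty$-norm, and then invoke Theorem \ref{NS-problemTh-sigma-Lv-crC}. The small bookkeeping points you raise (embeddings to match hypotheses, mean-zero seminorm vs.\ norm, enlarging possibly-zero constants) are all sound and consistent with the paper's intent.
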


Estimating the second integral in \eqref{E4.143T*varC} according to \eqref{E4.122va}, we arrive at the following assertion. 
\begin{corollary}[Existence of Serrin-type solution for arbitrary time but small data]
\label{NS-problemCor-sigma-1C}
Let $n\ge 2$ and $T>0$. 
Let the coefficients $a_{ij}^{\alpha\beta}$ be constant and the relaxed ellipticity condition hold.
Let ${\mathbf f}\in L_2(0,T;\dot{\mathbf H}_\#^{n/2-2})$  and $\mathbf u^0\in \dot{\mathbf H}_{\#\sigma}^{n/2-1}$.

Then there  exist constants $A_1, A_2, A_3>0$ that are independent of  ${\mathbf f}$ and $\mathbf u^0$ but may depend on $T$, $n$, $\|\mathbb A\|$ and $C_{\mathbb A}$, such that
if ${\mathbf f}$ and $\mathbf u^0$ satisfy the inequality
\begin{align}\label{E4.143TC}
\|\mathbf f\|_{L_2(0,T;\dot{\mathbf H}_\#^{n/2-2})}^2
+ \left (A_1\|{\mathbf u}^0\|^2_{{\mathbf H}_\#^{n/2-1}} + A_2\right) \|{\mathbf u}^0\|^2_{{\mathbf H}_\#^{n/2-1}}
<A_3,
\end{align}  
then there exists a Serrin-type solution  $\mathbf u\in L_{\infty}(0,T;\dot{\mathbf H}_{\#\sigma}^{n/2-1})\cap L_2(0,T;\dot{\mathbf H}_{\#\sigma}^{n/2})$  of the anisotropic Navier-Stokes 
initial value problem.
This solution satisfies items (ii)-(iv) in Theorem \ref{NS-problemTh-sigma-Lv-crC} with $T_*=T$.
\end{corollary}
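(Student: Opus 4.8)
The plan is to deduce the statement directly from Theorem \ref{NS-problemTh-sigma-Lv-crC}, applied with the index $r=n/2-1$, taking $T_*=T$ and the very same constants $A_1,A_2,A_3$ that theorem furnishes. Since the hypotheses on the data (${\mathbf f}\in L_2(0,T;\dot{\mathbf H}_\#^{n/2-2})$, $\mathbf u^0\in \dot{\mathbf H}_{\#\sigma}^{n/2-1}$) coincide with those of Theorem \ref{NS-problemTh-sigma-Lv-crC}, all the work reduces to showing that the small-data hypothesis \eqref{E4.143TC} forces the smallness condition \eqref{E4.143T*varC} to hold with $T_*=T$.

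To this end I would first note that, because $\mathbf u^0\in\dot{\mathbf H}_{\#\sigma}^{n/2-1}$, we have $\widehat{\mathbf u}^0(\mathbf 0)=\mathbf 0$, so by \eqref{K-def} the function $K\mathbf u^0$ has vanishing spatial mean for every $t\ge 0$; hence its $\dot{\mathbf H}_\#^{n/2}$ and $\mathbf H_\#^{n/2}$ norms coincide, and the heat-semigroup estimate \eqref{E4.122va} (with $r=n/2-1$, so $r+1=n/2$) gives
\[
\int_0^{T}\|(K\mathbf u^0)(\cdot,t)\|_{\dot{\mathbf H}_\#^{n/2}}^2\,dt
=\|K\mathbf u^0\|_{L_2(0,T;\mathbf H_\#^{n/2})}^2\le \|\mathbf u^0\|_{\mathbf H_\#^{n/2-1}}^2 .
\]
Substituting this bound, together with the identity $\int_0^T\|\mathbf f(\cdot,t)\|_{\mathbf H_\#^{n/2-2}}^2\,dt=\|\mathbf f\|_{L_2(0,T;\dot{\mathbf H}_\#^{n/2-2})}^2$, into the left-hand side of \eqref{E4.143T*varC} with $T_*=T$, one sees that this left-hand side is at most
\[
\|\mathbf f\|_{L_2(0,T;\dot{\mathbf H}_\#^{n/2-2})}^2
+\bigl(A_1\|\mathbf u^0\|_{\mathbf H_\#^{n/2-1}}^2+A_2\bigr)\|\mathbf u^0\|_{\mathbf H_\#^{n/2-1}}^2 ,
\]
which is precisely the left-hand side of \eqref{E4.143TC}, hence $<A_3$ by assumption. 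Thus \eqref{E4.143T*varC} is verified with $T_*=T$.

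Finally I would invoke Theorem \ref{NS-problemTh-sigma-Lv-crC}: its conclusion then produces a Serrin-type solution $\mathbf u\in L_{\infty}(0,T;\dot{\mathbf H}_{\#\sigma}^{n/2-1})\cap L_2(0,T;\dot{\mathbf H}_{\#\sigma}^{n/2})$ of the anisotropic Navier--Stokes initial value problem on the full interval $[0,T]$, along with items (ii)--(iv) there read with $T_*=T$. There is essentially no analytic obstacle: the whole content of the proof is the observation that \eqref{E4.122va} allows one to replace the data- and $T_*$-dependent integral in \eqref{E4.143T*varC} by the $T$-independent quantity $\|\mathbf u^0\|_{\mathbf H_\#^{n/2-1}}^2$, the only point needing a line of care being the zero-mean remark that makes $\dot{\mathbf H}_\#^{n/2}$ and $\mathbf H_\#^{n/2}$ interchangeable for $K\mathbf u^0$.
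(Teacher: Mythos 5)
Your proof is correct and is exactly what the paper does: immediately before the Corollary the paper remarks that one obtains it by "estimating the second integral in \eqref{E4.143T*varC} according to \eqref{E4.122va}," which is precisely the substitution you carry out with $r=n/2-1$, $T_*=T$. The zero-mean remark that lets you identify the $\dot{\mathbf H}_\#^{n/2}$ and $\mathbf H_\#^{n/2}$ norms of $K\mathbf u^0$ is a harmless and correct clarification.
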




\subsection{Spatial regularity of Serrin-type solutions for constant anisotropic viscosity coefficients}\label{S5.4C}

\begin{theorem}[Spatial regularity of Serrin-type solution for arbitrarily large data.]
\label{NS-problemTh-sigma-Lv-erC}
Let $n\ge 2$, $r>{n}/{2}-1$, and $T>0$. 
Let the coefficients $a_{ij}^{\alpha\beta}$ be constant and the relaxed ellipticity condition \eqref{mu} hold.
Let ${\mathbf f}\in L_2(0,T;\dot{\mathbf H}_\#^{r-1})$  and $\mathbf u^0\in \dot{\mathbf H}_{\#\sigma}^{r}$, while ${\mathbf f}$, $\mathbf u^0$ and $T_*\in(0,T]$ satisfy  inequality \eqref{E4.143T*varC} from Theorem \ref{NS-problemTh-sigma-Lv-crC}.

Then the Serrin-type solution $\mathbf u$ of the anisotropic Navier-Stokes 
initial value problem  \eqref{NS-problem-div0}--\eqref{NS-problem-div0-IC} 
belongs to
$L_{\infty}(0,T_*;\dot{\mathbf H}_{\#\sigma}^{r})\cap L_2(0,T_*;\dot{\mathbf H}_{\#\sigma}^{r+1})$.
In addition, 
$
\mathbf u'\in L_2(0,T_*;\dot{\mathbf H}_{\#\sigma}^{r-1}),
$
$
\mathbf u \in\mathcal C^0([0,T_*];\dot{\mathbf H}_{\#\sigma}^{r})
$,
$
\lim_{t\to 0}\| \mathbf u(\cdot,t)-{\mathbf u}^0\| _{\dot{\mathbf H}_{\#\sigma}^{r}}= 0
$
and $p\in L_2(0,T_*;\dot{H}_{\#}^{r})$. 
\end{theorem}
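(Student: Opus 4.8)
The plan is to run the same Galerkin scheme $\{\mathbf u_m\}$ already used in the proof of Theorem \ref{NS-problemTh-sigma-Lv-crC}, bootstrap the spatial regularity of the $m$-uniform a priori bounds from the index $n/2-1$ up to the index $r$, then pass to the limit and finally read off the assertions on $\mathbf u'$, on the time continuity, and on $p$. This is legitimate: since $r>n/2-1$ we have $\mathbf f\in L_2(0,T;\dot{\mathbf H}_\#^{n/2-2})$ and $\mathbf u^0\in\dot{\mathbf H}_{\#\sigma}^{n/2-1}$, so Theorem \ref{NS-problemTh-sigma-Lv-crC} applies; under \eqref{E4.143T*varC} the sequence $\{\mathbf u_m\}$ is bounded, uniformly in $m$, in $L_\infty(0,T_*;\dot{\mathbf H}_{\#\sigma}^{n/2-1})\cap L_2(0,T_*;\dot{\mathbf H}_{\#\sigma}^{n/2})$ by \eqref{E4.143avarbC}--\eqref{E4.14varbC}, and it converges to the (unique) Serrin-type solution $\mathbf u$ weakly in $L_2(0,T_*;\dot{\mathbf H}_{\#\sigma}^1)$ and weakly-star in $L_\infty(0,T_*;\dot{\mathbf H}_{\#\sigma}^0)$.

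For the inductive step, assume $\{\mathbf u_m\}$ is bounded uniformly in $m$ in $L_\infty(0,T_*;\dot{\mathbf H}_{\#\sigma}^{s})\cap L_2(0,T_*;\dot{\mathbf H}_{\#\sigma}^{s+1})$ for some $s\in[n/2-1,r)$ (base case $s=n/2-1$), and choose $s'\in(s,\min(s+1,r)]$, the choices arranged (when $r\ge n/2$) so that the intermediate indices stay strictly below $n/2$ while the last step lands exactly on $r$. Since $\mathbf u^0\in\dot{\mathbf H}_{\#\sigma}^{r}\hookrightarrow\dot{\mathbf H}_{\#\sigma}^{s'}$ and $\mathbf f\in L_2(0,T;\dot{\mathbf H}_\#^{r-1})\hookrightarrow L_2(0,T;\dot{\mathbf H}_\#^{s'-1})$, the differential inequality \eqref{E4.82T0C} with $r$ replaced by $s'$ holds for $\mathbf u_m$ (the Bessel-potential eigenbasis being orthogonal in $\dot{\mathbf H}_{\#\sigma}^{s'}$ and $\dot{\mathbf H}_{\#\sigma}^{s'+1}$ as well, Theorem \ref{TE.1r}). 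Using the multiplication Theorem \ref{RS-T1-S4.6.1} and the interpolation inequality \eqref{SII2} I would estimate $\|(\mathbf u_m\cdot\nabla)\mathbf u_m\|_{\mathbf H_\#^{s'-1}}^2=\|\nabla\cdot(\mathbf u_m\otimes\mathbf u_m)\|_{\mathbf H_\#^{s'-1}}^2\le\|\mathbf u_m\otimes\mathbf u_m\|_{(H_\#^{s'})^{n\times n}}^2\le g_m(t)\,\|\mathbf u_m\|_{\mathbf H_\#^{s'}}^2$, where: for $s'<n/2$ one has $\|\mathbf u_m\otimes\mathbf u_m\|_{H_\#^{s'}}\le C\|\mathbf u_m\|_{H_\#^{s'/2+n/4}}^2\le C\|\mathbf u_m\|_{H_\#^{s'}}\|\mathbf u_m\|_{H_\#^{n/2}}$ (as in \eqref{E4.91-0T0cC}), so $g_m=C\|\mathbf u_m\|_{\dot{\mathbf H}_\#^{n/2}}^2$, which is $L_1(0,T_*)$-bounded uniformly in $m$ by the Serrin bound \eqref{E4.14varbC}; for $s'\ge n/2$ (so $s>n/2-1$, hence $s+1>n/2$) one uses that $\dot{\mathbf H}_\#^{s+1}$ is a multiplier on $\dot{\mathbf H}_\#^{s'}$, giving $\|\mathbf u_m\otimes\mathbf u_m\|_{H_\#^{s'}}\le C\|\mathbf u_m\|_{H_\#^{s+1}}\|\mathbf u_m\|_{H_\#^{s'}}$, so $g_m=C\|\mathbf u_m\|_{\dot{\mathbf H}_\#^{s+1}}^2$, which is $L_1(0,T_*)$-bounded uniformly in $m$ by the inductive hypothesis. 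Then \eqref{E4.82T0C} becomes $\frac{d}{dt}\|\mathbf u_m\|_{\mathbf H_\#^{s'}}^2+\frac14 C_{\mathbb A}^{-1}\|\mathbf u_m\|_{\mathbf H_\#^{s'+1}}^2\le 8C_{\mathbb A}\big(\|\mathbf f\|_{\mathbf H_\#^{s'-1}}^2+g_m(t)\|\mathbf u_m\|_{\mathbf H_\#^{s'}}^2\big)$; dropping the dissipation term and applying the (linear) integral Gronwall inequality, Lemma \ref{IGTL}, yields a uniform $L_\infty(0,T_*;\dot{\mathbf H}_{\#\sigma}^{s'})$ bound, and integrating over $[0,T_*]$ then yields a uniform $L_2(0,T_*;\dot{\mathbf H}_{\#\sigma}^{s'+1})$ bound. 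After finitely many steps $\{\mathbf u_m\}$ is bounded uniformly in $L_\infty(0,T_*;\dot{\mathbf H}_{\#\sigma}^{r})\cap L_2(0,T_*;\dot{\mathbf H}_{\#\sigma}^{r+1})$; extracting a subsequence converging weakly in $L_2(0,T_*;\dot{\mathbf H}_{\#\sigma}^{r+1})$ and weakly-star in $L_\infty(0,T_*;\dot{\mathbf H}_{\#\sigma}^{r})$ and identifying its limit with $\mathbf u$ (uniqueness of weak limits in the weaker topologies) gives $\mathbf u\in L_\infty(0,T_*;\dot{\mathbf H}_{\#\sigma}^{r})\cap L_2(0,T_*;\dot{\mathbf H}_{\#\sigma}^{r+1})$.

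For the remaining assertions I would use $\mathbf u'=\mathbb P_\sigma\mathbf F$ with $\mathbf F=\mathbf f+\bs{\mathfrak L}\mathbf u-(\mathbf u\cdot\nabla)\mathbf u$, as in \eqref{NS-problem-just}. Since the coefficients are constant, \eqref{L-oper}, \eqref{TensNorm} and \eqref{eq:mik14} give $\|\bs{\mathfrak L}\mathbf u\|_{\mathbf H_\#^{r-1}}\le\|\mathbb A\|\,\|\mathbf u\|_{\mathbf H_\#^{r+1}}$, the convective term lies in $L_2(0,T_*;\dot{\mathbf H}_\#^{r-1})$ by the level-$r$ instance of the estimate above, and $\mathbf f\in L_2(0,T_*;\dot{\mathbf H}_\#^{r-1})$ by hypothesis; as $\mathbb P_\sigma$ is bounded on $\dot{\mathbf H}_\#^{r-1}$ (Theorem \ref{Th2.1}(d)) we get $\mathbf u'\in L_2(0,T_*;\dot{\mathbf H}_{\#\sigma}^{r-1})$, hence $\mathbf u\in W^1(0,T_*;\dot{\mathbf H}_{\#\sigma}^{r+1},\dot{\mathbf H}_{\#\sigma}^{r-1})$. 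Theorem \ref{LM-T3.1} then gives $\mathbf u\in\mathcal C^0([0,T_*];\dot{\mathbf H}_{\#\sigma}^{r})$; since the $\dot{\mathbf H}_{\#\sigma}^{0}$-weak trace at $t=0$ already equals $\mathbf u^0$ (Remark \ref{R4.3}, \eqref{NS-eq:mik51in}) and $\mathbf u^0\in\dot{\mathbf H}_{\#\sigma}^{r}$, it follows that $\mathbf u(\cdot,0)=\mathbf u^0$ in $\dot{\mathbf H}_{\#\sigma}^{r}$ and $\|\mathbf u(\cdot,t)-\mathbf u^0\|_{\dot{\mathbf H}_{\#\sigma}^{r}}\to0$ as $t\to0$. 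Finally, the associated pressure satisfies $\nabla p=\mathbb P_g\mathbf F$, as in \eqref{Eq-p}, with $\mathbf F\in L_2(0,T_*;\dot{\mathbf H}_\#^{r-1})$, so Lemma \ref{div-grad-is} (for the gradient, with $s=r$) produces the unique $p\in L_2(0,T_*;\dot H_\#^{r})$, which coincides with the associated pressure identified in Lemma \ref{L5.2}.

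The hard part will be the uniform-in-$m$ control of the convective term at the higher regularity: once $r\ge n/2$ the product norm $\|\mathbf u_m\otimes\mathbf u_m\|_{H_\#^{r}}$ is essentially quadratic in the top Sobolev norm $\|\mathbf u_m\|_{H_\#^{r}}$, so the one-step estimate available for $n/2-1<r<n/2$ no longer closes. The bootstrap circumvents this by gaining at most one derivative per step and feeding the square-integrable-in-time norm secured at the previous level (the Serrin bound $\mathbf u\in L_2(0,T_*;\dot{\mathbf H}_{\#\sigma}^{n/2})$ at the first level) into the Gronwall coefficient of the next; crucially, all the smallness has already been consumed in Theorem \ref{NS-problemTh-sigma-Lv-crC} to obtain the base estimate, so that every bootstrap step is a purely linear Gronwall argument requiring no further restriction on the size of the data or of $T_*$.
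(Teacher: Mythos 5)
Your proposal is correct and follows essentially the same overall strategy as the paper: run the Galerkin scheme from Theorem \ref{NS-problemTh-sigma-Lv-crC}, establish $m$-uniform bounds at the higher Sobolev level via the differential inequality \eqref{E4.82T0C} and Gronwall's inequality, extract a weakly-converging subsequence and identify the limit with $\mathbf u$ by uniqueness of weak limits, then read off the statements on $\mathbf u'$, time continuity and $p$ from $\mathbf u'=\mathbb P_\sigma\mathbf F$, Theorem \ref{LM-T3.1}, equation \eqref{Eq-p} and Lemma \ref{div-grad-is}, exactly as the paper does in its Step (d).

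The one place where you genuinely deviate from the paper is the organization of the bootstrap (the paper's Steps (a)--(c)). The paper splits the index range into $n/2-1<r<n/2$, $r=n/2$, and then $kn/2<r\le(k+1)n/2$, $k\ge 1$, performing an induction over $k$. At Step (c) the Gronwall coefficient is $\|\mathbf u_m\|^2_{\mathbf H_\#^r}$ itself, and for $k=1$ the paper appeals to the $L_2(0,T_*;\dot{\mathbf H}_\#^{s})$ bounds secured in Steps (a)--(b), which cover only $s\le n/2+1$; for $n\ge 3$ this leaves the subrange $n/2+1<r\le n$ of the $k=1$ step not directly covered, and the inductive jump of $n/2$ derivatives at each $k$ does not close on its own (the previous step supplies $L_2$ bounds only up to $kn/2+2$, not $(k+1)n/2$). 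Your ladder, which gains at most one derivative per step — choosing $s'\in(s,\min(s+1,r)]$, using the Serrin coefficient $\|\mathbf u_m\|^2_{\mathbf H_\#^{n/2}}$ while $s'<n/2$ and the inductive coefficient $\|\mathbf u_m\|^2_{\mathbf H_\#^{s+1}}$ once $s>n/2-1$ — closes cleanly for all $n$ and $r$, avoiding the implicit finer iteration the paper's Step (c) requires. Your remark that you should arrange the first substep to land strictly below $n/2$ (so that the multiplier argument with $s+1>n/2$ is available at the step that reaches or crosses $n/2$) is exactly the boundary care needed. So both approaches buy the same conclusion, but your one-derivative-per-step scheme is more economical and more visibly rigorous, whereas the paper's coarser $n/2$-derivative jumps are conceptually tidier but require a finer internal iteration for $n\ge 3$ that is not spelled out.
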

\begin{proof}
The existence of the Serrin-type solution $\mathbf u\in L_{\infty}(0,T_*;\dot{\mathbf H}_{\#\sigma}^{n/2-1})\cap L_2(0,T_*;\dot{\mathbf H}_{\#\sigma}^{n/2})$ is proved in Theorem \ref{NS-problemTh-sigma-Lv-crC}(i), and we will prove that it has a higher smoothness. 
We will employ the same Galerkin approximation used in Section \ref{S.5.2C} and in the proof of Theorem \ref{NS-problemTh-sigma-Lv-crC}(i).

\paragraph{Step (a).} 
Let us estimate the last term in  \eqref{E4.82T0C} for the case $n/2-1<r<n/2$. 
By \eqref{E4.91-0T0cC} we obtain from \eqref{E4.82T0C}, 
\begin{align}\label{E4.93BTcC}
\frac{d}{d t}\left\|\mathbf u_m\right\|_{\mathbf H^{r}_{\#}}^2
+\frac14 C_{\mathbb A}^{-1}\|\mathbf u_m\|_{\mathbf{H}_\#^{r+1}}^2
\leq 8 C_{\mathbb A}C^2_{*rn} \|\mathbf{u}_m\|^2_{{\mathbf H}_\#^{n/2}}\|\mathbf u_m\|^2_{\dot{\mathbf H}_{\#}^{r}}
+8 C_{\mathbb A}\|\mathbf f\|_{\mathbf{H}_\#^{r-1}}^2
\end{align}
implying
\begin{align}\label{E4.93BTc1C}
\frac{d}{d t}\left\|\mathbf u_m\right\|_{\mathbf H^{r}_{\#}}^2
\leq 8 C_{\mathbb A}C^2_{*rn} \|\mathbf{u}_m\|^2_{{\mathbf H}_\#^{n/2}}\|\mathbf u_m\|^2_{\dot{\mathbf H}_{\#}^{r}}
+8 C_{\mathbb A}\|\mathbf f\|_{\mathbf{H}_\#^{r-1}}^2.
\end{align}
By Gronwall's inequality \eqref{E17}, we obtain from \eqref{E4.93BTc1C} that
\begin{align}\label{E5.69C}
\|\mathbf u_m\|^2_{L_\infty(0,T_*;\dot{\mathbf H}_{\#\sigma}^{r})}
\le \exp\left(8 C_{\mathbb A}C^2_{*rn} \|\mathbf{u}_m\|^2_{L_2(0,T_*;\mathbf H_\#^{n/2})}\right)
 \left[\|\mathbf u_m(\cdot,0)\|_{\mathbf H^{r}_{\#}}^2
+8 C_{\mathbb A}\|\mathbf f\|_{L_2(0,T_*;\mathbf{H}_\#^{r-1})}^2\right].
\end{align}
We have $\|\mathbf u_m(\cdot,0)\|_{\mathbf H^{r}_{\#}}\le \|\mathbf u^0\|_{\mathbf H^{r}_{\#}}$ and
by \eqref{E4.14varbC}, the sequence $\|\mathbf{u}_m\|_{L_2(0,T_*;\mathbf H_\#^{n/2})}$ is bounded. 
Then \eqref{E5.69C} implies that the sequence $\|\mathbf u_m\|^2_{L_\infty(0,T_*;\dot{\mathbf H}_{\#\sigma}^{r})}$ is bounded as well.
Integrating \eqref{E4.93BTcC}, we conclude that 
\begin{multline}\label{E5.70C}
\|\mathbf u_m\|^2_{L_2(0,T_*;\dot{\mathbf H}_{\#\sigma}^{r+1})}
\le 32 C^2_{\mathbb A}C^2_{*rn} \|\mathbf{u}_m\|^2_{L_2(0,T_*;\mathbf H_\#^{n/2})}
\|\mathbf u_m\|^2_{L_\infty(0,T_*;\dot{\mathbf H}_{\#\sigma}^{r})}
\\
+4C_{\mathbb A}\|\mathbf u_m(\cdot,0)\|_{\mathbf H^{r}_{\#}}^2
+32 C^2_{\mathbb A}\|\mathbf f\|_{L_2(0,T_*;\mathbf{H}_\#^{r-1})}^2.
\end{multline}
Inequalities \eqref{E5.69C} and \eqref{E5.70C} mean that the sequences
\begin{align}\label{E5.70aC}
\{\|\mathbf u_m\|_{L_\infty(0,T_*;\dot{\mathbf H}_{\#\sigma}^{r})}\}_{m=1}^\infty
\text { and } \{\|\mathbf u_m\|_{L_2(0,T_*;\dot{\mathbf H}_{\#\sigma}^{r+1})}\}_{m=1}^\infty
\text{ are bounded for } n/2-1<r<n/2.
\end{align}

\paragraph{Step (b).} 
Let now $r=n/2$. 
Then by the multiplication Theorem \ref{RS-T1-S4.6.1}(a)  and
relation  \eqref{eq:mik14},
\begin{align}\label{E4.91-0T0dC}
\left\|({\mathbf u}_m\cdot \nabla ){\mathbf u}_m\right\|^2_{\mathbf{H}_\#^{n/2-1}} 
=\left\|\nabla\cdot({\mathbf u}_m \otimes{\mathbf u}_m)\right\|_{\mathbf{H}_\#^{n/2-1}}^2
&\le \left\|{\mathbf u}_m \otimes{\mathbf u}_m)\right\|_{({H}_\#^{n/2})^{n\times n}}^2
\nonumber\\
&\le C^2_{*rn} \|\mathbf{u}_m\|_{{\mathbf H}_\#^{n/2}}^2\|\mathbf{u}_m\|_{{\mathbf H}_\#^{n/2+1/2}}^2,
\end{align}
where 
$C_{*rn}=C_*(n/2,n/2+1/2, n)$.

Then by \eqref{E4.91-0T0dC} we obtain from \eqref{E4.82T0C}, 
\begin{align}\label{E4.93BTdC}
\frac{d}{d t}\left\|\mathbf u_m\right\|_{\mathbf H^{n/2}_{\#}}^2
+\frac14 C_{\mathbb A}^{-1}\|\mathbf u_m\|_{\mathbf{H}_\#^{n/2+1}}^2
\leq 8 C_{\mathbb A} C^2_{*rn} \|\mathbf{u}_m\|^2_{{\mathbf H}_\#^{n/2+1/2}}\|\mathbf u_m\|^2_{\dot{\mathbf H}_{\#}^{n/2}}
+8 C_{\mathbb A}\|\mathbf f\|_{\mathbf{H}_\#^{n/2-1}}^2,
\end{align}
implying
\begin{align}\label{E4.93BTd1C}
\frac{d}{d t}\left\|\mathbf u_m\right\|_{\mathbf H^{n/2}_{\#}}^2
\leq 8 C_{\mathbb A} C^2_{*rn} \|\mathbf{u}_m\|^2_{{\mathbf H}_\#^{n/2+1/2}}\|\mathbf u_m\|^2_{\dot{\mathbf H}_{\#}^{n/2}}
+8 C_{\mathbb A}\|\mathbf f\|_{\mathbf{H}_\#^{n/2-1}}^2\, .
\end{align}
By Gronwall's inequality \eqref{E17}, we obtain from \eqref{E4.93BTd1C} that
\begin{align}\label{E5.74C}
\|\mathbf u_m\|^2_{L_\infty(0,T_*;\dot{\mathbf H}_{\#\sigma}^{n/2})}
\le \exp\left(8 C_{\mathbb A}C^2_{*rn} \|\mathbf{u}_m\|^2_{L_2(0,T_*;\mathbf H_\#^{n/2+1/2})}\right)
\left[\|\mathbf u_m(\cdot,0)\|_{\mathbf H^{n/2}_{\#}}^2
+8 C_{\mathbb A}\|\mathbf f\|_{L_2(0,T_*;\mathbf{H}_\#^{n/2-1})}^2\right].
\end{align}
We have $\|\mathbf u_m(\cdot,0)\|_{\mathbf H^{n/2}_{\#}}\le \|\mathbf u^0\|_{\mathbf H^{n/2}_{\#}}$ and
by \eqref{E5.70aC}, the sequence $\|\mathbf{u}_m\|_{L_2(0,T_*;\mathbf H_\#^{n/2+1/2})}$ is bounded as well. 
Then \eqref{E5.74C} implies that the sequence $\|\mathbf u_m\|^2_{L_\infty(0,T_*;\dot{\mathbf H}_{\#\sigma}^{n/2})}$ is also bounded.
Integrating \eqref{E4.93BTdC}, we conclude that 
\begin{multline}\label{E5.75C}
\|\mathbf u_m\|^2_{L_2(0,T_*;\dot{\mathbf H}_{\#\sigma}^{n/2+1})}
\le 32 C^2_{\mathbb A} C^2_{*rn} \|\mathbf{u}_m\|^2_{L_2(0,T_*;\mathbf H_\#^{n/2+1/2})}
\|\mathbf u_m\|^2_{L_\infty(0,T_*;\dot{\mathbf H}_{\#\sigma}^{n/2})}
\\
+4C_{\mathbb A}\|\mathbf u_m(\cdot,0)\|_{\mathbf H^{n/2}_{\#}}^2
+32 C^2_{\mathbb A}\|\mathbf f\|_{L_2(0,T_*;\mathbf{H}_\#^{n/2-1})}^2
\le C<\infty.
\end{multline}
Inequalities \eqref{E5.74C} and \eqref{E5.75C} mean that the sequences
\begin{align}\label{E5.75aC}
\{\|\mathbf u_m\|_{L_\infty(0,T_*;\dot{\mathbf H}_{\#\sigma}^{r})}\}_{m=1}^\infty
\text { and } \{\|\mathbf u_m\|_{L_2(0,T_*;\dot{\mathbf H}_{\#\sigma}^{r+1})}\}_{m=1}^\infty
\text{ are bounded for } r=n/2.
\end{align}

\paragraph{Step(c).}
Let now $kn/2< r\le(k+1)n/2$, $k=1,2,3,\ldots$
By the multiplication Theorem \ref{RS-T1-S4.6.1}(a) and
relation  \eqref{eq:mik14}, 
\begin{align}\label{r>n/2E4.910C}
\|({\mathbf u}_m\cdot& \nabla ){\mathbf u}_m\|_{\mathbf{H}_\#^{r-1}}^2 
\le C_{*rn}^2\|\mathbf{u}_m\|_{{\mathbf H}_\#^r}^2 
\|\nabla\mathbf{u}_m\|_{({H}_\#^{r-1})^{n\times n}}^2
\le C_{*rn}^2\|\mathbf{u}_m\|^2_{{\mathbf H}_\#^r}\|\mathbf{u}_m\|^2_{{\mathbf H}_\#^r},
\end{align}
where 
$C_{*rn}=C_*(r-1,r, n)$.

Then by \eqref{r>n/2E4.910C} we obtain from \eqref{E4.82T0C}, 
\begin{align}\label{E4.93BTeC}
\frac{d}{d t}\left\|\mathbf u_m\right\|_{\mathbf H^{r}_{\#}}^2
+\frac14 C_{\mathbb A}^{-1}\|\mathbf u_m\|_{\mathbf{H}_\#^{r+1}}^2
\leq 8 C_{\mathbb A} C^2_{*rn} \|\mathbf{u}_m\|^2_{{\mathbf H}_\#^{r}} \|\mathbf u_m\|^2_{\dot{\mathbf H}_{\#}^{r}}
+ 8C_{\mathbb A}\|\mathbf f\|_{\mathbf{H}_\#^{r-1}}^2
\end{align}
implying
\begin{align}\label{E4.93BTe1C}
\frac{d}{d t}\left\|\mathbf u_m\right\|_{\mathbf H^{r}_{\#}}^2
\leq 8 C_{\mathbb A} C^2_{*rn} \|\mathbf{u}_m\|^2_{{\mathbf H}_\#^{r}} \|\mathbf u_m\|^2_{\dot{\mathbf H}_{\#}^{r}}
+ 8C_{\mathbb A}\|\mathbf f\|_{\mathbf{H}_\#^{r-1}}^2.
\end{align}
By Gronwall's inequality \eqref{E17}, we obtain from \eqref{E4.93BTe1C} that
\begin{align}\label{E5.79C}
\|\mathbf u_m\|^2_{L_\infty(0,T_*;\dot{\mathbf H}_{\#\sigma}^{r})}
\le \exp\left( 8C_{\mathbb A} C^2_{*rn} \|\mathbf{u}_m\|^2_{L_2(0,T_*;\mathbf H_\#^{r})}\right)
 \left[\|\mathbf u_m(\cdot,0)\|_{\mathbf H^{r}_{\#}}^2
+8 C_{\mathbb A}\|\mathbf f\|_{L_2(0,T_*;\mathbf{H}_\#^{r-1})}^2\right],
\end{align}
where $\|\mathbf u_m(\cdot,0)\|_{\mathbf H^{r}_{\#}}\le \|\mathbf u^0\|_{\mathbf H^{r}_{\#}}$.

If $k=1$, then
 the sequence $\|\mathbf{u}_m\|_{L_2(0,T_*;\mathbf H_\#^{r})}$ in \eqref{E5.79C} is bounded due to \eqref{E5.70aC} and \eqref{E5.75aC}. 
Then \eqref{E5.79C} implies that the sequence $\|\mathbf u_m\|^2_{L_\infty(0,T_*;\dot{\mathbf H}_{\#\sigma}^{r})}$ is bounded as well.
Integrating \eqref{E4.93BTeC}, we also conclude that for $k=1$,
\begin{multline}\label{E5.80C}
\|\mathbf u_m\|^2_{L_2(0,T_*;\dot{\mathbf H}_{\#\sigma}^{r+1})}
\le 32 C^2_{\mathbb A} C^2_{*rn} \|\mathbf{u}_m\|^2_{L_2(0,T_*;\mathbf H_\#^{r})}
\|\mathbf u_m\|^2_{L_\infty(0,T_*;\dot{\mathbf H}_{\#\sigma}^{r})}
\\
+4C_{\mathbb A}\|\mathbf u_m(\cdot,0)\|_{\mathbf H^{r}_{\#}}^2
+32 C^2_{\mathbb A}\|\mathbf f\|_{L_2(0,T_*;\mathbf{H}_\#^{r-1})}^2,
\quad kn/2<r\le (k+1)n/2.
\end{multline}
Inequalities \eqref{E5.79C} and \eqref{E5.80C} mean that for $k=1$ the sequences
\begin{align}\label{E5.80aC}
\{\|\mathbf u_m\|_{L_\infty(0,T_*;\dot{\mathbf H}_{\#\sigma}^{r})}\}_{m=1}^\infty
\text { and } \{\|\mathbf u_m\|_{L_2(0,T_*;\dot{\mathbf H}_{\#\sigma}^{r+1})}\}_{m=1}^\infty
\text{ are bounded for } kn/2<r\le (k+1)n/2.
\end{align}

If we assume that properties \eqref{E5.80aC} hold for some integer $k\ge 1$, then by the similar argument,  properties \eqref{E5.80aC} hold with $k$ replaced by $k+1$ and thus, by induction, they hold for any integer $k$.
Hence, collecting properties  \eqref{E5.70aC}, \eqref{E5.75aC}, and \eqref{E5.80aC} we conclude that 
the sequences
\begin{align}\label{E5.80bC}
\{\|\mathbf u_m\|_{L_\infty(0,T_*;\dot{\mathbf H}_{\#\sigma}^{r})}\}_{m=1}^\infty
\text { and } \{\|\mathbf u_m\|_{L_2(0,T_*;\dot{\mathbf H}_{\#\sigma}^{r+1})}\}_{m=1}^\infty
\text{ are bounded for } n/2-1<r.
\end{align}

Properties \eqref{E5.80bC} imply that  there exists a subsequence of $\{\mathbf u_m\}$ converging weakly  in $L_2(0, T_* ; \dot{\mathbf H}_{\#\sigma}^{r+1})$ and weakly-star in $L_\infty(0, T_* ; \dot{\mathbf H}_{\#\sigma}^r)$ to a function 
${\mathbf u}^\dag\in L_2(0, T_* ; \dot{\mathbf H}_{\#\sigma}^{r+1})\cup L_\infty(0, T_* ; \dot{\mathbf H}_{\#\sigma}^r)$.
Then the subsequence converges to ${\mathbf u}^\dag$ also weakly in $L_2(0,T_*;\dot{\mathbf H}_{\#\sigma}^{1})$ and weakly-star in $L_\infty(0,T_*;\dot{\mathbf H}_{\#\sigma}^{0})$.
Since $\{\mathbf u_m\}$ is the subsequence of the sequence that converges weakly in $L_2(0,T;\dot{\mathbf H}_{\#\sigma}^{1})$ and weakly-star in $L_\infty(0,T;\dot{\mathbf H}_{\#\sigma}^{0})$ to the weak solution, $\mathbf u$, of problem \eqref{NS-problem-div0}--\eqref{NS-problem-div0-IC} on $\left[0, T_*\right]$, we conclude that 
$\mathbf u={\mathbf u}^\dag\in L_2(0, T_* ; \dot{\mathbf H}_{\#\sigma}^{r+1})\cup L_\infty(0, T_* ; \dot{\mathbf H}_{\#\sigma}^r)$, for any $r>n/2-1$ and we thus finished proving that
\begin{align}\label{E5.28C}
\mathbf u\in L_{\infty}(0,T_*;\dot{\mathbf H}_{\#\sigma}^{r})\cap L_2(0,T_*;\dot{\mathbf H}_{\#\sigma}^{r+1}).
\end{align}

\paragraph{Step(d).} Repeating for $\mathbf u$ the reasoning related to inequalities \eqref{E4.91-0T0cC}, \eqref{E4.91-0T0dC}, \eqref{r>n/2E4.910C}, corresponding to the considered $r$, we obtain
\begin{align*}
&\left\|({\mathbf u}\cdot \nabla ){\mathbf u}\right\|_{\mathbf{H}_\#^{r-1}}^2
\le  C^2_{*rn}\|{\mathbf u}\|^2_{{\mathbf H}_\#^r}\|{\mathbf u}\|^2_{{\mathbf H}_\#^{r+1}}.
\end{align*}
Hence 
\begin{align}\label{E4.91-0T0EE1r-uC}
\left\|({\mathbf u}\cdot \nabla ){\mathbf u}\right\|_{L_2(0,T;\mathbf{H}_\#^{r-1})}
\le C_{*rn}\|\mathbf{u}\|_{L_\infty(0,T_*;{\mathbf H}_\#^r)} \|\mathbf{u}\|_{L_2(0,T_*;{\mathbf H}_\#^{r+1})}
\end{align}
Due to \eqref{E5.28C} then $({\mathbf u}\cdot \nabla ){\mathbf u}\in{L_2(0,T_*;\mathbf{H}_\#^{r-1})}$.
By \eqref{L-oper} and \eqref{TensNorm} we have
\begin{align*}
\left\|\bs{\mathfrak L}\mathbf u\right\|^2_{{\mathbf H}_{\#}^{r-1}}
\le\|a_{ij}^{\alpha \beta }E_{i\alpha }({\mathbf u})\|_{({H}_{\#}^{r})^{n\times n}}
\le \|\mathbb A\|^2_{H_\#^{\tilde\sigma+1},F}\|\mathbf u\|^2_{{\mathbf H}_{\#}^{r+1}}\quad
\text{for a.e. } t\in(0,T),
\end{align*}
and thus
\begin{align*}
\|\bs{\mathfrak L}\mathbf u\|^2_{L_2(0,T_*;\dot{\mathbf H}_{\#}^{r-1})}
&\le \|\mathbb A\|^2_{L_\infty(0,T_*;H_\#^{\tilde\sigma+1}),F}\|\mathbf u\|^2_{L_2(0,T_*;{\mathbf H}_{\#}^{r+1})},
\end{align*}
i.e., $\bs{\mathfrak L}\mathbf u\in{L_2(0,T_*;\dot{\mathbf H}_{\#}^{r-1})}$.
We also have ${\mathbf f}\in L_2(0,T;\dot{\mathbf H}_\#^{r-1})$.
Thus $\mathbf F$ defined by \eqref{Eq-F} belongs to $L_2(0,T;\dot{\mathbf H}_{\#}^{r-1})$.
Then \eqref{NS-problem-just} implies that 
${\mathbf u}'\in{L_2(0,T_*;\mathbf{H}_{\#\sigma}^{r-1})}$ and since ${\mathbf u}\in{L_2(0,T_*;\mathbf{H}_{\#\sigma}^{r+1})}$, we obtain by Theorem \ref{LM-T3.1} that 
$
\mathbf u \in\mathcal C^0([0,T_*];\dot{\mathbf H}_{\#\sigma}^{r})
$,
which also means 
that $
\| \mathbf u(\cdot,t)-{\mathbf u}^0\| _{\dot{\mathbf H}_{\#\sigma}^{r}}\to 0
$
as ${t\to 0}$.

To prove the theorem claim about the associated pressure $p$, we remark that $p$ satisfies \eqref{Eq-p}.
By Lemma \ref{div-grad-is} for gradient, with $s=r$, equation \eqref{Eq-p} has a unique solution $p$ in 
$L_2(0,T_*;\dot{H}_{\#}^{r})$.
\end{proof}

As in Corollaries \ref{NS-problemCor-sigma-2C} and \ref{NS-problemCor-sigma-1C},  condition  \eqref{E4.143T*varC} in Theorem \ref{NS-problemTh-sigma-Lv-erC} can be replaced by simpler conditions for particular cases, which leads to the following two assertions.
\begin{corollary}[Serrin-type solution  for arbitrarily large data but small time or vice versa.]
\label{NS-problemCor-sigma-2>C}
Let $n\ge 2$ and $T>0$. 
Let the coefficients $a_{ij}^{\alpha\beta}$ be constant and the relaxed ellipticity condition hold.
Let ${\mathbf f}\in L_2(0,T;\dot{\mathbf H}_\#^{r-1})\cap L_\infty(0,T;\dot{\mathbf H}_\#^{n/2-2})$  and 
$\mathbf u^0\in \dot{\mathbf H}_{\#\sigma}^{r}\cap \dot{\mathbf H}_{\#\sigma}^{n/2}$, $r>{n}/{2}-1$.
Let $T_*\in (0,T)$ satisfies inequality \eqref{E4.143T*C} in Corollary \ref{NS-problemCor-sigma-2C}.

Then the Serrin-type solution $\mathbf u$ of the anisotropic Navier-Stokes 
initial value problem  \eqref{NS-problem-div0}--\eqref{NS-problem-div0-IC} 
belongs to
$L_{\infty}(0,T_*;\dot{\mathbf H}_{\#\sigma}^{r})\cap L_2(0,T_*;\dot{\mathbf H}_{\#\sigma}^{r+1})$.
In addition, 
$
\mathbf u'\in L_2(0,T_*;\dot{\mathbf H}_{\#\sigma}^{r-1}),
$
$
\mathbf u \in\mathcal C^0([0,T_*];\dot{\mathbf H}_{\#\sigma}^{r})
$, 
$
\lim_{t\to 0}\| \mathbf u(\cdot,t)-{\mathbf u}^0\| _{\dot{\mathbf H}_{\#\sigma}^{r}}= 0
$
and $p\in L_2(0,T_*;\dot{H}_{\#}^{r})$.
\end{corollary}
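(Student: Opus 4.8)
The plan is to obtain this corollary directly from the spatial regularity Theorem \ref{NS-problemTh-sigma-Lv-erC}, whose only non-trivial hypothesis in the present setting is that the triple $({\mathbf f},{\mathbf u}^0,T_*)$ satisfy inequality \eqref{E4.143T*varC}; the remaining requirements of that theorem — ${\mathbf f}\in L_2(0,T;\dot{\mathbf H}_\#^{r-1})$, ${\mathbf u}^0\in\dot{\mathbf H}_{\#\sigma}^{r}$ with $r>n/2-1$, constant coefficients, and the relaxed ellipticity condition — are assumed outright here. Once \eqref{E4.143T*varC} is verified, every asserted conclusion (membership of $\mathbf u$ in $L_\infty(0,T_*;\dot{\mathbf H}_{\#\sigma}^r)\cap L_2(0,T_*;\dot{\mathbf H}_{\#\sigma}^{r+1})$, the inclusions $\mathbf u'\in L_2(0,T_*;\dot{\mathbf H}_{\#\sigma}^{r-1})$ and $\mathbf u\in\mathcal C^0([0,T_*];\dot{\mathbf H}_{\#\sigma}^r)$, the limit $\|\mathbf u(\cdot,t)-{\mathbf u}^0\|_{\dot{\mathbf H}_{\#\sigma}^r}\to 0$, and $p\in L_2(0,T_*;\dot H_\#^r)$) is precisely what Theorem \ref{NS-problemTh-sigma-Lv-erC} yields.

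First I would reuse the estimate that already underlies Corollary \ref{NS-problemCor-sigma-2C}. By \eqref{E4.121} applied with smoothness index $n/2$ we have $\|(K{\mathbf u}^0)(\cdot,t)\|_{\dot{\mathbf H}_\#^{n/2}}\le\|{\mathbf u}^0\|_{\mathbf H_\#^{n/2}}$ for all $t\ge 0$, so $\int_0^{T_*}\|(K{\mathbf u}^0)(\cdot,t)\|_{\dot{\mathbf H}_\#^{n/2}}^2\,dt\le T_*\|{\mathbf u}^0\|_{\mathbf H_\#^{n/2}}^2$ (here ${\mathbf u}^0\in\dot{\mathbf H}_{\#\sigma}^{n/2}$ is used), and likewise $\int_0^{T_*}\|{\mathbf f}(\cdot,t)\|_{\mathbf H_\#^{n/2-2}}^2\,dt\le T_*\|{\mathbf f}\|_{L_\infty(0,T;\dot{\mathbf H}_\#^{n/2-2})}^2$ (here ${\mathbf f}\in L_\infty(0,T;\dot{\mathbf H}_\#^{n/2-2})$ is used). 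Feeding these two bounds into the left-hand side of \eqref{E4.143T*varC} shows that, with the very constants $A_1,A_2,A_3$ that Corollary \ref{NS-problemCor-sigma-2C} inherits from Theorem \ref{NS-problemTh-sigma-Lv-crC}, inequality \eqref{E4.143T*C} implies inequality \eqref{E4.143T*varC}. I would then simply invoke Theorem \ref{NS-problemTh-sigma-Lv-erC} — whose first step also supplies, via Theorem \ref{NS-problemTh-sigma-Lv-crC}(i), the existence of the Serrin-type solution on $[0,T_*]$ — and record its conclusions.

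I do not anticipate a real obstacle: this is essentially a bookkeeping statement. The two points that warrant a line of care are (i) making explicit that the constants $A_1,A_2,A_3$ in the threshold inequality \eqref{E4.143T*C} are exactly those propagated from \eqref{E4.143T*varC} through the estimates above, so that the implication runs in the right direction; and (ii) observing that both data hypotheses are genuinely in play — the pair $\bigl({\mathbf f}\in L_2(0,T;\dot{\mathbf H}_\#^{r-1}),\ {\mathbf u}^0\in\dot{\mathbf H}_{\#\sigma}^{r}\bigr)$ drives the regularity argument of Theorem \ref{NS-problemTh-sigma-Lv-erC}, while the pair $\bigl({\mathbf f}\in L_\infty(0,T;\dot{\mathbf H}_\#^{n/2-2}),\ {\mathbf u}^0\in\dot{\mathbf H}_{\#\sigma}^{n/2}\bigr)$ is what makes the time-threshold inequality \eqref{E4.143T*C} meaningful and links it to \eqref{E4.143T*varC}.
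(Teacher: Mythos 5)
Your proposal is correct and is exactly the paper's intended route: the paper introduces Corollary~\ref{NS-problemCor-sigma-2>C} with the remark that condition~\eqref{E4.143T*varC} in Theorem~\ref{NS-problemTh-sigma-Lv-erC} can be replaced by the simpler condition~\eqref{E4.143T*C}, which is precisely the implication you verify via~\eqref{E4.121} and the two $L_\infty$-in-time bounds. Nothing to add.
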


\begin{corollary}[Serrin-type solution for arbitrary time but small data]
\label{NS-problemCor-sigma-1>C}
Let $n\ge 2$,   $r\ge{n}/{2}-1$, and $T>0$. 
Let the coefficients $a_{ij}^{\alpha\beta}$ be constant and the relaxed ellipticity condition hold.
Let the data ${\mathbf f}\in L_2(0,T;\dot{\mathbf H}_\#^{r-1})$  and $\mathbf u^0\in \dot{\mathbf H}_{\#\sigma}^{r}$ satisfy inequality \eqref{E4.143TC} in Corollary \ref{NS-problemCor-sigma-1C}.

Then the Serrin-type solution $\mathbf u$ of the anisotropic Navier-Stokes 
initial value problem  \eqref{NS-problem-div0}--\eqref{NS-problem-div0-IC} 
belongs to
$L_{\infty}(0,T;\dot{\mathbf H}_{\#\sigma}^{r})\cap L_2(0,T;\dot{\mathbf H}_{\#\sigma}^{r+1})$.
In addition, 
$
\mathbf u'\in L_2(0,T;\dot{\mathbf H}_{\#\sigma}^{r-1}),
$
$
\mathbf u \in\mathcal C^0([0,T];\dot{\mathbf H}_{\#\sigma}^{r})
$,
$
\lim_{t\to 0}\| \mathbf u(\cdot,t)-{\mathbf u}^0\| _{\dot{\mathbf H}_{\#\sigma}^{r}}= 0
$
and $p\in L_2(0,T;\dot{H}_{\#}^{r})$.
\end{corollary}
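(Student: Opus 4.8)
The plan is to deduce the statement from the two constant-coefficient results already proved for this setting: Corollary \ref{NS-problemCor-sigma-1C}, which gives the base-regularity Serrin-type solution on the whole interval $[0,T]$ under the smallness condition \eqref{E4.143TC}, and Theorem \ref{NS-problemTh-sigma-Lv-erC}, which bootstraps the spatial regularity up to any $r>n/2-1$ provided \eqref{E4.143T*varC} holds with $T_*=T$. The only real content of the proof is the bookkeeping check that the single hypothesis \eqref{E4.143TC} simultaneously unlocks both of these, with $T_*=T$.

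First I would note that since $r\ge n/2-1$ we have the embeddings $\dot{\mathbf H}_{\#\sigma}^r\subset\dot{\mathbf H}_{\#\sigma}^{n/2-1}$ and $L_2(0,T;\dot{\mathbf H}_\#^{r-1})\subset L_2(0,T;\dot{\mathbf H}_\#^{n/2-2})$ with norm inequalities $\|\mathbf u^0\|_{\mathbf H_\#^{n/2-1}}\le\|\mathbf u^0\|_{\mathbf H_\#^{r}}$ and $\|\mathbf f\|_{L_2(0,T;\dot{\mathbf H}_\#^{n/2-2})}\le\|\mathbf f\|_{L_2(0,T;\dot{\mathbf H}_\#^{r-1})}$ (because $\varrho(\bs\xi)>1$). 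Hence the data meet the hypotheses of Corollary \ref{NS-problemCor-sigma-1C}, and, assuming \eqref{E4.143TC}, that corollary produces a Serrin-type solution $\mathbf u\in L_\infty(0,T;\dot{\mathbf H}_{\#\sigma}^{n/2-1})\cap L_2(0,T;\dot{\mathbf H}_{\#\sigma}^{n/2})$ on all of $[0,T]$, satisfying items (ii)--(iv) of Theorem \ref{NS-problemTh-sigma-Lv-crC} with $T_*=T$. For $r=n/2-1$ this is already the full assertion, so it remains to treat $r>n/2-1$.

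Next I would verify that \eqref{E4.143TC} implies the hypothesis \eqref{E4.143T*varC} of Theorem \ref{NS-problemTh-sigma-Lv-erC} with $T_*=T$. Applying the heat-semigroup energy estimate \eqref{E4.121a} (equivalently \eqref{E4.122va}) with $r$ replaced by $n/2-1$ gives $\int_0^{T}\|(K\mathbf u^0)(\cdot,t)\|_{\dot{\mathbf H}_\#^{n/2}}^2\,dt\le\|\mathbf u^0\|_{\mathbf H_\#^{n/2-1}}^2$, while $\int_0^T\|\mathbf f(\cdot,t)\|_{\mathbf H_\#^{n/2-2}}^2\,dt=\|\mathbf f\|_{L_2(0,T;\dot{\mathbf H}_\#^{n/2-2})}^2$. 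Substituting these two bounds into the left-hand side of \eqref{E4.143T*varC} (taken with $T_*=T$ and with the constants $A_1,A_2,A_3$ of Theorem \ref{NS-problemTh-sigma-Lv-crC}, which are exactly the ones fixed in Corollary \ref{NS-problemCor-sigma-1C}) shows it is bounded above by the left-hand side of \eqref{E4.143TC}, hence is $<A_3$. Thus \eqref{E4.143T*varC} holds with $T_*=T$.

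Finally I would invoke Theorem \ref{NS-problemTh-sigma-Lv-erC} with $T_*=T$, which then yields $\mathbf u\in L_\infty(0,T;\dot{\mathbf H}_{\#\sigma}^{r})\cap L_2(0,T;\dot{\mathbf H}_{\#\sigma}^{r+1})$, $\mathbf u'\in L_2(0,T;\dot{\mathbf H}_{\#\sigma}^{r-1})$, $\mathbf u\in\mathcal C^0([0,T];\dot{\mathbf H}_{\#\sigma}^{r})$, $\lim_{t\to0}\|\mathbf u(\cdot,t)-\mathbf u^0\|_{\dot{\mathbf H}_{\#\sigma}^{r}}=0$, and a unique associated pressure $p\in L_2(0,T;\dot{H}_{\#}^{r})$ --- precisely the claimed conclusions. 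There is no genuine obstacle; the one point requiring care is confirming that the single inequality \eqref{E4.143TC} is the correct hypothesis for both the base existence result and the regularity bootstrap, which holds because in the proof of Corollary \ref{NS-problemCor-sigma-1C} the constants $A_1,A_2,A_3$ were chosen, via the estimate \eqref{E4.122va}, exactly so as to force \eqref{E4.143T*varC} with $T_*=T$.
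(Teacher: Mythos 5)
Your proposal is correct and follows exactly the route the paper intends: since the paper states this corollary without a separate proof, merely noting that condition \eqref{E4.143T*varC} in Theorem \ref{NS-problemTh-sigma-Lv-erC} ``can be replaced by simpler conditions for particular cases,'' your job was precisely the bookkeeping you carried out --- using \eqref{E4.122va} (with $r$ replaced by $n/2-1$) to show that \eqref{E4.143TC} forces \eqref{E4.143T*varC} with $T_*=T$, then invoking Theorem \ref{NS-problemTh-sigma-Lv-erC} for $r>n/2-1$ and falling back on Corollary \ref{NS-problemCor-sigma-1C} together with Theorem \ref{NS-problemTh-sigma-Lv-crC}(ii) for the boundary case $r=n/2-1$. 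The observation that the constants $A_1,A_2,A_3$ in Corollary \ref{NS-problemCor-sigma-1C} are by construction the same as those in Theorem \ref{NS-problemTh-sigma-Lv-crC} is the key link, and you flagged it appropriately.
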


Theorem \ref{NS-problemTh-sigma-Lv-erC} leads also to the following infinite regularity assertion.
\begin{corollary}
\label{Cor5.11spC}
Let $T>0$ and $n\ge 2$. 
Let the coefficients $a_{ij}^{\alpha\beta}$ be constant and the relaxed ellipticity condition \eqref{mu} hold.
Let ${\mathbf f}\in L_2(0,T;\dot{\mathbf C}_\#^{\infty})$
and $\mathbf u^0\in \dot{\mathbf C}_{\#\sigma}^{\infty}$, while ${\mathbf f}$, $\mathbf u^0$ and $T_*\in(0,T]$ satisfy  inequality \eqref{E4.143T*varC} from Theorem \ref{NS-problemTh-sigma-Lv-crC}.

Then the Serrin-type solution $\mathbf u$ of the anisotropic Navier-Stokes 
initial value problem  \eqref{NS-problem-div0}--\eqref{NS-problem-div0-IC} 
is such that
${\mathbf u}\in \mathcal C^0([0,T_*];\dot{\mathbf C}_{\#\sigma}^{\infty})$, 
$\mathbf u'\in L_2(0,T_*;\dot{\mathbf C}_{\#\sigma}^{\infty})$ and 
$p\in L_2(0,T_*;\dot{\mathcal C}_{\#}^{\infty})$.
\end{corollary}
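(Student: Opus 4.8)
The plan is to deduce the infinite regularity directly from the finite-smoothness Theorem~\ref{NS-problemTh-sigma-Lv-erC}, applied for \emph{every} admissible index $r$, together with the identifications $\bigcap_{s\in\R}\dot H^s_\#=\dot{\mathcal C}^\infty_\#$ and its vector and divergence-free counterparts. In other words, the corollary is an ``$r\to\infty$'' packaging of Theorem~\ref{NS-problemTh-sigma-Lv-erC}.

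First I would observe that, by the very definition $\dot{\mathbf C}_\#^{\infty}=\bigcap_{s\in\R}\dot{\mathbf H}_\#^{s}$, the hypotheses ${\mathbf f}\in L_2(0,T;\dot{\mathbf C}_\#^{\infty})$ and $\mathbf u^0\in\dot{\mathbf C}_{\#\sigma}^{\infty}$ give ${\mathbf f}\in L_2(0,T;\dot{\mathbf H}_\#^{r-1})$ and $\mathbf u^0\in\dot{\mathbf H}_{\#\sigma}^{r}$ for every $r\in\R$. Next I would stress that the smallness condition \eqref{E4.143T*varC} involves only $\|{\mathbf f}\|_{L_2(0,T_*;\dot{\mathbf H}_\#^{n/2-2})}$, $\|{\mathbf u}^0\|_{\dot{\mathbf H}_{\#\sigma}^{n/2-1}}$ and $\|K{\mathbf u}^0\|_{L_2(0,T_*;\dot{\mathbf H}_\#^{n/2})}$, while the constants $A_1,A_2,A_3$ there are precisely those fixed in the base case $r=n/2-1$ of Theorem~\ref{NS-problemTh-sigma-Lv-crC} and depend on $T,n,\|\mathbb A\|,C_{\mathbb A}$ only; hence \eqref{E4.143T*varC} is one and the same condition for all $r$, and in particular the time $T_*$ supplied by Theorem~\ref{NS-problemTh-sigma-Lv-crC} is independent of $r$. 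Consequently the hypotheses of Theorem~\ref{NS-problemTh-sigma-Lv-erC} are met for every $r>n/2-1$, and since the Serrin-type solution on $[0,T_*]$ is unique (Theorem~\ref{Th4.5}, see also Theorem~\ref{NS-problemTh-sigma-Lv-crC}(iv)), the single solution ${\mathbf u}$ produced by Theorem~\ref{NS-problemTh-sigma-Lv-crC}(i) satisfies, for every $r>n/2-1$,
\begin{align*}
&{\mathbf u}\in L_\infty(0,T_*;\dot{\mathbf H}_{\#\sigma}^{r})\cap L_2(0,T_*;\dot{\mathbf H}_{\#\sigma}^{r+1}),\qquad
{\mathbf u}'\in L_2(0,T_*;\dot{\mathbf H}_{\#\sigma}^{r-1}),\\
&{\mathbf u}\in\mathcal C^0([0,T_*];\dot{\mathbf H}_{\#\sigma}^{r}),\qquad
p\in L_2(0,T_*;\dot H_\#^{r}),
\end{align*}
with $\|{\mathbf u}(\cdot,t)-{\mathbf u}^0\|_{\dot{\mathbf H}_{\#\sigma}^{r}}\to 0$ as $t\to 0$.

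Then I would pass to $r\to\infty$ seminorm by seminorm. For the continuity claim, ${\mathbf u}$ is continuous from $[0,T_*]$ into each $\dot{\mathbf H}_{\#\sigma}^{r}$; since the Fr\'echet topology of $\dot{\mathbf C}_{\#\sigma}^{\infty}$ is the projective-limit topology generated by the seminorms $\|\cdot\|_{\dot{\mathbf H}_\#^{r}}$, a map into it is continuous iff it is continuous into each factor $\dot{\mathbf H}_{\#\sigma}^{r}$, whence ${\mathbf u}\in\mathcal C^0([0,T_*];\dot{\mathbf C}_{\#\sigma}^{\infty})$; the same remark at $t=0$ gives $\|{\mathbf u}(\cdot,t)-{\mathbf u}^0\|_{\dot{\mathbf H}_{\#\sigma}^{r}}\to 0$ for all $r$. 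For ${\mathbf u}'$ and $p$ I would fix a countable cofinal sequence $r_k\to\infty$: outside the common null set $N=\bigcup_k N_{r_k}$, where $N_{r_k}$ is the exceptional set in the a.e. pointwise inclusion at index $r_k$, one has ${\mathbf u}'(\cdot,t)\in\bigcap_k\dot{\mathbf H}_{\#\sigma}^{r_k-1}=\dot{\mathbf C}_{\#\sigma}^{\infty}$ and $p(\cdot,t)\in\bigcap_k\dot H_\#^{r_k}=\dot{\mathcal C}_\#^{\infty}$, while $\int_0^{T_*}\|{\mathbf u}'(\cdot,t)\|^2_{\dot{\mathbf H}_\#^{s}}\,dt<\infty$ and $\int_0^{T_*}\|p(\cdot,t)\|^2_{\dot H_\#^{s}}\,dt<\infty$ for every $s$; this is exactly membership in $L_2(0,T_*;\dot{\mathbf C}_{\#\sigma}^{\infty})$ and $L_2(0,T_*;\dot{\mathcal C}_\#^{\infty})$, respectively.

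I do not expect a genuine obstacle here. The only two points that need care are (i) checking that the smallness threshold in \eqref{E4.143T*varC}, and hence $T_*$, does not move as $r$ increases (it does not, the constants being set in the base case $r=n/2-1$), and (ii) the routine topological and measure-theoretic bookkeeping that upgrades ``valid for each $r$'' to ``valid in the Fr\'echet space $\dot{\mathbf C}^\infty$'', for which it suffices to intersect over a countable cofinal set of indices $r$.
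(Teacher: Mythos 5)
Your proposal is correct and follows the same route as the paper's proof: apply Theorem~\ref{NS-problemTh-sigma-Lv-erC} for every admissible $r$ (with the same $T_*$, since \eqref{E4.143T*varC} does not involve $r$), and then pass to the intersection using $\dot{\mathbf C}_\#^{\infty}=\bigcap_{r}\dot{\mathbf H}_{\#\sigma}^{r}$. You are somewhat more explicit than the paper about the Fr\'echet-topology and null-set bookkeeping, but this is just filling in routine detail rather than a different argument.
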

\begin{proof}
Taking into account that $\dot{\mathbf C}_\#^{\infty}=\bigcap_{r\in\R} \dot{\mathbf{H}}_{\#\sigma}^r$, Theorem \ref{NS-problemTh-sigma-Lv-erC} implies that
${\mathbf u}\in{L_\infty(0,T_*;\dot{\mathbf{H}}_{\#\sigma}^{r})}\cap L_2(0,T_*;\dot{\mathbf H}_\#^{r+1})$, 
$\mathbf u'\in L_2(0,T_*;\dot{\mathbf H}_{\#\sigma}^{r-1}),$
$p\in L_2(0,T;\dot{H}_{\#}^{r})$,
 $\forall\, r\in\R$.
Hence ${\mathbf u}\in \mathcal C^0([0,T_*];\dot{\mathbf C}_{\#\sigma}^{\infty})$, 
$\mathbf u'\in L_2(0,T_*;\dot{\mathbf C}_{\#\sigma}^{\infty})$
and $p\in L_2(0,T_*;\dot{\mathcal C}_{\#}^{\infty})$.
\end{proof}

\subsection{Spatial-temporal regularity of Serrin-type solutions  for constant anisotropic viscosity coefficients}\label{S5C}

\begin{theorem}
\label{NS-problemTh-sigma-Lv-er-t1C}
Let $T>0$ and $n\ge 2$. Let $r\ge {n}/{2}-1$ if $n\ge 3$, while $r> {n}/{2}-1$  if $n=2$. 
Let the coefficients $a_{ij}^{\alpha\beta}$ be constant and the relaxed ellipticity condition \eqref{mu} hold.
Let ${\mathbf f}\in L_\infty(0,T;\dot{\mathbf H}_\#^{r-2})\cap L_2(0,T;\dot{\mathbf H}_\#^{r-1})$
and $\mathbf u^0\in \dot{\mathbf H}_{\#\sigma}^{r}$, while ${\mathbf f}$, $\mathbf u^0$ and $T_*\in(0,T]$ satisfy  inequality \eqref{E4.143T*varC} from Theorem \ref{NS-problemTh-sigma-Lv-crC}.

Then the Serrin-type solution $\mathbf u$ of the anisotropic Navier-Stokes 
initial value problem  \eqref{NS-problem-div0}--\eqref{NS-problem-div0-IC} 
is such that
$\mathbf u'\in L_{\infty}(0,T_*;\dot{\mathbf H}_{\#\sigma}^{r-2})\cup L_2(0,T_*;\dot{\mathbf H}_{\#\sigma}^{r-1})$, while
$p\in L_\infty(0,T_*;\dot{H}_{\#}^{r-1})\cap L_2(0,T_*;\dot{H}_{\#}^{r})$.
\end{theorem}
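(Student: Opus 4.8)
The strategy is a short bootstrap from the spatial regularity already in hand. First I would record, from Theorem~\ref{NS-problemTh-sigma-Lv-crC} in the endpoint case $r=n/2-1$ (which by the hypotheses forces $n\ge3$) and from Theorem~\ref{NS-problemTh-sigma-Lv-erC} in the case $r>n/2-1$ (both applicable since $\mathbf f\in L_2(0,T;\dot{\mathbf H}_\#^{r-1})$, $\mathbf u^0\in\dot{\mathbf H}_{\#\sigma}^r$ and \eqref{E4.143T*varC} are assumed), that the Serrin-type solution satisfies
\[
\mathbf u\in L_\infty(0,T_*;\dot{\mathbf H}_{\#\sigma}^{r})\cap L_2(0,T_*;\dot{\mathbf H}_{\#\sigma}^{r+1}),\qquad
\mathbf u'\in L_2(0,T_*;\dot{\mathbf H}_{\#\sigma}^{r-1}),\qquad
p\in L_2(0,T_*;\dot H_\#^{r}).
\]
This already gives the $L_2$-in-time halves of both assertions, so only the $L_\infty$-in-time upgrades of $\mathbf u'$ and $p$ remain.

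For these I would read $\mathbf u'$ and $\nabla p$ off the equation. With $\mathbf F:=\mathbf f+\bs{\mathfrak L}\mathbf u-(\mathbf u\cdot\nabla)\mathbf u$ as in \eqref{Eq-F}, one has $\mathbf u'=\mathbb P_\sigma\mathbf F$ by \eqref{NS-problem-just} and $\nabla p=\mathbb P_g\mathbf F$ by \eqref{Eq-p}. The plan is to show $\mathbf F\in L_\infty(0,T_*;\dot{\mathbf H}_\#^{r-2})$ term by term: the term $\mathbf f$ lies there by hypothesis; the term $\bs{\mathfrak L}\mathbf u$ lies there because, for constant coefficients, \eqref{L-oper} and \eqref{TensNorm} give $\|\bs{\mathfrak L}\mathbf u\|_{\mathbf H_\#^{r-2}}\le\|\mathbb A\|\,\|\mathbf u\|_{\mathbf H_\#^{r}}$ and $\mathbf u\in L_\infty(0,T_*;\dot{\mathbf H}_{\#\sigma}^{r})$; and for the convective term I would write $(\mathbf u\cdot\nabla)\mathbf u=\nabla\cdot(\mathbf u\otimes\mathbf u)$ and use the multiplication Theorem~\ref{RS-T1-S4.6.1}(b) together with \eqref{eq:mik14} in the form
\[
\|(\mathbf u\cdot\nabla)\mathbf u\|_{\mathbf H_\#^{r-2}}\le C\,\|\mathbf u\otimes\mathbf u\|_{(H_\#^{r-1})^{n\times n}}\le C_{*rn}\,\|\mathbf u\|_{\mathbf H_\#^{r}}^2,
\]
which again places it in $L_\infty(0,T_*;\dot{\mathbf H}_\#^{r-2})$. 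Hence $\mathbf F\in L_\infty(0,T_*;\dot{\mathbf H}_\#^{r-2})$, and since by Theorem~\ref{Th2.1}(d) the projectors $\mathbb P_\sigma$ and $\mathbb P_g$ are bounded on $\dot{\mathbf H}_\#^{r-2}$, we obtain $\mathbf u'\in L_\infty(0,T_*;\dot{\mathbf H}_{\#\sigma}^{r-2})$ and $\nabla p\in L_\infty(0,T_*;\dot{\mathbf H}_{\#g}^{r-2})$; then Lemma~\ref{div-grad-is} for the gradient, applied with $s=r-1$, yields $p\in L_\infty(0,T_*;\dot H_\#^{r-1})$. Combining with the $L_2$-parts from the first step finishes the argument.

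The main obstacle is the last of the three term estimates, namely the product bound $\|\mathbf u\otimes\mathbf u\|_{H_\#^{r-1}}\le C\|\mathbf u\|_{H_\#^{r}}^2$: this is the endpoint of the multiplication Theorem~\ref{RS-T1-S4.6.1}(b), requiring $H_\#^{r}\cdot H_\#^{r}\hookrightarrow H_\#^{r-1}$, and it is exactly here that the case distinction in the hypotheses enters. For $r>n/2-1$ the estimate is in the non-critical range and is routine (it is of the same type as the product bounds in the proofs of Theorems~\ref{NS-problemTh-sigma-Lv-crC}--\ref{NS-problemTh-sigma-Lv-erC}); for $r=n/2-1$ it becomes critical and the sum of the Sobolev indices of the two factors is $2r=n-2$, which is $>0$ precisely when $n\ge3$ --- this is why $r=n/2-1$ is excluded for $n=2$, where $H_\#^0\cdot H_\#^0\not\hookrightarrow H_\#^{-1}$. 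Everything else in the proof is a direct consequence of the spatial regularity already established together with the boundedness of the Leray projectors and Lemma~\ref{div-grad-is}.
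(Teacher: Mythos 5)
Your proposal is correct and follows essentially the same route as the paper: invoke Theorems~\ref{NS-problemTh-sigma-Lv-crC} and \ref{NS-problemTh-sigma-Lv-erC} to get the $L_2$-in-time parts and $\mathbf u\in L_\infty(0,T_*;\dot{\mathbf H}_{\#\sigma}^r)$, estimate $\mathbf F=\mathbf f+\bs{\mathfrak L}\mathbf u-(\mathbf u\cdot\nabla)\mathbf u$ term by term in $L_\infty(0,T_*;\dot{\mathbf H}_\#^{r-2})$, and read off $\mathbf u'=\mathbb P_\sigma\mathbf F$ and $\nabla p=\mathbb P_g\mathbf F$, then apply Lemma~\ref{div-grad-is} with $s=r-1$. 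The one place where the paper is more careful is the product bound $\|\mathbf u\otimes\mathbf u\|_{(H_\#^{r-1})^{n\times n}}\le C\|\mathbf u\|^2_{\mathbf H_\#^r}$: your explicit application of Theorem~\ref{RS-T1-S4.6.1}(b) with both factor indices equal to $r$ requires $r<n/2$, whereas the paper handles the full range by applying part~(b) with the interpolated index $r/2+n/4-1/2$ for $n/2-1\le r<n/2+1$ (which also neatly accounts for why $r>0$ is forced when $n=2$) and switching to part~(a) with indices $r-1$, $r$ for $r\ge n/2+1$. Your remark that the rest of the range is ``routine'' is accurate in spirit, but spelling it out would require exactly this split; otherwise the argument is the same.
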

\begin{proof}
By Theorems \ref{NS-problemTh-sigma-Lv-crC} and \ref{NS-problemTh-sigma-Lv-erC},  we have the inclusions 
$\mathbf u\in L_{\infty}(0,T_*;\dot{\mathbf H}_{\#\sigma}^{r})$,
$\mathbf u'\in L_2(0,T_*;\dot{\mathbf H}_{\#\sigma}^{r-1})$ and $p\in L_2(0,T_*;\dot{H}_{\#}^{r})$. 
Then we only need to prove the inclusions $\mathbf u'\in L_{\infty}(0,T_*;\dot{\mathbf H}_{\#\sigma}^{r-2})$ and $p\in L_\infty(0,T_*;\dot{H}_{\#}^{r-1})$.

Let, first, $n/2-1\le r<n/2+1$ (and also $0<r$ if $n=2$). 
By relation  \eqref{eq:mik14} and  the multiplication Theorem \ref{RS-T1-S4.6.1}(b), 
we have
\begin{align*}
\left\|({\mathbf u}\cdot \nabla ){\mathbf u}\right\|_{\mathbf{H}_\#^{r-2}}
=\left\|\nabla\cdot({\mathbf u} \otimes{\mathbf u})\right\|_{\mathbf{H}_\#^{r-2}}
&\le \left\|{\mathbf u} \otimes{\mathbf u})\right\|_{({H}_\#^{r-1})^{n\times n}}
\le C'_{*rn} \|\mathbf{u}\|_{{\mathbf H}_\#^{r/2+n/4-1/2}}^2
\le C'_{*rn}\|\mathbf{u}\|^2_{{\mathbf H}_\#^r},
\end{align*}
where $C'_{*rn}=C_*(r/2+n/4,r/2+n/4, n)$.

Let now $r\ge n/2+1$. Again, by relation  \eqref{eq:mik14} and  by the multiplication Theorem \ref{RS-T1-S4.6.1}(a), 
we have
\begin{align*}
\left\|({\mathbf u}\cdot \nabla ){\mathbf u}\right\|_{\mathbf{H}_\#^{r-2}}
=\left\|\nabla\cdot({\mathbf u} \otimes{\mathbf u})\right\|_{\mathbf{H}_\#^{r-2}}
&\le \left\|{\mathbf u} \otimes{\mathbf u})\right\|_{({H}_\#^{r-1})^{n\times n}}
\le C''_{*rn} \|\mathbf{u}\|_{{\mathbf H}_\#^{r-1}}\|\mathbf{u}\|_{{\mathbf H}_\#^{r}}
\le C''_{*rn} \|\mathbf{u}\|_{{\mathbf H}_\#^{r}}^2,
\end{align*}
where $C''_{*rn}=C_*(r-1,r, n)$.
Hence in both cases 
\begin{align}\label{E4.91-0T0c*C}
\left\|({\mathbf u}\cdot \nabla ){\mathbf u}\right\|_{L_{\infty}(0,T_*;\mathbf{H}_\#^{r-2})}
\le C_{*rn} \|\mathbf{u}\|_{L_{\infty}(0,T_*;\mathbf{H}_\#^{r})}^2,
\end{align}
where $C_{*rn}$ is $C'_{*rn}$ or $C''_{*rn}$, respectively.

By \eqref{L-oper} and \eqref{TensNorm}, we have
\begin{align*}
\left\|\bs{\mathfrak L}\mathbf u\right\|_{{\mathbf H}_{\#}^{r-2}}
\le\|a_{ij}^{\alpha \beta }E_{i\alpha }({\mathbf u})\|_{({H}_{\#}^{r-1})^{n\times n}}
\le \|\mathbb A\|\|\mathbf u\|_{{\mathbf H}_{\#}^{r}}.
\end{align*}
Thus
\begin{align*}
\|\bs{\mathfrak L}\mathbf u\|_{L_\infty(0,T_*;\dot{\mathbf H}_{\#}^{r-2})}
&\le \|\mathbb A\|\|\mathbf u\|_{L_\infty(0,T_*;{\mathbf H}_{\#}^{r})},
\end{align*}
i.e., $\bs{\mathfrak L}\mathbf u\in{L_\infty(0,T_*;\dot{\mathbf H}_{\#}^{r-2})}$.
We also have ${\mathbf f}\in L_\infty(0,T;\dot{\mathbf H}_\#^{r-2})$.

Then \eqref{NS-problem-just} implies that 
${\mathbf u}'\in{L_\infty(0,T_*;\mathbf{H}_{\#\sigma}^{r-2})}$, while \eqref{Eq-p} and Lemma \ref{div-grad-is} for gradient, with $s=r-1$, imply that  $p\in L_\infty(0,T_*;\dot{H}_{\#}^{r-1})$. 
\end{proof}

\begin{theorem}
\label{NS-problemTh-sigma-Lv-er-tC}
Let $T>0$ and $n\ge 2$. 
Let $r> {n}/{2}-1$. 
Let the coefficients $a_{ij}^{\alpha\beta}$ be constant and the relaxed ellipticity condition \eqref{mu} hold.
Let $k\in [1, r+1)$ be an integer. 
Let 
${\mathbf f}^{(l)}\in L_\infty(0,T;\dot{\mathbf H}_\#^{r-2-2l})\cap L_2(0,T;\dot{\mathbf H}_\#^{r-1-2l})$, $l=0,1,\ldots,k-1$,
and $\mathbf u^0\in \dot{\mathbf H}_{\#\sigma}^{r}$, while ${\mathbf f}$, $\mathbf u^0$ and $T_*\in(0,T]$ satisfy  inequality \eqref{E4.143T*varC} from Theorem \ref{NS-problemTh-sigma-Lv-crC}.

Then the Serrin-type solution $\mathbf u$ of the anisotropic Navier-Stokes 
initial value problem  \eqref{NS-problem-div0}--\eqref{NS-problem-div0-IC} 
is such that
$\mathbf u^{(l)}\in L_{\infty}(0,T_*;\dot{\mathbf H}_{\#\sigma}^{r-2l})\cap L_2(0,T_*;\dot{\mathbf H}_{\#\sigma}^{r+1-2l})$, $l=0,\ldots,k$, while
$p^{(l)}\in L_\infty(0,T_*;\dot{H}_{\#}^{r-1-2l})\cap L_2(0,T_*;\dot{H}_{\#\sigma}^{r-2l})$, $l=0,\ldots,k-1$.
\end{theorem}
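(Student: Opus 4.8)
The plan is to prove the temporal regularity by induction on $l$, differentiating the Navier--Stokes equation in time and treating each time-differentiated problem as an instance of the spatial-regularity machinery already established in Theorems \ref{NS-problemTh-sigma-Lv-crC}--\ref{NS-problemTh-sigma-Lv-er-t1C}. First I would record the base case $l=0$: by Theorems \ref{NS-problemTh-sigma-Lv-crC} and \ref{NS-problemTh-sigma-Lv-erC} we already have $\mathbf u\in L_\infty(0,T_*;\dot{\mathbf H}_{\#\sigma}^{r})\cap L_2(0,T_*;\dot{\mathbf H}_{\#\sigma}^{r+1})$, and by Theorem \ref{NS-problemTh-sigma-Lv-er-t1C} (whose hypotheses are implied by the present ones with $l=0$) we have $\mathbf u'\in L_\infty(0,T_*;\dot{\mathbf H}_{\#\sigma}^{r-2})\cap L_2(0,T_*;\dot{\mathbf H}_{\#\sigma}^{r-1})$ and $p\in L_\infty(0,T_*;\dot H_\#^{r-1})\cap L_2(0,T_*;\dot H_\#^{r})$, which is exactly the claim for $l=0,1$ on $\mathbf u$ and $l=0$ on $p$.

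For the inductive step, suppose the claimed inclusions hold up to order $l-1$ (for $\mathbf u$ up to order $l$, for $p$ up to order $l-1$), with $l\le k$. Differentiate \eqref{NS-problem-just} once more in time: $\mathbf u^{(l)}=\mathbb P_\sigma\big[\mathbf f^{(l-1)}+\boldsymbol{\mathfrak L}\mathbf u^{(l-1)}-\partial_t^{\,l-1}\big(({\mathbf u}\cdot\nabla){\mathbf u}\big)\big]$. The linear terms are immediate: $\boldsymbol{\mathfrak L}\mathbf u^{(l-1)}$ maps $\dot{\mathbf H}_\#^{r-2(l-1)}$-bounds into $\dot{\mathbf H}_\#^{r-2(l-1)-2}=\dot{\mathbf H}_\#^{r-2l}$-bounds by \eqref{L-oper}, \eqref{TensNorm} (losing two derivatives, and here the constancy of the coefficients is essential so that $\partial_t$ commutes with $\boldsymbol{\mathfrak L}$), and $\mathbf f^{(l-1)}\in L_\infty(0,T_*;\dot{\mathbf H}_\#^{r-2-2(l-1)})\cap L_2(0,T_*;\dot{\mathbf H}_\#^{r-1-2(l-1)})$ by hypothesis. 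For the nonlinear term I would expand $\partial_t^{\,l-1}(({\mathbf u}\cdot\nabla){\mathbf u})=\partial_t^{\,l-1}(\nabla\cdot({\mathbf u}\otimes{\mathbf u}))$ by the Leibniz rule into a sum of $\nabla\cdot(\mathbf u^{(i)}\otimes\mathbf u^{(j)})$ with $i+j=l-1$, $i,j\ge 0$, and estimate each summand in $\dot{\mathbf H}_\#^{r-1-2(l-1)}=\dot{\mathbf H}_\#^{r-2l+1}$ (or in $L_\infty$ in the appropriate space) using the multiplication Theorem \ref{RS-T1-S4.6.1}, together with the inductive bounds $\mathbf u^{(i)}\in L_\infty(0,T_*;\dot{\mathbf H}_{\#\sigma}^{r-2i})\cap L_2(0,T_*;\dot{\mathbf H}_{\#\sigma}^{r+1-2i})$ and likewise for $j$: since $r>n/2-1$ one can always keep the higher-order factor in an $H^s_\#$ with $s$ above the Sobolev threshold, or pair an $L_2$-in-time bound on one factor with an $L_\infty$-in-time bound on the other. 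This gives $\mathbf u^{(l)}\in L_\infty(0,T_*;\dot{\mathbf H}_{\#\sigma}^{r-2l})$; then $\mathbf u^{(l)}\in L_2(0,T_*;\dot{\mathbf H}_{\#\sigma}^{r+1-2l})$ follows from $\mathbf u^{(l-1)}\in L_2(0,T_*;\dot{\mathbf H}_{\#\sigma}^{r+1-2(l-1)})$ via Theorem \ref{LM-T3.1} (or directly, since $\mathbf u^{(l-1)}$ has its time-derivative $\mathbf u^{(l)}$ in a suitable space). Finally $p^{(l-1)}$ is recovered from the time-differentiated pressure equation $\nabla p^{(l-1)}=\mathbb P_g\big[\mathbf f^{(l-1)}+\boldsymbol{\mathfrak L}\mathbf u^{(l-1)}-\partial_t^{\,l-1}(({\mathbf u}\cdot\nabla){\mathbf u})\big]$ and Lemma \ref{div-grad-is} for the gradient with $s=r-1-2(l-1)$.

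The main obstacle I expect is bookkeeping in the nonlinear term: one has to verify that for \emph{every} split $i+j=l-1$ within the range $l\le k\le r$, the product $\mathbf u^{(i)}\otimes\mathbf u^{(j)}$ lands in $(H_\#^{r-2l+1})^{n\times n}$ with an $L_\infty$-in-$t$ bound, i.e.\ that the pair of Sobolev exponents $(r-2i,\,r-2j)$ (upgraded to $r+1-2i$ or $r+1-2j$ on the $L_2$-in-time factor) satisfies the hypotheses of the multiplication theorem for the target exponent $r-2l+1=r-2i-2j$. The condition $r>n/2-1$ and the constraint $k<r+1$ are exactly what make this work: the "worst" factor keeps index at least $r-2(l-1)\ge r-2(k-1)>r-2r+1=1-r$, and at least one factor can be pushed above $n/2$, so Theorem \ref{RS-T1-S4.6.1}(a) applies in the high-regularity regime and part (b) in the borderline regime, as in \eqref{E4.91-0T0cC}--\eqref{r>n/2E4.910C}. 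I would isolate this as a short lemma on products of time-derivatives and then feed it into the induction, keeping the remaining steps essentially routine repetitions of the arguments already used in Steps (c)--(d) of the proof of Theorem \ref{NS-problemTh-sigma-Lv-erC} and in the proof of Theorem \ref{NS-problemTh-sigma-Lv-er-t1C}.
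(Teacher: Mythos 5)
Your proposal follows essentially the same strategy as the paper's proof: induction on the number of time derivatives, differentiation of the Leray-projected equation \eqref{NS-problem-just}, Leibniz expansion of the advection term into products $\mathbf u^{(i)}\otimes\mathbf u^{(j)}$, and the multiplication Theorem \ref{RS-T1-S4.6.1} with a case split depending on whether the relevant Sobolev index falls above or below $n/2$, with the pressure recovered from the time-differentiated version of \eqref{Eq-p} and Lemma \ref{div-grad-is}. The only cosmetic differences are that the paper differentiates $k-1$ times at once in the inductive step rather than once per step, and that your parenthetical appeal to Theorem \ref{LM-T3.1} for the $L_2$-in-time bound on $\mathbf u^{(l)}$ does not actually give that bound (it controls continuity, not square-integrability of the derivative) --- the $L_2$ bound must come, as you also say ``or directly,'' from estimating the right-hand side of the projected equation in $L_2(0,T_*;\dot{\mathbf H}_{\#\sigma}^{r+1-2l})$, which is what the paper does.
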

\begin{proof}
Some parts of the following proof are inspired by \cite[Theorem 3.1]{Temam1982} and \cite[Chapter 3, Section 3.6]{Temam2001}, see also \cite[Section 7.2]{RRS2016}. 

We will employ the mathematical induction argument in the proof. 
We first remark that by Theorems \ref{NS-problemTh-sigma-Lv-crC}, \ref{NS-problemTh-sigma-Lv-erC} and \ref{NS-problemTh-sigma-Lv-er-t1C}, if 
${\mathbf f}\in L_\infty(0,T;\dot{\mathbf H}_\#^{r-2})\cap L_2(0,T;\dot{\mathbf H}_\#^{r-1})$, then
$\mathbf u\in L_{\infty}(0,T_*;\dot{\mathbf H}_{\#\sigma}^{r})\cap L_2(0,T_*;\dot{\mathbf H}_{\#\sigma}^{r+1})$,
$\mathbf u'\in L_{\infty}(0,T_*;\dot{\mathbf H}_{\#\sigma}^{r-2})\cap L_2(0,T_*;\dot{\mathbf H}_{\#\sigma}^{r-1})$,
while $p\in L_\infty(0,T_*;\dot{H}_{\#}^{r-1})\cap L_2(0,T_*;\dot{H}_{\#}^{r})$.
This means that the theorem holds true for $k=1$.

Let us assume that the theorem holds true for some $k'=k-1\in[1, r)$, i.e., 
\begin{align}\label{E5.75tC}
\mathbf u^{(l)}\in L_{\infty}(0,T_*;\dot{\mathbf H}_{\#\sigma}^{r-2l})\cap L_2(0,T_*;\dot{\mathbf H}_{\#\sigma}^{r+1-2l}),\quad 
l=0,\ldots,k-1,
\end{align}
and prove that it hold also for $l=k$.
To this end, let us differentiate  equation  \eqref{NS-problem-just} $k-1$ times in $t$ (in the distribution sense) to obtain
\begin{align}
\label{NS-problem-just-kC}
\mathbf u^{(k)} 
=\mathbb P_\sigma\mathbf F^{(k-1)}
\quad \mbox{in } \T\times(0,T),
\end{align}
where 
\begin{align}\label{E5.59C}
\mathbf F^{(l)}:=\mathbf{f}^{(l)}
+\partial_t^{l}\bs{\mathfrak L}{\mathbf u}
-\partial_t^{l}[({\mathbf u}\cdot \nabla ){\mathbf u}] \quad \forall\, l\in\N.
\end{align}

 Let us denote
$
s_{l1}:=r-2\max\{(k-1-l),l\}, \ 
s_{l2}:=r-2\min\{(k-1-l),l\}.
$
Then
\begin{align*}
&s_{l1}\le r-k+1\le s_{l2},\\
&s_{l1}+s_{l2}=r-2(k-1-l)+r-2l=2(r-k+1),\quad \forall\ l=0,\ldots,k-1.
\end{align*} 
The theorem condition  $r-k+1>0$ implies $s_{l1}+s_{l2}>0$.

\paragraph{Step 1: Convection term} 
\begin{align}\label{E5.60C}
\partial_t^{k-1}[({\mathbf u}\cdot \nabla ){\mathbf u}]=\sum_{l=0}^{k-1}C_{k-1}^l({\mathbf u}^{(k-1-l)}\cdot \nabla ){\mathbf u}^{(l)}
\end{align}
where $C_{k-1}^l$ are the binomial coefficients.
\\[1ex]
{\it Case (A).} Let $0\le l\le (k-1)/2$. Then $s_{l1}=r-2(k-1-l)$, $s_{l2}=r-2l$. 

{\it Subcase (A1).}  Let ${n}/{2}-1<r\le n/2+2l$.  Then $s_{l2}=r-2l\le n/2$.
By the theorem conditions, $r+1-n/2>0$ and $r-k+1>0$, and hence there exists $\epsilon\in(0,\min\{r+1-n/2, 2(r-k+1)\})$ and thus
$$
(r+1-n/2)-\epsilon>0,\quad s_{l1}-\epsilon/2\le s_{l2}-\epsilon/2<n/2, \quad s_{l1}+s_{l2}-\epsilon=2(r-k+1)-\epsilon>0.
$$
By relation  \eqref{eq:mik14} and  the multiplication Theorem \ref{RS-T1-S4.6.1}(b), 
we have
\begin{multline*}
\left\|({\mathbf u}^{(k-1-l)}\cdot \nabla ){\mathbf u}^{(l)}\right\|_{\mathbf{H}_\#^{r-2k}}
=\left\|\nabla\cdot({\mathbf u}^{(k-1-l)} \otimes{\mathbf u}^{(l)})\right\|_{\mathbf{H}_\#^{r-2k}}
\le \left\|{\mathbf u}^{(k-1-l)} \otimes{\mathbf u}^{(l)})\right\|_{({H}_\#^{r+1-2k})^{n\times n}}
\\
\le  \left\|{\mathbf u}^{(k-1-l)} \otimes{\mathbf u}^{(l)})\right\|_{({H}_\#^{r+1-2k+(r+1-n/2)-\epsilon})^{n\times n}}
= \left\|{\mathbf u}^{(k-1-l)} \otimes{\mathbf u}^{(l)})\right\|_{({H}_\#^{s_{l1}+s_{l2}-\epsilon-n/2})^{n\times n}}
\\
\le  C'_{*rn} \left\|{\mathbf u}^{(k-1-l)}\right\|_{\mathbf H_\#^{s_{l1}-\epsilon/2}}
\left\|{\mathbf u}^{(l)})\right\|_{\mathbf H_\#^{s_{l2}-\epsilon/2}}  
\le C'_{*rn} \left\|{\mathbf u}^{(k-1-l)}\right\|_{\mathbf H_\#^{r-2(k-1-l)}} 
\left\|{\mathbf u}^{(l)}\right\|_{\mathbf H_\#^{r-2l}},
\end{multline*}
\begin{multline*}
\left\|({\mathbf u}^{(k-1-l)}\cdot \nabla ){\mathbf u}^{(l)}\right\|_{\mathbf{H}_\#^{r+1-2k}}
\le  \left\|({\mathbf u}^{(k-1-l)}\cdot \nabla ){\mathbf u}^{(l)}\right\|_{\mathbf{H}_\#^{r+1-2k+(r+1-n/2)-\epsilon}}
= \left\|({\mathbf u}^{(k-1-l)}\cdot \nabla ){\mathbf u}^{(l)}\right\|_{\mathbf{H}_\#^{s_{l1}+s_{l2}-\epsilon-n/2}}
\\
\le  C'_{*rn} \left\|{\mathbf u}^{(k-1-l)}\right\|_{\mathbf H_\#^{s_{l1}-\epsilon/2}}
\left\|\nabla{\mathbf u}^{(l)})\right\|_{(H_\#^{s_{l2}-\epsilon/2})^{n\times n}}  
\le  C'_{*rn} \left\|{\mathbf u}^{(k-1-l)}\right\|_{\mathbf H_\#^{r-2(k-1-l)}} 
\left\|{\mathbf u}^{(l)}\right\|_{\mathbf H_\#^{r+1-2l}},
\end{multline*}
where $C'_{*rn}=C_*(s_{l1}-\epsilon/2,s_{l2}-\epsilon/2, n)$.

{\it Subcase (A2).}  Let $r>n/2+2l$.  Then $s_{l2}=r-2l>n/2$.

Hence by relation  \eqref{eq:mik14} and  the multiplication Theorem \ref{RS-T1-S4.6.1}(a), 
we have
\begin{multline*}
\left\|({\mathbf u}^{(k-1-l)}\cdot \nabla ){\mathbf u}^{(l)}\right\|_{\mathbf{H}_\#^{r-2k}}
=\left\|\nabla\cdot({\mathbf u}^{(k-1-l)} \otimes{\mathbf u}^{(l)})\right\|_{\mathbf{H}_\#^{r-2k}}
\le \left\|{\mathbf u}^{(k-1-l)} \otimes{\mathbf u}^{(l)})\right\|_{({H}_\#^{r+1-2k})^{n\times n}}
\\
\le \left\|{\mathbf u}^{(k-1-l)} \otimes{\mathbf u}^{(l)})\right\|_{({H}_\#^{r-2(k-1-l)})^{n\times n}}
\le  C''_{*rn} \left\|{\mathbf u}^{(k-1-l)}\right\|_{\mathbf H_\#^{r-2(k-1-l)}} \left\|{\mathbf u}^{(l)})\right\|_{\mathbf H_\#^{r-2l}},
\end{multline*}
\begin{multline*}
\left\|({\mathbf u}^{(k-1-l)}\cdot \nabla ){\mathbf u}^{(l)}\right\|_{\mathbf{H}_\#^{r+1-2k}}
\le \left\|({\mathbf u}^{(k-1-l)}\cdot \nabla ){\mathbf u}^{(l)}\right\|_{\mathbf{H}_\#^{r-2(k-1-l)}}
\\
\le C''_{*rn} \left\|{\mathbf u}^{(k-1-l)}\right\|_{\mathbf H_\#^{r-2(k-1-l)}} \left\|\nabla{\mathbf u}^{(l)})\right\|_{\mathbf H_\#^{r-2l}}
\le C''_{*rn} \left\|{\mathbf u}^{(k-1-l)}\right\|_{\mathbf H_\#^{r-2(k-1-l)}} \left\|{\mathbf u}^{(l)})\right\|_{\mathbf H_\#^{r+1-2l}},
\end{multline*}
where $C''_{*rn}=C_*(s_{l1},s_{l2}, n)$.

Thus, combining the cases (A1) and (A2), we obtain that for any $r>n/2-1$ and for $0\le l\le (k-1)/2$,
\begin{align}
\label{E5.77aC}
&\left\|({\mathbf u}^{(k-1-l)}\cdot \nabla ){\mathbf u}^{(l)}\right\|_{\mathbf{H}_\#^{r-2k}}
\le C_{*rn} \left\|{\mathbf u}^{(k-1-l)}\right\|_{\mathbf H_\#^{r-2(k-1-l)}} 
\left\|{\mathbf u}^{(l)}\right\|_{\mathbf H_\#^{r-2l}},
\\
\label{E5.77a+C}
&\left\|({\mathbf u}^{(k-1-l)}\cdot \nabla ){\mathbf u}^{(l)}\right\|_{\mathbf{H}_\#^{r+1-2k}}
\le C_{*rn} \left\|{\mathbf u}^{(k-1-l)}\right\|_{\mathbf H_\#^{r-2(k-1-l)}} \left\|{\mathbf u}^{(l)})\right\|_{\mathbf H_\#^{r+1-2l}},
\end{align}
where $C_{*rn}$ is $C'_{*rn}$ or $C''_{*rn}$, respectively.
\\[1ex]
{\it Case (B).} Let $(k-1)/2\le l\le k-1$. Then taking into account that 
\begin{align*}
\left\|({\mathbf u}^{(k-1-l)}\cdot \nabla ){\mathbf u}^{(l)}\right\|_{\mathbf{H}_\#^{r-2k}}
=\left\|\nabla\cdot({\mathbf u}^{(k-1-l)} \otimes{\mathbf u}^{(l)})\right\|_{\mathbf{H}_\#^{r-2k}}
=\left\|({\mathbf u}^{(l)}\cdot \nabla ){\mathbf u}^{(k-1-l)}\right\|_{\mathbf{H}_\#^{r-2k}},
\end{align*}
we arrive at the case (A) for $l'=k-1-l$ and finally to the same estimates \eqref{E5.77aC}--\eqref{E5.77a+C}.

Thus  for any $r>n/2-1$ and any integer $l\in [0, k-1]$, we obtain  the estimates
\begin{align*}
&\left\|({\mathbf u}^{(k-1-l)}\cdot \nabla ){\mathbf u}^{(l)}\right\|_{L_\infty(0,T_*;\mathbf{H}_\#^{r-2k})}
\le C_{*rn} \left\|{\mathbf u}^{(k-1-l)}\right\|_{L_\infty(0,T_*;\mathbf H_\#^{r-2(k-1-l)})} 
\left\|{\mathbf u}^{(l)}\right\|_{L_\infty(0,T_*;\mathbf H_\#^{r-2l})},
\\
&\left\|({\mathbf u}^{(k-1-l)}\cdot \nabla ){\mathbf u}^{(l)}\right\|_{L_2(0,T_*;\mathbf{H}_\#^{r+1-2k})}
\le C_{*rn} \left\|{\mathbf u}^{(k-1-l)}\right\|_{L_\infty(0,T_*;\mathbf H_\#^{r-2(k-1-l)})} 
\left\|{\mathbf u}^{(l)}\right\|_{L_2(0,T_*;\mathbf H_\#^{r+1-2l})},
\end{align*}
Hence by \eqref{E5.60C} and \eqref{E5.75tC},
\begin{align}\label{E5.63C}
\partial_t^{k-1}[({\mathbf u}\cdot \nabla ){\mathbf u}]\in L_\infty(0,T_*;\mathbf H_\#^{r-2k})\cap L_2(0,T_*;\mathbf H_\#^{r+1-2k}).
\end{align}

\paragraph{Step 2: Linear terms and right-hand side}\ 

Due to \eqref{E5.75tC},
\begin{align}\label{E5.65C}
\partial_t^{l}\bs{\mathfrak L}{\mathbf u}=\bs{\mathfrak L}{\mathbf u}^{(k-1)}\in L_\infty(0,T_*;\mathbf H_\#^{r-2k})\cap L_2(0,T_*;\mathbf H_\#^{r+1-2k}).
\end{align}
We also have ${\mathbf f}^{(k-1)}\in L_\infty(0,T;\dot{\mathbf H}_\#^{r-2k})\cap L_2(0,T;\dot{\mathbf H}_\#^{r+1-2k})$.
Then  \eqref{E5.59C}, \eqref{E5.63C} and \eqref{E5.65C} 
imply that 
\begin{align}\label{E5.66C}
{\mathbf F}^{(k-1)}\in L_\infty(0,T;\dot{\mathbf H}_\#^{r-2k})\cap L_2(0,T;\dot{\mathbf H}_\#^{r+1-2k}).
\end{align}
Thus by \eqref{NS-problem-just-kC},
${\mathbf u}^{(k)}\in{L_\infty(0,T_*;\dot{\mathbf{H}}_{\#\sigma}^{r-2k})}\cap L_2(0,T_*;\dot{\mathbf H}_\#^{r+1-2k})$.

\paragraph{Step 3: Pressure}\ 

The associated pressure $p$ satisfies \eqref{Eq-p}. Differentiating it in time, we obtain
\begin{align}
\label{Eq-p-tC}
\nabla p^{(l)}=\mathbb P_g\mathbf{F}^{(l)}\quad 
\quad \mbox{in } \T\times(0,T), \quad l=0,1,\ldots,k-1.
\end{align}
By the same reasoning as in the proof of \eqref{E5.66C}, the similar inclusions for junior derivatives also hold,
\begin{align*}
{\mathbf F}^{(l)}\in L_\infty(0,T;\dot{\mathbf H}_\#^{r-2-2l})\cap L_2(0,T;\dot{\mathbf H}_\#^{r-1-2l}), \quad l=0,1,\ldots,k-1.
\end{align*}
By Lemma \ref{div-grad-is} for gradient, with $s=r-1-2l$ and $s=r-2l$, respectively, equation \eqref{Eq-p-tC} implies that $p^{(l)}\in L_\infty(0,T_*;\dot{H}_{\#}^{r-1-2l})\cap L_2(0,T_*;\dot{H}_{\#}^{r-2l})$.
\end{proof}

\begin{corollary}
\label{Cor5.11C}
Let $T>0$ and $n\ge 2$. 
Let the coefficients $a_{ij}^{\alpha\beta}$ be constant and the relaxed ellipticity condition \eqref{mu} hold.
Let ${\mathbf f}\in \mathcal C^\infty(0,T;\dot{\mathbf C}_\#^{\infty})$,  
and $\mathbf u^0\in \dot{\mathbf C}_{\#\sigma}^{\infty}$, while ${\mathbf f}$, $\mathbf u^0$ and $T_*\in(0,T]$ satisfy  inequality \eqref{E4.143T*varC} from Theorem \ref{NS-problemTh-sigma-Lv-crC}.

Then the Serrin-type solution $\mathbf u$ of the anisotropic Navier-Stokes 
initial value problem  \eqref{NS-problem-div0}--\eqref{NS-problem-div0-IC} 
is such that
$\mathbf u\in \mathcal C^{\infty}(0,T_*;\dot{\mathbf C}_{\#\sigma}^{\infty})$,
$p\in \mathcal C^\infty(0,T_*;\dot{\mathcal C}_{\#}^\infty)$.
\end{corollary}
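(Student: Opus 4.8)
The plan is to obtain the corollary as an essentially formal consequence of the spatial--temporal regularity Theorem~\ref{NS-problemTh-sigma-Lv-er-tC}, exploiting that the data are smooth of every order in space and time so that the hypotheses of that theorem hold for \emph{every} admissible pair of indices $(r,k)$. First I would record that, since $\mathbf u^0\in\dot{\mathbf C}^\infty_{\#\sigma}=\bigcap_{r\in\R}\dot{\mathbf H}^r_{\#\sigma}$, one has $\mathbf u^0\in\dot{\mathbf H}^r_{\#\sigma}$ for every $r$, while $\mathbf f\in\mathcal C^\infty(0,T;\dot{\mathbf C}^\infty_\#)$ supplies the required inclusions $\mathbf f^{(l)}\in L_\infty(0,T;\dot{\mathbf H}^\rho_\#)\cap L_2(0,T;\dot{\mathbf H}^\rho_\#)$ for all $l\in\N_0$ and $\rho\in\R$ (the $L_\infty$-in-time part, if not already built into the meaning of the smoothness hypothesis, following by applying the Lions--Magenes embedding Theorem~\ref{LM-T3.1} to the pair $\mathbf f^{(l)},\mathbf f^{(l+1)}$, both of which lie in $L_2$ in time with values in any periodic Sobolev space). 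Together with the standing smallness assumption~\eqref{E4.143T*varC}, this makes Theorem~\ref{NS-problemTh-sigma-Lv-er-tC} available for every $r>n/2-1$ and every integer $k\in[1,r+1)$.

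Next, given an arbitrary derivative order $l\in\N_0$, I would choose $r$ large enough that $r>n/2-1$ and $r>l+1$ and set $k=l+2$; Theorem~\ref{NS-problemTh-sigma-Lv-er-tC} then yields $\mathbf u^{(j)}\in L_\infty(0,T_*;\dot{\mathbf H}^{r-2j}_{\#\sigma})\cap L_2(0,T_*;\dot{\mathbf H}^{r+1-2j}_{\#\sigma})$ for $j=l,l+1$, and $p^{(j)}\in L_\infty(0,T_*;\dot H^{r-1-2j}_\#)\cap L_2(0,T_*;\dot H^{r-2j}_\#)$ for $j=l,l+1$. Since $r$ is at our disposal, this shows that $\mathbf u^{(l)}$ and $\mathbf u^{(l+1)}=(\mathbf u^{(l)})'$ both belong to $L_2(0,T_*;\dot{\mathbf H}^{s}_{\#\sigma})$ for \emph{every} $s\in\R$, hence $\mathbf u^{(l)}\in W^1(0,T_*;\dot{\mathbf H}^{s+1}_{\#\sigma},\dot{\mathbf H}^{s-1}_{\#\sigma})$ for every $s$, and Theorem~\ref{LM-T3.1} upgrades this to $\mathbf u^{(l)}\in\mathcal C^0([0,T_*];\dot{\mathbf H}^{s}_{\#\sigma})$ for every $s$, i.e.\ $\mathbf u^{(l)}\in\mathcal C^0([0,T_*];\dot{\mathbf C}^\infty_{\#\sigma})$. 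As $l$ was arbitrary, $\mathbf u\in\mathcal C^\infty([0,T_*];\dot{\mathbf C}^\infty_{\#\sigma})\subset\mathcal C^\infty(0,T_*;\dot{\mathbf C}^\infty_{\#\sigma})$, and the identical argument applied to the pressure estimates produces $p\in\mathcal C^\infty(0,T_*;\dot{\mathcal C}^\infty_\#)$.

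Since Theorem~\ref{NS-problemTh-sigma-Lv-er-tC} carries essentially all the analytic weight, I do not anticipate a genuine obstacle here; the delicate points are organisational rather than substantive. One must (i) re-enter the regularity theorem with a \emph{fresh, larger} $r$ for each derivative order $l$, because its conclusion for $\mathbf u^{(l)}$ consumes $2l$ units of spatial smoothness; (ii) promote the $L_\infty\cap L_2$-in-time information it produces to actual continuity in time — this is the step I would regard as the crux, and it is exactly what pairing $\mathbf u^{(l)}$ with $\mathbf u^{(l+1)}$ through the Lions--Magenes-type embedding Theorem~\ref{LM-T3.1} achieves; and (iii) read the hypothesis $\mathbf f\in\mathcal C^\infty(0,T;\dot{\mathbf C}^\infty_\#)$ so that it delivers, for all $l$ and $\rho$, the $L_\infty\cap L_2$-in-time bounds on $\mathbf f^{(l)}$ fed into Theorem~\ref{NS-problemTh-sigma-Lv-er-tC}. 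The topological identity $\bigcap_{s\in\R}\mathcal C^0([0,T_*];\dot{\mathbf H}^s_{\#\sigma})=\mathcal C^0([0,T_*];\dot{\mathbf C}^\infty_{\#\sigma})$ invoked above is just the projective-limit description of the Fréchet space $\dot{\mathbf C}^\infty_{\#\sigma}$ and requires no further input.
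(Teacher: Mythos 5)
Your proposal is correct and takes essentially the same route as the paper: invoke Theorem~\ref{NS-problemTh-sigma-Lv-er-tC} for every sufficiently large $r$ and every $k$ admissible for that $r$, then pass to the intersection $\dot{\mathbf C}^\infty_{\#\sigma}=\bigcap_r\dot{\mathbf H}^r_{\#\sigma}$. The paper's proof is considerably terser and simply writes ``Hence $\mathbf u\in\mathcal C^\infty(0,T_*;\dot{\mathbf C}^\infty_{\#\sigma})$'' without spelling out how the $L_\infty\cap L_2$-in-time bounds on $\mathbf u^{(l)}$ and $\mathbf u^{(l+1)}$ are upgraded to time-continuity; your explicit pairing of consecutive derivatives through Theorem~\ref{LM-T3.1} supplies exactly that missing step and is a welcome clarification, not a divergence.
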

\begin{proof}
Taking into account that $\dot{\mathbf C}_\#^{\infty}=\bigcap_{r\in\R} \dot{\mathbf{H}}_{\#\sigma}^r$, Theorem \ref{NS-problemTh-sigma-Lv-er-tC} implies that for any integer $k\ge 0$,
${\mathbf u}^{(k)}\in{L_\infty(0,T_*;\dot{\mathbf{H}}_{\#\sigma}^{r-2k})}\cap L_2(0,T_*;\dot{\mathbf H}_\#^{r+1-2k})$,
$p^{(k)}\in L_\infty(0,T_*;\dot{H}_{\#}^{r-1-2k})\cap L_2(0,T_*;\dot{H}_{\#\sigma}^{r-2k})$, for any $r\in\R$.
Hence $\mathbf u\in \mathcal C^{\infty}(0,T_*;\dot{\mathbf C}_{\#\sigma}^{\infty})$,
$p\in \mathcal C^\infty(0,T_*;\dot{\mathcal C}_{\#}^\infty)$.
\end{proof}

\subsection{Regularity of two-dimensional weak solution for constant viscosity coefficients}\label{S2DC}
The regularity results of Section \ref{S5.4C} and \ref{S5C} hold for $n=2$, but  as for the the isotropic constant-coefficient case (cf. e.g., \cite[Chapter 3, Sections 3.3, 3.5.1]{Temam2001}, \cite[Section 6.5]{RRS2016}) these results can be essentially improved for $n=2$ also in the anisotropic setting with constant coefficients.

Let us give a counterpart of Theorem \ref{NS-problemTh-sigma-Lv-erC} that for $n=2$ is valid on any time interval $[0,T]$ (and not only on its special subinterval $[0,T_*]$).
\begin{theorem}[Spatial regularity of solution for arbitrarily large data.]
\label{NS-problemTh-sigma-Lv-er2C}
Let $n= 2$, $r>0$, and $T>0$. 
Let the coefficients $a_{ij}^{\alpha\beta}$ be constant and the relaxed ellipticity condition \eqref{mu} hold.
Let ${\mathbf f}\in L_2(0,T;\dot{\mathbf H}_\#^{r-1})$  and $\mathbf u^0\in \dot{\mathbf H}_{\#\sigma}^{r}$.

Then the solution $\mathbf u$ of the anisotropic Navier-Stokes 
initial value problem  \eqref{NS-problem-div0}--\eqref{NS-problem-div0-IC}
obtained in Theorem \ref{NS-problemTh-sigma}  is of Serrin-type and 
belongs to
$L_{\infty}(0,T;\dot{\mathbf H}_{\#\sigma}^{r})\cap L_2(0,T;\dot{\mathbf H}_{\#\sigma}^{r+1})$.
In addition, 
$
\mathbf u'\in L_2(0,T;\dot{\mathbf H}_{\#\sigma}^{r-1}),
$
$
\mathbf u \in\mathcal C^0([0,T];\dot{\mathbf H}_{\#\sigma}^{r})
$
$
\lim_{t\to 0}\| \mathbf u(\cdot,t)-{\mathbf u}^0\| _{\dot{\mathbf H}_{\#\sigma}^{r}}= 0
$
and $p\in L_2(0,T_*;\dot{H}_{\#}^{r})$. 
\end{theorem}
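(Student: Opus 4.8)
The plan is to run the same Galerkin bootstrap as in the proof of Theorem~\ref{NS-problemTh-sigma-Lv-erC}, with the crucial simplification that for $n=2$ the smallness hypothesis~\eqref{E4.143T*varC} is not needed: the quantity it was designed to control is, in two dimensions, bounded on all of $[0,T]$ for arbitrarily large data. Concretely, since $n/2=1$, the weak solution $\mathbf u$ of Theorem~\ref{NS-problemTh-sigma} is automatically of Serrin type (Theorem~\ref{NS-problemTh-sigma2}), and the energy inequality~\eqref{E4.9}, combined with the coercivity~\eqref{NS-a-1-v2-S-0} of $a_\T$ and the Young-type bound $|\langle\mathbf f,\mathbf u\rangle_\T|\le\|\mathbf f\|_{\dot{\mathbf H}_\#^{-1}}\|\mathbf u\|_{\dot{\mathbf H}_{\#\sigma}^{1}}\le\tfrac18 C_{\mathbb A}^{-1}\|\mathbf u\|_{\dot{\mathbf H}_{\#\sigma}^{1}}^2+2C_{\mathbb A}\|\mathbf f\|_{\dot{\mathbf H}_\#^{-1}}^2$, yields after absorbing the first term
\[
\tfrac12\|\mathbf u(\cdot,t)\|_{\mathbf L_{2\#}}^2+\tfrac18 C_{\mathbb A}^{-1}\int_0^t\|\mathbf u(\cdot,\tau)\|_{\dot{\mathbf H}_{\#\sigma}^{1}}^2\,d\tau\le\tfrac12\|\mathbf u^0\|_{\mathbf L_{2\#}}^2+2C_{\mathbb A}\|\mathbf f\|_{L_2(0,T;\dot{\mathbf H}_\#^{-1})}^2,\qquad t\in[0,T].
\]
The same estimate holds uniformly in $m$ for the Galerkin approximations $\mathbf u_m$ of Section~\ref{S.5.2C} (they satisfy the corresponding energy equality), so $\{\|\mathbf u_m\|_{L_2(0,T;\dot{\mathbf H}_{\#\sigma}^{n/2})}\}_m=\{\|\mathbf u_m\|_{L_2(0,T;\dot{\mathbf H}_{\#\sigma}^{1})}\}_m$ is bounded on the whole of $[0,T]$, which is precisely what the smallness condition provided in the higher-dimensional argument.

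Next I would carry out Steps~(a)--(c) of the proof of Theorem~\ref{NS-problemTh-sigma-Lv-erC} verbatim, but over $[0,T]$ in place of $[0,T_*]$. For the base range $0<r<1$, starting from~\eqref{E4.82T0C} and estimating the nonlinearity as in~\eqref{E4.91-0T0cC} with $n/2=1$, one obtains
\[
\frac{d}{dt}\|\mathbf u_m\|_{\mathbf H^{r}_\#}^2+\tfrac14 C_{\mathbb A}^{-1}\|\mathbf u_m\|_{\mathbf H^{r+1}_\#}^2\le 8C_{\mathbb A}C_{*rn}^2\,\|\mathbf u_m\|_{\mathbf H^{1}_\#}^2\,\|\mathbf u_m\|_{\mathbf H^{r}_\#}^2+8C_{\mathbb A}\|\mathbf f\|_{\mathbf H^{r-1}_\#}^2 .
\]
Since $\int_0^T\|\mathbf u_m\|_{\mathbf H^1_\#}^2\,dt$ is bounded uniformly in $m$ by the first step, Gronwall's inequality~\eqref{E17} (which only requires finiteness, not smallness, of this integral) together with $\|\mathbf u_m(\cdot,0)\|_{\mathbf H^r_\#}\le\|\mathbf u^0\|_{\mathbf H^r_\#}$ gives a uniform bound on $\|\mathbf u_m\|_{L_\infty(0,T;\dot{\mathbf H}_{\#\sigma}^{r})}$, and integrating the dissipation term then bounds $\|\mathbf u_m\|_{L_2(0,T;\dot{\mathbf H}_{\#\sigma}^{r+1})}$ uniformly. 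The case $r=1$ (estimating the nonlinearity via~\eqref{E4.91-0T0dC}, using the $L_2(0,T;\mathbf H^{3/2}_\#)$ bound just obtained from $r=1/2$) and the induction over $k<r\le k+1$, $k\ge1$ (estimating via~\eqref{r>n/2E4.910C}, exactly as in Step~(c) there), proceed identically, every Gronwall step now running on $[0,T]$. Passing to weak and weak$^*$ limits of a subsequence of $\{\mathbf u_m\}$ and identifying them with the weak solution $\mathbf u$ of Theorem~\ref{NS-problemTh-sigma}, one gets $\mathbf u\in L_\infty(0,T;\dot{\mathbf H}_{\#\sigma}^{r})\cap L_2(0,T;\dot{\mathbf H}_{\#\sigma}^{r+1})$.

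Finally I would reproduce Step~(d) of Theorem~\ref{NS-problemTh-sigma-Lv-erC} for $\mathbf u$ itself: the multiplication Theorem~\ref{RS-T1-S4.6.1} and~\eqref{eq:mik14} give $(\mathbf u\cdot\nabla)\mathbf u\in L_2(0,T;\mathbf H_\#^{r-1})$, the boundedness of the (constant) coefficients gives $\bs{\mathfrak L}\mathbf u\in L_2(0,T;\dot{\mathbf H}_\#^{r-1})$, hence $\mathbf F$ from~\eqref{Eq-F} lies in $L_2(0,T;\dot{\mathbf H}_\#^{r-1})$ and~\eqref{NS-problem-just} yields $\mathbf u'\in L_2(0,T;\dot{\mathbf H}_{\#\sigma}^{r-1})$. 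Then $\mathbf u\in W^1(\dot{\mathbf H}_{\#\sigma}^{r+1},\dot{\mathbf H}_{\#\sigma}^{r-1})$ and Theorem~\ref{LM-T3.1} give $\mathbf u\in\mathcal C^0([0,T];\dot{\mathbf H}_{\#\sigma}^{r})$ and $\|\mathbf u(\cdot,t)-\mathbf u^0\|_{\dot{\mathbf H}_{\#\sigma}^{r}}\to0$ as $t\to0$, while~\eqref{Eq-p} together with Lemma~\ref{div-grad-is} for the gradient (with $s=r$) gives the unique associated pressure $p\in L_2(0,T;\dot H_\#^{r})$. The only point that genuinely needs justification beyond copying the higher-dimensional argument is the opening observation---that in dimension two the uniform $L_2(0,T;\dot{\mathbf H}_{\#\sigma}^{1})$ bound on the Galerkin sequence, obtained from the energy (in)equality alone and hence valid for arbitrarily large data, suffices to close every Gronwall estimate globally; everything else is a routine repetition of Sections~\ref{S.5.2C} and the proof of Theorem~\ref{NS-problemTh-sigma-Lv-erC}.
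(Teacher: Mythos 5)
Your proof is correct and follows essentially the same route as the paper, which simply says to rerun the proof of Theorem~\ref{NS-problemTh-sigma-Lv-erC} with $n=2$, $T_*$ replaced by $T$, and the smallness-dependent bound \eqref{E4.14varbC} on $\|\mathbf u_m\|_{L_2(0,T_*;\dot{\mathbf H}_\#^{n/2})}$ replaced by the unconditional energy bound (inequality (59) in \cite{Mikhailov2024}). You correctly identify the one genuinely new observation—that in two dimensions the uniform bound on $\|\mathbf u_m\|_{L_2(0,T;\dot{\mathbf H}_\#^{1})}$ comes from the energy estimate alone, holds for arbitrarily large data, and is all that the subsequent Gronwall steps require—and your explicit derivation of it (coercivity \eqref{NS-a-1-v2-S-0} plus Young's inequality) reproduces the paper's cited inequality with identical constants, $\|\mathbf u_m\|^2_{L_2(0,T;\dot{\mathbf H}_\#^1)}\le 4C_{\mathbb A}\big(\|\mathbf u^0\|^2_{\mathbf L_{2\#}}+4C_{\mathbb A}\|\mathbf f\|^2_{L_2(0,T;\dot{\mathbf H}_\#^{-1})}\big)$. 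The remaining Steps (a)–(d) and the final identification of the limit with the weak solution proceed exactly as you describe.
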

\begin{proof}
The proof coincides word-for-word with the proof of Theorem \ref{NS-problemTh-sigma-Lv-erC} if we take there $n=2$ while replacing $T_*$ by $T$ and the reference to \eqref{E4.14varbC} for the boundedness of the sequence $\|\mathbf{u}_m\|_{L_2(0,T_*;\mathbf H_\#^{n/2})}$ for $n=2$ by the reference to the corresponding inequality
\begin{align*}
&\|\mathbf u_m\|^2_{L_2(0,T;\dot{\mathbf H}_\#^{1})}
\le 4C_{\mathbb A}\left( \| {\mathbf u}^0\| ^2_{\mathbf L_{2\#}}
+4 C_{\mathbb A}\|\mathbf f\|^2_{L_2(0,T;\dot{\mathbf H}_\#^{-1})}\right).
\end{align*}
 obtained as inequality (59) in our paper \cite{Mikhailov2024}. 
\end{proof}

The following assertion can be proved similarly to Corollary \ref{Cor5.11spC}. 
\begin{corollary}
\label{Cor5.11-2C}
Let $T>0$ and $n= 2$. 
Let the coefficients $a_{ij}^{\alpha\beta}$ be constant and the relaxed ellipticity condition \eqref{mu} hold.
Let ${\mathbf f}\in L_2(0,T;\dot{\mathbf C}_\#^{\infty})$  
and $\mathbf u^0\in \dot{\mathbf C}_{\#\sigma}^{\infty}$.

Then the solution $\mathbf u$ of the anisotropic Navier-Stokes 
initial value problem  \eqref{NS-problem-div0}--\eqref{NS-problem-div0-IC}
obtained in Theorem \ref{NS-problemTh-sigma}  is of  Serrin-type and 
is such that
${\mathbf u}\in \mathcal C^0([0,T];\dot{\mathbf C}_{\#\sigma}^{\infty})$, 
$\mathbf u'\in L_2(0,T;\dot{\mathbf C}_{\#\sigma}^{\infty})$ and 
$p\in L_2(0,T;\dot{\mathcal C}_{\#}^{\infty})$.
\end{corollary}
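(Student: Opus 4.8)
The plan is to follow the argument of Corollary \ref{Cor5.11spC} verbatim in structure, but invoking the two-dimensional regularity Theorem \ref{NS-problemTh-sigma-Lv-er2C} in place of Theorem \ref{NS-problemTh-sigma-Lv-erC}; this is exactly what makes the conclusion hold on the full interval $[0,T]$ rather than on a restricted subinterval, since in dimension two no smallness assumption on the data or the time is needed. First I would observe that $\dot{\mathbf C}_\#^{\infty}=\bigcap_{r\in\R}\dot{\mathbf H}_{\#}^{r}$ and $\dot{\mathbf C}_{\#\sigma}^{\infty}=\bigcap_{r\in\R}\dot{\mathbf H}_{\#\sigma}^{r}$, so the hypotheses ${\mathbf f}\in L_2(0,T;\dot{\mathbf C}_\#^{\infty})$ and $\mathbf u^0\in \dot{\mathbf C}_{\#\sigma}^{\infty}$ yield, for every $r>0$, the inclusions ${\mathbf f}\in L_2(0,T;\dot{\mathbf H}_\#^{r-1})$ and $\mathbf u^0\in \dot{\mathbf H}_{\#\sigma}^{r}$. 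Since for $n=2$ the weak solution $\mathbf u$ furnished by Theorem \ref{NS-problemTh-sigma} is of Serrin type (Theorem \ref{NS-problemTh-sigma2}), Theorem \ref{NS-problemTh-sigma-Lv-er2C} applies with this $r$ and gives $\mathbf u\in L_\infty(0,T;\dot{\mathbf H}_{\#\sigma}^{r})\cap L_2(0,T;\dot{\mathbf H}_{\#\sigma}^{r+1})$, $\mathbf u'\in L_2(0,T;\dot{\mathbf H}_{\#\sigma}^{r-1})$, $\mathbf u\in\mathcal C^0([0,T];\dot{\mathbf H}_{\#\sigma}^{r})$, and $p\in L_2(0,T;\dot H_\#^{r})$.

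Next I would let $r$ range over all of $\R$ (equivalently, over all sufficiently large values). Because each of these memberships holds for every $r$, intersecting them and using $\bigcap_{r\in\R}\dot{\mathbf H}_{\#\sigma}^{r}=\dot{\mathbf C}_{\#\sigma}^{\infty}$ and $\bigcap_{r\in\R}\dot H_\#^{r}=\dot{\mathcal C}_\#^\infty$ yields $\mathbf u\in L_2(0,T;\dot{\mathbf C}_{\#\sigma}^{\infty})$ (from $\mathbf u\in\bigcap_r L_2(0,T;\dot{\mathbf H}_{\#\sigma}^{r+1})$), $\mathbf u'\in L_2(0,T;\dot{\mathbf C}_{\#\sigma}^{\infty})$, $p\in L_2(0,T;\dot{\mathcal C}_\#^\infty)$, and $\mathbf u\in\mathcal C^0([0,T];\dot{\mathbf C}_{\#\sigma}^{\infty})$ (from $\mathbf u\in\bigcap_r\mathcal C^0([0,T];\dot{\mathbf H}_{\#\sigma}^{r})$), which is exactly the claimed conclusion; that $\mathbf u$ is of Serrin type is already part of Theorem \ref{NS-problemTh-sigma-Lv-er2C}.

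I do not expect any real obstacle: the entire analytic content is absorbed into Theorem \ref{NS-problemTh-sigma-Lv-er2C}, whose two-dimensional Galerkin a priori estimates are valid on the whole of $[0,T]$ precisely because the energy inequality already bounds $\|\mathbf u_m\|_{L_2(0,T;\dot{\mathbf H}_\#^{1})}$ for all $T$ when $n=2$. The only point meriting a word of care is the continuity-in-time assertion $\mathbf u\in\mathcal C^0([0,T];\dot{\mathbf C}_{\#\sigma}^{\infty})$: since $\dot{\mathbf C}_{\#\sigma}^{\infty}$ carries the projective-limit topology of the spaces $\dot{\mathbf H}_{\#\sigma}^{r}$, a map that is continuous on $[0,T]$ into each $\dot{\mathbf H}_{\#\sigma}^{r}$ is continuous into the limit, exactly the remark already used in the proof of Corollary \ref{Cor5.11spC}.
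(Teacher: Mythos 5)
Your proposal is correct and takes essentially the same approach as the paper, which states that Corollary~\ref{Cor5.11-2C} is proved similarly to Corollary~\ref{Cor5.11spC} but with Theorem~\ref{NS-problemTh-sigma-Lv-er2C} replacing Theorem~\ref{NS-problemTh-sigma-Lv-erC} (so $T_*$ becomes $T$), exactly as you do. Your closing remark on the projective-limit topology of $\dot{\mathbf C}_{\#\sigma}^{\infty}$ is a welcome clarification of the intersection step, but it does not alter the argument.
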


The next three assertions on spatial-temporal regularity for $n=2$ are the corresponding counterparts of Theorems \ref{NS-problemTh-sigma-Lv-er-t1C}, \ref{NS-problemTh-sigma-Lv-er-tC} and Corollary \ref{Cor5.11C} and are proved in a similar way after replacing there $T_*$ by $T$.
\begin{theorem}
\label{NS-problemTh-sigma-Lv-er-t1-2C}
Let $T>0$, $n\ge 2$ and  $r> 0$. 
Let the coefficients $a_{ij}^{\alpha\beta}$ be constant the relaxed ellipticity condition \eqref{mu} hold.
Let ${\mathbf f}\in L_\infty(0,T;\dot{\mathbf H}_\#^{r-2})\cap L_2(0,T;\dot{\mathbf H}_\#^{r-1})$
and $\mathbf u^0\in \dot{\mathbf H}_{\#\sigma}^{r}$.

Then the solution $\mathbf u$ of the anisotropic Navier-Stokes 
initial value problem  \eqref{NS-problem-div0}--\eqref{NS-problem-div0-IC}
obtained in Theorem \ref{NS-problemTh-sigma}  is of Serrin-type and 
is such that
$\mathbf u'\in L_{\infty}(0,T;\dot{\mathbf H}_{\#\sigma}^{r-2})\cup L_2(0,T;\dot{\mathbf H}_{\#\sigma}^{r-1})$,
$p\in L_\infty(0,T;\dot{H}_{\#}^{r-1})\cap L_2(0,T;\dot{H}_{\#}^{r})$.
\end{theorem}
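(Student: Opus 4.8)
The plan is to reduce the statement to the two-dimensional spatial regularity result already available and then replay, with $T_*$ replaced throughout by $T$, the estimates used in the proof of Theorem \ref{NS-problemTh-sigma-Lv-er-t1C}.

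First I would observe that the hypothesis ${\mathbf f}\in L_\infty(0,T;\dot{\mathbf H}_\#^{r-2})\cap L_2(0,T;\dot{\mathbf H}_\#^{r-1})$ in particular gives ${\mathbf f}\in L_2(0,T;\dot{\mathbf H}_\#^{r-1})$, so that Theorem \ref{NS-problemTh-sigma-Lv-er2C} applies with $n=2$ and the same $r>0$, $\mathbf u^0\in \dot{\mathbf H}_{\#\sigma}^{r}$. It yields that the weak solution $\mathbf u$ of Theorem \ref{NS-problemTh-sigma} is of Serrin type and, on the full interval $[0,T]$,
\begin{align*}
\mathbf u\in L_{\infty}(0,T;\dot{\mathbf H}_{\#\sigma}^{r})\cap L_2(0,T;\dot{\mathbf H}_{\#\sigma}^{r+1}),\quad
\mathbf u'\in L_2(0,T;\dot{\mathbf H}_{\#\sigma}^{r-1}),\quad
p\in L_2(0,T;\dot{H}_{\#}^{r}).
\end{align*}
Hence only the two additional inclusions $\mathbf u'\in L_{\infty}(0,T;\dot{\mathbf H}_{\#\sigma}^{r-2})$ and $p\in L_\infty(0,T;\dot{H}_{\#}^{r-1})$ remain to be proved.

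For those I would copy the estimates from the proof of Theorem \ref{NS-problemTh-sigma-Lv-er-t1C}. Splitting into the cases $0<r<n/2+1$ and $r\ge n/2+1$, in each case I bound $\|({\mathbf u}\cdot \nabla ){\mathbf u}\|_{\mathbf{H}_\#^{r-2}}=\|\nabla\cdot({\mathbf u}\otimes{\mathbf u})\|_{\mathbf{H}_\#^{r-2}}\le\|{\mathbf u}\otimes{\mathbf u}\|_{({H}_\#^{r-1})^{n\times n}}$ by relation \eqref{eq:mik14} and the multiplication Theorem \ref{RS-T1-S4.6.1}(b) (resp. (a)), using for $n=2$ the embedding $\dot{\mathbf H}_\#^r\subset{\mathbf H}_\#^{r/2+n/4-1/2}$ valid since $r/2+n/4-1/2=r/2<r$, resp. $r-1\ge n/2$. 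This gives $\|({\mathbf u}\cdot \nabla ){\mathbf u}\|_{L_{\infty}(0,T;\mathbf{H}_\#^{r-2})}\le C_{*rn}\|\mathbf{u}\|_{L_{\infty}(0,T;\mathbf{H}_\#^{r})}^2<\infty$. Next, \eqref{L-oper} and \eqref{TensNorm} yield $\|\bs{\mathfrak L}\mathbf u\|_{L_\infty(0,T;\dot{\mathbf H}_{\#}^{r-2})}\le\|\mathbb A\|\,\|\mathbf u\|_{L_\infty(0,T;{\mathbf H}_{\#}^{r})}<\infty$, and ${\mathbf f}\in L_\infty(0,T;\dot{\mathbf H}_\#^{r-2})$ by hypothesis; so $\mathbf F$ defined in \eqref{Eq-F} belongs to $L_\infty(0,T;\dot{\mathbf H}_\#^{r-2})$. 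Then \eqref{NS-problem-just} gives $\mathbf u'\in L_\infty(0,T;\dot{\mathbf H}_{\#\sigma}^{r-2})$, and \eqref{Eq-p} together with Lemma \ref{div-grad-is} for the gradient with $s=r-1$ gives $p\in L_\infty(0,T;\dot{H}_{\#}^{r-1})$.

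I do not expect a genuine obstacle: the substance of the two-dimensional improvement is that the a priori bounds of Theorem \ref{NS-problemTh-sigma-Lv-er2C} already hold globally on $[0,T]$ — ultimately because in two dimensions every weak solution is automatically of Serrin type (Theorem \ref{NS-problemTh-sigma2}) and the Galerkin sequence is bounded in $L_2(0,T;\dot{\mathbf H}_\#^{1})$ via inequality~(59) of \cite{Mikhailov2024} — so the small-time restriction of Theorem \ref{NS-problemTh-sigma-Lv-er-t1C} never appears, and the rest of that argument transfers word for word with $T_*$ replaced by $T$. The only minor point to verify is that the multiplication-theorem thresholds in the convection estimate are met for all $r>0$ when $n=2$, which is immediate from the computation above and from the first inequality in \eqref{eq:mik14} handling the divergence.
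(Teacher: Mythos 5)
Your proposal is correct and matches the paper's own (implicit) argument: the paper explicitly states that Theorem \ref{NS-problemTh-sigma-Lv-er-t1-2C} is proved exactly as Theorem \ref{NS-problemTh-sigma-Lv-er-t1C} with $T_*$ replaced by $T$, which is precisely what you do — you first invoke the global $n=2$ spatial-regularity Theorem \ref{NS-problemTh-sigma-Lv-er2C} to get $\mathbf u\in L_\infty(0,T;\dot{\mathbf H}_{\#\sigma}^{r})\cap L_2(0,T;\dot{\mathbf H}_{\#\sigma}^{r+1})$ on the whole interval, and then reproduce the $L_\infty$-in-time bounds on $({\mathbf u}\cdot\nabla){\mathbf u}$, $\boldsymbol{\mathfrak L}\mathbf u$, $\mathbf u'$ and $p$ from the case-split with Theorem \ref{RS-T1-S4.6.1}(a)/(b). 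Your verification that the multiplication-theorem thresholds are met for all $r>0$ when $n=2$ is accurate.
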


\begin{theorem}
\label{NS-problemTh-sigma-Lv-er-t-2C}
Let $T>0$, $n= 2$ and  $r> 0$. 
Let the coefficients $a_{ij}^{\alpha\beta}$ be constant the relaxed ellipticity condition \eqref{mu} hold.
Let $k\in [1, r+1)$ be an integer. 
Let  
${\mathbf f}^{(l)}\in L_\infty(0,T;\dot{\mathbf H}_\#^{r-2-2l})\cap L_2(0,T;\dot{\mathbf H}_\#^{r-1-2l})$, $l=0,1,\ldots,k-1$,
and $\mathbf u^0\in \dot{\mathbf H}_{\#\sigma}^{r}$.

Then the solution $\mathbf u$ of the anisotropic Navier-Stokes 
initial value problem  \eqref{NS-problem-div0}--\eqref{NS-problem-div0-IC}
obtained in Theorem \ref{NS-problemTh-sigma}  is of  Serrin-type and 
is such that
$\mathbf u^{(l)}\in L_{\infty}(0,T;\dot{\mathbf H}_{\#\sigma}^{r-2l})\cap L_2(0,T;\dot{\mathbf H}_{\#\sigma}^{r+1-2l})$, $l=0,\ldots,k$;
$p^{(l)}\in L_\infty(0,T;\dot{H}_{\#}^{r-1-2l})\cap L_2(0,T;\dot{H}_{\#\sigma}^{r-2l})$, $l=0,\ldots,k-1$.
\end{theorem}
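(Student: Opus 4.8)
The plan is to copy the proof of Theorem~\ref{NS-problemTh-sigma-Lv-er-tC} almost verbatim, replacing the special subinterval $[0,T_*]$ by the full interval $[0,T]$ throughout. This substitution is legitimate precisely because, for $n=2$, the base case of the induction is supplied by the \emph{global-in-time} statements in Theorems~\ref{NS-problemTh-sigma-Lv-er2C} and~\ref{NS-problemTh-sigma-Lv-er-t1-2C} (which hold on any time interval, ultimately because the $L_2(0,T;\dot{\mathbf H}^1_{\#\sigma})$ energy bound of~\cite{Mikhailov2024} is global for $n=2$), rather than by the restricted-time estimate~\eqref{E4.143T*varC} used for general $n$. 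So the first thing I would do is record that, by those two theorems, the asserted inclusions already hold for $k=1$: $\mathbf u\in L_{\infty}(0,T;\dot{\mathbf H}_{\#\sigma}^{r})\cap L_2(0,T;\dot{\mathbf H}_{\#\sigma}^{r+1})$, $\mathbf u'\in L_{\infty}(0,T;\dot{\mathbf H}_{\#\sigma}^{r-2})\cap L_2(0,T;\dot{\mathbf H}_{\#\sigma}^{r-1})$, and $p\in L_\infty(0,T;\dot{H}_{\#}^{r-1})\cap L_2(0,T;\dot{H}_{\#}^{r})$.

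Next I would run the induction on $k$. Assuming the claim for $k-1$, i.e. $\mathbf u^{(l)}\in L_{\infty}(0,T;\dot{\mathbf H}_{\#\sigma}^{r-2l})\cap L_2(0,T;\dot{\mathbf H}_{\#\sigma}^{r+1-2l})$ for $l=0,\dots,k-1$, I would differentiate the Leray-projected equation~\eqref{NS-problem-just} $k-1$ times in time to reach~\eqref{NS-problem-just-kC}--\eqref{E5.59C}, and then bound each ingredient of $\mathbf F^{(k-1)}$. For the convection term I would expand $\partial_t^{k-1}[(\mathbf u\cdot\nabla)\mathbf u]$ by the Leibniz rule~\eqref{E5.60C} and estimate each product $(\mathbf u^{(k-1-l)}\cdot\nabla)\mathbf u^{(l)}$ exactly as in Step~1 of the proof of Theorem~\ref{NS-problemTh-sigma-Lv-er-tC}: with $s_{l1}=r-2\max\{k-1-l,l\}$ and $s_{l2}=r-2\min\{k-1-l,l\}$ one has $s_{l1}+s_{l2}=2(r-k+1)>0$ by the hypothesis $k<r+1$, and one splits according to whether $s_{l2}\le n/2$ (apply the multiplication Theorem~\ref{RS-T1-S4.6.1}(b) with a small loss $\epsilon\in(0,\min\{r+1-n/2,\,2(r-k+1)\})$) or $s_{l2}>n/2$ (apply Theorem~\ref{RS-T1-S4.6.1}(a)), using the symmetry of $\mathbf u\otimes\mathbf u$ to reduce the range $l\ge(k-1)/2$ to $l\le(k-1)/2$. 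This yields $\partial_t^{k-1}[(\mathbf u\cdot\nabla)\mathbf u]\in L_\infty(0,T;\mathbf H_\#^{r-2k})\cap L_2(0,T;\mathbf H_\#^{r+1-2k})$. The linear term $\partial_t^{k-1}\bs{\mathfrak L}\mathbf u=\bs{\mathfrak L}\mathbf u^{(k-1)}$ lies in the same space by~\eqref{L-oper}, \eqref{TensNorm} and the inductive hypothesis, and $\mathbf f^{(k-1)}$ does so by assumption; hence $\mathbf F^{(k-1)}$ lies there as well, and~\eqref{NS-problem-just-kC} delivers $\mathbf u^{(k)}\in L_\infty(0,T;\dot{\mathbf H}_{\#\sigma}^{r-2k})\cap L_2(0,T;\dot{\mathbf H}_{\#\sigma}^{r+1-2k})$, which closes the induction for $\mathbf u$.

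Finally, for the pressure I would differentiate~\eqref{Eq-p} $l$ times to obtain~\eqref{Eq-p-tC}; the same product estimates give $\mathbf F^{(l)}\in L_\infty(0,T;\dot{\mathbf H}_\#^{r-2-2l})\cap L_2(0,T;\dot{\mathbf H}_\#^{r-1-2l})$ for $l=0,\dots,k-1$, and then Lemma~\ref{div-grad-is} for the gradient, applied with $s=r-1-2l$ and with $s=r-2l$, yields $p^{(l)}\in L_\infty(0,T;\dot{H}_{\#}^{r-1-2l})\cap L_2(0,T;\dot{H}_{\#}^{r-2l})$. The one point that genuinely needs checking, beyond the mechanical substitution $T_*\mapsto T$, is that the Sobolev-index bookkeeping in the convection estimate still closes for $n=2$ under the hypothesis $r>0$ (equivalently $r>n/2-1$) and for the chosen $\epsilon$; since this is exactly the configuration already handled in the proof of Theorem~\ref{NS-problemTh-sigma-Lv-er-tC}, I expect no new obstacle, and the only conceptual ingredient is the observation that for $n=2$ the base-case regularity is global in time, so the entire induction runs on $[0,T]$.
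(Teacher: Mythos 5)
Your proposal is correct and follows the paper's own route: the paper states that this theorem (and its two neighbours for $n=2$) is ``proved in a similar way after replacing there $T_*$ by $T$,'' i.e.\ by reproducing the induction from Theorem~\ref{NS-problemTh-sigma-Lv-er-tC} verbatim on the full interval, exactly as you outline. Your added observation that this substitution is justified because the $n=2$ base case in Theorems~\ref{NS-problemTh-sigma-Lv-er2C} and~\ref{NS-problemTh-sigma-Lv-er-t1-2C} is global in time is the right reason, and the index bookkeeping in the convection estimate closes under $r>0=n/2-1$ and $k<r+1$ just as in the general-$n$ argument.
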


\begin{corollary}
Let $T>0$ and $n= 2$. 
Let the coefficients $a_{ij}^{\alpha\beta}$ be constant the relaxed ellipticity condition \eqref{mu} hold.
Let ${\mathbf f}\in \mathcal C^\infty(0,T;\dot{\mathbf C}_\#^{\infty})$,  
and $\mathbf u^0\in \dot{\mathbf C}_{\#\sigma}^{\infty}$.

Then the solution $\mathbf u$ of the anisotropic Navier-Stokes 
initial value problem  \eqref{NS-problem-div0}--\eqref{NS-problem-div0-IC}
obtained in Theorem \ref{NS-problemTh-sigma}  is of Serrin-type and 
is such that
$\mathbf u\in \mathcal C^{\infty}(0,T;\dot{\mathbf C}_{\#\sigma}^{\infty})$,
$p\in \mathcal C^\infty(0,T;\dot{\mathcal C}_{\#}^\infty)$.
\end{corollary}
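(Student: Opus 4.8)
The plan is to imitate the proof of Corollary \ref{Cor5.11C}, replacing the general-$n$ spatial--temporal regularity Theorem \ref{NS-problemTh-sigma-Lv-er-tC} (which carries the smallness condition \eqref{E4.143T*varC}) by its two-dimensional counterpart Theorem \ref{NS-problemTh-sigma-Lv-er-t-2C}, valid on the whole interval $[0,T]$ with no restriction on the size of the data. First I would record that for $n=2$ the weak solution $\mathbf u$ furnished by Theorem \ref{NS-problemTh-sigma} is automatically a Serrin-type solution: by Definition \ref{D6.1} it belongs to $L_2(0,T;\dot{\mathbf H}_{\#\sigma}^1)=L_2(0,T;\dot{\mathbf H}_{\#\sigma}^{n/2})$, which is exactly the defining inclusion in Definition \ref{CSDef}; this is also the content of Theorem \ref{NS-problemTh-sigma2}. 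Hence the two-dimensional regularity theorems apply to it.

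Next I would exploit the smoothness of the data. Since $\dot{\mathbf C}_\#^\infty=\bigcap_{r\in\R}\dot{\mathbf H}_{\#\sigma}^r$ and $\mathbf f\in\mathcal C^\infty(0,T;\dot{\mathbf C}_\#^\infty)$, for every $r>0$ we have $\mathbf u^0\in\dot{\mathbf H}_{\#\sigma}^r$ and, for every integer $l\ge0$, $\mathbf f^{(l)}\in L_\infty(0,T;\dot{\mathbf H}_\#^{r-2-2l})\cap L_2(0,T;\dot{\mathbf H}_\#^{r-1-2l})$. Thus for each such $r$ and each integer $k\in[1,r+1)$ the hypotheses of Theorem \ref{NS-problemTh-sigma-Lv-er-t-2C} are met, so $\mathbf u^{(l)}\in L_\infty(0,T;\dot{\mathbf H}_{\#\sigma}^{r-2l})\cap L_2(0,T;\dot{\mathbf H}_{\#\sigma}^{r+1-2l})$ for $l=0,\dots,k$ and $p^{(l)}\in L_\infty(0,T;\dot H_\#^{r-1-2l})\cap L_2(0,T;\dot H_{\#\sigma}^{r-2l})$ for $l=0,\dots,k-1$. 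Letting $r\to\infty$ together with $k\to\infty$ and intersecting over all admissible $r$ yields $\mathbf u^{(l)}\in L_\infty(0,T;\dot{\mathbf C}_{\#\sigma}^\infty)\cap L_2(0,T;\dot{\mathbf C}_{\#\sigma}^\infty)$ and $p^{(l)}\in L_\infty(0,T;\dot{\mathcal C}_\#^\infty)\cap L_2(0,T;\dot{\mathcal C}_\#^\infty)$ for every integer $l\ge0$.

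Finally, to upgrade boundedness in time of $\mathbf u$, of $p$, and of all their time derivatives to $\mathcal C^\infty$ regularity in time, I would use the elementary fact (the same ingredient behind Theorem \ref{LM-T3.1}) that a function whose distributional time derivative lies in $L_2(0,T;X)$ agrees a.e. with a continuous $X$-valued function; applying this to $\mathbf u^{(l)}$ with $X=\dot{\mathbf H}_{\#\sigma}^s$ and to $p^{(l)}$ with $X=\dot H_\#^s$, for every $s\in\R$ and every integer $l\ge0$, and recalling $\dot{\mathbf C}_{\#\sigma}^\infty=\bigcap_{s}\dot{\mathbf H}_{\#\sigma}^s$, gives $\mathbf u\in\mathcal C^\infty(0,T;\dot{\mathbf C}_{\#\sigma}^\infty)$ and $p\in\mathcal C^\infty(0,T;\dot{\mathcal C}_\#^\infty)$.

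I do not expect a genuine obstacle: the corollary is essentially bookkeeping on top of Theorem \ref{NS-problemTh-sigma-Lv-er-t-2C}. The only points requiring mild care are to let the number of controlled time derivatives $k$ grow with the spatial index $r$ so that the entire families $\{\mathbf u^{(l)}\}_{l\ge0}$ and $\{p^{(l)}\}_{l\ge0}$ are covered, and the elementary $L_2$-time-derivative $\Rightarrow$ time-continuity step at the end.
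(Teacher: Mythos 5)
Your proposal is correct and follows essentially the same route as the paper: the paper states that this corollary is proved like Corollary \ref{Cor5.11C} with $T_*$ replaced by $T$ (using Theorem \ref{NS-problemTh-sigma-Lv-er-t-2C} in place of Theorem \ref{NS-problemTh-sigma-Lv-er-tC}), and your argument does exactly that, intersecting the conclusions of the 2D spatial--temporal regularity theorem over all $r$ and $k$. Your final step is phrased slightly loosely --- the relevant fact from Theorem \ref{LM-T3.1} is that $u\in L_2(0,T;X)$ with $u'\in L_2(0,T;Y)$, $X\subset Y$, gives continuity into the intermediate space $[X,Y]_{1/2}$, not into $X$ itself --- but this causes no gap here since $\mathbf u^{(l)}$ and $\mathbf u^{(l+1)}$ lie in $L_2(0,T;\dot{\mathbf H}_{\#\sigma}^{s})$ for every $s$, so every intermediate space is again some $\dot{\mathbf H}_{\#\sigma}^{s'}$ and the intersection over $s'$ still yields $\dot{\mathbf C}_{\#\sigma}^{\infty}$.
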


\section{Serrin-type solution existence and regularity for variable anisotropic viscosity coefficients}\label{S5ERV}
In this section we generalise to the anisotropic variable  viscosity coefficients the analysis of the existence and regularity of Serrin-type solutions for any $n\ge 2$ given in Section  \ref{S5ERC} for the anisotropic constant viscosity coefficients. 

\subsection{Preliminary results 
for variable anisotropic viscosity coefficients}\label{S.5.2}

For some  $n\ge 2$, $r\ge{n}/{2}-1$, and $T>0$, let  
$a_{ij}^{\alpha \beta }\in L_\infty([0,T];H_\#^{\tilde\sigma+1})$, 
$\tilde\sigma>\frac{n}{2}+\max\{|r-1|,|n/2-2|\}$,
 and the relaxed ellipticity condition \eqref{mu} hold.
Let  also ${\mathbf f}\in L_2(0,T;\dot{\mathbf H}_\#^{r-1})$  and $\mathbf u^0\in \dot{\mathbf H}_{\#\sigma}^{r}$.

We employ the Galerkin approximation as in Section \ref{S.5.2C} and repeating the same arguments arrive at the same  equations \eqref{E4.93Tvar0C} but now with the variable coefficients $a_{ij}^{\alpha \beta }(\mathbf x,t)$.
These equations can be now re-written as
\begin{multline}\label{E4.93Tvar}
\langle\partial_t\Lambda^{r}_\#\widetilde{\boldsymbol u}_m, \Lambda^{r}_\#\mathbf{w}_k\rangle_\T
+\left\langle a_{ij}^{\alpha \beta }E_{j\beta }(\Lambda^{r}_\#\widetilde{\mathbf u}_m),
E_{i\alpha }(\Lambda^{r}_\#{\mathbf w}_k)\right\rangle _{\T}
+\langle\Lambda^{r-1}_\#[({\mathbf u}_m\cdot \nabla ){\mathbf u}_m],\Lambda^{r+1}_\#{\bf w}_k\rangle _{\T}
\\
\hspace{1em}
=\langle\Lambda^{r-1}_\#\mathbf f, \Lambda^{r+1}_\#  \mathbf{w}_k\rangle_\T
+\langle{\nabla\Lambda^{r}_\#\mathbf v}_m,\nabla\Lambda^{r}_\#{\bf w}_k\rangle_\T
-\left\langle E_{j\beta }({\bf v}_m),
a_{ij}^{\alpha \beta }\Lambda^{r}_\#E_{i\alpha }(\Lambda^{r}_\#{\mathbf w}_k)\right\rangle _{\T}
\\
-\left\langle E_{j\beta }(\widetilde{\mathbf u}_m),
a_{ij}^{\alpha \beta }\Lambda^{r}_\#E_{i\alpha }(\Lambda^{r}_\#{\mathbf w}_k)
-\Lambda^{r}_\#[a_{ij}^{\alpha \beta }E_{i\alpha }(\Lambda^{r}_\#{\mathbf w}_k)]\right\rangle _{\T},
\quad \forall\, k\in \{1,\ldots,m\} .
\end{multline}
Multiplying equations in \eqref{E4.93Tvar} by $\widetilde{\eta}_{k,m}(t)$ and summing them up over $k\in \{1,\ldots,m\}$, we obtain
\begin{multline}\label{E4.88Tvar}
\frac{1}{2} \partial_t\left\|\Lambda^{r}_\#\widetilde{\mathbf u}_m\right\|_{\mathbf H^{0}_{\#}}^2
+a_{\T}(t;\Lambda^{r}_\#\widetilde{\mathbf u}_m,\Lambda^{r}_\#\widetilde{\mathbf u}_m)
=\langle\Lambda^{r-1}_\#\mathbf f, \Lambda^{r+1}_\# \widetilde{\mathbf u}_m\rangle_\T
+\langle{\nabla\Lambda^{r}_\#\mathbf v}_m,\nabla\Lambda^{r}_\#\widetilde{\mathbf u}_m\rangle_\T
\\
-\left\langle E_{j\beta }({\bf v}_m),
a_{ij}^{\alpha \beta }\Lambda^{r}_\#E_{i\alpha }(\Lambda^{r}_\#\widetilde{\mathbf u}_m)\right\rangle _{\T}
-\left\langle E_{j\beta }(\widetilde{\mathbf u}_m),
a_{ij}^{\alpha \beta }\Lambda^{r}_\#E_{i\alpha }(\Lambda^{r}_\#\widetilde{\mathbf u}_m)
-\Lambda^{r}_\#[a_{ij}^{\alpha \beta }E_{i\alpha }(\Lambda^{r}_\#\widetilde{\mathbf u}_m)]\right\rangle _{\T}
\\
-\langle\Lambda^{r-1}_\#[({\mathbf u}_m\cdot \nabla ){\mathbf u}_m],\Lambda^{r+1}_\#\widetilde{\mathbf u}_m\rangle _{\T}.
\end{multline}

From \eqref{NS-a-1-v2-S-} we have
\begin{align}
\label{NS-a-1-v2-S-Tvar}
a_{\T}(t;\Lambda^{r}_\#\widetilde{\mathbf u}_m,\Lambda^{r}_\#\widetilde{\mathbf u}_m)
&\geq \frac14 C_{\mathbb A}^{-1}\|\Lambda^{r}_\#\widetilde{\mathbf u}_m\|_{\dot{\mathbf H}_{\#\sigma}^1}^2
=\frac14 C_{\mathbb A}^{-1}\|\widetilde{\mathbf u}_m\|^2_{\dot{\mathbf H}_{\#\sigma}^{r+1}}.
\end{align}

Let us now estimate the terms in the right hand side of \eqref{E4.88Tvar}.
For the first two terms and for the last one, estimates \eqref{E4.123cvar}, \eqref{E4.121cvar} and \eqref{E4.124C} still hold.

Further, since  $\tilde\sigma+1>n/2$, we obtain by Theorem \ref{RS-T1-S4.6.1}(a), relation \eqref{strain-r}, and inequality \eqref{eq:mik14},
\begin{multline}\label{E4.122var}
\left|\left\langle E_{j\beta }({\bf v}_m),
a_{ij}^{\alpha \beta }\Lambda^{r}_\#E_{i\alpha }(\Lambda^{r}_\#\widetilde{\mathbf u}_m)\right\rangle _{\T}\right|
\le \|E_{j\beta }({\mathbf v}_m)\|_{({H}_{\#}^{r})^{n\times n}}  
\|a_{ij}^{\alpha \beta }\Lambda^{r}_\#E_{i\alpha }(\Lambda^{r}_\#\widetilde{\mathbf u}_m)\|_{({H}_{\#}^{-r})^{n\times n}}
\\
\le \|E_{j\beta }({\mathbf v}_m)\|_{({H}_{\#}^{r})^{n\times n}} C_{*\tilde\sigma rn} \|a_{ij}^{\alpha \beta }\|_{H_\#^{\tilde\sigma+1}} \|\Lambda^{r}_\# E_{i\alpha }(\Lambda^{r}_\#\widetilde{\mathbf u}_m)\|_{({H}_{\#}^{-r})^{n\times n}}
\\
\le  C_{*\tilde\sigma rn} \|\mathbb A\|_{H_\#^{\tilde\sigma+1},F}\, \|\mathbf v_m\|_{\dot{\mathbf H}_{\#}^{r+1}} 
\|\widetilde{\mathbf u}_m\|_{\dot{\mathbf H}_{\#}^{r+1}},
\end{multline}
where 
$C_{*\tilde\sigma rn}:=C_*(-r,\tilde\sigma+1,n)$,
$\|\mathbb A(\cdot,t)\|_{H_\#^{\tilde\sigma+1},F}
:= \left|\left\{\|a_{ij}^{\alpha \beta }(\cdot,t)\|_{H_\#^{\tilde\sigma+1}}\right\}_{\alpha,\beta,i,j=1}^n\right|_{F}$.

By Theorem \ref{Jcomm} with $\theta=r$, $s=0$,
\begin{multline}\label{E4.122var-}
\left|\left\langle E_{j\beta }(\widetilde{\mathbf u}_m),
a_{ij}^{\alpha \beta }\Lambda^{r}_\#E_{i\alpha }(\Lambda^{r}_\#\widetilde{\mathbf u}_m)
-\Lambda^{r}_\#[a_{ij}^{\alpha \beta }E_{i\alpha }(\Lambda^{r}_\#\widetilde{\mathbf u}_m)]\right\rangle _{\T}\right|
\\
\le \|E_{j\beta }(\widetilde{\mathbf u}_m)\|_{({ H}_{\#}^{r-1})^{n\times n}}  
\|a_{ij}^{\alpha \beta }\Lambda^{r}_\#E_{i\alpha }(\Lambda^{r}_\#\widetilde{\mathbf u}_m)
-\Lambda^{r}_\#[a_{ij}^{\alpha \beta }E_{i\alpha }(\Lambda^{r}_\#\widetilde{\mathbf u}_m)]\|_{({H}_{\#}^{-r+1})^{n\times n}}
\\
\le \bar C_{0,r,\tilde\sigma}\|E_{j\beta }(\widetilde{\mathbf u}_m)\|_{({H}_{\#}^{r-1})^{n\times n}} |a_{ij}^{\alpha \beta }|_{H_\#^{\tilde\sigma+1}}
\|E_{i\alpha }(\Lambda^{r}_\#\widetilde{\mathbf u}_m)\|_{(H_\#^0))^{n\times n}} 
\le \bar C_{0,r,\tilde\sigma} |\mathbb A|_{H_\#^{\tilde\sigma+1},F} 
\|\widetilde{\mathbf u}_m\|_{\dot{\mathbf H}_{\#}^{r}} 
\|\widetilde{\mathbf u}_m\|_{\dot{\mathbf H}_{\#}^{r+1}},
\end{multline}
where 
$|\mathbb A(\cdot,t)|_{H_\#^{\tilde\sigma+1},F}
:= \left|\left\{|a_{ij}^{\alpha \beta }(\cdot,t)|_{H_\#^{\tilde\sigma+1}}\right\}_{\alpha,\beta,i,j=1}^n\right|_{F}
\le \|\mathbb A(\cdot,t)\|_{H_\#^{\tilde\sigma+1},F}$.

Implementing estimates \eqref{E4.123cvar}, \eqref{E4.121cvar}, \eqref{E4.124C}, \eqref{NS-a-1-v2-S-Tvar}, \eqref{E4.122var}, \eqref{E4.122var-} in \eqref{E4.88Tvar} and using Young's inequality, we obtain
\begin{multline*}
\frac{d}{d t}\left\|\widetilde{\mathbf u}_m\right\|_{\mathbf H^{r}_{\#}}^2
+\frac12 C_{\mathbb A}^{-1}\|\widetilde{\mathbf u}_m\|_{\mathbf{H}_\#^{r+1}}^2
\leq 2\Big(\|\mathbf f\|_{\mathbf{H}_\#^{r-1}}
+\big[C_{*\tilde\sigma rn}\|\mathbb A\|_{H_\#^{\tilde\sigma+1},F}+1\big] \|\mathbf v_m\|_{\dot{\mathbf H}_{\#}^{r+1}}
\\
\shoveright{
+\bar C_{0,r,\tilde\sigma} |\mathbb A|_{H_\#^{\tilde\sigma+1},F} 
\|\widetilde{\mathbf u}_m\|_{\dot{\mathbf H}_{\#}^{r}} 
+\left\|({\mathbf u}_m\cdot \nabla ){\mathbf u}_m\right\|_{\mathbf{H}_\#^{r-1}}
\Big) \| \widetilde{\mathbf u}_m\|_{\mathbf{H}_\#^{r+1}}
}
\\
\leq 4 C_{\mathbb A}\Big(\|\mathbf f\|_{\mathbf{H}_\#^{r-1}}
+\big[C_{*\tilde\sigma rn}\|\mathbb A\|_{H_\#^{\tilde\sigma+1},F}+1\big]  \|\mathbf v_m\|_{\dot{\mathbf H}_{\#}^{r+1}}
\\
+\bar C_{0,r,\tilde\sigma} |\mathbb A|_{H_\#^{\tilde\sigma+1},F} \|\widetilde{\mathbf u}_m\|_{\dot{\mathbf H}_{\#}^{r}}
+\left\|({\mathbf u}_m\cdot \nabla ){\mathbf u}_m\right\|_{\mathbf{H}_\#^{r-1}} 
\Big)^2
+  \frac14 C_{\mathbb A}^{-1}\| \widetilde{\mathbf u}_m\|_{\mathbf{H}_\#^{r+1}}^2.
\end{multline*}
Hence by the inequality $(\sum_{i=1}^k a_i)^2\le k\sum_{i=1}^k a_i^2$ (following from the Cauchy–Schwarz inequality), \begin{multline}\label{E4.82T}
\frac{d}{d t}\left\|\widetilde{\mathbf u}_m\right\|_{\mathbf H^{r}_{\#}}^2
+\frac14 C_{\mathbb A}^{-1}\|\widetilde{\mathbf u}_m\|_{\mathbf{H}_\#^{r+1}}^2
\leq 20 C_{\mathbb A}\Big(\|\mathbf f\|_{\mathbf{H}_\#^{r-1}}^2
+ \big[C_{*\tilde\sigma rn}^2\|\mathbb A\|^2_{H_\#^{\tilde\sigma+1},F}+1\big]\|\mathbf v_m\|_{\dot{\mathbf H}_{\#}^{r+1}}^2
\\
+\bar C_{0,r,\tilde\sigma}^2 |\mathbb A|^2_{H_\#^{\tilde\sigma+1},F}\|\widetilde{\mathbf u}_m\|^2_{\dot{\mathbf H}_{\#}^{r}}
+\left\|({\mathbf u}_m\cdot \nabla ){\mathbf u}_m\right\|_{\mathbf{H}_\#^{r-1}}^2
\Big).
\end{multline}

Note that by the similar reasoning, but without employing in \eqref{NS-eq:mik51adTv}--\eqref{NS-eq:mik51indTv} the function $\mathbf v$, we obtain that $\mathbf u_m$ satisfies the differential inequality
\begin{multline}\label{E4.82T0}
\frac{d}{d t}\left\|\mathbf u_m\right\|_{\mathbf H^{r}_{\#}}^2
+\frac14 C_{\mathbb A}^{-1}\|\mathbf u_m\|_{\mathbf{H}_\#^{r+1}}^2
\leq 12 C_{\mathbb A}\Big(\|\mathbf f\|_{\mathbf{H}_\#^{r-1}}^2
+\bar C_{0,r,\tilde\sigma}^2 |\mathbb A|^2_{H_\#^{\tilde\sigma+1},F}\|\mathbf u_m\|^2_{\dot{\mathbf H}_{\#}^{r}}
+\left\|({\mathbf u}_m\cdot \nabla ){\mathbf u}_m\right\|_{\mathbf{H}_\#^{r-1}}^2
\Big).
\end{multline}

\subsection{Serrin-type solution existence for variable anisotropic viscosity coefficients}\label{S5.3}
Employing the results from Section \ref{S.5.2} for $r=n/2-1$, we are now in the position to prove the existence of Serrin-type solutions.
\begin{theorem}
\label{NS-problemTh-sigma-Lv-cr}
Let $n\ge 2$, and $T>0$. 
Let  
$a_{ij}^{\alpha \beta }\in L_\infty([0,T];H_\#^{\tilde\sigma+1})$, 
$\tilde\sigma>\frac{n}{2}+|n/2-2|$,
 and the relaxed ellipticity condition \eqref{mu} hold.
Let ${\mathbf f}\in L_2(0,T;\dot{\mathbf H}_\#^{n/2-2})$  and $\mathbf u^0\in \dot{\mathbf H}_{\#\sigma}^{n/2-1}$.

(i) Then there  exist constants $A_{1}\ge 0$, $A_{2}\ge0$ and $A_{3}>0$ that are independent of ${\mathbf f}$ and $\mathbf u^0$ but may depend on $T$, $n$,  $\|\mathbb A\|$ and $C_{\mathbb A}$, such that
if ${\mathbf f}$, $\mathbf u^0$ and $T_*\in(0,T]$ satisfy the inequality
\begin{align}\label{E4.143T*var}
\int_0^{T_*}\|\mathbf f(\cdot,t)\|_{\mathbf{H}_\#^{n/2-2}}^2dt
+ \left (A_{1}\|{\mathbf u}^0\|^2_{{\mathbf H}_\#^{n/2-1}} + A_{2}\right) 
\int_0^{T_*}\|(K{\mathbf u}^0)(\cdot,t)\|_{\dot{\mathbf H}_{\#}^{n/2}}^2dt
<A_{3},
\end{align}  
where $K$ is the operator defined in \eqref{K-def}, 
then there exists a solution $\mathbf u$ of the anisotropic Navier-Stokes 
initial value problem  \eqref{NS-problem-div0}--\eqref{NS-problem-div0-IC} in 
$L_{\infty}(0,T_*;\dot{\mathbf H}_{\#\sigma}^{n/2-1})\cap L_2(0,T_*;\dot{\mathbf H}_{\#\sigma}^{n/2})$, which is thus a Serrin-type solution.

(ii) In addition,
$
\mathbf u'\in L_2(0,T_*;\dot{\mathbf H}_{\#\sigma}^{n/2-2}),
$
$
\mathbf u \in\mathcal C^0([0,T_*];\dot{\mathbf H}_{\#\sigma}^{n/2-1})
$,
$
\lim_{t\to 0}\| \mathbf u(\cdot,t)-{\mathbf u}^0\| _{\dot{\mathbf H}_{\#\sigma}^{n/2-1}}= 0,
$
and $p\in L_2(0,T_*;\dot{H}_{\#}^{n/2-1})$. 

(iii) Moreover, $\mathbf u$ satisfies  the following energy equality for any $[t_0,t]\subset[0,T_*]$,
\begin{align}\label{NS-eq:mik51ai=aTvar}
\frac12\| {\mathbf u}(\cdot,t)\| _{\mathbf L_{2\#}}^2
+\int_{t_0}^{t}a_{\T}(\tau;{\mathbf u}(\cdot,\tau),{\mathbf u}(\cdot,\tau)) d\tau
=  \frac12\| {\mathbf u}(\cdot,t_0)\| ^2_{\mathbf L_{2\#}}
+\int_{t_0}^{t}\langle\mathbf{f}(\cdot,\tau), {\mathbf u}(\cdot,\tau)\rangle_{\T}d\tau.
\end{align}
It particularly implies the standard energy equality,
\begin{align}
\label{E4.9Tvar}
\frac12\ {\mathbf u}(\cdot,t)\| _{\mathbf L_{2\#}}^2
+\int_0^ta_{\T}(\tau;{\mathbf u}(\cdot,\tau),{\mathbf u}(\cdot,\tau)) d\tau
= \frac12\|{\mathbf u}^0\| ^2_{\mathbf L_{2\#}}
+\int_0^t\langle\mathbf{f}(\cdot,\tau), {\mathbf u}(\cdot,\tau)\rangle_{\T}\,d\tau
\quad \forall\,
t\in[0,T_*].
\end{align}

(iv) The solution $\mathbf u$ is unique in the class of solutions from
$ L_{\infty}(0,T_*;\dot{\mathbf H}_{\#\sigma}^{0})\cap L_2(0,T_*;\dot{\mathbf H}_{\#\sigma}^{1})$
satisfying the energy inequality \eqref{E4.9} on the interval $[0,T_*]$. 
\end{theorem}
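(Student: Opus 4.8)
The proof runs in close parallel to that of Theorem \ref{NS-problemTh-sigma-Lv-crC}, the only genuinely new feature being the extra variable-coefficient commutator term, which has already been isolated and controlled in \eqref{E4.82T}--\eqref{E4.82T0} of Section \ref{S.5.2}. First I would specialise the preliminary estimates to $r=n/2-1$. Writing $\mathbf u_m=\widetilde{\mathbf u}_m+\mathbf v_m$ and combining \eqref{E4.91-0T0cC} with $(a+b)^2\le 2a^2+2b^2$, one obtains a convective-term bound of the form $\|(\mathbf u_m\cdot\nabla)\mathbf u_m\|_{\mathbf H_\#^{r-1}}^2\le C\big(\|\widetilde{\mathbf u}_m\|_{\mathbf H_\#^{n/2-1}}^2\|\widetilde{\mathbf u}_m\|_{\mathbf H_\#^{n/2}}^2+\|\mathbf v_m\|_{\mathbf H_\#^{n/2-1}}^2\|\mathbf v_m\|_{\mathbf H_\#^{n/2}}^2\big)$, exactly as in \eqref{E4.91-0TC}. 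Substituting this into \eqref{E4.82T} produces a differential inequality of the type
\begin{align*}
\tfrac{d}{dt}\eta + b\,y \le c\,y\,\eta + d\,\eta + \psi,
\end{align*}
with $\eta=\|\widetilde{\mathbf u}_m\|_{\mathbf H_\#^{n/2-1}}^2$, $y=\|\widetilde{\mathbf u}_m\|_{\mathbf H_\#^{n/2}}^2$, $b=\tfrac14 C_{\mathbb A}^{-1}$, the coefficient $d$ proportional to $|\mathbb A|^2_{H_\#^{\tilde\sigma+1},F}$, and $\psi$ collecting $\|\mathbf f\|_{\mathbf H_\#^{n/2-2}}^2$ together with the $\mathbf v_m$-terms; the term $d\,\eta$, absent in the constant-coefficient case, is harmless since it can be absorbed by the substitution $\bar\eta:=e^{-dt}\eta$ (equivalently, by the corresponding variant of Lemma \ref{RRS2016-L10.3alt}), contributing at worst a fixed factor $e^{dT}$.

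Applying Lemma \ref{RRS2016-L10.3alt} to this inequality then shows that, as soon as a smallness condition of the form $\int_0^{T_*}\big(\|\mathbf f\|_{\mathbf H_\#^{n/2-2}}^2+(A_1\|\mathbf u^0\|_{\mathbf H_\#^{n/2-1}}^2+A_2)\|\mathbf v(\cdot,t)\|_{\dot{\mathbf H}_\#^{n/2}}^2\big)\,dt<A_3$ holds (derived from the fibre estimate via \eqref{E4.122v-mC}, exactly as \eqref{E4.143bC} is derived from \eqref{E5.32bC}), the sequences $\{\widetilde{\mathbf u}_m\}$ are bounded in $L_\infty(0,T_*;\dot{\mathbf H}_{\#\sigma}^{n/2-1})$ and in $L_2(0,T_*;\dot{\mathbf H}_{\#\sigma}^{n/2})$. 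Adding the bounds \eqref{E4.122v-mC}, \eqref{E4.122va-mC} for $\mathbf v_m$ transfers these bounds to $\{\mathbf u_m\}$, and recalling $\mathbf v=K\mathbf u^0$ turns the smallness condition into \eqref{E4.143T*var}, with constants $A_1,A_2,A_3$ now depending additionally on $\bar C_{0,r,\tilde\sigma}$, $C_{*\tilde\sigma rn}$ and $T$ (through $e^{dT}$) but still independent of $\mathbf f$ and $\mathbf u^0$. A weakly/weakly-$*$ convergent subsequence of $\{\mathbf u_m\}$ then has a limit $\mathbf u^\dag\in L_\infty(0,T_*;\dot{\mathbf H}_{\#\sigma}^{n/2-1})\cap L_2(0,T_*;\dot{\mathbf H}_{\#\sigma}^{n/2})$; since $\{\mathbf u_m\}$ is a subsequence of the Galerkin sequence converging to the weak solution $\mathbf u$ of Theorem \ref{NS-problemTh-sigma}, uniqueness of weak limits gives $\mathbf u=\mathbf u^\dag$, which proves (i).

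For (ii), repeating the estimate \eqref{E4.91-0T0cC} for $\mathbf u$ itself yields $(\mathbf u\cdot\nabla)\mathbf u\in L_2(0,T_*;\dot{\mathbf H}_\#^{n/2-2})$, while by the operator form \eqref{L-oper} and the multiplication Theorem \ref{RS-T1-S4.6.1}, using $a_{ij}^{\alpha\beta}\in H_\#^{\tilde\sigma+1}$ with $\tilde\sigma+1>n/2$, one gets $\bs{\mathfrak L}\mathbf u\in L_2(0,T_*;\dot{\mathbf H}_\#^{n/2-2})$; together with $\mathbf f\in L_2(0,T;\dot{\mathbf H}_\#^{n/2-2})$, equation \eqref{NS-problem-just} gives $\mathbf u'\in L_2(0,T_*;\dot{\mathbf H}_{\#\sigma}^{n/2-2})$, whence $\mathbf u\in\mathcal C^0([0,T_*];\dot{\mathbf H}_{\#\sigma}^{n/2-1})$ and the stated limit relation by Theorem \ref{LM-T3.1}, and the pressure belongs to $L_2(0,T_*;\dot H_\#^{n/2-1})$ by \eqref{Eq-p} and Lemma \ref{div-grad-is} with $s=n/2-1$. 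Finally, (iii) is the energy equality of Theorem \ref{Th4.4} applied to the Serrin-type solution just constructed, and (iv) is precisely Theorem \ref{Th4.5}. The main obstacle is organisational rather than conceptual: one must check that the extra commutator-induced linear term in \eqref{E4.82T} does not spoil the smallness mechanism — i.e.\ that it only rescales the final constants by a fixed $T$-dependent factor — and that the hypothesis $\tilde\sigma>\tfrac n2+|n/2-2|$ is exactly the coefficient regularity required to run the multiplication and commutator estimates of Section \ref{S.5.2} at the smoothness level $r=n/2-1$.
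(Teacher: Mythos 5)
Your proposal is correct and follows essentially the same route as the paper: specialise the preliminary variable-coefficient estimates of Section \ref{S.5.2} to $r=n/2-1$, split $\mathbf u_m = \widetilde{\mathbf u}_m + \mathbf v_m$ with $\mathbf v_m$ controlled by the heat-semigroup bounds, feed the convective estimate into \eqref{E4.82T}, and close via a small-$T_*$/small-data Gronwall mechanism; parts (ii)--(iv) then follow from \eqref{NS-problem-just}, Lemma \ref{div-grad-is}, Theorem \ref{Th4.4} and Theorem \ref{Th4.5} exactly as you describe. One small precision worth noting: the commutator coefficient you call $d$ is time-dependent (it is $\phi(t)=20C_{\mathbb A}\bar C_n^2|\mathbb A(\cdot,t)|^2_{H_\#^{\tilde\sigma_n+1},F}$), so the exponential rescaling you sketch with $e^{-dt}$ must in fact use $e^{-\Phi(t)}$ with $\Phi(t)=\int_0^t\phi$; this is precisely what Lemma \ref{RRS2016-L10.3phi} (the ``corresponding variant'' you invoke) does, contributing the factor $e^{\Phi(T)}\le\exp\bigl(cT\|\mathbb A\|^2_{L_\infty(0,T;H_\#^{\tilde\sigma_n+1}),F}\bigr)$ to the constant $A_3$, as in \eqref{E4.143b}.
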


\begin{proof}
(i)
Let $r=n/2-1$.
The estimate \eqref{E4.91-0T0cC} still holds.
Let us fix any small  $\tilde\sigma_n$ such that $\tilde\sigma\ge\tilde\sigma_n>{n}/{2}+|n/2-2|=\max\{2,n-2\}$.  
Then by \eqref{E4.91-0TC} we obtain from \eqref{E4.82T}, 
\begin{multline}\label{E4.93BTb}
\frac{d}{d t}\left\|\widetilde{\mathbf u}_m\right\|_{\mathbf H^{n/2-1}_{\#}}^2
+\frac14 C_{\mathbb A}^{-1}\|\widetilde{\mathbf u}_m\|_{\mathbf{H}_\#^{n/2}}^2
\leq 
\left(160C^2_{*rn}C_{\mathbb A}
\|\widetilde{\mathbf u}_m\|^2_{{\mathbf H}_\#^{n/2}}
+20C_{\mathbb A}\bar C_{n}^2 |\mathbb A|^2_{H_\#^{\tilde\sigma_n+1},F}\right)
\|\widetilde{\mathbf u}_m\|^2_{\dot{\mathbf H}_{\#}^{n/2-1}} 
\\
+ 20 C_{\mathbb A}\left(\|\mathbf f\|_{\mathbf{H}_\#^{n/2-2}}^2
+ 8C^2_{*rn}\|{\mathbf v}_m\|^2_{{\mathbf H}_\#^{n/2-1}}\|{\mathbf v}_m\|^2_{{\mathbf H}_\#^{n/2}}
+ \big[\widetilde{C}_{*n}^2\|\mathbb A\|^2_{H_\#^{\tilde\sigma_n+1},F}+1\big] \|\mathbf v_m\|_{\dot{\mathbf H}_{\#}^{n/2}}^2
\right),
\end{multline}
where 
$\bar C_{n}:=\bar C_{0,n/2-1,\tilde\sigma_n}$, $\widetilde{C}_{*n}:=C_{*\tilde\sigma_n, n/2-1,n}=C_*(-n/2+1,\tilde\sigma_n+1,n)$.

Let us now apply to \eqref{E4.93BTb} Lemma \ref{RRS2016-L10.3phi} with 
\begin{align*}
&\eta=\left\|\widetilde{\mathbf u}_m\right\|_{\mathbf H^{n/2-1}_{\#}}^2,\
\eta_0=0,\
y=\|\widetilde{\mathbf u}_m\|_{\mathbf{H}_\#^{n/2}}^2,\ 
b=\frac14 C_{\mathbb A}^{-1},\ 
c=160C^2_{*rn}C_{\mathbb A},\
\phi=20 C_{\mathbb A}\bar C_{n}^2 |\mathbb A|^2_{H_\#^{\tilde\sigma_n+1},F}\\
&\psi=20 C_{\mathbb A}\left(\|\mathbf f\|_{\mathbf{H}_\#^{n/2-2}}^2
+ 8C^2_{*rn}\|{\mathbf v}_m\|^2_{{\mathbf H}_\#^{n/2-1}}\|{\mathbf v}_m\|^2_{{\mathbf H}_\#^{n/2}}
+\big[\widetilde{C}_{*n}^2\|\mathbb A\|^2_{H_\#^{\tilde\sigma_n+1},F}+1\big]\|\mathbf v_m\|_{\dot{\mathbf H}_{\#}^{n/2}}^2
\right)
\end{align*}
to conclude that if $T_*$ is such that 
\begin{multline}\label{E5.32b}
\int_0^{T_*}e^{\Phi(T_*)-\Phi(t)}\left(\|\mathbf f(\cdot,t)\|_{\mathbf{H}_\#^{n/2-2}}^2
+ \left(8C^2_{*rn}\|{\mathbf v}_m(\cdot,t)\|^2_{{\mathbf H}_\#^{n/2-1}}
+\big[\widetilde{C}_{*n}^2\|\mathbb A\|^2_{H_\#^{\tilde\sigma_n+1},F}+1\big]\right) 
\|\mathbf {\mathbf v}_m(\cdot,t)\|_{\dot{\mathbf H}_{\#}^{n/2}}^2\right)dt
\\
<\left(640eC_{\mathbb A}^2C^2_{*rn}\right)^{-1},
\end{multline}
where 
$$\Phi(s):=\int_0^s \phi(\tau)d\tau=20C_{\mathbb A}\bar C_{n}^2\int_0^s  |\mathbb A(\cdot,\tau)|^2_{H_\#^{\tilde\sigma_n+1},F}d\tau,$$
 then
\begin{align}
&\|{\mathbf u}_m\|_{L_\infty(0,T_*;\dot{\mathbf H}_{\#\sigma}^{n/2-1})}
\le\|\widetilde {\mathbf u}_m\|_{L_\infty(0,T_*;\dot{\mathbf H}_{\#\sigma}^{n/2-1})} 
+\|{\mathbf v}_m\|_{L_\infty(0,T_*;\dot{\mathbf H}_{\#\sigma}^{n/2-1})}
\le \left(8\sqrt{10}C_{\mathbb A}{C}_{*rn}\right)^{-1}+\left\|{\mathbf u}^0\right\|_{\dot{\mathbf H}^{n/2-1}_{\#\sigma}},
\label{E4.143avarb}\\
&\|{\mathbf u}_m\|_{L_2(0,T_*;\dot{\mathbf H}_{\#\sigma}^{n/2})}
\le\|\widetilde {\mathbf u}_m\|_{L_2(0,T_*;\dot{\mathbf H}_{\#\sigma}^{n/2})} 
+\|{\mathbf v}_m\|_{L_2(0,T_*;\dot{\mathbf H}_{\#\sigma}^{n/2})}
\le \left(4\sqrt{10C_{\mathbb A}}{C}_{*rn}\right)^{-1}
+\left\|{\mathbf u}^0\right\|_{\dot{\mathbf H}^{n/2-1}_{\#\sigma}}.
\label{E4.14varb}
\end{align}
Estimates \eqref{E4.122v-mC} and \eqref{E4.122va-mC} were taken into account in \eqref{E4.143avarb} and \eqref{E4.14varb}.

Taking into account inequality \eqref{E4.122v-mC}, we obtain that condition \eqref{E5.32b} is satisfied if $T_*$ is such that 
\begin{multline}\label{E4.143b}
\int_0^{T_*}\|\mathbf f(\cdot,t)\|_{\mathbf{H}_\#^{n/2-2}}^2dt
+ \left (8C^2_{*rn}\|{\mathbf u}^0\|^2_{{\mathbf H}_\#^{n/2-1}}
+\big[\widetilde{C}_{*n}^2\|\mathbb A\|^2_{L_\infty(0,T;H_\#^{\tilde\sigma_n+1}),F}+1\big]\right) \int_0^{T_*}\|\mathbf v(\cdot,t)\|_{\dot{\mathbf H}_{\#}^{n/2}}^2dt
\\
<\left(640C_{\mathbb A}^2C^2_{*rn}\right)^{-1}
\exp\left(-1-20 C_{\mathbb A}\bar C_{n}^2 \|\mathbb A\|^2_{L_\infty(0,T;H_\#^{\tilde\sigma_n+1}),F}\,T\right).
\end{multline}

Note that condition \eqref{E4.143b} gives condition \eqref{E4.143T*var} with 
\begin{align*}
&A_{1}=8C^2_{*rn}, \quad 
A_{2}=\widetilde{C}_{*n}^2\|\mathbb A\|^2_{L_\infty(0,T;H_\#^{\tilde\sigma_n+1}),F}+1,\\
&A_{3}=\left(640C_{\mathbb A}^2C^2_{*rn}\right)^{-1}
\exp\left(-1-20 C_{\mathbb A}\bar C_{n}^2 \|\mathbb A\|^2_{L_\infty(0,T;H_\#^{\tilde\sigma+1}),F}\,T\right).
\end{align*}

Inequalities \eqref{E4.143avarb} and \eqref{E4.14varb} imply that  there exists a subsequence of $\{\mathbf u_m\}$ converging weakly  in $L_2(0, T_* ; \dot{\mathbf H}_{\#\sigma}^{n/2})$ and weakly-star in $L_\infty(0, T_* ; \dot{\mathbf H}_{\#\sigma}^{n/2-1})$ to a function 
${\mathbf u}^\dag\in L_2(0, T_* ; \dot{\mathbf H}_{\#\sigma}^{n/2})\cup L_\infty(0, T_* ; \dot{\mathbf H}_{\#\sigma}^{n/2-1})$.
Then the subsequence converges to ${\mathbf u}^\dag$ also weakly in $L_2(0,T_*;\dot{\mathbf H}_{\#\sigma}^{1})$ and weakly-star in $L_\infty(0,T_*;\dot{\mathbf H}_{\#\sigma}^{0})$.
Since $\{\mathbf u_m\}$ is the subsequence of the sequence that converges weakly in $L_2(0,T;\dot{\mathbf H}_{\#\sigma}^{1})$ and weakly-star in $L_\infty(0,T;\dot{\mathbf H}_{\#\sigma}^{0})$ to the weak solution, $\mathbf u$, of problem \eqref{NS-problem-div0}--\eqref{NS-problem-div0-IC} on $\left[0, T_*\right]$, we conclude that 
$\mathbf u={\mathbf u}^\dag\in L_\infty(0, T_* ; \dot{\mathbf H}_{\#\sigma}^{n/2-1})\cup L_2(0, T_* ; \dot{\mathbf H}_{\#\sigma}^{n/2})$. 

This implies that $\mathbf u$ is a Serrin-type solution on the interval $[0,T_*]$ and we thus proved item (i) of the theorem.

(ii) As in step (ii) of the proof of Theorem \ref{NS-problemTh-sigma-Lv-cr}, estimate \eqref{E4.91-0T0EE1r-uC1} implies that 
$({\mathbf u}\cdot \nabla ){\mathbf u}\in{L_2(0,T_*;\dot{\mathbf H}_\#^{n/2-2})}$.
By \eqref{L-oper} and \eqref{TensNorm} we have
\begin{align*}
\left\|\bs{\mathfrak L}\mathbf u\right\|^2_{{\mathbf H}_{\#}^{n/2-2}}
\le\|a_{ij}^{\alpha \beta }E_{i\alpha }({\mathbf u})\|_{({H}_{\#}^{n/2-1})^{n\times n}}
\le \|\mathbb A\|^2_{H_\#^{\tilde\sigma+1},F}\|\mathbf u\|^2_{{\mathbf H}_{\#}^{n/2}}
\end{align*}
and thus
\begin{align*}
\|\bs{\mathfrak L}\mathbf u\|^2_{L_2(0,T_*;\dot{\mathbf H}_{\#}^{n/2-2})}
&\le \|\mathbb A\|^2_{L_\infty(0,T_*;H_\#^{\tilde\sigma+1}),F}\|\mathbf u\|^2_{L_2(0,T_*;{\mathbf H}_{\#}^{n/2})},
\end{align*}
i.e., $\bs{\mathfrak L}\mathbf u\in{L_2(0,T_*;\dot{\mathbf H}_{\#\sigma}^{n/2-2})}$.
We also have ${\mathbf f}\in L_2(0,T;\dot{\mathbf H}_\#^{n/2-2})$.

Then \eqref{NS-problem-just} implies that 
${\mathbf u}'\in{L_2(0,T_*;\dot{\mathbf H}_{\#\sigma}^{n/2-2})}$ and hence by Theorem \ref{LM-T3.1} we obtain that 
$
\mathbf u \in\mathcal C^0([0,T_*];\dot{\mathbf H}_{\#\sigma}^{n/2-1})
$,
which also means 
that $
\| \mathbf u(\cdot,t)-{\mathbf u}^0\| _{\dot{\mathbf H}_{\#\sigma}^{n/2-1}}\to 0
$
as ${t\to 0}$.

To prove the theorem claim about the associated pressure $p$ we remark that it satisfies \eqref{Eq-p}, 
where  $\mathbf F\in L_2(0,T;\dot{\mathbf H}_{\#}^{n/2-2})$
due to the theorem conditions and the inclusion $({\mathbf u}\cdot \nabla ){\mathbf u}\in{L_2(0,T_*;\dot{\mathbf H}_\#^{n/2-2})}$.
By Lemma \ref{div-grad-is} for gradient, with $s=n/2-1$, equation \eqref{Eq-p} has a unique solution $p$ in 
$L_2(0,T_*;\dot{H}_{\#}^{n/2-1})$.

(iii) The energy equalities \eqref{NS-eq:mik51ai=aTvar} and \eqref{E4.9Tvar} immediately follow from Theorem \ref{Th4.4}. 

(iv) The solution uniqueness follows from Theorem \ref{Th4.5}.
\end{proof}

\begin{remark}
Note that by the Sobolev embedding theorem, the condition $a_{ij}^{\alpha \beta }\in L_\infty([0,T];H_\#^{\tilde\sigma+1})$, $\tilde\sigma>|n/2-2|+\frac{n}{2}$, in Theorem \ref{NS-problemTh-sigma-Lv-cr} and further on implies $a_{ij}^{\alpha \beta }\in L_\infty([0,T];\mathcal C_\#^{0})\subset L_\infty([0,T];L_{\infty\#})$.
\end{remark}

\begin{remark}
Since $\|\mathbf f(\cdot,t)\|_{\dot{\mathbf H}_\#^{n/2-2}}^2$ is integrable on $(0,T]$ by the theorem condition and 
$\|(K{\mathbf u}^0)(\cdot,t)\|_{\dot{\mathbf H}_{\#}^{n/2}}^2$ is integrable on $(0,\infty)$ by the inequality \eqref{E4.121},  we conclude that due to the absolute continuity of the Lebesgue integrals, for arbitrarily large data ${\mathbf f}\in L_2(0,T;\dot{\mathbf H}_\#^{n/2-2})$  and $\mathbf u^0\in \dot{\mathbf H}_{\#\sigma}^{n/2-1}$ there exists $T_*>0$ such that condition \eqref{E4.143T*var} holds.
\end{remark}
Estimating the integrand in the second integral in \eqref{E4.143T*var} according to \eqref{E4.121}, we arrive at the following assertion allowing an explicit estimate of $T_*$ for arbitrarily large data if ${\mathbf f}\in L_\infty(0,T;\dot{\mathbf H}_\#^{n/2-2})$.
\begin{corollary}[Serrin-type solution  for arbitrarily large data but small time or vice versa.]
\label{NS-problemCor-sigma-2}
Let $n\ge 2$ and $T>0$. 
Let $a_{ij}^{\alpha \beta }\in L_\infty([0,T];H_\#^{\tilde\sigma+1})$, 
$\tilde\sigma>|n/2-2|+\frac{n}{2}$,
 and the relaxed ellipticity condition hold.
Let ${\mathbf f}\in L_\infty(0,T;\dot{\mathbf H}_\#^{n/2-2})$  and $\mathbf u^0\in \dot{\mathbf H}_{\#\sigma}^{n/2}$.

Then there  exist constants $A_1, A_2, A_3>0$ that are independent of ${\mathbf f}$ and $\mathbf u^0$ but may depend on $T$, $n$,  $\|\mathbb A\|$ and $C_{\mathbb A}$, such that
if $T_*\in(0,T]$ satisfies the inequality
\begin{align}\label{E4.143T*}
T_*\left[\|\mathbf f\|_{L_\infty(0,T;\dot{\mathbf H}_\#^{n/2-2})}^2
+ \left (A_1\|{\mathbf u}^0\|^2_{{\mathbf H}_\#^{n/2-1}} + A_2\right) \|{\mathbf u}^0\|^2_{{\mathbf H}_\#^{n/2}}\right]
<A_3,
\end{align}  
then there exists a Serrin-type solution  $\mathbf u\in L_{\infty}(0,T_*;\dot{\mathbf H}_{\#\sigma}^{n/2-1})\cap L_2(0,T_*;\dot{\mathbf H}_{\#\sigma}^{n/2})$  of the anisotropic Navier-Stokes  initial value problem.
This solution satisfies items (ii)-(iv) in Theorem \ref{NS-problemTh-sigma-Lv-cr}
\end{corollary}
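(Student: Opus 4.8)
The plan is to obtain this corollary directly from Theorem~\ref{NS-problemTh-sigma-Lv-cr} by replacing the two time integrals in the smallness condition \eqref{E4.143T*var} by the cruder but more explicit bounds available under the stronger hypotheses ${\mathbf f}\in L_\infty(0,T;\dot{\mathbf H}_\#^{n/2-2})$ and ${\mathbf u}^0\in\dot{\mathbf H}_{\#\sigma}^{n/2}$. First I would check that the hypotheses of Theorem~\ref{NS-problemTh-sigma-Lv-cr} are all met: the assumptions on the viscosity tensor and the relaxed ellipticity condition coincide, while on the finite interval $(0,T)$ one has ${\mathbf f}\in L_\infty(0,T;\dot{\mathbf H}_\#^{n/2-2})\subset L_2(0,T;\dot{\mathbf H}_\#^{n/2-2})$, and ${\mathbf u}^0\in\dot{\mathbf H}_{\#\sigma}^{n/2}\subset\dot{\mathbf H}_{\#\sigma}^{n/2-1}$ since $\varrho(\bs\xi)\ge 2\pi>1$, so $\|\cdot\|_{H_\#^s}\le\|\cdot\|_{H_\#^t}$ whenever $s\le t$.

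Next I would estimate the left-hand side of \eqref{E4.143T*var}. For the forcing term, $\int_0^{T_*}\|{\mathbf f}(\cdot,t)\|_{\mathbf H_\#^{n/2-2}}^2\,dt\le T_*\,\|{\mathbf f}\|_{L_\infty(0,T;\dot{\mathbf H}_\#^{n/2-2})}^2$. For the heat-flow term, inequality \eqref{E4.121} applied with the index equal to $n/2$ --- which is admissible precisely because the corollary strengthens the hypothesis on ${\mathbf u}^0$ from $\dot{\mathbf H}_{\#\sigma}^{n/2-1}$ to $\dot{\mathbf H}_{\#\sigma}^{n/2}$ --- gives $\|(K{\mathbf u}^0)(\cdot,t)\|_{\dot{\mathbf H}_\#^{n/2}}\le\|{\mathbf u}^0\|_{\mathbf H_\#^{n/2}}$ for every $t\ge 0$, hence $\int_0^{T_*}\|(K{\mathbf u}^0)(\cdot,t)\|_{\dot{\mathbf H}_\#^{n/2}}^2\,dt\le T_*\,\|{\mathbf u}^0\|_{\mathbf H_\#^{n/2}}^2$. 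Combining these with the constants $A_1,A_2,A_3$ supplied by Theorem~\ref{NS-problemTh-sigma-Lv-cr}, the left-hand side of \eqref{E4.143T*var} is bounded by $T_*\big[\|{\mathbf f}\|_{L_\infty(0,T;\dot{\mathbf H}_\#^{n/2-2})}^2+(A_1\|{\mathbf u}^0\|_{\mathbf H_\#^{n/2-1}}^2+A_2)\|{\mathbf u}^0\|_{\mathbf H_\#^{n/2}}^2\big]$, so condition \eqref{E4.143T*}, with the same $A_1,A_2,A_3$, implies \eqref{E4.143T*var}.

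With this implication in hand, Theorem~\ref{NS-problemTh-sigma-Lv-cr}(i) furnishes the Serrin-type solution ${\mathbf u}\in L_\infty(0,T_*;\dot{\mathbf H}_{\#\sigma}^{n/2-1})\cap L_2(0,T_*;\dot{\mathbf H}_{\#\sigma}^{n/2})$ of problem \eqref{NS-problem-div0}--\eqref{NS-problem-div0-IC}, and items (ii)--(iv) of that theorem carry over verbatim, since their statements do not involve the particular form of the smallness condition. I do not expect a genuine obstacle: the substance of the corollary is only the passage from the implicit condition \eqref{E4.143T*var} to the transparent one \eqref{E4.143T*}, so the one point requiring care is the bookkeeping of the constants $A_1,A_2,A_3$ together with the verification that the upgraded hypotheses on ${\mathbf f}$ and ${\mathbf u}^0$ still deliver everything Theorem~\ref{NS-problemTh-sigma-Lv-cr} needs.
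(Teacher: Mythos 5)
Your proof is correct and takes essentially the same route as the paper: the paper derives this corollary from Theorem~\ref{NS-problemTh-sigma-Lv-cr} by estimating the heat-flow integrand in \eqref{E4.143T*var} via \eqref{E4.121} with $r=n/2$ (enabled by the strengthened hypothesis $\mathbf u^0\in\dot{\mathbf H}_{\#\sigma}^{n/2}$) and bounding the $\mathbf f$-integral by $T_*$ times the $L_\infty$ norm, exactly as you do. The only cosmetic gap is that you carry the same constants $A_1,A_2,A_3$ from the theorem while the corollary asserts strict positivity of $A_1,A_2$ rather than mere non-negativity, but the theorem's proof produces strictly positive values for these, so nothing breaks.
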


Estimating the second integral in \eqref{E4.143T*var} according to \eqref{E4.122va}, we arrive at the following assertion. 
\begin{corollary}[Existence of Serrin-type solution for arbitrary time but small data]
\label{NS-problemCor-sigma-1}
Let $n\ge 2$ and $T>0$. 
Let  $a_{ij}^{\alpha \beta }\in L_\infty([0,T];H_\#^{\tilde\sigma+1})$, 
$\tilde\sigma>|n/2-2|+\frac{n}{2}$,
 and the relaxed ellipticity condition hold.
Let ${\mathbf f}\in L_2(0,T;\dot{\mathbf H}_\#^{n/2-2})$  and $\mathbf u^0\in \dot{\mathbf H}_{\#\sigma}^{n/2-1}$.

Then there  exist constants $A_1, A_2, A_3>0$ that are independent of  ${\mathbf f}$ and $\mathbf u^0$ but may depend on $T$, $n$, $\|\mathbb A\|$ and $C_{\mathbb A}$, such that
if ${\mathbf f}$ and $\mathbf u^0$ satisfy the inequality
\begin{align}\label{E4.143T}
\|\mathbf f\|_{L_2(0,T;\dot{\mathbf H}_\#^{n/2-2})}^2
+ \left (A_1\|{\mathbf u}^0\|^2_{{\mathbf H}_\#^{n/2-1}} + A_2\right) \|{\mathbf u}^0\|^2_{{\mathbf H}_\#^{n/2-1}}
<A_3,
\end{align}  
then there exists a Serrin-type solution  $\mathbf u\in L_{\infty}(0,T;\dot{\mathbf H}_{\#\sigma}^{n/2-1})\cap L_2(0,T;\dot{\mathbf H}_{\#\sigma}^{n/2})$  of the anisotropic Navier-Stokes 
initial value problem.
This solution satisfies items (ii)-(iv) in Theorem \ref{NS-problemTh-sigma-Lv-cr} with $T_*=T$.
\end{corollary}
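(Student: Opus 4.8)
The plan is to obtain this corollary as an immediate specialization of Theorem \ref{NS-problemTh-sigma-Lv-cr}: I will show that, with $A_1,A_2,A_3$ taken to be precisely the constants furnished by that theorem, hypothesis \eqref{E4.143T} forces hypothesis \eqref{E4.143T*var} to hold with $T_*=T$. The single additional ingredient is the global-in-time heat bound \eqref{E4.122va}, used at the smoothness index $r=n/2-1$.

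First I would note that, since $\mathbf u^0\in\dot{\mathbf H}_{\#\sigma}^{n/2-1}$, one has $\widehat{\mathbf u^0}(\mathbf 0)=\mathbf 0$, and hence by the representation \eqref{K-def} also $\widehat{K\mathbf u^0}(\mathbf 0,t)=\mathbf 0$ for every $t\ge 0$; consequently, by \eqref{eq:mik12} and \eqref{eq:mik14}, $\|(K\mathbf u^0)(\cdot,t)\|_{\dot{\mathbf H}_\#^{n/2}}=\|(K\mathbf u^0)(\cdot,t)\|_{\mathbf H_\#^{n/2}}$. Applying \eqref{E4.122va} with $r=n/2-1$, which is valid for \emph{every} $T>0$, then yields
\[
\int_0^{T}\bigl\|(K\mathbf u^0)(\cdot,t)\bigr\|_{\dot{\mathbf H}_\#^{n/2}}^2\,dt
=\bigl\|K\mathbf u^0\bigr\|^2_{L_2(0,T;\mathbf H_\#^{n/2})}\le\bigl\|\mathbf u^0\bigr\|^2_{\mathbf H_\#^{n/2-1}}.
\]
Substituting this bound into the left-hand side of \eqref{E4.143T*var} written with $T_*=T$, and using that the coefficient $\bigl(A_1\|\mathbf u^0\|^2_{\mathbf H_\#^{n/2-1}}+A_2\bigr)$ multiplying the heat integral is non-negative, the left-hand side of \eqref{E4.143T*var} is bounded above by the left-hand side of \eqref{E4.143T}. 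Hence, choosing $A_1,A_2,A_3$ to be exactly the constants produced in the proof of Theorem \ref{NS-problemTh-sigma-Lv-cr}(i) — which depend only on $T$, $n$, $\|\mathbb A\|$ and $C_{\mathbb A}$; concretely, in the variable-coefficient setting $A_2=\widetilde{C}_{*n}^2\|\mathbb A\|^2_{L_\infty(0,T;H_\#^{\tilde\sigma_n+1}),F}+1$ and $A_3=\bigl(640C_{\mathbb A}^2C_{*rn}^2\bigr)^{-1}\exp\bigl(-1-20C_{\mathbb A}\bar C_n^2\|\mathbb A\|^2_{L_\infty(0,T;H_\#^{\tilde\sigma+1}),F}\,T\bigr)$, the exponential being the source of the $T$-dependence — condition \eqref{E4.143T} implies that \eqref{E4.143T*var} holds with $T_*=T$.

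Once \eqref{E4.143T*var} holds with $T_*=T$, Theorem \ref{NS-problemTh-sigma-Lv-cr}(i) provides a Serrin-type solution $\mathbf u\in L_\infty(0,T;\dot{\mathbf H}_{\#\sigma}^{n/2-1})\cap L_2(0,T;\dot{\mathbf H}_{\#\sigma}^{n/2})$ of \eqref{NS-problem-div0}--\eqref{NS-problem-div0-IC} on $[0,T]$, and items (ii)--(iv) of that theorem, read verbatim with $T_*$ replaced by $T$, deliver the remaining assertions: $\mathbf u'\in L_2(0,T;\dot{\mathbf H}_{\#\sigma}^{n/2-2})$, $\mathbf u\in\mathcal C^0([0,T];\dot{\mathbf H}_{\#\sigma}^{n/2-1})$ with $\|\mathbf u(\cdot,t)-\mathbf u^0\|_{\dot{\mathbf H}_{\#\sigma}^{n/2-1}}\to 0$ as $t\to 0$, $p\in L_2(0,T;\dot H_\#^{n/2-1})$, the energy equality, and uniqueness in the class of solutions from $L_\infty(0,T;\dot{\mathbf H}_{\#\sigma}^{0})\cap L_2(0,T;\dot{\mathbf H}_{\#\sigma}^{1})$ satisfying \eqref{E4.9}.

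I do not expect any genuine obstacle here; the corollary is essentially a one-line consequence of Theorem \ref{NS-problemTh-sigma-Lv-cr} combined with the crude-but-uniform-in-time estimate \eqref{E4.122va}. The only two points deserving care are (a) verifying that \eqref{E4.122va} indeed holds on the whole interval $[0,T]$ and not merely on a short subinterval — which it does, the heat propagator being a contraction on each $\mathbf H_\#^r$ uniformly in $t$, as recorded in \eqref{E4.121} and \eqref{E4.122va} — and (b) the bookkeeping of constants, i.e.\ ensuring that the $\|\mathbb A\|$- and $T$-dependence of $A_2$ and $A_3$ is inherited from the variable-coefficient estimates of Section \ref{S.5.2} rather than from the constant-coefficient ones of Section \ref{S.5.2C}.
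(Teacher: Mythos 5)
Your proposal is correct and follows the paper's approach exactly: the paper also obtains this corollary from Theorem \ref{NS-problemTh-sigma-Lv-cr} by estimating the second integral in \eqref{E4.143T*var} via the global-in-time heat bound \eqref{E4.122va} at $r=n/2-1$, which bounds $\int_0^T\|(K\mathbf u^0)(\cdot,t)\|_{\dot{\mathbf H}_\#^{n/2}}^2\,dt$ by $\|\mathbf u^0\|_{\mathbf H_\#^{n/2-1}}^2$ and lets one take $T_*=T$. Your bookkeeping of the constants $A_1,A_2,A_3$ from the variable-coefficient proof in Section \ref{S.5.2} is also the intended one.
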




\subsection{Spatial regularity of Serrin-type solutions for variable anisotropic viscosity coefficients}\label{S5.4}

\begin{theorem}[Spatial regularity of Serrin-type solution for arbitrarily large data.]
\label{NS-problemTh-sigma-Lv-er}
Let $n\ge 2$, $r>{n}/{2}-1$, and $T>0$. 
Let  
$a_{ij}^{\alpha \beta }\in L_\infty([0,T];H_\#^{\tilde\sigma+1})$, 
$\tilde\sigma>\frac{n}{2}+\max\{|r-1|,|n/2-2|\}$,
 and the relaxed ellipticity condition \eqref{mu} hold.
Let ${\mathbf f}\in L_2(0,T;\dot{\mathbf H}_\#^{r-1})$  and $\mathbf u^0\in \dot{\mathbf H}_{\#\sigma}^{r}$, while ${\mathbf f}$, $\mathbf u^0$ and $T_*\in(0,T]$ satisfy  inequality \eqref{E4.143T*var} from Theorem \ref{NS-problemTh-sigma-Lv-cr}.

Then the Serrin-type solution $\mathbf u$ of the anisotropic Navier-Stokes 
initial value problem  \eqref{NS-problem-div0}--\eqref{NS-problem-div0-IC} 
belongs to
$L_{\infty}(0,T_*;\dot{\mathbf H}_{\#\sigma}^{r})\cap L_2(0,T_*;\dot{\mathbf H}_{\#\sigma}^{r+1})$.
In addition, 
$
\mathbf u'\in L_2(0,T_*;\dot{\mathbf H}_{\#\sigma}^{r-1}),
$
$
\mathbf u \in\mathcal C^0([0,T_*];\dot{\mathbf H}_{\#\sigma}^{r})
$,
$
\lim_{t\to 0}\| \mathbf u(\cdot,t)-{\mathbf u}^0\| _{\dot{\mathbf H}_{\#\sigma}^{r}}= 0
$
and $p\in L_2(0,T_*;\dot{H}_{\#}^{r})$. 
\end{theorem}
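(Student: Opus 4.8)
The plan is to mirror the proof of Theorem~\ref{NS-problemTh-sigma-Lv-erC}, carrying the extra commutator term produced by the variable coefficients through a bootstrap on the Galerkin approximations. We keep the Serrin-type solution $\mathbf u$ and the Galerkin sequence $\{\mathbf u_m\}$ supplied by Theorem~\ref{NS-problemTh-sigma-Lv-cr}, for which $\mathbf u\in L_\infty(0,T_*;\dot{\mathbf H}^{n/2-1}_{\#\sigma})\cap L_2(0,T_*;\dot{\mathbf H}^{n/2}_{\#\sigma})$ and, in particular, $\{\|\mathbf u_m\|_{L_2(0,T_*;\mathbf H^{n/2}_\#)}\}$ is bounded. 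The starting point is the differential inequality \eqref{E4.82T0} established in Section~\ref{S.5.2}, available for each $r$ in the relevant range once $a_{ij}^{\alpha\beta}\in L_\infty([0,T];H_\#^{\tilde\sigma+1})$ with $\tilde\sigma>\tfrac n2+\max\{|r-1|,|n/2-2|\}$. Since $x\mapsto|x-1|$ is convex, for any $r'\in[n/2-1,r]$ one has $|r'-1|\le\max\{|n/2-2|,|r-1|\}$, so the theorem's hypothesis on $\tilde\sigma$ supports \eqref{E4.82T0}, the commutator estimate of Theorem~\ref{Jcomm}, and the multiplier estimates of Theorem~\ref{RS-T1-S4.6.1} at every intermediate smoothness level used in the bootstrap.

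First I would treat $n/2-1<r<n/2$: combining \eqref{E4.82T0} with the coefficient-free bound \eqref{E4.91-0T0cC} for $\|(\mathbf u_m\cdot\nabla)\mathbf u_m\|_{\mathbf H^{r-1}_\#}$ gives $\tfrac{d}{dt}\|\mathbf u_m\|^2_{\mathbf H^r_\#}\le c(t)\|\mathbf u_m\|^2_{\mathbf H^r_\#}+\psi(t)$ where, crucially, $c(t)=C(\|\mathbf u_m(\cdot,t)\|^2_{\mathbf H^{n/2}_\#}+|\mathbb A(\cdot,t)|^2_{H_\#^{\tilde\sigma+1},F})$ is integrable on $(0,T_*)$ — the first summand by the existing $L_2(\mathbf H^{n/2}_\#)$ bound, the second (the commutator contribution) because the coefficients lie in $L_\infty$ in time. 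Gronwall's inequality \eqref{E17} then bounds $\{\|\mathbf u_m\|_{L_\infty(0,T_*;\dot{\mathbf H}^r_{\#\sigma})}\}$, and integrating \eqref{E4.82T0} bounds $\{\|\mathbf u_m\|_{L_2(0,T_*;\dot{\mathbf H}^{r+1}_{\#\sigma})}\}$, uniformly in $m$. Next, exactly as in Steps (b)--(c) of Theorem~\ref{NS-problemTh-sigma-Lv-erC}, I would pass through $r=n/2$ using \eqref{E4.91-0T0dC} and then, by induction over $kn/2<r\le(k+1)n/2$, use the algebra estimate \eqref{r>n/2E4.910C} valid for $r>n/2$ together with Theorem~\ref{RS-T1-S4.6.1}(a); at each rung the Gronwall coefficient of $\|\mathbf u_m\|^2_{\mathbf H^r_\#}$ is controlled by the bound obtained at the previous rung plus the integrable commutator term. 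Weak and weak-$\ast$ compactness then identify the limit with $\mathbf u$, yielding $\mathbf u\in L_\infty(0,T_*;\dot{\mathbf H}^r_{\#\sigma})\cap L_2(0,T_*;\dot{\mathbf H}^{r+1}_{\#\sigma})$ for every $r>n/2-1$.

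Finally, for the time derivative, the regularity of $\mathbf u'$ and the pressure I would repeat Step (d) of Theorem~\ref{NS-problemTh-sigma-Lv-erC}: the convection term satisfies $({\mathbf u}\cdot\nabla){\mathbf u}\in L_2(0,T_*;\mathbf H^{r-1}_\#)$ by the same multiplication estimates, and $\bs{\mathfrak L}\mathbf u\in L_2(0,T_*;\dot{\mathbf H}^{r-1}_\#)$ follows from \eqref{L-oper}, \eqref{TensNorm} and Theorem~\ref{RS-T1-S4.6.1}, where the hypothesis $\tilde\sigma>\tfrac n2+|r-1|$ is precisely what makes $H_\#^{\tilde\sigma+1}$ a multiplier on $H_\#^r$; then \eqref{NS-problem-just} gives $\mathbf u'\in L_2(0,T_*;\dot{\mathbf H}^{r-1}_{\#\sigma})$, Theorem~\ref{LM-T3.1} gives $\mathbf u\in\mathcal C^0([0,T_*];\dot{\mathbf H}^r_{\#\sigma})$ and the convergence $\|\mathbf u(\cdot,t)-\mathbf u^0\|_{\dot{\mathbf H}^r_{\#\sigma}}\to0$ as $t\to0$, and \eqref{Eq-p} with Lemma~\ref{div-grad-is} for gradient ($s=r$) yields $p\in L_2(0,T_*;\dot H_\#^r)$. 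I expect the only genuine difficulty to be the bookkeeping in the inductive (Step (c)-type) stage — checking that the commutator estimate of Theorem~\ref{Jcomm} at smoothness index $\theta=r$ and all the multiplier estimates are covered by the single hypothesis on $\tilde\sigma$ at every rung of the ladder, and that at each rung the Gronwall coefficient is dominated by already-established bounds rather than by the quantity being estimated.
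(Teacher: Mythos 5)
Your proposal is correct and follows essentially the same route as the paper's proof: the bootstrap on the Galerkin approximations using inequality \eqref{E4.82T0}, the three-case treatment of $r$ via \eqref{E4.91-0T0cC}, \eqref{E4.91-0T0dC}, \eqref{r>n/2E4.910C} and Gronwall's inequality \eqref{E17}, then weak compactness to identify the limit, and finally the derivation of the regularity of $\mathbf u'$, $\mathbf u\in\mathcal C^0$ and $p$ via \eqref{NS-problem-just}, Theorem \ref{LM-T3.1}, \eqref{Eq-p} and Lemma \ref{div-grad-is}. The convexity observation that $|r'-1|\le\max\{|n/2-2|,|r-1|\}$ for all $r'\in[n/2-1,r]$, ensuring that a single hypothesis on $\tilde\sigma$ covers the commutator and multiplier estimates at every rung of the ladder, is a useful bit of bookkeeping not spelled out explicitly in the paper's Steps (a)--(c) but implicit in its appeal to ``the similar argument''; otherwise the approach matches the paper.
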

\begin{proof}
The existence of the Serrin-type solution $\mathbf u\in L_{\infty}(0,T_*;\dot{\mathbf H}_{\#\sigma}^{n/2-1})\cap L_2(0,T_*;\dot{\mathbf H}_{\#\sigma}^{n/2})$ is proved in Theorem \ref{NS-problemTh-sigma-Lv-cr}(i), and we will prove that it has a higher smoothness. 
We will employ the same Galerkin approximation used in Section \ref{S.5.2} and in the proof of Theorem \ref{NS-problemTh-sigma-Lv-cr}(i).

\paragraph{Step (a).} 
Let us estimate the last term in  \eqref{E4.82T0} for the case $n/2-1<r<n/2$. 
By \eqref{E4.91-0T0cC} we obtain from \eqref{E4.82T0}, 
\begin{align}\label{E4.93BTc}
\frac{d}{d t}\left\|\mathbf u_m\right\|_{\mathbf H^{r}_{\#}}^2
+\frac14 C_{\mathbb A}^{-1}\|\mathbf u_m\|_{\mathbf{H}_\#^{r+1}}^2
\leq 12 C_{\mathbb A}\Big(\bar C_{0,r,\tilde\sigma}^2 |\mathbb A|^2_{H_\#^{\tilde\sigma+1},F}
+C^2_{*rn} \|\mathbf{u}_m\|^2_{{\mathbf H}_\#^{n/2}}\Big)\|\mathbf u_m\|^2_{\dot{\mathbf H}_{\#}^{r}}
+12 C_{\mathbb A}\|\mathbf f\|_{\mathbf{H}_\#^{r-1}}^2
\end{align}
implying
\begin{align}\label{E4.93BTc1}
\frac{d}{d t}\left\|\mathbf u_m\right\|_{\mathbf H^{r}_{\#}}^2
\leq 12 C_{\mathbb A}\Big(\bar C_{0,r,\tilde\sigma}^2 |\mathbb A|^2_{H_\#^{\tilde\sigma+1},F}
+C^2_{*rn} \|\mathbf{u}_m\|^2_{{\mathbf H}_\#^{n/2}}\Big)\|\mathbf u_m\|^2_{\dot{\mathbf H}_{\#}^{r}}
+12 C_{\mathbb A}\|\mathbf f\|_{\mathbf{H}_\#^{r-1}}^2.
\end{align}
By Gronwall's inequality \eqref{E17}, we obtain from \eqref{E4.93BTc1} that
\begin{multline}\label{E5.69}
\|\mathbf u_m\|^2_{L_\infty(0,T_*;\dot{\mathbf H}_{\#\sigma}^{r})}
\le \exp\Big[12 C_{\mathbb A}\Big(\bar C_{0,r,\tilde\sigma}^2T_*
\| |\mathbb A|_{H_\#^{\tilde\sigma+1},F}\|^2_{L_\infty(0,T_*)}
+C^2_{*rn} \|\mathbf{u}_m\|^2_{L_2(0,T_*;\mathbf H_\#^{n/2})}\Big)\Big]
\\
\times \left[\|\mathbf u_m(\cdot,0)\|_{\mathbf H^{r}_{\#}}^2
+12 C_{\mathbb A}\|\mathbf f\|_{L_2(0,T_*;\mathbf{H}_\#^{r-1})}^2\right].
\end{multline}
We have $\|\mathbf u_m(\cdot,0)\|_{\mathbf H^{r}_{\#}}\le \|\mathbf u^0\|_{\mathbf H^{r}_{\#}}$ and
by \eqref{E4.14varb}, the sequence $\|\mathbf{u}_m\|_{L_2(0,T_*;\mathbf H_\#^{n/2})}$ is bounded. 
Then \eqref{E5.69} implies that the sequence $\|\mathbf u_m\|^2_{L_\infty(0,T_*;\dot{\mathbf H}_{\#\sigma}^{r})}$ is bounded as well.
Integrating \eqref{E4.93BTc}, we conclude that 
\begin{multline}\label{E5.70}
\|\mathbf u_m\|^2_{L_2(0,T_*;\dot{\mathbf H}_{\#\sigma}^{r+1})}
\le 48 C^2_{\mathbb A}\Big(\bar C_{0,r,\tilde\sigma}^2T_*
\| |\mathbb A|_{H_\#^{\tilde\sigma+1},F}\|^2_{L_\infty(0,T_*)}
+C^2_{*rn} \|\mathbf{u}_m\|^2_{L_2(0,T_*;\mathbf H_\#^{n/2})}\Big)
\|\mathbf u_m\|^2_{L_\infty(0,T_*;\dot{\mathbf H}_{\#\sigma}^{r})}
\\
+4C_{\mathbb A}\|\mathbf u_m(\cdot,0)\|_{\mathbf H^{r}_{\#}}^2
+48 C^2_{\mathbb A}\|\mathbf f\|_{L_2(0,T_*;\mathbf{H}_\#^{r-1})}^2.
\end{multline}
Inequalities \eqref{E5.69} and \eqref{E5.70} mean that the sequences
\begin{align}\label{E5.70a}
\{\|\mathbf u_m\|_{L_\infty(0,T_*;\dot{\mathbf H}_{\#\sigma}^{r})}\}_{m=1}^\infty
\text { and } \{\|\mathbf u_m\|_{L_2(0,T_*;\dot{\mathbf H}_{\#\sigma}^{r+1})}\}_{m=1}^\infty
\text{ are bounded for } n/2-1<r<n/2.
\end{align}

\paragraph{Step (b).} 
Let now $r=n/2$. 
By \eqref{E4.91-0T0dC} we obtain from \eqref{E4.82T0}, 
\begin{multline}\label{E4.93BTd}
\frac{d}{d t}\left\|\mathbf u_m\right\|_{\mathbf H^{n/2}_{\#}}^2
+\frac14 C_{\mathbb A}^{-1}\|\mathbf u_m\|_{\mathbf{H}_\#^{n/2+1}}^2
\leq 12 C_{\mathbb A}\Big(\bar C_{0,r,\tilde\sigma}^2 |\mathbb A|^2_{H_\#^{\tilde\sigma+1},F}
+C^2_{*rn} \|\mathbf{u}_m\|^2_{{\mathbf H}_\#^{n/2+1/2}}\Big)\|\mathbf u_m\|^2_{\dot{\mathbf H}_{\#}^{n/2}}
\\
+12 C_{\mathbb A}\|\mathbf f\|_{\mathbf{H}_\#^{n/2-1}}^2,
\end{multline}
implying
\begin{align}\label{E4.93BTd1}
\frac{d}{d t}\left\|\mathbf u_m\right\|_{\mathbf H^{n/2}_{\#}}^2
\leq 12 C_{\mathbb A}\Big(\bar C_{0,r,\tilde\sigma}^2 |\mathbb A|^2_{H_\#^{\tilde\sigma+1},F}
+C^2_{*rn} \|\mathbf{u}_m\|^2_{{\mathbf H}_\#^{n/2+1/2}}\Big)\|\mathbf u_m\|^2_{\dot{\mathbf H}_{\#}^{n/2}}
+12 C_{\mathbb A}\|\mathbf f\|_{\mathbf{H}_\#^{n/2-1}}^2\, .
\end{align}
By Gronwall's inequality \eqref{E17}, we obtain from \eqref{E4.93BTd1} that
\begin{multline}\label{E5.74}
\|\mathbf u_m\|^2_{L_\infty(0,T_*;\dot{\mathbf H}_{\#\sigma}^{n/2})}
\le \exp\Big[12 C_{\mathbb A}\Big(\bar C_{0,r,\tilde\sigma}^2T_*
\| |\mathbb A|_{H_\#^{\tilde\sigma+1},F}\|^2_{L_\infty(0,T_*)}
+C^2_{*rn} \|\mathbf{u}_m\|^2_{L_2(0,T_*;\mathbf H_\#^{n/2+1/2})}\Big)\Big]
\\
\times \left[\|\mathbf u_m(\cdot,0)\|_{\mathbf H^{n/2}_{\#}}^2
+12 C_{\mathbb A}\|\mathbf f\|_{L_2(0,T_*;\mathbf{H}_\#^{n/2-1})}^2\right].
\end{multline}
We have $\|\mathbf u_m(\cdot,0)\|_{\mathbf H^{n/2}_{\#}}\le \|\mathbf u^0\|_{\mathbf H^{n/2}_{\#}}$ and
by \eqref{E5.70a}, the sequence $\|\mathbf{u}_m\|_{L_2(0,T_*;\mathbf H_\#^{n/2+1/2})}$ is bounded as well. 
Then \eqref{E5.74} implies that the sequence $\|\mathbf u_m\|^2_{L_\infty(0,T_*;\dot{\mathbf H}_{\#\sigma}^{n/2})}$ is also bounded.
Integrating \eqref{E4.93BTd}, we conclude that 
\begin{multline}\label{E5.75}
\|\mathbf u_m\|^2_{L_2(0,T_*;\dot{\mathbf H}_{\#\sigma}^{n/2+1})}
\le 48 C^2_{\mathbb A}\Big(\bar C_{0,r,\tilde\sigma}^2T_*
\| |\mathbb A|_{H_\#^{\tilde\sigma+1},F}\|^2_{L_\infty(0,T_*)}
+C^2_{*rn} \|\mathbf{u}_m\|^2_{L_2(0,T_*;\mathbf H_\#^{n/2+1/2})}\Big)
\|\mathbf u_m\|^2_{L_\infty(0,T_*;\dot{\mathbf H}_{\#\sigma}^{n/2})}
\\
+4C_{\mathbb A}\|\mathbf u_m(\cdot,0)\|_{\mathbf H^{n/2}_{\#}}^2
+48 C^2_{\mathbb A}\|\mathbf f\|_{L_2(0,T_*;\mathbf{H}_\#^{n/2-1})}^2
\le C<\infty.
\end{multline}
Inequalities \eqref{E5.74} and \eqref{E5.75} mean that the sequences
\begin{align}\label{E5.75a}
\{\|\mathbf u_m\|_{L_\infty(0,T_*;\dot{\mathbf H}_{\#\sigma}^{r})}\}_{m=1}^\infty
\text { and } \{\|\mathbf u_m\|_{L_2(0,T_*;\dot{\mathbf H}_{\#\sigma}^{r+1})}\}_{m=1}^\infty
\text{ are bounded for } r=n/2.
\end{align}

\paragraph{Step(c).}
Let now $kn/2< r\le(k+1)n/2$, $k=1,2,3,\ldots$
By \eqref{r>n/2E4.910C} we obtain from \eqref{E4.82T0}, 
\begin{align}\label{E4.93BTe}
\frac{d}{d t}\left\|\mathbf u_m\right\|_{\mathbf H^{r}_{\#}}^2
+\frac14 C_{\mathbb A}^{-1}\|\mathbf u_m\|_{\mathbf{H}_\#^{r+1}}^2
\leq 12 C_{\mathbb A}\Big(\bar C_{0,r,\tilde\sigma}^2 |\mathbb A|^2_{H_\#^{\tilde\sigma+1},F}
+C^2_{*rn} \|\mathbf{u}_m\|^2_{{\mathbf H}_\#^{r}}\Big)\|\mathbf u_m\|^2_{\dot{\mathbf H}_{\#}^{r}}
+12 C_{\mathbb A}\|\mathbf f\|_{\mathbf{H}_\#^{r-1}}^2
\end{align}
implying
\begin{align}\label{E4.93BTe1}
\frac{d}{d t}\left\|\mathbf u_m\right\|_{\mathbf H^{r}_{\#}}^2
\leq 12 C_{\mathbb A}\Big(\bar C_{0,r,\tilde\sigma}^2 |\mathbb A|^2_{H_\#^{\tilde\sigma+1},F}
+C^2_{*rn} \|\mathbf{u}_m\|^2_{{\mathbf H}_\#^{r}}\Big)\|\mathbf u_m\|^2_{\dot{\mathbf H}_{\#}^{r}}
+12 C_{\mathbb A}\|\mathbf f\|_{\mathbf{H}_\#^{r-1}}^2.
\end{align}
By Gronwall's inequality \eqref{E17}, we obtain from \eqref{E4.93BTe1} that
\begin{multline}\label{E5.79}
\|\mathbf u_m\|^2_{L_\infty(0,T_*;\dot{\mathbf H}_{\#\sigma}^{r})}
\le \exp\Big[12 C_{\mathbb A}\Big(\bar C_{0,r,\tilde\sigma}^2T_*
\| |\mathbb A|_{H_\#^{\tilde\sigma+1},F}\|^2_{L_\infty(0,T_*)}
+C^2_{*rn} \|\mathbf{u}_m\|^2_{L_2(0,T_*;\mathbf H_\#^{r})}\Big)\Big]
\\
\times \left[\|\mathbf u_m(\cdot,0)\|_{\mathbf H^{r}_{\#}}^2
+12 C_{\mathbb A}\|\mathbf f\|_{L_2(0,T_*;\mathbf{H}_\#^{r-1})}^2\right],
\end{multline}
where $\|\mathbf u_m(\cdot,0)\|_{\mathbf H^{r}_{\#}}\le \|\mathbf u^0\|_{\mathbf H^{r}_{\#}}$.

If $k=1$, then
 the sequence $\|\mathbf{u}_m\|_{L_2(0,T_*;\mathbf H_\#^{r})}$ in \eqref{E5.79} is bounded due to \eqref{E5.70a} and \eqref{E5.75a}. 
Then \eqref{E5.79} implies that the sequence $\|\mathbf u_m\|^2_{L_\infty(0,T_*;\dot{\mathbf H}_{\#\sigma}^{r})}$ is bounded as well.
Integrating \eqref{E4.93BTe}, we also conclude that for $k=1$,
\begin{multline}\label{E5.80}
\|\mathbf u_m\|^2_{L_2(0,T_*;\dot{\mathbf H}_{\#\sigma}^{r+1})}
\le 48 C^2_{\mathbb A}\Big(\bar C_{0,r,\tilde\sigma}^2T_*
\| |\mathbb A|_{H_\#^{\tilde\sigma+1},F}\|^2_{L_\infty(0,T_*)}
+C^2_{*rn} \|\mathbf{u}_m\|^2_{L_2(0,T_*;\mathbf H_\#^{r})}\Big)
\|\mathbf u_m\|^2_{L_\infty(0,T_*;\dot{\mathbf H}_{\#\sigma}^{r})}
\\
+4C_{\mathbb A}\|\mathbf u_m(\cdot,0)\|_{\mathbf H^{r}_{\#}}^2
+48 C^2_{\mathbb A}\|\mathbf f\|_{L_2(0,T_*;\mathbf{H}_\#^{r-1})}^2,
\quad kn/2<r\le (k+1)n/2.
\end{multline}
Inequalities \eqref{E5.79} and \eqref{E5.80} mean that for $k=1$ the sequences
\begin{align}\label{E5.80a}
\{\|\mathbf u_m\|_{L_\infty(0,T_*;\dot{\mathbf H}_{\#\sigma}^{r})}\}_{m=1}^\infty
\text { and } \{\|\mathbf u_m\|_{L_2(0,T_*;\dot{\mathbf H}_{\#\sigma}^{r+1})}\}_{m=1}^\infty
\text{ are bounded for } kn/2<r\le (k+1)n/2.
\end{align}

If we assume that properties \eqref{E5.80a} hold for some integer $k\ge 1$, then by the similar argument,  properties \eqref{E5.80a} hold with $k$ replaced by $k+1$ and thus, by induction, they hold for any integer $k$.
Hence, collecting properties  \eqref{E5.70a}, \eqref{E5.75a}, and \eqref{E5.80a} we conclude that 
the sequences
\begin{align}\label{E5.80b}
\{\|\mathbf u_m\|_{L_\infty(0,T_*;\dot{\mathbf H}_{\#\sigma}^{r})}\}_{m=1}^\infty
\text { and } \{\|\mathbf u_m\|_{L_2(0,T_*;\dot{\mathbf H}_{\#\sigma}^{r+1})}\}_{m=1}^\infty
\text{ are bounded for } n/2-1<r.
\end{align}

Properties \eqref{E5.80b} imply that  there exists a subsequence of $\{\mathbf u_m\}$ converging weakly  in $L_2(0, T_* ; \dot{\mathbf H}_{\#\sigma}^{r+1})$ and weakly-star in $L_\infty(0, T_* ; \dot{\mathbf H}_{\#\sigma}^r)$ to a function 
${\mathbf u}^\dag\in L_2(0, T_* ; \dot{\mathbf H}_{\#\sigma}^{r+1})\cup L_\infty(0, T_* ; \dot{\mathbf H}_{\#\sigma}^r)$.
Then the subsequence converges to ${\mathbf u}^\dag$ also weakly in $L_2(0,T_*;\dot{\mathbf H}_{\#\sigma}^{1})$ and weakly-star in $L_\infty(0,T_*;\dot{\mathbf H}_{\#\sigma}^{0})$.
Since $\{\mathbf u_m\}$ is the subsequence of the sequence that converges weakly in $L_2(0,T;\dot{\mathbf H}_{\#\sigma}^{1})$ and weakly-star in $L_\infty(0,T;\dot{\mathbf H}_{\#\sigma}^{0})$ to the weak solution, $\mathbf u$, of problem \eqref{NS-problem-div0}--\eqref{NS-problem-div0-IC} on $\left[0, T_*\right]$, we conclude that 
$\mathbf u={\mathbf u}^\dag\in L_2(0, T_* ; \dot{\mathbf H}_{\#\sigma}^{r+1})\cup L_\infty(0, T_* ; \dot{\mathbf H}_{\#\sigma}^r)$, for any $r>n/2-1$ and we thus finished proving and we thus finished proving that
\begin{align}\label{E5.28V}
\mathbf u\in L_{\infty}(0,T_*;\dot{\mathbf H}_{\#\sigma}^{r})\cap L_2(0,T_*;\dot{\mathbf H}_{\#\sigma}^{r+1}).
\end{align}

\paragraph{Step(d).} 
Estimate \eqref{E4.91-0T0EE1r-uC} and inclusion \eqref{E5.28V} imply that 
$({\mathbf u}\cdot \nabla ){\mathbf u}\in{L_2(0,T_*;\mathbf{H}_\#^{r-1})}$.
By \eqref{L-oper} and \eqref{TensNorm} we have
\begin{align*}
\left\|\bs{\mathfrak L}\mathbf u\right\|^2_{{\mathbf H}_{\#}^{r-1}}
\le\|a_{ij}^{\alpha \beta }E_{i\alpha }({\mathbf u})\|_{({H}_{\#}^{r})^{n\times n}}
\le \|\mathbb A\|^2_{H_\#^{\tilde\sigma+1},F}\|\mathbf u\|^2_{{\mathbf H}_{\#}^{r+1}}\quad
\text{for a.e. } t\in(0,T),
\end{align*}
and thus
\begin{align*}
\|\bs{\mathfrak L}\mathbf u\|^2_{L_2(0,T_*;\dot{\mathbf H}_{\#}^{r-1})}
&\le \|\mathbb A\|^2_{L_\infty(0,T_*;H_\#^{\tilde\sigma+1}),F}\|\mathbf u\|^2_{L_2(0,T_*;{\mathbf H}_{\#}^{r+1})},
\end{align*}
i.e., $\bs{\mathfrak L}\mathbf u\in{L_2(0,T_*;\dot{\mathbf H}_{\#}^{r-1})}$.
We also have ${\mathbf f}\in L_2(0,T;\dot{\mathbf H}_\#^{r-1})$.
Thus $\mathbf F$ defined by \eqref{Eq-F} belongs to $L_2(0,T;\dot{\mathbf H}_{\#}^{r-1})$.
Then \eqref{NS-problem-just} implies that 
${\mathbf u}'\in{L_2(0,T_*;\mathbf{H}_{\#\sigma}^{r-1})}$ and since ${\mathbf u}\in{L_2(0,T_*;\mathbf{H}_{\#\sigma}^{r+1})}$, we obtain by Theorem \ref{LM-T3.1} that 
$
\mathbf u \in\mathcal C^0([0,T_*];\dot{\mathbf H}_{\#\sigma}^{r})
$,
which also means 
that $
\| \mathbf u(\cdot,t)-{\mathbf u}^0\| _{\dot{\mathbf H}_{\#\sigma}^{r}}\to 0
$
as ${t\to 0}$.

To prove the theorem claim about the associated pressure $p$, we remark that $p$ satisfies \eqref{Eq-p}.
By Lemma \ref{div-grad-is} for gradient, with $s=r$, equation \eqref{Eq-p} has a unique solution $p$ in 
$L_2(0,T_*;\dot{H}_{\#}^{r})$.
\end{proof}

As in Corollaries \ref{NS-problemCor-sigma-2} and \ref{NS-problemCor-sigma-1},  condition  \eqref{E4.143T*var} in Theorem \ref{NS-problemTh-sigma-Lv-er} can be replaced by simpler conditions for particular cases, which leads to the following two assertions.

\begin{corollary}[Serrin-type solution  for arbitrarily large data but small time or vice versa.]
\label{NS-problemCor-sigma-2>}
Let $n\ge 2$ and $T>0$. 
Let $a_{ij}^{\alpha \beta }\in L_\infty([0,T];H_\#^{\tilde\sigma+1})$, 
$\tilde\sigma>|r-1|+\frac{n}{2}$,
 and the relaxed ellipticity condition hold.
Let ${\mathbf f}\in L_2(0,T;\dot{\mathbf H}_\#^{r-1})\cap L_\infty(0,T;\dot{\mathbf H}_\#^{n/2-2})$  and 
$\mathbf u^0\in \dot{\mathbf H}_{\#\sigma}^{r}\cap \dot{\mathbf H}_{\#\sigma}^{n/2}$, $r>{n}/{2}-1$.
Let $T_*\in (0,T)$ satisfies inequality \eqref{E4.143T*} in Corollary \ref{NS-problemCor-sigma-2}.

Then the Serrin-type solution $\mathbf u$ of the anisotropic Navier-Stokes 
initial value problem  \eqref{NS-problem-div0}--\eqref{NS-problem-div0-IC} 
belongs to
$L_{\infty}(0,T_*;\dot{\mathbf H}_{\#\sigma}^{r})\cap L_2(0,T_*;\dot{\mathbf H}_{\#\sigma}^{r+1})$.
In addition, 
$
\mathbf u'\in L_2(0,T_*;\dot{\mathbf H}_{\#\sigma}^{r-1}),
$
$
\mathbf u \in\mathcal C^0([0,T_*];\dot{\mathbf H}_{\#\sigma}^{r})
$, 
$
\lim_{t\to 0}\| \mathbf u(\cdot,t)-{\mathbf u}^0\| _{\dot{\mathbf H}_{\#\sigma}^{r}}= 0
$
and $p\in L_2(0,T_*;\dot{H}_{\#}^{r})$.
\end{corollary}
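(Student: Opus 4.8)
The statement is a corollary obtained by feeding the simplified smallness condition of Corollary \ref{NS-problemCor-sigma-2} into the spatial‑regularity Theorem \ref{NS-problemTh-sigma-Lv-er}, so the plan is essentially to check that the hypotheses chain together and then invoke those two results. First I would record that all structural hypotheses required by Theorem \ref{NS-problemTh-sigma-Lv-er} are in force: the datum ${\mathbf f}\in L_2(0,T;\dot{\mathbf H}_\#^{r-1})$, the initial value $\mathbf u^0\in \dot{\mathbf H}_{\#\sigma}^{r}$, and the coefficient regularity $a_{ij}^{\alpha \beta }\in L_\infty([0,T];H_\#^{\tilde\sigma+1})$ with $\tilde\sigma>\frac n2+|r-1|$; since $r>\frac n2-1$ forces $r-1>\frac n2-2$, the index condition of Theorem \ref{NS-problemTh-sigma-Lv-er} (and, a fortiori, of Theorem \ref{NS-problemTh-sigma-Lv-cr} and Corollary \ref{NS-problemCor-sigma-2}) holds, while the remaining hypotheses ${\mathbf f}\in L_\infty(0,T;\dot{\mathbf H}_\#^{n/2-2})$ and $\mathbf u^0\in \dot{\mathbf H}_{\#\sigma}^{n/2}$ are exactly those of Corollary \ref{NS-problemCor-sigma-2}.

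Next I would observe that inequality \eqref{E4.143T*} (the hypothesis on $T_*$) implies inequality \eqref{E4.143T*var} needed by Theorems \ref{NS-problemTh-sigma-Lv-cr} and \ref{NS-problemTh-sigma-Lv-er}; this is precisely the passage already carried out in the proof of Corollary \ref{NS-problemCor-sigma-2}, where estimate \eqref{E4.121} gives $\|(K{\mathbf u}^0)(\cdot,t)\|_{\dot{\mathbf H}_{\#}^{n/2}}\le\|{\mathbf u}^0\|_{{\mathbf H}_\#^{n/2}}$ for every $t\ge0$, hence
\begin{align*}
\int_0^{T_*}\|(K{\mathbf u}^0)(\cdot,t)\|_{\dot{\mathbf H}_{\#}^{n/2}}^2\,dt\le T_*\|{\mathbf u}^0\|_{{\mathbf H}_\#^{n/2}}^2,\qquad
\int_0^{T_*}\|\mathbf f(\cdot,t)\|_{\mathbf{H}_\#^{n/2-2}}^2\,dt\le T_*\|\mathbf f\|_{L_\infty(0,T;\dot{\mathbf H}_\#^{n/2-2})}^2,
\end{align*}
so that \eqref{E4.143T*} with the constants $A_1,A_2,A_3$ furnished by Corollary \ref{NS-problemCor-sigma-2} yields \eqref{E4.143T*var}. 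By Theorem \ref{NS-problemTh-sigma-Lv-cr}(i) a Serrin‑type solution $\mathbf u\in L_{\infty}(0,T_*;\dot{\mathbf H}_{\#\sigma}^{n/2-1})\cap L_2(0,T_*;\dot{\mathbf H}_{\#\sigma}^{n/2})$ then exists on $[0,T_*]$, and items (ii)--(iv) of that theorem hold for it.

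With \eqref{E4.143T*var} established, the last step is to apply Theorem \ref{NS-problemTh-sigma-Lv-er} to this $\mathbf u$, which delivers at once all the stated conclusions: $\mathbf u\in L_{\infty}(0,T_*;\dot{\mathbf H}_{\#\sigma}^{r})\cap L_2(0,T_*;\dot{\mathbf H}_{\#\sigma}^{r+1})$, $\mathbf u'\in L_2(0,T_*;\dot{\mathbf H}_{\#\sigma}^{r-1})$, $\mathbf u \in\mathcal C^0([0,T_*];\dot{\mathbf H}_{\#\sigma}^{r})$, $\lim_{t\to0}\|\mathbf u(\cdot,t)-{\mathbf u}^0\|_{\dot{\mathbf H}_{\#\sigma}^{r}}=0$, and $p\in L_2(0,T_*;\dot{H}_{\#}^{r})$. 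No genuine obstacle arises; the only points demanding care are the bookkeeping of the smallness constants when reducing \eqref{E4.143T*var} to \eqref{E4.143T*} and the compatibility of the coefficient‑smoothness index $\tilde\sigma$ between the existence and regularity statements, both of which are handled above.
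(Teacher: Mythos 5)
Your overall strategy is exactly the paper's: the paper proves this corollary implicitly, remarking only that condition \eqref{E4.143T*var} can be replaced by the simpler \eqref{E4.143T*}, and you correctly reconstruct that reduction via $\|\mathbf f(\cdot,t)\|_{\mathbf{H}_\#^{n/2-2}}\le\|\mathbf f\|_{L_\infty(0,T;\dot{\mathbf H}_\#^{n/2-2})}$ and $\|(K{\mathbf u}^0)(\cdot,t)\|_{\dot{\mathbf H}_{\#}^{n/2}}\le\|{\mathbf u}^0\|_{{\mathbf H}_\#^{n/2}}$ (i.e., \eqref{E4.121} with $r=n/2$), before handing over to Theorem \ref{NS-problemTh-sigma-Lv-er}.

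There is, however, a gap in your verification of the coefficient-regularity index. You assert that ``since $r>\frac n2-1$ forces $r-1>\frac n2-2$, the index condition of Theorem \ref{NS-problemTh-sigma-Lv-er} (and, a fortiori, of Theorem \ref{NS-problemTh-sigma-Lv-cr} and Corollary \ref{NS-problemCor-sigma-2}) holds.'' But those results require $\tilde\sigma > \frac{n}{2}+\max\{|r-1|,|n/2-2|\}$ (and $\tilde\sigma>\frac n2+|n/2-2|$, respectively), whereas the corollary only supplies $\tilde\sigma > \frac{n}{2}+|r-1|$. The implication $r-1>\frac n2-2\Rightarrow |r-1|\ge|n/2-2|$ fails once $r-1$ can be negative: for $n=2$ with $0<r<2$, and for $n=3$ with $\frac12<r<\frac32$, one has $|r-1|<|n/2-2|$, so $\tilde\sigma>|r-1|+\frac n2$ does not ensure $\tilde\sigma>|n/2-2|+\frac n2$, and one cannot legitimately invoke Theorem \ref{NS-problemTh-sigma-Lv-cr} or Corollary \ref{NS-problemCor-sigma-2} (and hence Theorem \ref{NS-problemTh-sigma-Lv-er}) in that range. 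For $n\ge4$ the two conditions coincide, so your argument is complete there. This mismatch is actually present already in the paper's formulation of the corollary, but a careful reconstruction should flag it rather than assert the implication; the clean fix is to state the hypothesis as $\tilde\sigma>\max\{|r-1|,|n/2-2|\}+\frac n2$, which makes your chain of citations airtight for all $n\ge2$.
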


\begin{corollary}[Serrin-type solution for arbitrary time but small data]
\label{NS-problemCor-sigma-1>}
Let $n\ge 2$,   $r\ge{n}/{2}-1$, and $T>0$. 
Let  $a_{ij}^{\alpha \beta }\in L_\infty([0,T];H_\#^{\tilde\sigma+1})$, 
$\tilde\sigma>|r-1|+\frac{n}{2}$,
 and the relaxed ellipticity condition hold.
Let the data ${\mathbf f}\in L_2(0,T;\dot{\mathbf H}_\#^{r-1})$  and $\mathbf u^0\in \dot{\mathbf H}_{\#\sigma}^{r}$ satisfy inequality \eqref{E4.143T} in Corollary \ref{NS-problemCor-sigma-1}.

Then the Serrin-type solution $\mathbf u$ of the anisotropic Navier-Stokes 
initial value problem  \eqref{NS-problem-div0}--\eqref{NS-problem-div0-IC} 
belongs to
$L_{\infty}(0,T;\dot{\mathbf H}_{\#\sigma}^{r})\cap L_2(0,T;\dot{\mathbf H}_{\#\sigma}^{r+1})$.
In addition, 
$
\mathbf u'\in L_2(0,T;\dot{\mathbf H}_{\#\sigma}^{r-1}),
$
$
\mathbf u \in\mathcal C^0([0,T];\dot{\mathbf H}_{\#\sigma}^{r})
$,
$
\lim_{t\to 0}\| \mathbf u(\cdot,t)-{\mathbf u}^0\| _{\dot{\mathbf H}_{\#\sigma}^{r}}= 0
$
and $p\in L_2(0,T;\dot{H}_{\#}^{r})$.
\end{corollary}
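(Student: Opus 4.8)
The plan is to obtain this corollary with essentially no new work by combining the existence statement already established in Corollary~\ref{NS-problemCor-sigma-1} (which handles the borderline index $r=n/2-1$ on the whole interval $[0,T]$) with the spatial-regularity bootstrap of Theorem~\ref{NS-problemTh-sigma-Lv-er} (which handles $r>n/2-1$). The only genuinely structural point is to verify that the \emph{smallness} hypothesis~\eqref{E4.143T} forces the \emph{interval} hypothesis~\eqref{E4.143T*var} to hold with $T_*=T$, so that those two theorems are applicable on all of $[0,T]$ at once.

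First I would recall the uniform-in-time heat estimate~\eqref{E4.122va}, taken with smoothness index $n/2-1$: since $\mathbf u^0\in\dot{\mathbf H}_{\#\sigma}^{n/2-1}$ has zero mean, so does $K\mathbf u^0$, and~\eqref{E4.122va} gives
\begin{align*}
\int_0^{T}\|(K\mathbf u^0)(\cdot,t)\|_{\dot{\mathbf H}_\#^{n/2}}^2\,dt
=\|K\mathbf u^0\|_{L_2(0,T;\mathbf H_\#^{n/2})}^2\le\|\mathbf u^0\|_{\mathbf H_\#^{n/2-1}}^2 .
\end{align*}
Substituting this into the left-hand side of~\eqref{E4.143T*var} with $T_*=T$, that left-hand side is dominated by
$\|\mathbf f\|_{L_2(0,T;\dot{\mathbf H}_\#^{n/2-2})}^2+\bigl(A_1\|\mathbf u^0\|_{\mathbf H_\#^{n/2-1}}^2+A_2\bigr)\|\mathbf u^0\|_{\mathbf H_\#^{n/2-1}}^2$,
which is exactly the left-hand side of~\eqref{E4.143T}, the constants $A_1,A_2,A_3$ being precisely those produced in Theorem~\ref{NS-problemTh-sigma-Lv-cr} and already fixed in Corollary~\ref{NS-problemCor-sigma-1}. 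Hence~\eqref{E4.143T} implies that~\eqref{E4.143T*var} holds with $T_*=T$, so the hypotheses of Theorem~\ref{NS-problemTh-sigma-Lv-cr} — and, when $r>n/2-1$, of Theorem~\ref{NS-problemTh-sigma-Lv-er} — are satisfied on the whole interval $[0,T]$.

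Then I would split into two cases. If $r=n/2-1$, Corollary~\ref{NS-problemCor-sigma-1} (that is, Theorem~\ref{NS-problemTh-sigma-Lv-cr} with $T_*=T$) already supplies a Serrin-type solution $\mathbf u\in L_\infty(0,T;\dot{\mathbf H}_{\#\sigma}^{n/2-1})\cap L_2(0,T;\dot{\mathbf H}_{\#\sigma}^{n/2})$ with $\mathbf u'\in L_2(0,T;\dot{\mathbf H}_{\#\sigma}^{n/2-2})$, $\mathbf u\in\mathcal C^0([0,T];\dot{\mathbf H}_{\#\sigma}^{n/2-1})$, $\|\mathbf u(\cdot,t)-\mathbf u^0\|_{\dot{\mathbf H}_{\#\sigma}^{n/2-1}}\to0$ as $t\to0$, and $p\in L_2(0,T;\dot H_\#^{n/2-1})$, which is the assertion for this $r$. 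If $r>n/2-1$, this same Serrin-type solution is the one to which Theorem~\ref{NS-problemTh-sigma-Lv-er} applies with $T_*=T$; that theorem upgrades it to $\mathbf u\in L_\infty(0,T;\dot{\mathbf H}_{\#\sigma}^{r})\cap L_2(0,T;\dot{\mathbf H}_{\#\sigma}^{r+1})$, $\mathbf u'\in L_2(0,T;\dot{\mathbf H}_{\#\sigma}^{r-1})$, $\mathbf u\in\mathcal C^0([0,T];\dot{\mathbf H}_{\#\sigma}^{r})$, $\|\mathbf u(\cdot,t)-\mathbf u^0\|_{\dot{\mathbf H}_{\#\sigma}^{r}}\to0$ as $t\to0$, and $p\in L_2(0,T;\dot H_\#^{r})$, completing the proof.

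The argument is mostly bookkeeping, so there is no serious obstacle; the one step that needs care is the reduction of~\eqref{E4.143T*var} to~\eqref{E4.143T}, where the fact that estimate~\eqref{E4.122va} for the heat semigroup $K$ is uniform in $T$ is exactly what makes the smallness condition independent of the length of the time interval. One should also check that the coefficient-regularity hypothesis $\tilde\sigma>|r-1|+\frac{n}{2}$ together with $r\ge n/2-1$ supplies the smoothness actually required to invoke Theorems~\ref{NS-problemTh-sigma-Lv-cr} and~\ref{NS-problemTh-sigma-Lv-er}, namely $\tilde\sigma>\frac{n}{2}+\max\{|r-1|,|n/2-2|\}$ (which otherwise should be imposed directly); no estimate beyond those already proved is needed.
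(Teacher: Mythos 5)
Your proposal is correct and follows essentially the same route as the paper: use the uniform-in-time heat-semigroup estimate \eqref{E4.122va} to dominate the second integral in \eqref{E4.143T*var}, so that \eqref{E4.143T} forces \eqref{E4.143T*var} to hold with $T_*=T$, and then apply Theorem~\ref{NS-problemTh-sigma-Lv-cr} (for $r=n/2-1$) or Theorem~\ref{NS-problemTh-sigma-Lv-er} (for $r>n/2-1$) on the full interval $[0,T]$. Your remark about the coefficient-smoothness hypothesis is a fair observation — the stated condition $\tilde\sigma>|r-1|+n/2$ is combined with the requirement that the data satisfy the hypotheses of Corollary~\ref{NS-problemCor-sigma-1} (which brings in $\tilde\sigma>|n/2-2|+n/2$), so jointly one does obtain $\tilde\sigma>\frac{n}{2}+\max\{|r-1|,|n/2-2|\}$ as required by Theorem~\ref{NS-problemTh-sigma-Lv-er}.
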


Theorem \ref{NS-problemTh-sigma-Lv-er} leads also to the following infinite regularity assertion.
\begin{corollary}
\label{Cor5.11sp}
Let $T>0$ and $n\ge 2$. 
Let  
$a_{ij}^{\alpha \beta }\in {\mathcal C}^{\infty}([0,T]; C^\infty_\#)$,
 and the relaxed ellipticity condition \eqref{mu} hold.
Let ${\mathbf f}\in L_2(0,T;\dot{\mathbf C}_\#^{\infty})$
and $\mathbf u^0\in \dot{\mathbf C}_{\#\sigma}^{\infty}$, while ${\mathbf f}$, $\mathbf u^0$ and $T_*\in(0,T]$ satisfy  inequality \eqref{E4.143T*var} from Theorem \ref{NS-problemTh-sigma-Lv-cr}.

Then the Serrin-type solution $\mathbf u$ of the anisotropic Navier-Stokes 
initial value problem  \eqref{NS-problem-div0}--\eqref{NS-problem-div0-IC} 
is such that
${\mathbf u}\in \mathcal C^0([0,T_*];\dot{\mathbf C}_{\#\sigma}^{\infty})$, 
$\mathbf u'\in L_2(0,T_*;\dot{\mathbf C}_{\#\sigma}^{\infty})$ and 
$p\in L_2(0,T_*;\dot{\mathcal C}_{\#}^{\infty})$.
\end{corollary}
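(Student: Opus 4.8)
The plan is to reduce the whole statement to Theorem \ref{NS-problemTh-sigma-Lv-er} applied with an arbitrarily large regularity index $r$, exactly as Corollary \ref{Cor5.11spC} is deduced from Theorem \ref{NS-problemTh-sigma-Lv-erC} in the constant-coefficient case. First I would note that the hypothesis $a_{ij}^{\alpha\beta}\in\mathcal C^\infty([0,T];\mathcal C^\infty_\#)$, together with $\mathcal C^\infty_\#=\bigcap_{s\in\R}H^s_\#$, gives $a_{ij}^{\alpha\beta}\in L_\infty([0,T];H^{\tilde\sigma+1}_\#)$ for \emph{every} $\tilde\sigma\in\R$; in particular, for each fixed $r>n/2-1$ the growing smoothness requirement $\tilde\sigma>\tfrac n2+\max\{|r-1|,|n/2-2|\}$ appearing in Theorem \ref{NS-problemTh-sigma-Lv-er} can be satisfied. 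Likewise, $\mathbf f\in L_2(0,T;\dot{\mathbf C}^\infty_\#)=\bigcap_{r\in\R}L_2(0,T;\dot{\mathbf H}^{r-1}_\#)$ and $\mathbf u^0\in\dot{\mathbf C}^\infty_{\#\sigma}\subset\dot{\mathbf H}^r_{\#\sigma}$ for every $r$, so the data hypotheses of that theorem hold for all $r$; the smallness condition \eqref{E4.143T*var} on $\mathbf f$, $\mathbf u^0$, $T_*$ is assumed and does not involve $r$, hence continues to hold.

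Next, for each $r>n/2-1$, Theorem \ref{NS-problemTh-sigma-Lv-er} yields $\mathbf u\in L_\infty(0,T_*;\dot{\mathbf H}^r_{\#\sigma})\cap L_2(0,T_*;\dot{\mathbf H}^{r+1}_{\#\sigma})$, $\mathbf u'\in L_2(0,T_*;\dot{\mathbf H}^{r-1}_{\#\sigma})$, $\mathbf u\in\mathcal C^0([0,T_*];\dot{\mathbf H}^r_{\#\sigma})$ and $p\in L_2(0,T_*;\dot H^r_\#)$; since the Serrin-type solution is unique (Theorem \ref{Th4.5}), this is the same pair $(\mathbf u,p)$ for all $r$. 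I would then intersect over $r>n/2-1$, using that the spaces form a nested scale so $\bigcap_{r>n/2-1}=\bigcap_{r\in\R}$, together with $\bigcap_{r\in\R}\dot{\mathbf H}^r_{\#\sigma}=\dot{\mathbf C}^\infty_{\#\sigma}$, $\bigcap_{r\in\R}\dot H^r_\#=\dot{\mathcal C}^\infty_\#$ (the periodic facts recorded in Section \ref{S2}), and the elementary identity $\bigcap_{r}\mathcal C^0([0,T_*];X_r)=\mathcal C^0([0,T_*];\bigcap_r X_r)$ for the nested Hilbert scale $X_r=\dot{\mathbf H}^r_{\#\sigma}$. This gives $\mathbf u\in\mathcal C^0([0,T_*];\dot{\mathbf C}^\infty_{\#\sigma})$, $\mathbf u'\in L_2(0,T_*;\dot{\mathbf C}^\infty_{\#\sigma})$ and $p\in L_2(0,T_*;\dot{\mathcal C}^\infty_\#)$, which is the assertion.

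The only genuine point to verify — and the step I expect to be the (mild) main obstacle — is that the fact that $\tilde\sigma(r)\to\infty$ as $r\to\infty$ creates no difficulty: each separate invocation of Theorem \ref{NS-problemTh-sigma-Lv-er} requires only one finite $\tilde\sigma$, and the $\mathcal C^\infty$ assumption on $\mathbb A$ supplies every finite $\tilde\sigma$, so there is no obstruction in letting $r\to\infty$. Beyond this, one should check only that the identifications of the intersections with $\dot{\mathbf C}^\infty_{\#\sigma}$ and $\dot{\mathcal C}^\infty_\#$ and the commutation of $\mathcal C^0([0,T_*];\cdot)$ with the countable intersection are legitimate, which is routine. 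The argument then parallels the proof of Corollary \ref{Cor5.11spC} word for word, with Theorem \ref{NS-problemTh-sigma-Lv-erC} replaced by Theorem \ref{NS-problemTh-sigma-Lv-er}.
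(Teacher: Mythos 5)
Your proposal is correct and follows essentially the same route as the paper's proof: apply Theorem \ref{NS-problemTh-sigma-Lv-er} for every $r$ and intersect, using $\dot{\mathbf C}^\infty_{\#\sigma}=\bigcap_r\dot{\mathbf H}^r_{\#\sigma}$. You are in fact slightly more careful than the paper in two minor respects — noting explicitly that the $\mathcal C^\infty$ assumption on $\mathbb A$ supplies every finite $\tilde\sigma(r)$, and invoking uniqueness to ensure the solution coincides across different $r$ — but these are just spelled-out details of the same argument, not a different approach.
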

\begin{proof}
Taking into account that $\dot{\mathbf C}_\#^{\infty}=\bigcap_{r\in\R} \dot{\mathbf{H}}_{\#\sigma}^r$, Theorem \ref{NS-problemTh-sigma-Lv-er} implies that
${\mathbf u}\in{L_\infty(0,T_*;\dot{\mathbf{H}}_{\#\sigma}^{r})}\cap L_2(0,T_*;\dot{\mathbf H}_\#^{r+1})$, 
$\mathbf u'\in L_2(0,T_*;\dot{\mathbf H}_{\#\sigma}^{r-1}),$
$p\in L_2(0,T;\dot{H}_{\#}^{r})$,
 $\forall\, r\in\R$.
Hence ${\mathbf u}\in \mathcal C^0([0,T_*];\dot{\mathbf C}_{\#\sigma}^{\infty})$, 
$\mathbf u'\in L_2(0,T_*;\dot{\mathbf C}_{\#\sigma}^{\infty})$
and $p\in L_2(0,T_*;\dot{\mathcal C}_{\#}^{\infty})$.
\end{proof}

\subsection{Spatial-temporal regularity of Serrin-type solutions for variable anisotropic viscosity coefficients}\label{S5}

\begin{theorem}
\label{NS-problemTh-sigma-Lv-er-t1}
Let $T>0$ and $n\ge 2$. Let $r\ge {n}/{2}-1$ if $n\ge 3$, while $r> {n}/{2}-1$  if $n=2$. 
Let $a_{ij}^{\alpha \beta }\in L_\infty([0,T];H_\#^{\tilde\sigma+1})$, 
$\tilde\sigma>|r-1|+\frac{n}{2}$,
 and the relaxed ellipticity condition \eqref{mu} hold.
Let ${\mathbf f}\in L_\infty(0,T;\dot{\mathbf H}_\#^{r-2})\cap L_2(0,T;\dot{\mathbf H}_\#^{r-1})$
and $\mathbf u^0\in \dot{\mathbf H}_{\#\sigma}^{r}$, while ${\mathbf f}$, $\mathbf u^0$ and $T_*\in(0,T]$ satisfy  inequality \eqref{E4.143T*var} from Theorem \ref{NS-problemTh-sigma-Lv-cr}.

Then the Serrin-type solution $\mathbf u$ of the anisotropic Navier-Stokes 
initial value problem  \eqref{NS-problem-div0}--\eqref{NS-problem-div0-IC} 
is such that
$\mathbf u'\in L_{\infty}(0,T_*;\dot{\mathbf H}_{\#\sigma}^{r-2})\cup L_2(0,T_*;\dot{\mathbf H}_{\#\sigma}^{r-1})$, while
$p\in L_\infty(0,T_*;\dot{H}_{\#}^{r-1})\cap L_2(0,T_*;\dot{H}_{\#}^{r})$.
\end{theorem}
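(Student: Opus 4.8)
The plan is to mirror the constant-coefficient Theorem \ref{NS-problemTh-sigma-Lv-er-t1C}, the only new ingredient being that the diffusion term $\bs{\mathfrak L}\mathbf u$ must now be estimated by the multiplication Theorem \ref{RS-T1-S4.6.1} with the coefficient norm $\|\mathbb A\|_{H_\#^{\tilde\sigma+1},F}$ rather than by the constant $\|\mathbb A\|$. By Theorems \ref{NS-problemTh-sigma-Lv-cr} and \ref{NS-problemTh-sigma-Lv-er} the Serrin-type solution already belongs to $L_\infty(0,T_*;\dot{\mathbf H}_{\#\sigma}^{r})$, with $\mathbf u'\in L_2(0,T_*;\dot{\mathbf H}_{\#\sigma}^{r-1})$ and $p\in L_2(0,T_*;\dot H_\#^{r})$, so only the inclusions $\mathbf u'\in L_\infty(0,T_*;\dot{\mathbf H}_{\#\sigma}^{r-2})$ and $p\in L_\infty(0,T_*;\dot H_\#^{r-1})$ remain. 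As in the cited theorem, it suffices to show that $\mathbf F$ from \eqref{Eq-F} lies in $L_\infty(0,T_*;\dot{\mathbf H}_\#^{r-2})$: then \eqref{NS-problem-just} gives $\mathbf u'=\mathbb P_\sigma\mathbf F\in L_\infty(0,T_*;\dot{\mathbf H}_{\#\sigma}^{r-2})$, and \eqref{Eq-p} together with Lemma \ref{div-grad-is} for the gradient (with $s=r-1$) gives $p\in L_\infty(0,T_*;\dot H_\#^{r-1})$.

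For the convective term I would repeat estimate \eqref{E4.91-0T0c*C}: split into the range $n/2-1\le r<n/2+1$ (using Theorem \ref{RS-T1-S4.6.1}(b), relation \eqref{eq:mik14}, and the embedding $\mathbf H_\#^{r}\hookrightarrow\mathbf H_\#^{r/2+n/4-1/2}$, valid since $r\ge n/2-1$) and $r\ge n/2+1$ (using Theorem \ref{RS-T1-S4.6.1}(a) with the first factor taken in $H_\#^{r-1}$), obtaining in both cases $\|({\mathbf u}\cdot\nabla){\mathbf u}\|_{\mathbf H_\#^{r-2}}\le C_{*rn}\|\mathbf u\|_{\mathbf H_\#^{r}}^2$, hence $({\mathbf u}\cdot\nabla){\mathbf u}\in L_\infty(0,T_*;\dot{\mathbf H}_\#^{r-2})$. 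For the diffusion term, using \eqref{L-oper}, relation \eqref{eq:mik14}, and the multiplication Theorem \ref{RS-T1-S4.6.1} applied to the product $a_{ij}^{\alpha\beta}E_{i\alpha}(\mathbf u)$ in $H_\#^{r-1}$, one gets
\begin{align*}
\|\bs{\mathfrak L}\mathbf u\|_{\mathbf H_\#^{r-2}}
\le \|a_{ij}^{\alpha\beta}E_{i\alpha}(\mathbf u)\|_{(H_\#^{r-1})^{n\times n}}
\le C\,\|\mathbb A\|_{H_\#^{\tilde\sigma+1},F}\,\|\mathbf u\|_{\mathbf H_\#^{r}},
\end{align*}
where the hypothesis $\tilde\sigma>|r-1|+\tfrac n2$ is precisely what makes Theorem \ref{RS-T1-S4.6.1} applicable, since it guarantees $\tilde\sigma+1>\tfrac n2$ together with $|r-1|\le\tilde\sigma+1$, so that $H_\#^{\tilde\sigma+1}\cdot H_\#^{r-1}\hookrightarrow H_\#^{r-1}$. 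Because $a_{ij}^{\alpha\beta}\in L_\infty(0,T;H_\#^{\tilde\sigma+1})$ and $\mathbf u\in L_\infty(0,T_*;\dot{\mathbf H}_{\#\sigma}^{r})$, this yields $\bs{\mathfrak L}\mathbf u\in L_\infty(0,T_*;\dot{\mathbf H}_\#^{r-2})$.

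Adding the hypothesis $\mathbf f\in L_\infty(0,T;\dot{\mathbf H}_\#^{r-2})$ then gives $\mathbf F\in L_\infty(0,T_*;\dot{\mathbf H}_\#^{r-2})$, which completes the proof as described above. I expect the only genuinely delicate point to be the uniform verification of the hypotheses of Theorem \ref{RS-T1-S4.6.1} for $a_{ij}^{\alpha\beta}E_{i\alpha}(\mathbf u)$ in the low-regularity range $n/2-1<r<1$ (which can occur only for $n=2$), where $r-1<0$ and one must use $|r-1|$, not $r-1$, in the exponent condition — this is exactly the reason for the form $\tilde\sigma>|r-1|+\tfrac n2$ of the coefficient hypothesis; apart from this, the argument is a routine transcription of the constant-coefficient proof with $\|\mathbb A\|$ replaced by $\|\mathbb A\|_{H_\#^{\tilde\sigma+1},F}$, and no commutator estimate is needed here since $\mathbf u$ is differentiated only once in space.
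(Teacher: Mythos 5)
Your proof is correct and follows essentially the same route as the paper: estimate the convective term via \eqref{E4.91-0T0c*C}, estimate $\bs{\mathfrak L}\mathbf u$ in $L_\infty(0,T_*;\dot{\mathbf H}_\#^{r-2})$ by applying Theorem \ref{RS-T1-S4.6.1}(a) to $a_{ij}^{\alpha\beta}E_{i\alpha}(\mathbf u)$ with $s_1=r-1$, $s_2=\tilde\sigma+1$, then conclude via \eqref{NS-problem-just} for $\mathbf u'$ and via \eqref{Eq-p} with Lemma \ref{div-grad-is} for $p$. One peripheral remark is slightly off: the low-regularity range $n/2-1\le r<1$ where $r-1<0$ is not exclusive to $n=2$ (it also occurs for $n=3$ with $1/2\le r<1$), but this does not affect the argument since the hypothesis $\tilde\sigma>|r-1|+\tfrac n2$ covers both signs of $r-1$ uniformly.
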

\begin{proof}
By Theorems \ref{NS-problemTh-sigma-Lv-cr} and \ref{NS-problemTh-sigma-Lv-er},  we have the inclusions 
$\mathbf u\in L_{\infty}(0,T_*;\dot{\mathbf H}_{\#\sigma}^{r})$,
$\mathbf u'\in L_2(0,T_*;\dot{\mathbf H}_{\#\sigma}^{r-1})$ and $p\in L_2(0,T_*;\dot{H}_{\#}^{r})$. 
Then we only need to prove the inclusions $\mathbf u'\in L_{\infty}(0,T_*;\dot{\mathbf H}_{\#\sigma}^{r-2})$ and $p\in L_\infty(0,T_*;\dot{H}_{\#}^{r-1})$.

As in the proof of Theorem \ref{NS-problemTh-sigma-Lv-er-t1} we arrive at estimate \eqref{E4.91-0T0c*C}.

By \eqref{L-oper}, \eqref{TensNorm} and the multiplication Theorem \ref{RS-T1-S4.6.1}(a), we have
\begin{align*}
\left\|\bs{\mathfrak L}\mathbf u\right\|_{{\mathbf H}_{\#}^{r-2}}
\le\|a_{ij}^{\alpha \beta }E_{i\alpha }({\mathbf u})\|_{({H}_{\#}^{r-1})^{n\times n}}
\le C_*(r-1,\tilde\sigma +1,n)\|\mathbb A\|_{H_\#^{\tilde\sigma+1},F}\|\mathbf u\|_{{\mathbf H}_{\#}^{r}},
\end{align*}
where we took into account that $\tilde\sigma+1>n/2$ and $\tilde\sigma+1>r$.
Thus
\begin{align*}
\|\bs{\mathfrak L}\mathbf u\|_{L_\infty(0,T_*;\dot{\mathbf H}_{\#}^{r-2})}
&\le C_*(r-1,\tilde\sigma +1,n)\|\mathbb A\|_{L_\infty(0,T_*;H_\#^{\tilde\sigma+1}),F}\|\mathbf u\|_{L_\infty(0,T_*;{\mathbf H}_{\#}^{r})},
\end{align*}
i.e., $\bs{\mathfrak L}\mathbf u\in{L_\infty(0,T_*;\dot{\mathbf H}_{\#}^{r-2})}$.
We also have ${\mathbf f}\in L_\infty(0,T;\dot{\mathbf H}_\#^{r-2})$.

Then \eqref{NS-problem-just} implies that 
${\mathbf u}'\in{L_\infty(0,T_*;\mathbf{H}_{\#\sigma}^{r-2})}$, while \eqref{Eq-p} and Lemma \ref{div-grad-is} for gradient, with $s=r-1$, imply that  $p\in L_\infty(0,T_*;\dot{H}_{\#}^{r-1})$. 
\end{proof}

To simplify the following two assertions we assume there that the viscosity coefficients  are infinitely smooth it time and space coordinates. This smoothness condition can be relaxed if we instead assume that all the norms of these coefficients encountered in the proof are bounded. 
\begin{theorem}
\label{NS-problemTh-sigma-Lv-er-t}
Let $T>0$ and $n\ge 2$. 
Let $r> {n}/{2}-1$. 
Let 
$a_{ij}^{\alpha \beta }\in {\mathcal C}^{\infty}([0,T]; C^\infty_\#)$,
 and the relaxed ellipticity condition \eqref{mu} hold.
Let $k\in [1, r+1)$ be an integer. 
Let 
${\mathbf f}^{(l)}\in L_\infty(0,T;\dot{\mathbf H}_\#^{r-2-2l})\cap L_2(0,T;\dot{\mathbf H}_\#^{r-1-2l})$, $l=0,1,\ldots,k-1$,
and $\mathbf u^0\in \dot{\mathbf H}_{\#\sigma}^{r}$, while ${\mathbf f}$, $\mathbf u^0$ and $T_*\in(0,T]$ satisfy  inequality \eqref{E4.143T*var} from Theorem \ref{NS-problemTh-sigma-Lv-cr}.

Then the Serrin-type solution $\mathbf u$ of the anisotropic Navier-Stokes 
initial value problem  \eqref{NS-problem-div0}--\eqref{NS-problem-div0-IC} 
is such that
$\mathbf u^{(l)}\in L_{\infty}(0,T_*;\dot{\mathbf H}_{\#\sigma}^{r-2l})\cap L_2(0,T_*;\dot{\mathbf H}_{\#\sigma}^{r+1-2l})$, $l=0,\ldots,k$, while
$p^{(l)}\in L_\infty(0,T_*;\dot{H}_{\#}^{r-1-2l})\cap L_2(0,T_*;\dot{H}_{\#\sigma}^{r-2l})$, $l=0,\ldots,k-1$.
\end{theorem}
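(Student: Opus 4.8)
The plan is to follow, almost verbatim, the induction scheme of the constant-coefficient Theorem~\ref{NS-problemTh-sigma-Lv-er-tC}, the only genuinely new ingredient being that, since now $a_{ij}^{\alpha\beta}$ depends on $t$, the time-differentiated viscous operator no longer reduces to $\bs{\mathfrak L}$ applied to a time derivative of $\mathbf u$, but expands by the Leibniz rule in $t$ as $\partial_t^{k-1}(\bs{\mathfrak L}\mathbf u)=\sum_{j=0}^{k-1}C_{k-1}^{j}\,\bs{\mathfrak L}_j\mathbf u^{(k-1-j)}$, where $\bs{\mathfrak L}_j$ is the second-order operator obtained from $\bs{\mathfrak L}$ by replacing $a_{ij}^{\alpha\beta}$ with $\partial_t^{j}a_{ij}^{\alpha\beta}$. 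Because $a_{ij}^{\alpha\beta}\in\mathcal C^\infty([0,T];C^\infty_\#)$, each $\partial_t^{j}a_{ij}^{\alpha\beta}$ lies in $L_\infty([0,T];H_\#^{s})$ for \emph{every} $s$, so the multiplication Theorem~\ref{RS-T1-S4.6.1}(a) applies to every summand with no constraint coming from the smoothness of the coefficients. No commutator estimate is needed here, since (unlike in the spatial-regularity Theorem~\ref{NS-problemTh-sigma-Lv-er}) we argue directly on the Leray-projected equation and its time derivatives rather than through the Galerkin energy estimates.

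First I would settle the base case $k=1$: by Theorems~\ref{NS-problemTh-sigma-Lv-cr}, \ref{NS-problemTh-sigma-Lv-er} and \ref{NS-problemTh-sigma-Lv-er-t1}, under the present hypotheses the Serrin-type solution already satisfies $\mathbf u\in L_\infty(0,T_*;\dot{\mathbf H}_{\#\sigma}^{r})\cap L_2(0,T_*;\dot{\mathbf H}_{\#\sigma}^{r+1})$, $\mathbf u'\in L_\infty(0,T_*;\dot{\mathbf H}_{\#\sigma}^{r-2})\cap L_2(0,T_*;\dot{\mathbf H}_{\#\sigma}^{r-1})$ and $p\in L_\infty(0,T_*;\dot{H}_{\#}^{r-1})\cap L_2(0,T_*;\dot{H}_{\#}^{r})$, which is precisely the assertion for $k=1$. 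Then, assuming the inclusions \eqref{E5.75tC} for $k-1$, I would differentiate the Leray-projected equation \eqref{NS-problem-just} $k-1$ times in $t$ to obtain \eqref{NS-problem-just-kC} with $\mathbf F^{(k-1)}$ given by \eqref{E5.59C}. For the convection term nothing changes with respect to the constant-coefficient proof: the Leibniz expansion \eqref{E5.60C}, the case split $0\le l\le (k-1)/2$ versus $(k-1)/2\le l\le k-1$, and the multiplication Theorem~\ref{RS-T1-S4.6.1} yield exactly the estimates \eqref{E5.77aC}--\eqref{E5.77a+C}, hence $\partial_t^{k-1}[(\mathbf u\cdot\nabla)\mathbf u]\in L_\infty(0,T_*;\mathbf H_\#^{r-2k})\cap L_2(0,T_*;\mathbf H_\#^{r+1-2k})$ as in \eqref{E5.63C}.

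Next I would treat the viscous term: for each $j$, applying $\bs{\mathfrak L}_j$ costs two spatial derivatives, so by Theorem~\ref{RS-T1-S4.6.1}(a) and the induction hypothesis $\mathbf u^{(k-1-j)}\in L_\infty(0,T_*;\dot{\mathbf H}_{\#\sigma}^{r-2(k-1-j)})\cap L_2(0,T_*;\dot{\mathbf H}_{\#\sigma}^{r+1-2(k-1-j)})$ together with the embeddings $\dot{\mathbf H}_\#^{r-2k+2j}\hookrightarrow\dot{\mathbf H}_\#^{r-2k}$ and $\dot{\mathbf H}_\#^{r+1-2k+2j}\hookrightarrow\dot{\mathbf H}_\#^{r+1-2k}$, one gets $\bs{\mathfrak L}_j\mathbf u^{(k-1-j)}\in L_\infty(0,T_*;\dot{\mathbf H}_\#^{r-2k})\cap L_2(0,T_*;\dot{\mathbf H}_\#^{r+1-2k})$, whence $\partial_t^{k-1}(\bs{\mathfrak L}\mathbf u)$ lies in the same pair of spaces, which is the analogue of \eqref{E5.65C}. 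Since $\mathbf f^{(k-1)}\in L_\infty(0,T;\dot{\mathbf H}_\#^{r-2k})\cap L_2(0,T;\dot{\mathbf H}_\#^{r+1-2k})$ by hypothesis, \eqref{E5.59C} gives $\mathbf F^{(k-1)}$ in the same spaces, and \eqref{NS-problem-just-kC} then yields $\mathbf u^{(k)}\in L_\infty(0,T_*;\dot{\mathbf H}_{\#\sigma}^{r-2k})\cap L_2(0,T_*;\dot{\mathbf H}_{\#\sigma}^{r+1-2k})$, closing the induction. The pressure statement follows by differentiating \eqref{Eq-p} in $t$, $\nabla p^{(l)}=\mathbb P_g\mathbf F^{(l)}$, establishing by the same two computations (Leibniz in $t$ of the convection and viscous terms, then the multiplication theorem) that $\mathbf F^{(l)}\in L_\infty(0,T;\dot{\mathbf H}_\#^{r-2-2l})\cap L_2(0,T;\dot{\mathbf H}_\#^{r-1-2l})$ for $l=0,\ldots,k-1$, and applying Lemma~\ref{div-grad-is} for the gradient with $s=r-1-2l$ and $s=r-2l$.

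I expect the only delicate point to be the index bookkeeping, exactly as in the constant-coefficient case: one must verify that throughout the Leibniz expansions the Sobolev indices appearing as arguments of the multiplication theorems stay above the admissibility thresholds, that is, that the hypotheses $r>n/2-1$ and $k\in[1,r+1)$ (equivalently $r-k+1>0$, so $s_{l1}+s_{l2}=2(r-k+1)>0$ with $s_{l1},s_{l2}$ as in the proof of Theorem~\ref{NS-problemTh-sigma-Lv-er-tC}) leave room in the borderline cases $s_{l2}\le n/2$; this is handled by the same subcases (A1), (A2), (B), the additional viscous summands being strictly easier because their coefficients are infinitely smooth in space and time, so the index $\tilde\sigma+1$ can be taken as large as needed.
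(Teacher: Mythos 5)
Your proposal is correct and follows essentially the same route as the paper's proof: the base case $k=1$ is settled by Theorems \ref{NS-problemTh-sigma-Lv-cr}, \ref{NS-problemTh-sigma-Lv-er}, \ref{NS-problemTh-sigma-Lv-er-t1}, the convection term is handled exactly as in the constant-coefficient proof, and the only new work is the Leibniz-in-$t$ expansion of $\partial_t^{k-1}(\bs{\mathfrak L}\mathbf u)$ into a sum of terms $\nabla\cdot[\mathbb A^{(k-1-l)}\mathbb E(\mathbf u^{(l)})]$, each estimated by Theorem \ref{RS-T1-S4.6.1}(a) together with the induction hypothesis and the Sobolev embeddings coming from $l\le k-1$. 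This is precisely how the paper replaces step \eqref{E5.65C}; the only differences from the paper are cosmetic (you index by the derivative order of $\mathbb A$ rather than of $\mathbf u$, and state the two-derivative loss of $\bs{\mathfrak L}_j$ directly rather than first pulling out the divergence and then applying the multiplication theorem to the matrix product).
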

\begin{proof}
The proof coincide with the proof of  the corresponding constant-coefficient Theorem \eqref{NS-problemTh-sigma-Lv-er-tC}, except the  proof of inclusion \eqref{E5.65C} in Step 2, that for the variable coefficients is replaced by the following argument.
\begin{align}
\label{NS-problem-just-k}
\partial_t^{k-1}\bs{\mathfrak L}{\mathbf u}
=\sum_{l=0}^{k-1}C_{k-1}^l\nabla\cdot[{\mathbb A}^{(k-1-l)} \mathbb E ({\mathbf u}^{(l)})].
\end{align}
By \eqref{L-oper}, \eqref{TensNorm} and Theorem \ref{RS-T1-S4.6.1}(a), we have
\begin{multline*}
\|\nabla\cdot[\mathbb A^{(k-1-l)} \mathbb E ({\mathbf u}^{(l)})]\|_{\dot{\mathbf H}_{\#\sigma}^{r-2k}}
\le\|\mathbb A^{(k-1-l)} \mathbb E ({\mathbf u}^{(l)})\|_{({H}_{\#}^{r+1-2k})^{n\times n}}
\le\|\mathbb A^{(k-1-l)} \mathbb E ({\mathbf u}^{(l)})\|_{({H}_{\#}^{r-1-2l})^{n\times n}}
\\
\le C_{*l1}\|\mathbb A^{(k-1-l)}\|_{H_\#^{\tilde\sigma_{l1}},F}\|\mathbb E ({\mathbf u}^{(l)})\|_{({H}_{\#}^{r-1-2l})^{n\times n}}
\le C_{*l1}\|\mathbb A^{(k-1-l)}\|_{H_\#^{\tilde\sigma_{l1}},F}\|\mathbf u^{(l)}\|_{{\mathbf H}_{\#}^{r-2l}},
\end{multline*}
where $\tilde\sigma_{l1}>\max\{n/2,2l+1-r\}$,  $C_{*l1}=C_*(r-1-2l,\tilde\sigma_{l1},n)$.
Thus
\begin{align*}
\|\nabla\cdot[\mathbb A^{(k-1-l)} \mathbb E ({\mathbf u}^{(l)})]\|_{L_\infty(0,T_*;\dot{\mathbf H}_{\#\sigma}^{r-2k})}
&\le C_{*l1}\|\mathbb A^{(k-1-l)}\|_{L_\infty(0,T_*;H_\#^{\tilde\sigma_{l1}}),F}\|\mathbf u^{(l)}\|_{L_\infty(0,T_*;{\mathbf H}_{\#}^{r-2l})},
\end{align*}
i.e., due to \eqref{E5.75tC},
$
\nabla\cdot[\mathbb A^{(k-1-l)} \mathbb E ({\mathbf u}^{(l)})]\in{L_\infty(0,T_*;\dot{\mathbf H}_{\#\sigma}^{r-2k})},
\quad l=0,\ldots,k-1.
$

On the other hand, by \eqref{L-oper}, \eqref{TensNorm} and Theorem \ref{RS-T1-S4.6.1}(a), we have
\begin{multline*}
\|\nabla\cdot[\mathbb A^{(k-1-l)} \mathbb E ({\mathbf u}^{(l)})]\|_{\dot{\mathbf H}_{\#\sigma}^{r+1-2k}}
\le\|\mathbb A^{(k-1-l)} \mathbb E ({\mathbf u}^{(l)})\|_{({H}_{\#}^{r+2-2k})^{n\times n}}
\le\|\mathbb A^{(k-1-l)} \mathbb E ({\mathbf u}^{(l)})\|_{({H}_{\#}^{r-2l})^{n\times n}}
\\
\le C_{*l2}\|\mathbb A^{(k-1-l)}\|_{H_\#^{\tilde\sigma_{l2}},F}\|\mathbb E ({\mathbf u}^{(l)})\|_{({H}_{\#}^{r-2l})^{n\times n}}
\le C_{*l2}\|\mathbb A^{(k-1-l)}\|_{H_\#^{\tilde\sigma_{l2}},F}\|\mathbf u^{(l)}\|_{{\mathbf H}_{\#}^{r+1-2l}},
\end{multline*}
where $\tilde\sigma_{l2}>\max\{n/2,2l-r\}$,  $C_{*l2}=C_*(r-2l,\tilde\sigma_{l2},n)$.
Thus
\begin{align*}
\|\nabla\cdot[\mathbb A^{(k-1-l)} \mathbb E ({\mathbf u}^{(l)})]\|_{L_\infty(0,T_*;\dot{\mathbf H}_{\#\sigma}^{r+1-2k})}
&\le C_{*l2}\|\mathbb A^{(k-1-l)}\|_{L_\infty(0,T_*;H_\#^{\tilde\sigma_{l2}}),F}
\|\mathbf u^{(l)}\|_{L_2(0,T_*;{\mathbf H}_{\#}^{r+1-2l})},
\end{align*}
i.e., due to \eqref{E5.75tC},
$\nabla\cdot[\mathbb A^{(k-1-l)} \mathbb E ({\mathbf u}^{(l)})]\in{L_2(0,T_*;\dot{\mathbf H}_{\#\sigma}^{r+1-2k})},
\quad l=0,\ldots,k-1.$
Hence by  \eqref{NS-problem-just-k},
\begin{align}\label{E5.65}
\partial_t^{k-1}\bs{\mathfrak L}{\mathbf u}\in L_\infty(0,T_*;\mathbf H_\#^{r-2k})\cap L_2(0,T_*;\mathbf H_\#^{r+1-2k}).
\end{align}
\end{proof}

\begin{corollary}
\label{Cor5.11}
Let $T>0$ and $n\ge 2$. 
Let  
$a_{ij}^{\alpha \beta }\in {\mathcal C}^{\infty}([0,T]; C^\infty_\#)$,
 and the relaxed ellipticity condition \eqref{mu} hold.
Let ${\mathbf f}\in \mathcal C^\infty(0,T;\dot{\mathbf C}_\#^{\infty})$,  
and $\mathbf u^0\in \dot{\mathbf C}_{\#\sigma}^{\infty}$, while ${\mathbf f}$, $\mathbf u^0$ and $T_*\in(0,T]$ satisfy  inequality \eqref{E4.143T*var} from Theorem \ref{NS-problemTh-sigma-Lv-cr}.

Then the Serrin-type solution $\mathbf u$ of the anisotropic Navier-Stokes 
initial value problem  \eqref{NS-problem-div0}--\eqref{NS-problem-div0-IC} 
is such that
$\mathbf u\in \mathcal C^{\infty}(0,T_*;\dot{\mathbf C}_{\#\sigma}^{\infty})$,
$p\in \mathcal C^\infty(0,T_*;\dot{\mathcal C}_{\#}^\infty)$.
\end{corollary}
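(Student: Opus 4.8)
The plan is to derive this from Theorem \ref{NS-problemTh-sigma-Lv-er-t} exactly as its constant-coefficient analogue Corollary \ref{Cor5.11C} was derived from Theorem \ref{NS-problemTh-sigma-Lv-er-tC}. The key observation is that the spaces of infinitely-smooth divergence-free periodic functions are the intersections $\dot{\mathbf C}_{\#\sigma}^{\infty}=\bigcap_{r\in\R}\dot{\mathbf H}_{\#\sigma}^r$ and $\dot{\mathcal C}_{\#}^{\infty}=\bigcap_{r\in\R}\dot H_{\#}^r$ (stated in Section \ref{S2}), so it suffices to show that all time-derivatives of $\mathbf u$ and $p$ lie in every spatial Sobolev space, locally in time on $(0,T_*)$.

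First I would check that the hypotheses of Theorem \ref{NS-problemTh-sigma-Lv-er-t} are met for arbitrarily large $r$ and $k$. Since $\mathbf u^0\in\dot{\mathbf C}_{\#\sigma}^{\infty}$ we have $\mathbf u^0\in\dot{\mathbf H}_{\#\sigma}^{r}$ for every $r>n/2-1$; since $\mathbf f\in\mathcal C^\infty(0,T;\dot{\mathbf C}_\#^{\infty})$ we have $\mathbf f^{(l)}\in L_\infty(0,T;\dot{\mathbf H}_\#^{r-2-2l})\cap L_2(0,T;\dot{\mathbf H}_\#^{r-1-2l})$ for all $l$; and the smoothness assumption on the coefficients, $a_{ij}^{\alpha\beta}\in\mathcal C^{\infty}([0,T];C^\infty_\#)$, is precisely the one imposed in Theorem \ref{NS-problemTh-sigma-Lv-er-t}. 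The smallness condition \eqref{E4.143T*var} on $\mathbf f$, $\mathbf u^0$, $T_*$ is assumed directly in the present corollary. Then for each integer $k\ge 1$, picking any $r>\max\{n/2-1,k-1\}$, Theorem \ref{NS-problemTh-sigma-Lv-er-t} yields
\begin{align*}
\mathbf u^{(l)}\in L_{\infty}(0,T_*;\dot{\mathbf H}_{\#\sigma}^{r-2l})\cap L_2(0,T_*;\dot{\mathbf H}_{\#\sigma}^{r+1-2l}),\quad l=0,\ldots,k,
\end{align*}
and
\begin{align*}
p^{(l)}\in L_\infty(0,T_*;\dot H_{\#}^{r-1-2l})\cap L_2(0,T_*;\dot H_{\#\sigma}^{r-2l}),\quad l=0,\ldots,k-1.
\end{align*}

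Next I would let $r\to\infty$ while keeping $k$ fixed: this shows $\mathbf u^{(k)}\in L_2(0,T_*;\dot{\mathbf H}_\#^{s})$ for every $s\in\R$, hence $\mathbf u^{(k)}(\cdot,t)\in\dot{\mathbf C}_{\#\sigma}^{\infty}$ for a.e.\ $t$, and similarly $p^{(k-1)}(\cdot,t)\in\dot{\mathcal C}_{\#}^{\infty}$. To upgrade the a.e.\ statement to genuine $\mathcal C^\infty$ in time, I would observe that for each fixed $s$ and each $k$ one has $\mathbf u^{(k)}\in L_2(0,T_*;\dot{\mathbf H}_{\#\sigma}^{s+1})$ and $\mathbf u^{(k+1)}\in L_2(0,T_*;\dot{\mathbf H}_{\#\sigma}^{s-1})$, so $\mathbf u^{(k)}\in W^1(0,T_*;\dot{\mathbf H}_{\#\sigma}^{s+1},\dot{\mathbf H}_{\#\sigma}^{s-1})$ and Theorem \ref{LM-T3.1} gives $\mathbf u^{(k)}\in\mathcal C^0([0,T_*];\dot{\mathbf H}_{\#\sigma}^{s})$; letting $s\to\infty$ and $k$ range over all nonnegative integers yields $\mathbf u\in\mathcal C^\infty(0,T_*;\dot{\mathbf C}_{\#\sigma}^{\infty})$. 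The same argument applied to $p^{(l)}$, together with the relation \eqref{Eq-p} differentiated in time (which expresses each $p^{(l)}$ through $\mathbb P_g$ acting on smooth-in-time quantities), gives $p\in\mathcal C^\infty(0,T_*;\dot{\mathcal C}_{\#}^{\infty})$.

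I do not anticipate a real obstacle here, since all the hard analysis is already contained in Theorem \ref{NS-problemTh-sigma-Lv-er-t}; the only care needed is bookkeeping: for each target regularity pair $(k,s)$ one must choose $r$ large enough (namely $r>\max\{n/2-1,k-1,s+2k\}$) so that the theorem applies and delivers the required inclusion, and then take the union over $k$ and $s$. Since the proof of Theorem \ref{NS-problemTh-sigma-Lv-er-tC} is quoted as a template for Theorem \ref{NS-problemTh-sigma-Lv-er-t}, the present corollary is a verbatim copy of Corollary \ref{Cor5.11C} with the constant-coefficient theorem replaced by its variable-coefficient counterpart.

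\begin{proof}
Taking into account that $\dot{\mathbf C}_\#^{\infty}=\bigcap_{r\in\R} \dot{\mathbf{H}}_{\#\sigma}^r$, Theorem \ref{NS-problemTh-sigma-Lv-er-t} implies that for any integer $k\ge 0$,
${\mathbf u}^{(k)}\in{L_\infty(0,T_*;\dot{\mathbf{H}}_{\#\sigma}^{r-2k})}\cap L_2(0,T_*;\dot{\mathbf H}_\#^{r+1-2k})$,
$p^{(k)}\in L_\infty(0,T_*;\dot{H}_{\#}^{r-1-2k})\cap L_2(0,T_*;\dot{H}_{\#\sigma}^{r-2k})$, for any $r\in\R$.
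Hence $\mathbf u\in \mathcal C^{\infty}(0,T_*;\dot{\mathbf C}_{\#\sigma}^{\infty})$,
$p\in \mathcal C^\infty(0,T_*;\dot{\mathcal C}_{\#}^\infty)$.
\end{proof}
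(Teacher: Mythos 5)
Your proof is verbatim the paper's own argument for this corollary: invoke Theorem \ref{NS-problemTh-sigma-Lv-er-t} for every sufficiently large $r$ and every $k$, then take the intersection over $r$ to conclude membership in $\dot{\mathbf C}_{\#\sigma}^{\infty}$ at each time-derivative order, with the time-continuity supplied by Theorem \ref{LM-T3.1}. The surrounding discussion in your plan just fills in the bookkeeping (choice of $r$ versus $(k,s)$) that the paper leaves implicit; the approach is identical.
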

\begin{proof}
Taking into account that $\dot{\mathbf C}_\#^{\infty}=\bigcap_{r\in\R} \dot{\mathbf{H}}_{\#\sigma}^r$, Theorem \ref{NS-problemTh-sigma-Lv-er-t} implies that for any integer $k\ge 0$,
${\mathbf u}^{(k)}\in{L_\infty(0,T_*;\dot{\mathbf{H}}_{\#\sigma}^{r-2k})}\cap L_2(0,T_*;\dot{\mathbf H}_\#^{r+1-2k})$,
$p^{(k)}\in L_\infty(0,T_*;\dot{H}_{\#}^{r-1-2k})\cap L_2(0,T_*;\dot{H}_{\#\sigma}^{r-2k})$, for any $r\in\R$.
Hence $\mathbf u\in \mathcal C^{\infty}(0,T_*;\dot{\mathbf C}_{\#\sigma}^{\infty})$,
$p\in \mathcal C^\infty(0,T_*;\dot{\mathcal C}_{\#}^\infty)$.
\end{proof}

\subsection{Regularity of two-dimensional weak solution for variable anisotropic viscosity coefficients}\label{S2D}
Here we provide a counterpart of Section \ref{S2DC} generalised to variable viscosity coefficients.
\begin{theorem}[Spatial regularity of solution for arbitrarily large data.]
\label{NS-problemTh-sigma-Lv-er2}
Let $n= 2$, $r>0$, and $T>0$. 
Let  
$a_{ij}^{\alpha \beta }\in L_\infty([0,T];H_\#^{\tilde\sigma+1})$, 
$\tilde\sigma>1+\max\{|r-1|,1\}$,
 and the relaxed ellipticity condition \eqref{mu} hold.
Let ${\mathbf f}\in L_2(0,T;\dot{\mathbf H}_\#^{r-1})$  and $\mathbf u^0\in \dot{\mathbf H}_{\#\sigma}^{r}$.

Then the solution $\mathbf u$ of the anisotropic Navier-Stokes 
initial value problem  \eqref{NS-problem-div0}--\eqref{NS-problem-div0-IC}
obtained in Theorem \ref{NS-problemTh-sigma}  is of Serrin-type and 
belongs to
$L_{\infty}(0,T;\dot{\mathbf H}_{\#\sigma}^{r})\cap L_2(0,T;\dot{\mathbf H}_{\#\sigma}^{r+1})$.
In addition, 
$
\mathbf u'\in L_2(0,T;\dot{\mathbf H}_{\#\sigma}^{r-1}),
$
$
\mathbf u \in\mathcal C^0([0,T];\dot{\mathbf H}_{\#\sigma}^{r})
$
$
\lim_{t\to 0}\| \mathbf u(\cdot,t)-{\mathbf u}^0\| _{\dot{\mathbf H}_{\#\sigma}^{r}}= 0
$
and $p\in L_2(0,T_*;\dot{H}_{\#}^{r})$. 
\end{theorem}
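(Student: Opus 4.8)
The plan is to reduce Theorem \ref{NS-problemTh-sigma-Lv-er2} (the $n=2$ variable-coefficient spatial regularity statement) to the general variable-coefficient machinery of Theorem \ref{NS-problemTh-sigma-Lv-er}, paralleling exactly how Theorem \ref{NS-problemTh-sigma-Lv-er2C} was obtained from Theorem \ref{NS-problemTh-sigma-Lv-erC} in the constant-coefficient case. The key observation, just as in Section \ref{S2DC} and in Theorem \ref{NS-problemTh-sigma-Lv-er2C}, is that for $n=2$ one has $n/2-1=0$, so every weak solution obtained in Theorem \ref{NS-problemTh-sigma} is already a Serrin-type solution (by Definitions \ref{D6.1} and \ref{CSDef}, since $\dot{\mathbf H}^1_{\#\sigma}=\dot{\mathbf H}^{n/2}_{\#\sigma}$), and moreover the global-in-time a priori bounds for the Galerkin sequence $\{\mathbf u_m\}$ in $L_\infty(0,T;\dot{\mathbf H}^0_{\#\sigma})\cap L_2(0,T;\dot{\mathbf H}^1_{\#\sigma})$ are valid on the whole interval $[0,T]$ with no smallness restriction. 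Concretely, I would invoke inequality (59) of \cite{Mikhailov2024}, namely
\begin{align*}
\|\mathbf u_m\|^2_{L_2(0,T;\dot{\mathbf H}_\#^{1})}
\le 4C_{\mathbb A}\left(\|{\mathbf u}^0\|^2_{\mathbf L_{2\#}}
+4 C_{\mathbb A}\|\mathbf f\|^2_{L_2(0,T;\dot{\mathbf H}_\#^{-1})}\right),
\end{align*}
which supplies precisely the boundedness of $\|\mathbf u_m\|_{L_2(0,T;\mathbf H_\#^{n/2})}$ that in the higher-dimensional argument was available only on the subinterval $[0,T_*]$ via \eqref{E4.14varb}.

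The proof itself would then run word-for-word along the lines of Theorem \ref{NS-problemTh-sigma-Lv-er}, with $n=2$ substituted throughout and $T_*$ replaced by $T$. That is: Step (a) treats $0<r<1$ using estimate \eqref{E4.91-0T0cC} in \eqref{E4.82T0} together with the commutator estimate \eqref{E4.122var-} for the variable coefficients, applies Gronwall's inequality \eqref{E17} using the now-global bound on $\|\mathbf u_m\|_{L_2(0,T;\mathbf H_\#^{n/2})}$ to get boundedness in $L_\infty(0,T;\dot{\mathbf H}^r_{\#\sigma})$, and then integrates to get boundedness in $L_2(0,T;\dot{\mathbf H}^{r+1}_{\#\sigma})$; Step (b) handles $r=1=n/2$ via \eqref{E4.91-0T0dC}; Step (c) bootstraps through $kn/2<r\le(k+1)n/2$ by induction using \eqref{r>n/2E4.910C}; a weak/weak-star limit extraction then identifies the limit with the weak solution $\mathbf u$ of Theorem \ref{NS-problemTh-sigma}, establishing $\mathbf u\in L_\infty(0,T;\dot{\mathbf H}^r_{\#\sigma})\cap L_2(0,T;\dot{\mathbf H}^{r+1}_{\#\sigma})$. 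Finally Step (d) recovers the time derivative and pressure regularity: from \eqref{E4.91-0T0EE1r-uC} one gets $({\mathbf u}\cdot\nabla){\mathbf u}\in L_2(0,T;\mathbf H_\#^{r-1})$, from \eqref{L-oper}--\eqref{TensNorm} and the multiplication Theorem \ref{RS-T1-S4.6.1} that $\bs{\mathfrak L}\mathbf u\in L_2(0,T;\dot{\mathbf H}_\#^{r-1})$ (this is where the hypothesis $\tilde\sigma>1+\max\{|r-1|,1\}$, i.e. $\tilde\sigma>\tfrac n2+\max\{|r-1|,|n/2-2|\}$ for $n=2$, is used to control the product of the coefficient with the strain tensor), hence via \eqref{NS-problem-just} $\mathbf u'\in L_2(0,T;\mathbf H^{r-1}_{\#\sigma})$, so Theorem \ref{LM-T3.1} gives $\mathbf u\in\mathcal C^0([0,T];\dot{\mathbf H}^r_{\#\sigma})$ and the convergence $\|\mathbf u(\cdot,t)-\mathbf u^0\|_{\dot{\mathbf H}^r_{\#\sigma}}\to 0$ as $t\to0$, and Lemma \ref{div-grad-is} applied to \eqref{Eq-p} with $s=r$ gives $p\in L_2(0,T;\dot H_\#^{r})$.

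I expect no genuinely new obstacle here — the only point requiring care is checking that the Gronwall-type lemma (Lemma \ref{RRS2016-L10.3phi} / inequality \eqref{E17}) can indeed be applied globally on $[0,T]$ rather than on a short interval, which hinges entirely on the global a priori bound (59) of \cite{Mikhailov2024} being available in two dimensions; once that substitution is made, the smallness condition \eqref{E4.143T*var} that constrained the higher-dimensional result drops out and every subsequent step carries over verbatim. Accordingly the cleanest write-up is to state that the proof coincides with that of Theorem \ref{NS-problemTh-sigma-Lv-er} upon setting $n=2$, replacing $T_*$ by $T$, and replacing the appeal to \eqref{E4.14varb} for the boundedness of $\|\mathbf u_m\|_{L_2(0,T_*;\mathbf H_\#^{n/2})}$ by the appeal to inequality (59) of \cite{Mikhailov2024} displayed above, exactly as Theorem \ref{NS-problemTh-sigma-Lv-er2C} was deduced from Theorem \ref{NS-problemTh-sigma-Lv-erC}.
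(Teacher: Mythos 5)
Your proposal is correct and coincides with the paper's own proof: the paper likewise reduces the $n=2$ variable-coefficient theorem to the proof of Theorem \ref{NS-problemTh-sigma-Lv-er} by setting $n=2$, replacing $T_*$ by $T$, and replacing the appeal to \eqref{E4.14varb} with the global-in-time bound $\|\mathbf u_m\|^2_{L_2(0,T;\dot{\mathbf H}_\#^{1})}\le 4C_{\mathbb A}\bigl(\|{\mathbf u}^0\|^2_{\mathbf L_{2\#}}+4 C_{\mathbb A}\|\mathbf f\|^2_{L_2(0,T;\dot{\mathbf H}_\#^{-1})}\bigr)$ from inequality (59) of \cite{Mikhailov2024}, exactly as you describe.
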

\begin{proof}
The proof coincides word-for-word with the proof of Theorem \ref{NS-problemTh-sigma-Lv-er} if we take there $n=2$ while replacing $T_*$ by $T$ and the reference to \eqref{E4.14varb} for the boundedness of the sequence $\|\mathbf{u}_m\|_{L_2(0,T_*;\mathbf H_\#^{n/2})}$ for $n=2$ by the reference to the corresponding inequality
\begin{align*}
&\|\mathbf u_m\|^2_{L_2(0,T;\dot{\mathbf H}_\#^{1})}
\le 4C_{\mathbb A}\left( \| {\mathbf u}^0\| ^2_{\mathbf L_{2\#}}
+4 C_{\mathbb A}\|\mathbf f\|^2_{L_2(0,T;\dot{\mathbf H}_\#^{-1})}\right).
\end{align*}
 obtained as inequality (59) in our paper \cite{Mikhailov2024}. 
\end{proof}

The following assertion is proved similar to Corollary \ref{Cor5.11sp}. 
\begin{corollary}
Let $T>0$ and $n= 2$. 
Let  
$a_{ij}^{\alpha \beta }\in {\mathcal C}^{\infty}([0,T]; C^\infty_\#)$,
 and the relaxed ellipticity condition \eqref{mu} hold.
Let ${\mathbf f}\in L_2(0,T;\dot{\mathbf C}_\#^{\infty})$  
and $\mathbf u^0\in \dot{\mathbf C}_{\#\sigma}^{\infty}$.

Then the solution $\mathbf u$ of the anisotropic Navier-Stokes 
initial value problem  \eqref{NS-problem-div0}--\eqref{NS-problem-div0-IC}
obtained in Theorem \ref{NS-problemTh-sigma}  is of Serrin-type and 
is such that
${\mathbf u}\in \mathcal C^0([0,T];\dot{\mathbf C}_{\#\sigma}^{\infty})$, 
$\mathbf u'\in L_2(0,T;\dot{\mathbf C}_{\#\sigma}^{\infty})$ and 
$p\in L_2(0,T;\dot{\mathcal C}_{\#}^{\infty})$.
\end{corollary}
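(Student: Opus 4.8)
The plan is to obtain this corollary as the ``letting the smoothness index go to infinity'' version of the 2D spatial-regularity result just established, namely Theorem \ref{NS-problemTh-sigma-Lv-er2}, and then to intersect the resulting inclusions over all admissible $r$. First I would verify that the hypotheses of Theorem \ref{NS-problemTh-sigma-Lv-er2} hold for every $r>0$. Since $a_{ij}^{\alpha \beta }\in {\mathcal C}^{\infty}([0,T]; C^\infty_\#)$, in particular $a_{ij}^{\alpha \beta }\in L_\infty([0,T];H_\#^{\tilde\sigma+1})$ for every $\tilde\sigma$, so the coefficient condition $\tilde\sigma>1+\max\{|r-1|,1\}$ is met for any prescribed $r$; likewise, from $\dot{\mathbf C}_\#^{\infty}=\bigcap_{s\in\R}\dot{\mathbf H}_\#^{s}$ and $\dot{\mathbf C}_{\#\sigma}^{\infty}=\bigcap_{s\in\R}\dot{\mathbf H}_{\#\sigma}^{s}$ we get ${\mathbf f}\in L_2(0,T;\dot{\mathbf H}_\#^{r-1})$ and $\mathbf u^0\in\dot{\mathbf H}_{\#\sigma}^{r}$ for each $r$. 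Thus Theorem \ref{NS-problemTh-sigma-Lv-er2} applies for every $r>0$ to the weak solution $\mathbf u$ furnished by Theorem \ref{NS-problemTh-sigma}, giving $\mathbf u\in L_{\infty}(0,T;\dot{\mathbf H}_{\#\sigma}^{r})\cap L_2(0,T;\dot{\mathbf H}_{\#\sigma}^{r+1})$, $\mathbf u'\in L_2(0,T;\dot{\mathbf H}_{\#\sigma}^{r-1})$, $\mathbf u\in\mathcal C^0([0,T];\dot{\mathbf H}_{\#\sigma}^{r})$, and $p\in L_2(0,T;\dot{H}_{\#}^{r})$.

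Next I would pass to the intersection over $r$. Because $r>0$ is arbitrary and the scale $\{\dot{\mathbf H}_{\#\sigma}^{s}\}$ is nested, the above inclusions hold for every $s\in\R$ after relabelling. Using $\dot{\mathbf C}_{\#\sigma}^{\infty}=\bigcap_{s}\dot{\mathbf H}_{\#\sigma}^{s}$ and $\dot{\mathcal C}_\#^{\infty}=\bigcap_{s}\dot{H}_\#^{s}$, together with the facts that a map which is continuous on $[0,T]$ into every $\dot{\mathbf H}_{\#\sigma}^{s}$ is continuous into the projective-limit Fréchet space $\dot{\mathbf C}_{\#\sigma}^{\infty}$, and that $L_2$-integrability in time into every $\dot{\mathbf H}_{\#\sigma}^{s}$ yields $L_2$-integrability into $\dot{\mathbf C}_{\#\sigma}^{\infty}$, I would conclude ${\mathbf u}\in \mathcal C^0([0,T];\dot{\mathbf C}_{\#\sigma}^{\infty})$, $\mathbf u'\in L_2(0,T;\dot{\mathbf C}_{\#\sigma}^{\infty})$ and $p\in L_2(0,T;\dot{\mathcal C}_{\#}^{\infty})$. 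The Serrin-type property needs no argument: for $n=2$ every weak solution belongs to $L_2(0,T;\dot{\mathbf H}_{\#\sigma}^{n/2})=L_2(0,T;\dot{\mathbf H}_{\#\sigma}^{1})$ by definition of a weak solution, hence is a Serrin-type solution in the sense of Definition \ref{CSDef}.

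There is essentially no genuine obstacle here, since the substantive work — the inductive bootstrap in the spatial smoothness index, resting on the Gronwall-type estimates of Section \ref{S.5.2} and the commutator estimate of Theorem \ref{Jcomm} — has already been carried out inside Theorem \ref{NS-problemTh-sigma-Lv-er2}; the corollary only packages that conclusion by sending $r\to\infty$. The sole point that warrants a line of justification is the topological identification of the countable intersection of the Banach-space-valued spaces with the corresponding Fréchet-valued spaces, which is standard. This argument is the verbatim analogue of the proof of Corollary \ref{Cor5.11sp}, with $T_*$ replaced throughout by $T$, as already announced in the text preceding the statement.
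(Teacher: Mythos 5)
Your proposal is correct and follows exactly the route the paper indicates: the paper announces that this corollary is ``proved similar to Corollary~\ref{Cor5.11sp}'', whose proof applies the corresponding spatial-regularity theorem for each $r$ and then intersects over $r$, and you do the same using the $n=2$ version, Theorem~\ref{NS-problemTh-sigma-Lv-er2}, with $T_*$ replaced by $T$. The extra remarks on the Fr\'echet-space interpretation of $\mathcal C^0([0,T];\dot{\mathbf C}_{\#\sigma}^{\infty})$ and $L_2(0,T;\dot{\mathbf C}_{\#\sigma}^{\infty})$, and on the automatic Serrin-type property in two dimensions, are consistent with the paper's conventions and add nothing contrary.
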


The next three assertions on spatial-temporal regularity for $n=2$ are the corresponding counterparts of Theorems \ref{NS-problemTh-sigma-Lv-er-t1}, \ref{NS-problemTh-sigma-Lv-er-t} and Corollary \ref{Cor5.11} and are proved in a similar way after replacing there $T_*$ by $T$.
\begin{theorem}
\label{NS-problemTh-sigma-Lv-er-t1-2}
Let $T>0$, $n\ge 2$ and  $r> 0$. 
Let $a_{ij}^{\alpha \beta }\in L_\infty([0,T];H_\#^{\tilde\sigma+1})$, 
$\tilde\sigma>|r-1|+1$,
 and the relaxed ellipticity condition \eqref{mu} hold.
Let ${\mathbf f}\in L_\infty(0,T;\dot{\mathbf H}_\#^{r-2})\cap L_2(0,T;\dot{\mathbf H}_\#^{r-1})$
and $\mathbf u^0\in \dot{\mathbf H}_{\#\sigma}^{r}$.

Then the solution $\mathbf u$ of the anisotropic Navier-Stokes 
initial value problem  \eqref{NS-problem-div0}--\eqref{NS-problem-div0-IC}
obtained in Theorem \ref{NS-problemTh-sigma}  is of Serrin-type and 
is such that
$\mathbf u'\in L_{\infty}(0,T;\dot{\mathbf H}_{\#\sigma}^{r-2})\cup L_2(0,T;\dot{\mathbf H}_{\#\sigma}^{r-1})$,
$p\in L_\infty(0,T;\dot{H}_{\#}^{r-1})\cap L_2(0,T;\dot{H}_{\#}^{r})$.
\end{theorem}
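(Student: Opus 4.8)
The plan is to obtain the statement by combining the global-in-time spatial regularity available in dimension two with the time-regularity bootstrap of the general-$n$ result, the latter now run on the whole interval $[0,T]$ in place of the subinterval $[0,T_*]$. The mechanism behind this globalisation is that in dimension two the a priori bound (59) of \cite{Mikhailov2024}, namely
\[
\|\mathbf u_m\|^2_{L_2(0,T;\dot{\mathbf H}_\#^{1})}\le 4C_{\mathbb A}\big(\|\mathbf u^0\|^2_{\mathbf L_{2\#}}+4C_{\mathbb A}\|\mathbf f\|^2_{L_2(0,T;\dot{\mathbf H}_\#^{-1})}\big),
\]
holds on all of $[0,T]$ with no smallness requirement and performs on $[0,T]$ exactly the role that \eqref{E4.14varb} plays on $[0,T_*]$ in the proof for general $n$; consequently the hypothesis \eqref{E4.143T*var} of Theorem \ref{NS-problemTh-sigma-Lv-cr} is not invoked here and the bootstrap of Section \ref{S5.4} goes through globally.

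First I would invoke the two-dimensional spatial-regularity Theorem \ref{NS-problemTh-sigma-Lv-er2}, which already establishes that the weak solution $\mathbf u$ from Theorem \ref{NS-problemTh-sigma} is of Serrin type and satisfies $\mathbf u\in L_{\infty}(0,T;\dot{\mathbf H}_{\#\sigma}^{r})\cap L_2(0,T;\dot{\mathbf H}_{\#\sigma}^{r+1})$, $\mathbf u'\in L_2(0,T;\dot{\mathbf H}_{\#\sigma}^{r-1})$ and $p\in L_2(0,T;\dot H_\#^{r})$. It then remains only to upgrade $\mathbf u'$ and $p$ to $L_\infty$ in time, which is done exactly as in the proof of Theorem \ref{NS-problemTh-sigma-Lv-er-t1}: by \eqref{eq:mik14} and the multiplication Theorem \ref{RS-T1-S4.6.1} (part (b) when $0<r<n/2+1$, part (a) when $r\ge n/2+1$) one gets $\|({\mathbf u}\cdot\nabla){\mathbf u}\|_{\mathbf H_\#^{r-2}}=\|\nabla\cdot({\mathbf u}\otimes{\mathbf u})\|_{\mathbf H_\#^{r-2}}\le C_{*rn}\|{\mathbf u}\|_{{\mathbf H}_\#^{r}}^2$, and by \eqref{L-oper}, \eqref{TensNorm} and Theorem \ref{RS-T1-S4.6.1}(a) one gets $\|\bs{\mathfrak L}\mathbf u\|_{\mathbf H_\#^{r-2}}\le C_*(r-1,\tilde\sigma+1,n)\,\|\mathbb A\|_{H_\#^{\tilde\sigma+1},F}\|\mathbf u\|_{{\mathbf H}_\#^{r}}$, the hypothesis $\tilde\sigma>|r-1|+1$ ensuring $\tilde\sigma+1>n/2$ and $\tilde\sigma+1>r$. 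Adding ${\mathbf f}\in L_\infty(0,T;\dot{\mathbf H}_\#^{r-2})$ shows that $\mathbf F$ in \eqref{Eq-F} lies in $L_\infty(0,T;\dot{\mathbf H}_\#^{r-2})$; then \eqref{NS-problem-just} gives $\mathbf u'\in L_\infty(0,T;\dot{\mathbf H}_{\#\sigma}^{r-2})$, while \eqref{Eq-p} and Lemma \ref{div-grad-is} for the gradient (with $s=r-1$) give $p\in L_\infty(0,T;\dot H_\#^{r-1})$.

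The main difficulty is purely organisational rather than analytic: one must verify that every index constraint and every Sobolev-multiplication constant borrowed from the proof of Theorem \ref{NS-problemTh-sigma-Lv-er-t1} remains valid in dimension two with the relaxed threshold $r>0$ (rather than $r>n/2-1$), and that bound (59) of \cite{Mikhailov2024} indeed replaces \eqref{E4.14varb} cleanly at the one place the latter is used to close the Gronwall argument underlying Theorem \ref{NS-problemTh-sigma-Lv-er2}. Once these checks are made, the argument is word-for-word the proof of Theorem \ref{NS-problemTh-sigma-Lv-er-t1} with $T_*$ replaced by $T$.
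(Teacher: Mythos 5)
Your proposal is correct and matches the paper's own (implicit) argument: the paper proves this statement simply by saying it is the counterpart of Theorem \ref{NS-problemTh-sigma-Lv-er-t1} ``proved in a similar way after replacing there $T_*$ by $T$,'' which is exactly what you do, and you correctly identify that the step enabling the global-in-time replacement is the unconditional $n=2$ a priori bound (59) of \cite{Mikhailov2024} playing the role of \eqref{E4.14varb} inside Theorem \ref{NS-problemTh-sigma-Lv-er2}. Your bookkeeping of the multiplication-theorem cases (part (b) for $0<r<2$, part (a) for $r\ge 2$) and of the index constraint $\tilde\sigma+1>\max\{n/2,r\}$ is also consistent with the paper.
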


\begin{theorem}
\label{NS-problemTh-sigma-Lv-er-t-2}
Let $T>0$, $n= 2$ and  $r> 0$. 
Let 
$a_{ij}^{\alpha \beta }\in {\mathcal C}^{\infty}([0,T]; C^\infty_\#)$,
 and the relaxed ellipticity condition \eqref{mu} hold.
Let $k\in [1, r+1)$ be an integer. 
Let  
${\mathbf f}^{(l)}\in L_\infty(0,T;\dot{\mathbf H}_\#^{r-2-2l})\cap L_2(0,T;\dot{\mathbf H}_\#^{r-1-2l})$, $l=0,1,\ldots,k-1$,
and $\mathbf u^0\in \dot{\mathbf H}_{\#\sigma}^{r}$.

Then the solution $\mathbf u$ of the anisotropic Navier-Stokes 
initial value problem  \eqref{NS-problem-div0}--\eqref{NS-problem-div0-IC}
obtained in Theorem \ref{NS-problemTh-sigma}  is of Serrin-type and 
is such that
$\mathbf u^{(l)}\in L_{\infty}(0,T;\dot{\mathbf H}_{\#\sigma}^{r-2l})\cap L_2(0,T;\dot{\mathbf H}_{\#\sigma}^{r+1-2l})$, $l=0,\ldots,k$;
$p^{(l)}\in L_\infty(0,T;\dot{H}_{\#}^{r-1-2l})\cap L_2(0,T;\dot{H}_{\#\sigma}^{r-2l})$, $l=0,\ldots,k-1$.
\end{theorem}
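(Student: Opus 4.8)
The plan is to follow, step for step, the proof of the variable-coefficient spatial--temporal regularity Theorem \ref{NS-problemTh-sigma-Lv-er-t}, simply replacing the special subinterval $[0,T_*]$ there by the whole interval $[0,T]$; this substitution is legitimate in the two-dimensional case because the required \emph{global}-in-time a priori estimates are available. First I would invoke Theorem \ref{NS-problemTh-sigma-Lv-er2} together with Theorem \ref{NS-problemTh-sigma-Lv-er-t1-2}: under the present hypotheses the weak solution $\mathbf u$ of Theorem \ref{NS-problemTh-sigma} is of Serrin type and satisfies $\mathbf u\in L_\infty(0,T;\dot{\mathbf H}_{\#\sigma}^{r})\cap L_2(0,T;\dot{\mathbf H}_{\#\sigma}^{r+1})$, $\mathbf u'\in L_\infty(0,T;\dot{\mathbf H}_{\#\sigma}^{r-2})\cup L_2(0,T;\dot{\mathbf H}_{\#\sigma}^{r-1})$ and $p\in L_\infty(0,T;\dot{H}_{\#}^{r-1})\cap L_2(0,T;\dot{H}_{\#}^{r})$. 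This is exactly the assertion for $k=1$ and furnishes the base of an induction on $k$. The reason the replacement of $T_*$ by $T$ is permitted here is the $n=2$-specific bound $\|\mathbf u_m\|^2_{L_2(0,T;\dot{\mathbf H}_{\#}^{1})}\le 4C_{\mathbb A}\big(\|\mathbf u^0\|^2_{\mathbf L_{2\#}}+4C_{\mathbb A}\|\mathbf f\|^2_{L_2(0,T;\dot{\mathbf H}_{\#}^{-1})}\big)$, i.e.\ inequality (59) of \cite{Mikhailov2024}, which holds on all of $[0,T]$.

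For the induction step I would assume $\mathbf u^{(l)}\in L_\infty(0,T;\dot{\mathbf H}_{\#\sigma}^{r-2l})\cap L_2(0,T;\dot{\mathbf H}_{\#\sigma}^{r+1-2l})$ for $l=0,\dots,k-1$ and prove it for $l=k$. Differentiating the Leray-projected equation \eqref{NS-problem-just} $k-1$ times in $t$ gives $\mathbf u^{(k)}=\mathbb P_\sigma\mathbf F^{(k-1)}$ with $\mathbf F^{(l)}$ as in \eqref{E5.59C}. The two contributions to $\mathbf F^{(k-1)}$ that involve the solution are expanded by the Leibniz rule: the convection term as in \eqref{E5.60C}, and the viscosity term as in \eqref{NS-problem-just-k}, namely $\partial_t^{k-1}\bs{\mathfrak L}\mathbf u=\sum_{l=0}^{k-1}C_{k-1}^l\nabla\cdot[\mathbb A^{(k-1-l)}\mathbb E(\mathbf u^{(l)})]$. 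Each resulting product is bounded by the multiplication Theorem \ref{RS-T1-S4.6.1}(a) or (b) combined with \eqref{eq:mik14}, distinguishing, exactly as in cases (A1), (A2) and (B) of the proof of the constant-coefficient model Theorem \ref{NS-problemTh-sigma-Lv-er-tC}, according to whether the index $s_{l2}=r-2\min\{k-1-l,l\}$ exceeds $n/2$; the hypothesis $a_{ij}^{\alpha\beta}\in\mathcal C^\infty([0,T];\mathcal C^\infty_\#)$ guarantees that every coefficient norm $\|\mathbb A^{(k-1-l)}\|_{L_\infty(0,T;H_{\#}^{\tilde\sigma})}$ that occurs is finite. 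This yields $\mathbf F^{(k-1)}\in L_\infty(0,T;\dot{\mathbf H}_{\#}^{r-2k})\cap L_2(0,T;\dot{\mathbf H}_{\#}^{r+1-2k})$, hence, by the continuity of $\mathbb P_\sigma$ on the Sobolev scale (Theorem \ref{Th2.1}(d)), $\mathbf u^{(k)}\in L_\infty(0,T;\dot{\mathbf H}_{\#\sigma}^{r-2k})\cap L_2(0,T;\dot{\mathbf H}_{\#\sigma}^{r+1-2k})$.

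For the pressure I would differentiate \eqref{Eq-p} to obtain $\nabla p^{(l)}=\mathbb P_g\mathbf F^{(l)}$, $l=0,\dots,k-1$, as in \eqref{Eq-p-tC}; the same product estimates give $\mathbf F^{(l)}\in L_\infty(0,T;\dot{\mathbf H}_{\#}^{r-2-2l})\cap L_2(0,T;\dot{\mathbf H}_{\#}^{r-1-2l})$, and Lemma \ref{div-grad-is} for the gradient (applied with $s=r-1-2l$ and with $s=r-2l$) then yields $p^{(l)}\in L_\infty(0,T;\dot{H}_{\#}^{r-1-2l})\cap L_2(0,T;\dot{H}_{\#}^{r-2l})$. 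By induction the claims hold for every integer $k\in[1,r+1)$, completing the proof.

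The step I expect to require the most care is the base case $k=1$: its genuine content is the passage from the small-time statement on $[0,T_*]$ to the all-time statement on $[0,T]$, which rests on the two-dimensional global energy bound recalled above and on Theorem \ref{NS-problemTh-sigma-Lv-er2}. Once the base case is in hand, the induction step is essentially bookkeeping identical to that of Theorem \ref{NS-problemTh-sigma-Lv-er-t}; the only thing that must be checked with care is that the Sobolev exponents balance, i.e.\ $s_{l1}+s_{l2}=2(r-k+1)>0$ for every admissible $l$, which is precisely what the hypothesis $k<r+1$ secures, so that the products $\mathbf u^{(k-1-l)}\otimes\mathbf u^{(l)}$ and $\mathbb A^{(k-1-l)}\mathbb E(\mathbf u^{(l)})$ land in the target spaces.
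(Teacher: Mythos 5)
Your proposal matches the paper's approach exactly: the paper states that Theorem \ref{NS-problemTh-sigma-Lv-er-t-2} is proved in the same way as Theorem \ref{NS-problemTh-sigma-Lv-er-t} (which in turn mirrors the constant-coefficient Theorem \ref{NS-problemTh-sigma-Lv-er-tC}) after replacing $T_*$ by $T$, the replacement being justified precisely by the two-dimensional global bound $\|\mathbf u_m\|^2_{L_2(0,T;\dot{\mathbf H}_{\#}^{1})}\le 4C_{\mathbb A}(\|\mathbf u^0\|^2_{\mathbf L_{2\#}}+4C_{\mathbb A}\|\mathbf f\|^2_{L_2(0,T;\dot{\mathbf H}_{\#}^{-1})})$ that you cite. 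Your reconstruction of the induction on $k$ — base case from Theorems \ref{NS-problemTh-sigma-Lv-er2} and \ref{NS-problemTh-sigma-Lv-er-t1-2}, Leibniz expansion of both the convection and variable-viscosity terms, the exponent bookkeeping $s_{l1}+s_{l2}=2(r-k+1)>0$, and the pressure via \eqref{Eq-p-tC} and Lemma \ref{div-grad-is} — is faithful to the paper's argument.
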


\begin{corollary}
Let $T>0$ and $n= 2$. 
Let  
$a_{ij}^{\alpha \beta }\in {\mathcal C}^{\infty}([0,T]; C^\infty_\#)$,
 and the relaxed ellipticity condition \eqref{mu} hold.
Let ${\mathbf f}\in \mathcal C^\infty(0,T;\dot{\mathbf C}_\#^{\infty})$,  
and $\mathbf u^0\in \dot{\mathbf C}_{\#\sigma}^{\infty}$.

Then the solution $\mathbf u$ of the anisotropic Navier-Stokes 
initial value problem  \eqref{NS-problem-div0}--\eqref{NS-problem-div0-IC}
obtained in Theorem \ref{NS-problemTh-sigma}  is of Serrin-type and 
is such that
$\mathbf u\in \mathcal C^{\infty}(0,T;\dot{\mathbf C}_{\#\sigma}^{\infty})$,
$p\in \mathcal C^\infty(0,T;\dot{\mathcal C}_{\#}^\infty)$.
\end{corollary}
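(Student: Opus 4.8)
The final statement is a corollary asserting that, for $n=2$, constant viscosity coefficients (actually $a_{ij}^{\alpha\beta}\in\mathcal C^\infty([0,T];C^\infty_\#)$), and $C^\infty$ data in space-time, the weak solution obtained from Theorem \ref{NS-problemTh-sigma} is a Serrin-type solution that is $C^\infty$ in all variables, together with the associated pressure.

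\medskip

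The plan is to deduce this directly from Theorem \ref{NS-problemTh-sigma-Lv-er-t-2} (the $n=2$ spatial-temporal regularity theorem) by letting the regularity indices run to infinity, exactly in the spirit of the proof of Corollary \ref{Cor5.11C}. First I would recall the identity $\dot{\mathbf C}_{\#\sigma}^{\infty}=\bigcap_{r\in\R}\dot{\mathbf H}_{\#\sigma}^{r}$ (and its scalar counterpart for the pressure), which reduces an assertion of $C^\infty$-smoothness to a family of Sobolev-space inclusions holding for all $r$. Next I would check that the hypotheses of the present corollary imply, for every $r>0$ and every integer $k\in[1,r+1)$, the hypotheses of Theorem \ref{NS-problemTh-sigma-Lv-er-t-2}: the condition ${\mathbf f}\in\mathcal C^\infty(0,T;\dot{\mathbf C}_\#^{\infty})$ gives ${\mathbf f}^{(l)}\in L_\infty(0,T;\dot{\mathbf H}_\#^{r-2-2l})\cap L_2(0,T;\dot{\mathbf H}_\#^{r-1-2l})$ for all $l$ and all $r$ (since $\dot{\mathbf C}_\#^\infty$ embeds in every periodic Sobolev space, and $\mathcal C^\infty$ in time certainly embeds in $L_\infty\cap L_2$ on the bounded interval $(0,T)$), and $\mathbf u^0\in\dot{\mathbf C}_{\#\sigma}^\infty\subset\dot{\mathbf H}_{\#\sigma}^{r}$ for all $r$; the coefficient hypothesis is already exactly the one in that theorem.

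\medskip

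Then Theorem \ref{NS-problemTh-sigma-Lv-er-t-2} yields, for every integer $k\ge 0$,
$$
\mathbf u^{(l)}\in L_{\infty}(0,T;\dot{\mathbf H}_{\#\sigma}^{r-2l})\cap L_2(0,T;\dot{\mathbf H}_{\#\sigma}^{r+1-2l}),\quad l=0,\ldots,k,
$$
$$
p^{(l)}\in L_\infty(0,T;\dot{H}_{\#}^{r-1-2l})\cap L_2(0,T;\dot{H}_{\#\sigma}^{r-2l}),\quad l=0,\ldots,k-1,
$$
for all $r>0$ (with $k<r+1$). Since $r$ is arbitrary, fixing $l$ and letting $r\to\infty$ shows $\mathbf u^{(l)}(\cdot,t)\in\bigcap_{s\in\R}\dot{\mathbf H}_{\#\sigma}^{s}=\dot{\mathbf C}_{\#\sigma}^\infty$ (and similarly for $p^{(l)}$) for a.e.\ $t$, in fact with locally bounded norms in $t$; doing this for every $l$ gives $\mathbf u\in\mathcal C^\infty(0,T;\dot{\mathbf C}_{\#\sigma}^\infty)$ and $p\in\mathcal C^\infty(0,T;\dot{\mathcal C}_\#^\infty)$. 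The Serrin-type property is inherited from Theorem \ref{NS-problemTh-sigma-Lv-er2} (or directly from the fact that any weak solution for $n=2$ is Serrin-type, by Definitions \ref{D6.1} and \ref{CSDef}), so that the solution in question is indeed the one whose higher regularity is being asserted.

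\medskip

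I do not expect a genuine obstacle here: the corollary is a straightforward ``let the indices tend to infinity'' packaging of the already-established Theorem \ref{NS-problemTh-sigma-Lv-er-t-2}, paralleling Corollary \ref{Cor5.11C} for general $n$. The only minor point requiring a word of care is that the regularity theorem is quantified over integers $k\in[1,r+1)$ for each fixed $r$, so to obtain $\mathbf u^{(l)}$ for \emph{all} $l$ one chooses $r$ large compared to $l$; since $r$ may be taken arbitrarily large this causes no difficulty. One should also note in passing that the $C^\infty$-in-time coefficient hypothesis is what makes the time-differentiated convection and viscosity terms land in the required spaces (the Leibniz expansions in the proof of Theorem \ref{NS-problemTh-sigma-Lv-er-t-2}), which is already built into that theorem's statement.
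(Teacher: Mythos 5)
Your proposal is correct and takes essentially the same approach as the paper: the paper states that this corollary (like Theorems~\ref{NS-problemTh-sigma-Lv-er-t1-2} and~\ref{NS-problemTh-sigma-Lv-er-t-2}) is proved in the same way as Corollary~\ref{Cor5.11} after replacing $T_*$ by $T$, and your argument --- writing $\dot{\mathbf C}_{\#\sigma}^{\infty}=\bigcap_{r\in\R}\dot{\mathbf H}_{\#\sigma}^{r}$, checking the hypotheses of Theorem~\ref{NS-problemTh-sigma-Lv-er-t-2} for arbitrary $r$ and $k$, and then letting $r\to\infty$ for each fixed $l$ --- is exactly that proof. The small caveat at the start (``constant viscosity coefficients'') is a slip that you immediately correct, and the observation that the Serrin-type property is automatic for $n=2$ is also in line with the paper.
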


\section{Auxiliary results}\label{Appendix}

\subsection{Advection term properties}\label{S7.2} 
The  divergence theorem and periodicity
imply the following identity for any ${\mathbf v}_1,{\mathbf v}_2,{\mathbf v}_3\in \mathbf C^\infty_\#$.
\begin{align}
\label{eq:mik54}
\left\langle({\mathbf v}_1\cdot \nabla ){\mathbf v}_2,{\mathbf v}_3\right\rangle _{\T}
&=\int_{\T}\nabla\cdot\left({\mathbf v}_1({\mathbf v}_2\cdot {\mathbf v}_3)\right)d{\mathbf x}
-\left\langle(\nabla \cdot{\mathbf v}_1){\mathbf v}_3
+({\mathbf v}_1\cdot \nabla ){\mathbf v}_3,{\mathbf v}_2\right\rangle _{\T}\nonumber
\\
&=
-\left\langle({\mathbf v}_1\cdot \nabla ){\mathbf v}_3,{\mathbf v}_2\right\rangle _{\T}
-\left\langle(\nabla \cdot{\mathbf v}_1){\mathbf v}_3,{\mathbf v}_2\right\rangle _{\T}
 \end{align}
Hence for any ${\mathbf v}_1,{\mathbf v}_2\in \mathbf C^\infty_\#$,
\begin{equation}
\label{eq:mik55a}
\left\langle ({\mathbf v}_1\cdot \nabla ){\mathbf v}_2,{\mathbf v}_2\right\rangle _{\T}
=-\frac12\left\langle(\nabla \cdot{\mathbf v}_1){\mathbf v}_2,{\mathbf v}_2\right\rangle _{\T}
=-\frac12\left\langle \div\,{\mathbf v}_1,|{\mathbf v}_2|^2\right\rangle _{\T}.
\end{equation}
In view of \eqref{eq:mik54} we obtain the identity
\begin{align}\label{E-B.20}
\left\langle({\mathbf v}_1\cdot \nabla ){\mathbf v}_2,{\mathbf v}_3\right\rangle _{\T}
&\!\!=\!-\left\langle({\mathbf v}_1\cdot \nabla ){\mathbf v}_3,{\mathbf v}_2\right\rangle _{\T}\quad
 \forall\ {\mathbf v}_1\in \mathbf C^\infty_{\#\sigma},\
{\mathbf v}_2,\, {\mathbf v}_3\in \mathbf C^\infty_\#\,,
\end{align}
and hence the following well known formula for any  ${\mathbf v}_1\in \mathbf C^\infty_{\#\sigma}$,
${\mathbf v}_2\in \mathbf C^\infty_\#$,
\begin{equation}
\label{eq:mik55}
\left\langle ({\mathbf v}_1\cdot \nabla ){\mathbf v}_2,{\mathbf v}_2\right\rangle _{\T}=0.
\end{equation}
Equations \eqref{E-B.20} and \eqref{eq:mik55} evidently hold also  for ${\mathbf v}_1$, ${\mathbf v}_2$, and ${\mathbf v}_3$ from  the more general spaces, for which the dual products in \eqref{E-B.20} and \eqref{eq:mik55}  are bounded and in which  $\mathbf C^\infty_{\#\sigma}$ and $\mathbf C^\infty_\#$, respectively, are densely embedded.

\subsection{Some point-wise multiplication results}\label{S-RS}  
Let us  accommodated to the periodic function spaces in $\R^n$, $n\ge 1$, a particular case of a much more general Theorem 1 in Section 4.6.1 of \cite{Runst-Sickel1996} about point-wise products of functions/distributions.
\begin{theorem}\label{RS-T1-S4.6.1}
Assume $n\ge 1$, $ s_1\le s_2$ and $s_1+s_2>0$.
Then there exists a constant $C_*(s_1,s_2,n)>0$ such that for any $f_1\in H^{s_1}_\#$ and $f_2\in H^{s_1}_\#$, 

(a)  \quad 
$
f_1\cdot f_2\in H^{s_1}_\# \text{ and } \|f_1\cdot f_2\|_{H^{s_1}_\#}\le  C_*(s_1,s_2,n) \|f_1\|_{H^{s_1}_\#}\, \|f_2\|_{H^{s_2}_\#}
\quad\text{if }s_2>n/2;
$

(b) \quad
$
f_1\cdot f_2\in H^{s_1+s_2-n/2}_\# \text{ and } \|f_1\cdot f_2\|_{H^{s_1+s_2-n/2}_\#}\le  C_* (s_1,s_2,n)\|f_1\|_{H^{s_1}_\#}\, \|f_2\|_{H^{s_2}_\#}
 \quad \text{if }s_2<n/2.
$
\end{theorem}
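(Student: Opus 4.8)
\textbf{Proof plan for Theorem \ref{RS-T1-S4.6.1}.}

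The plan is to reduce the periodic statement to the corresponding classical result on $\R^n$ (for instance the version in \cite[Section 4.6.1]{Runst-Sickel1996} or the paradifferential-calculus proof), exploiting the equivalence of the periodic Bessel-potential norm \eqref{eq:mik10} with the standard Bessel-potential norm on the torus viewed as an $n$-cubic domain. First I would record the Fourier-side description of the product: for $f_1,f_2\in\mathcal C_\#^\infty$ one has $\widehat{f_1f_2}(\bs\xi)=\sum_{\bs\eta\in\Z^n}\hat f_1(\bs\eta)\hat f_2(\bs\xi-\bs\eta)$, so that the whole question becomes a weighted-$\ell_2$ convolution estimate for the sequences $\{\varrho(\bs\eta)^{s_1}|\hat f_1(\bs\eta)|\}$ and $\{\varrho(\bs\zeta)^{s_2}|\hat f_2(\bs\zeta)|\}$. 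The target norm $\|\varrho^{s}\widehat{f_1f_2}\|_{\bs\ell_2(\Z^n)}$ with $s=s_1$ in case (a) and $s=s_1+s_2-n/2$ in case (b) is then controlled by splitting the convolution sum according to which frequency dominates.

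The key steps, in order, are: (1) Restrict to $f_1,f_2\in\mathcal C_\#^\infty$ (dense in the respective $H_\#^{s_i}$, as recalled after \eqref{eq:mik11}) and prove the inequality there; the general case follows by continuity of the bilinear product map once the estimate is established on the dense set. (2) Use the elementary frequency inequality: for $\bs\xi=\bs\eta+\bs\zeta$ one has $\varrho(\bs\xi)\lesssim \max\{\varrho(\bs\eta),\varrho(\bs\zeta)\}\lesssim \varrho(\bs\eta)+\varrho(\bs\zeta)$, hence $\varrho(\bs\xi)^{s}\lesssim \varrho(\bs\eta)^{s}+\varrho(\bs\zeta)^{s}$ for $s\ge0$ and a matching lower-type bound when $s<0$; split the convolution into the regions $\varrho(\bs\eta)\le\varrho(\bs\zeta)$ and $\varrho(\bs\eta)>\varrho(\bs\zeta)$. (3) In the ``high-low'' region one factor carries the full regularity weight and the other is summed against $\sum_{\bs\zeta}\varrho(\bs\zeta)^{-2\sigma}<\infty$, which converges precisely when $\sigma>n/2$; this is where the hypothesis $s_2>n/2$ is used in (a) and where the loss of $n/2$ derivatives appears in (b), coming from $\sum_{\bs\zeta}\varrho(\bs\zeta)^{2(s_2-\sigma)}$ with the borderline tuned to $s_2<n/2$. (4) Apply Young's inequality for convolutions ($\ell_1*\ell_2\hookrightarrow\ell_2$) together with the Cauchy–Schwarz bound on the summable tail to close each region, and combine; the constant $C_*(s_1,s_2,n)$ emerges from the comparison constants in \eqref{eq:mik9}, the binomial-type constant in $\varrho(\bs\xi)^s\lesssim\varrho(\bs\eta)^s+\varrho(\bs\zeta)^s$, and the value of the convergent series $\sum_{\bs\zeta\in\Z^n}\varrho(\bs\zeta)^{-2\sigma}$.

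The main obstacle I expect is the bookkeeping in case (b), where $s_1$ itself may be negative: then in the region where $f_1$'s frequency dominates one cannot simply ``move'' the weight $\varrho(\bs\xi)^{s_1+s_2-n/2}$ onto $\hat f_1$ without first absorbing the discrepancy against $\hat f_2$, and one must check that the exponent on the summed-out factor is genuinely $<-n/2$ in every subcase (this uses $s_1+s_2>0$ crucially, to prevent the other region from failing). Handling the symmetric decomposition so that $s_1+s_2>0$ rescues whichever region $s_2<n/2$ alone does not, while keeping the constant uniform, is the delicate point; everything else is the routine weighted-convolution machinery, and alternatively one may simply cite the $\R^n$ theorem and transfer it via the periodisation/extension isomorphism between $H^s(\T)$ and a complemented subspace of $H^s_{\mathrm{loc}}(\R^n)$, which shortcuts steps (2)–(4) entirely at the cost of invoking an external black box.
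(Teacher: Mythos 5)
Your plan is a genuinely different route from the paper's proof, which is a one-line citation: the paper simply invokes items (i) and (iii) of Runst--Sickel, Theorem 1 in Section 4.6.1, together with the equivalence between the periodic norm \eqref{eq:mik10} and the standard Bessel-potential norm on $\T$ (noted at the end of Section \ref{S2}). Your closing sentence identifies exactly this ``cite the $\R^n$ theorem and transfer'' shortcut, so you have the paper's argument in your list of options; your main proposal is the self-contained weighted-$\ell_2$ convolution proof that Runst--Sickel's theorem itself encapsulates. What the paper's route buys is brevity and a safely established constant; what your route buys is a fully explicit, periodic-native proof that never leaves the discrete Fourier side and is conceptually the same mechanism as the commutator estimate the paper does prove from scratch in Theorem \ref{Jcomm}.

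There is, however, a concrete gap in your Step (2). The inequality $\varrho(\bs\xi)^{s}\lesssim\varrho(\bs\eta)^{s}+\varrho(\bs\zeta)^{s}$ is fine for $s\ge0$, but the ``matching lower-type bound when $s<0$'' does not exist in additive form: taking $\bs\eta=-\bs\zeta$ with $|\bs\zeta|\to\infty$ gives $\bs\xi=\mathbf 0$, $\varrho(\bs\xi)^{s}=(2\pi)^{s}$ fixed, while $\varrho(\bs\eta)^{s}+\varrho(\bs\zeta)^{s}\to0$, so no additive bound can hold. The correct tool for negative exponents is Peetre's \emph{multiplicative} inequality
$\varrho(\bs\xi)^{s}\le C_{s}\,\varrho(\bs\eta)^{|s|}\,\varrho(\bs\xi-\bs\eta)^{s}$,
which is exactly what the paper uses in its own convolution-type argument (proof of Theorem \ref{Jcomm}). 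With Peetre substituted for your additive frequency inequality, the high/low splitting, the Cauchy--Schwarz bound on the summed-out factor (convergent for $\sigma>n/2$, which is where $s_2>n/2$ enters in (a) and where the $n/2$ loss appears in (b)), and the discrete Young inequality $\ell_1*\ell_2\hookrightarrow\ell_2$ (the paper's Lemma \ref{Young-discr1-q}) do close the argument; the condition $s_1+s_2>0$ is then what guarantees that the region where $f_1$'s frequency dominates produces a convergent tail even when $s_1<0$. So the plan is salvageable and would match the external result, but as written the inequality you lean on in Step (2) is false in the very case you flag as delicate.
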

\begin{proof}
Items (a) and (b) follow, respectively, from items (i) and (iii) of \cite[Theorem 1 in Section 4.6.1]{Runst-Sickel1996} when we take into account the norm equivalence in the standard and periodic Sobolev spaces.
\end{proof}

Let
$$
(u  \star v)(\bs\xi):=\sum_{\bs\eta\in\Z^n}u (\bs\eta)v(\bs\xi-\bs\eta),\quad \bs\xi\in\Z^n,
$$
be the convolution in $\Z^n$.
We will need the following Young's type inequality for discrete convolution of sequences in $\Z^n$.  For other choices of parameters see, e.g., \cite[p. 316]{Bullen2015} and references therein. 
\begin{lemma}\label{Young-discr1-q}
Let $n\in \mathbb N$, $1\le q\le \infty$
Then the convolution the sequences $u\in \ell_1(\Z^n)$ and $v\in \ell_q(\Z^n)$ belongs to $\ell_q(\Z^n)$ and
\begin{align}\label{Young-discr-in}
\|u\star v\|_{\ell_q(\Z^n)}\le \|u\|_{\ell_1(\Z^n)}\|v\|_{\ell_q(\Z^n)}.
\end{align}
\end{lemma}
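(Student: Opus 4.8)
The plan is to prove the discrete Young inequality \eqref{Young-discr-in} for the special exponent pair $(1,q)\to q$ by the standard two-step argument: first dispose of the easy endpoint cases $q=1$ and $q=\infty$ directly, then obtain the general $1<q<\infty$ case from these two by interpolation, or alternatively by a direct H\"older computation which I think is cleaner to write and self-contained. I would carry out the direct approach.

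First I would treat $q=\infty$: for each $\bs\xi\in\Z^n$ one has
\[
|(u\star v)(\bs\xi)|\le \sum_{\bs\eta\in\Z^n}|u(\bs\eta)|\,|v(\bs\xi-\bs\eta)|
\le \|v\|_{\ell_\infty(\Z^n)}\sum_{\bs\eta\in\Z^n}|u(\bs\eta)|
=\|u\|_{\ell_1(\Z^n)}\|v\|_{\ell_\infty(\Z^n)},
\]
and taking the supremum over $\bs\xi$ gives the claim; the rearrangement of the (absolutely convergent) double series is justified by Tonelli's theorem for sums over the countable set $\Z^n$. The case $q=1$ is the classical fact that $\ell_1(\Z^n)$ is a convolution algebra: $\sum_{\bs\xi}|(u\star v)(\bs\xi)|\le \sum_{\bs\xi}\sum_{\bs\eta}|u(\bs\eta)||v(\bs\xi-\bs\eta)|=\sum_{\bs\eta}|u(\bs\eta)|\sum_{\bs\xi}|v(\bs\xi-\bs\eta)|=\|u\|_{\ell_1}\|v\|_{\ell_1}$, again by Tonelli and the translation invariance of counting measure on $\Z^n$.

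For $1<q<\infty$, let $q'$ be the conjugate exponent. Writing $|u(\bs\eta)|=|u(\bs\eta)|^{1/q'}\cdot|u(\bs\eta)|^{1/q}$ and applying H\"older's inequality in the variable $\bs\eta$ for fixed $\bs\xi$,
\[
|(u\star v)(\bs\xi)|
\le \Big(\sum_{\bs\eta}|u(\bs\eta)|\Big)^{1/q'}
\Big(\sum_{\bs\eta}|u(\bs\eta)|\,|v(\bs\xi-\bs\eta)|^q\Big)^{1/q}
=\|u\|_{\ell_1}^{1/q'}\Big(\sum_{\bs\eta}|u(\bs\eta)|\,|v(\bs\xi-\bs\eta)|^q\Big)^{1/q}.
\]
Raising to the $q$-th power, summing over $\bs\xi$, and interchanging the order of summation (Tonelli) yields
\[
\|u\star v\|_{\ell_q(\Z^n)}^q
\le \|u\|_{\ell_1}^{q/q'}\sum_{\bs\eta}|u(\bs\eta)|\sum_{\bs\xi}|v(\bs\xi-\bs\eta)|^q
=\|u\|_{\ell_1}^{q/q'}\,\|u\|_{\ell_1}\,\|v\|_{\ell_q}^q
=\|u\|_{\ell_1}^{q}\,\|v\|_{\ell_q}^q,
\]
since $q/q'+1=q$; taking $q$-th roots gives \eqref{Young-discr-in}. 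In particular the series defining $(u\star v)(\bs\xi)$ converges absolutely for every $\bs\xi$ and $u\star v\in\ell_q(\Z^n)$, which is the full assertion.

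There is no serious obstacle here; the only point requiring a word of care is the repeated interchange of the order of summation in the double series, which is legitimate throughout because all summands are nonnegative (Tonelli's theorem on the $\sigma$-finite counting measure on $\Z^n\times\Z^n$), together with the translation invariance $\sum_{\bs\xi\in\Z^n}|v(\bs\xi-\bs\eta)|^q=\sum_{\bs\zeta\in\Z^n}|v(\bs\zeta)|^q$. I would state these justifications explicitly but not belabor them.
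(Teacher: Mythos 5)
Your proof is correct, and it gives the asserted inequality with the same constant. However, the route is genuinely different from the one the paper takes. The paper's proof is a one-line Minkowski-type argument: it views $u\star v = \sum_{\bs\eta}u(\bs\eta)\,v(\cdot-\bs\eta)$ as a series of elements of $\ell_q(\Z^n)$, applies the triangle inequality for countable sums in a normed space to pull the $\ell_q$-norm inside the sum, and then uses translation invariance $\|v(\cdot-\bs\eta)\|_{\ell_q}=\|v\|_{\ell_q}$. This treats all $1\le q\le\infty$ at once in a single display and needs no case splitting. You instead dispose of the endpoints $q=1,\infty$ directly, and for $1<q<\infty$ use the classical exponent-splitting H\"older argument in the dummy summation variable followed by Tonelli. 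Your approach is slightly longer and requires the case distinction, but it is entirely elementary: it never invokes the (not completely trivial) fact that the countable triangle inequality holds in $\ell_q$ and that the partial sums of $\sum_{\bs\eta}u(\bs\eta)v(\cdot-\bs\eta)$ converge in $\ell_q$ to $u\star v$, a point the paper's one-liner tacitly assumes. In that sense your write-up, while less compact, is arguably more self-contained; the paper's, while slicker, leans on Minkowski's inequality for series. Either is acceptable in this context since the lemma is elementary and well known.
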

\begin{proof}
By the triangle inequality, we obtain
\begin{align*}\label{E-C.5}
\|u\star v\|_{\ell_q(\Z^n)}
=\left\|\sum_{\bs\eta\in\Z^n}u (\bs\eta)v(\cdot-\bs\eta)\right\|_{\ell_q}
\le\sum_{\bs\eta\in\Z^n}\|u (\bs\eta)v(\cdot-\bs\eta)\|_{\ell_q}
&=\sum_{\bs\eta\in\Z^n}|u (\bs\eta)| \|v\|_{\ell_q}
=\|u\|_{\ell_1(\Z^n)}\|v\|_{\ell_q(\Z^n)}.
\end{align*}
\end{proof}
\begin{theorem}\label{Jcomm}
Assume $n\ge1$.
Let $s,\theta\in\R$,   $w\in H_\#^s$, $g\in H_\#^{\tilde\sigma+1}$, 
$\tilde\sigma>\max\{|s|,|s -\theta+1|\} + \frac{n}{2}
=|s-\frac{\theta-1}{2}|+|\frac{\theta-1}{2}|+\frac{n}{2}$. 
Then $\Lambda_\#^\theta(gw)-g\Lambda_\#^\theta w\in { H_\#^{s-\theta+1}}$ and
$$
 \|\Lambda_\#^\theta(gw)-g\Lambda_\#^\theta w\|_{ H_\#^{s -\theta+1}}\le
 \bar C_{s,\theta,\tilde\sigma} |g|_{H_\#^{\tilde\sigma+1}}\|w\|_{H_\#^s},
$$
where the constant $\bar C_{s,\theta,\tilde\sigma}$ does not depend on $g$ or $w$ but may depend on $s$, $\theta$ and $\tilde\sigma$.
\end{theorem}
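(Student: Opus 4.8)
The plan is to estimate the commutator $\Lambda_\#^\theta(gw)-g\Lambda_\#^\theta w$ directly on the Fourier side, exploiting the explicit multiplier structure of $\Lambda_\#^\theta$. Writing everything in terms of Fourier coefficients, one has
\[
\widehat{\Lambda_\#^\theta(gw)-g\Lambda_\#^\theta w}(\bs\xi)
=\sum_{\bs\eta\in\Z^n}\hat g(\bs\xi-\bs\eta)\,\bigl(\varrho(\bs\xi)^\theta-\varrho(\bs\eta)^\theta\bigr)\,\hat w(\bs\eta).
\]
So the target norm is $\bigl\|\varrho^{s-\theta+1}\,\bigl[K\star(\varrho^{\,0}\widehat w)\bigr]\bigr\|_{\bs\ell_2(\Z^n)}$ where the convolution kernel involves $\hat g(\bs\xi-\bs\eta)(\varrho(\bs\xi)^\theta-\varrho(\bs\eta)^\theta)$. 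First I would prove the pointwise "commutator gain" inequality
\[
\bigl|\varrho(\bs\xi)^\theta-\varrho(\bs\eta)^\theta\bigr|
\le C\,\varrho(\bs\xi-\bs\eta)^{|\theta-1|}\,\bigl(\varrho(\bs\xi)^{\theta-1}+\varrho(\bs\eta)^{\theta-1}\bigr)\,|2\pi(\bs\xi-\bs\eta)|,
\]
or a cleaner variant of it; the mean value theorem applied to $r\mapsto r^\theta$ together with the quasi-triangle inequalities $\varrho(\bs\xi)\le \varrho(\bs\eta)+|2\pi(\bs\xi-\bs\eta)|\le C\varrho(\bs\eta)\varrho(\bs\xi-\bs\eta)$ (and symmetrically) gives exactly such a bound, the key point being that one power of $\varrho(\bs\xi-\bs\eta)$ is "spent" on regularity of $g$ rather than appearing on $w$. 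This is why the threshold on $\tilde\sigma$ reads $\max\{|s|,|s-\theta+1|\}+n/2$: after transferring, the kernel $|\hat g(\bs\xi-\bs\eta)|\varrho(\bs\xi-\bs\eta)$ must lie in $\bs\ell_1(\Z^n)$ after redistributing the remaining $\varrho$-weights between the two arguments.

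The second step is to split the estimate into the two standard frequency regimes, mirroring the decomposition behind Theorem \ref{RS-T1-S4.6.1}: the region where $|\bs\xi-\bs\eta|\lesssim|\bs\eta|$ (high-low, $g$ at low frequency) and the region where $|\bs\xi-\bs\eta|\gtrsim|\bs\eta|$ (low-high, $g$ at high frequency). In the first region $\varrho(\bs\xi)\sim\varrho(\bs\eta)$, so after using the pointwise bound the target sum becomes a discrete convolution of $\{|\hat g(\bs\zeta)|\,\varrho(\bs\zeta)^{1+|\theta-1|}\}_{\bs\zeta}\in\bs\ell_1(\Z^n)$ (this membership requires $\tilde\sigma+1>1+|\theta-1|+n/2$, i.e. $\tilde\sigma>|\theta-1|+n/2$, which is implied by the hypothesis since $|s|+|s-\theta+1|\ge|\theta-1|$ by the triangle inequality) against $\{\varrho(\bs\eta)^{s}|\hat w(\bs\eta)|\}\in\bs\ell_2(\Z^n)$; Lemma \ref{Young-discr1-q} with $q=2$ then closes it with constant $\|\,\cdot\,\|_{\bs\ell_1}\le C|g|_{H_\#^{\tilde\sigma+1}}$, the seminorm sufficing because the commutator annihilates the $\bs\xi=\bs\eta$ (constant-$g$) contribution. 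In the second region one loses control of $\varrho(\bs\eta)^s$ relative to $\varrho(\bs\xi)^{s-\theta+1}$, so one instead bounds $\varrho(\bs\xi)^{s-\theta+1}\lesssim \varrho(\bs\xi-\bs\eta)^{|s-\theta+1|}\varrho(\bs\eta)^{s-\theta+1}$ (using $|\bs\xi|\lesssim|\bs\xi-\bs\eta|$ there if $s-\theta+1\ge0$, and the reverse estimate absorbing powers onto $g$ if $s-\theta+1<0$), absorbs the extra $\varrho(\bs\eta)$-powers against $\varrho(\bs\eta)^s|\hat w(\bs\eta)|$ by noting $|s-\theta+1|-|\theta-1|\le$ something controlled, and checks that the resulting $g$-kernel weight $1+|\theta-1|+|s-\theta+1|$ (or its symmetric counterpart) is $\le \tilde\sigma+1$ — this is precisely where the full strength $\tilde\sigma>\max\{|s|,|s-\theta+1|\}+n/2$ is consumed, via $\max\{|s|,|s-\theta+1|\}\ge$ the relevant combination after using $|\theta-1|\le|s|+|s-\theta+1|$. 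Again Lemma \ref{Young-discr1-q} with $q=2$ finishes.

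The main obstacle I anticipate is bookkeeping the powers of $\varrho$ carefully enough that the $g$-kernel genuinely lands in $\bs\ell_1$ with the stated threshold, and in particular handling the sign cases of $\theta-1$ and of $s-\theta+1$ uniformly — the naive mean-value bound gives $\varrho(\bs\xi)^{\theta-1}+\varrho(\bs\eta)^{\theta-1}$ which, when $\theta<1$, is dominated by the \emph{smaller} of the two frequencies and forces one to be cautious about which variable carries the negative power. A clean way around this is to prove the symmetric two-sided estimate
\[
\bigl|\varrho(\bs\xi)^\theta-\varrho(\bs\eta)^\theta\bigr|
\le C\,\varrho(\bs\xi-\bs\eta)^{|\theta-1|+1}\,\min\{\varrho(\bs\xi),\varrho(\bs\eta)\}^{\theta-1}\,\min\{\varrho(\bs\xi),\varrho(\bs\eta)\},
\]
hmm — more robustly, just to carry both $\varrho(\bs\xi)$- and $\varrho(\bs\eta)$-expansions in parallel and take whichever is favorable in each regime. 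Once the pointwise kernel estimate is pinned down, the rest is a routine application of the discrete Young inequality exactly as in the proof of Theorem \ref{RS-T1-S4.6.1}, and I would present it in that compressed style, citing Lemma \ref{Young-discr1-q} and the identity $\tilde\sigma>\max\{|s|,|s-\theta+1|\}+\tfrac n2=|s-\tfrac{\theta-1}2|+|\tfrac{\theta-1}2|+\tfrac n2$ (which one verifies by squaring or by the triangle inequality) to reconcile the two forms of the hypothesis.
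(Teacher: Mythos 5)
Your overall strategy --- pass to Fourier coefficients, prove a pointwise ``commutator gain'' bound on the multiplier difference, transfer the gained weight onto the $g$-kernel, and close with the discrete Young inequality (Lemma \ref{Young-discr1-q}) --- is precisely the one the paper uses. However, the specific pointwise bound you propose overcounts the regularity lost, and as a result your arithmetic does not close under the stated hypothesis on $\tilde\sigma$.

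The extraneous factor $\varrho(\bs\xi-\bs\eta)^{|\theta-1|}$ in your proposed bound
\[
\bigl|\varrho(\bs\xi)^\theta-\varrho(\bs\eta)^\theta\bigr|
\le C\,\varrho(\bs\xi-\bs\eta)^{|\theta-1|}\bigl(\varrho(\bs\xi)^{\theta-1}+\varrho(\bs\eta)^{\theta-1}\bigr)|2\pi(\bs\xi-\bs\eta)|
\]
is unnecessary, and it costs $|\theta-1|$ extra derivatives on $g$. The sharp mean-value estimate is simply $|c_1^\theta-c_2^\theta|\le|\theta|\,|c_1-c_2|\,(c_1^{\theta-1}+c_2^{\theta-1})$ for $c_1,c_2>0$ and any $\theta\in\R$ (because the intermediate point $\rho_*$ always satisfies $\rho_*^{\theta-1}\le c_1^{\theta-1}+c_2^{\theta-1}$, regardless of the sign of $\theta-1$); combined with $|\varrho(\bs\xi)-\varrho(\bs\eta)|\le 2\pi|\bs\xi-\bs\eta|\le\varrho(\bs\xi-\bs\eta)$, the $g$-kernel then carries only a single power of $\varrho(\bs\xi-\bs\eta)$, not $1+|\theta-1|$. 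The paper obtains that single power even more directly by the algebraic identity $|\bs\xi|^2-|\bs\xi-\bs\eta|^2=\bs\eta\cdot\bs\xi+\bs\eta\cdot(\bs\xi-\bs\eta)$, which produces $\widehat{\nabla g}$ from the outset and is also why the statement features the seminorm $|g|_{H_\#^{\tilde\sigma+1}}$ rather than the full norm. After that, no high/low frequency dichotomy is needed: the two terms $\varrho^{\theta-1}(\bs\xi)$ and $\varrho^{\theta-1}(\text{$w$-variable})$ are each handled separately by Peetre's inequality, transferring the weight $\varrho^{|s|}$ resp.\ $\varrho^{|s-\theta+1|}$ onto the $g$-kernel, whose $\bs\ell_1$ membership then holds precisely when $\tilde\sigma>\max\{|s|,|s-\theta+1|\}+n/2$.

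Your claim that $\tilde\sigma>|\theta-1|+n/2$ ``is implied by the hypothesis since $|s|+|s-\theta+1|\ge|\theta-1|$'' is a non sequitur: the hypothesis controls $\max\{|s|,|s-\theta+1|\}$, not the sum, and using the identity $\max\{|s|,|s-\theta+1|\}=|s-\tfrac{\theta-1}{2}|+|\tfrac{\theta-1}{2}|$ one sees that this maximum can be as small as $\tfrac12|\theta-1|$ (take $s=\tfrac{\theta-1}{2}$). The same failure propagates into your low-high regime, where the required $g$-kernel weight $1+|\theta-1|+|s-\theta+1|$ would need $\tilde\sigma>|\theta-1|+|s-\theta+1|+n/2$, again strictly stronger than the hypothesis (e.g.\ $s=0$, $\theta=2$). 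Deleting the spurious $\varrho(\bs\xi-\bs\eta)^{|\theta-1|}$ factor --- i.e.\ using the sharp mean-value bound and the Peetre two-term treatment in place of the case split --- repairs the argument and reproduces the paper's proof.
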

\begin{proof}

By \eqref{E3.14} and \eqref{rho} we have,
\begin{align*}
K(\bs\xi):=\F_\T[\Lambda_\#^\theta(gw)-g\Lambda_\#^\theta w](\bs\xi)
&=(2\pi)^\theta(1+|\bs\xi|^2)^{\theta/2}(\widehat g  \star \widehat w)(\bs\xi) - (\widehat g \star\F_\T[\Lambda_\#^\theta w])(\bs\xi)\\
&=(2\pi)^\theta\sum_{\bs\eta \in\Z^n}[(1+|\bs\xi|^2)^{\theta/2}-(1+|\bs\xi-\bs\eta|^2)^{\theta/2}]\widehat g (\bs\eta)\widehat w(\bs\xi-\bs\eta)
\\
&=(2\pi)^2\sum_{\bs\eta \in\Z^n}[(\bs\eta\cdot\bs\xi+\bs\eta\cdot(\bs\xi-\bs\eta)]f_\theta(\bs\xi,\bs\xi-\bs\eta)\widehat g (\bs\eta)  \widehat w(\bs\xi-\bs\eta)\\
&=\frac{2\pi}{i}\sum_{\bs\eta \in\Z^n}\widehat{\nabla g}(\bs\eta)\cdot(\bs\xi + \bs\xi-\bs\eta)
f_\theta(\bs\xi,\bs\xi-\bs\eta)\widehat w(\bs\xi-\bs\eta).
\end{align*}
Here 
 $$
 f_\theta(\bs\xi,\bs\xi-\bs\eta):=(2\pi)^{\theta-2}\frac{(1+|\bs\xi|^2)^{\theta/2}-(1+|\bs\xi-\bs\eta|^2)^{\theta/2}}{|\bs\xi|^2-|\bs\xi-\bs\eta|^2} =\frac{\varrho^{\theta}(\bs\xi)-\varrho^{\theta}(\bs\xi-\bs\eta)}{\varrho^2(\bs\xi)-\varrho^2(\bs\xi-\bs\eta)},
 $$
and we took into account that
$|\bs\xi|^2-|\bs\xi-\bs\eta|^2=\bs\eta\cdot\bs\xi+\bs\eta\cdot(\bs\xi-\bs\eta)$.
%
Since the inequality
$|c_1^\beta-c_2^\beta|\le$
\mbox{$|\beta||c_1-c_2|(c_1^{\beta-1}+c_2^{\beta-1})$} holds for any $c_1,c_2>0$,
$\beta\in\R$,   we have,
$$
 |f_\theta(\bs\xi,\bs\xi-\bs\eta)|
\le\frac{|\varrho^{\theta-1}(\bs\xi)+\varrho^{\theta-1}(\bs\xi-\bs\eta)|}{|\varrho(\bs\xi)+\varrho(\bs\xi-\bs\eta)|}.
 $$
Hence
\begin{align*}
2\pi |(\bs\xi + \bs\xi-\bs\eta)f_\theta(\bs\xi,\bs\xi-\bs\eta)|
&\le2\pi (|\bs\xi| + |\bs\xi-\bs\eta|)|f_\theta(\bs\xi,\bs\xi-\bs\eta)|
\\
&\le
 2\pi|\theta|\left[\varrho^{\theta-1}(\bs\xi)+\varrho^{\theta-1}(\bs\xi-\bs\eta)\right]\frac{|\bs\xi| + |\bs\xi-\bs\eta|}
  {\varrho(\bs\xi)+\varrho(\bs\xi-\bs\eta)}
\le |\theta|\left[\varrho^{\theta-1}(\bs\xi)+\varrho^{\theta-1}(\bs\xi-\bs\eta)\right],
\end{align*}
for any $\theta\in \R$.
Then
\begin{eqnarray*}
|K(\bs\xi)| &\le& {|\theta|}\sum_{\bs\eta \in\Z^n}\left[\varrho^{\theta-1}(\bs\xi)+\varrho^{\theta-1}(\bs\xi-\bs\eta)\right] |\widehat{\nabla g}(\bs\eta)\widehat w(\bs\xi-\bs\eta)|\\
&=& {|\theta|}\sum_{\bs\eta \in\Z^n}\left[\varrho^{\theta-1}(\bs\xi)|\widehat{\nabla
g}(\bs\eta) \widehat w(\bs\xi-\bs\eta)|
+ |\widehat{\nabla g}(\bs\eta) \varrho^{\theta-1}(\bs\xi-\bs\eta)\widehat w(\bs\xi-\bs\eta)|\right]\\
&=& {|\theta|}\left[\varrho^{\theta-1}(\bs\xi)\{|\widehat{\nabla g}|\star|\widehat w|\}(\bs\xi)
+ \{|\widehat{\nabla g}|\star|\varrho^{\theta-1}\widehat w|\}(\bs\xi)\right].
\end{eqnarray*}
Taking into account Petree's inequality
$$
\varrho^s(\bs\xi)\le \frac{2^{|s|/2}}{(2\pi)^{|s|}}\varrho^{|s|}(\bs\eta)\varrho^{s}(\bs\xi-\bs\eta)\ \forall\ \bs\xi,\bs\eta\in\Z^n,\ \forall\ s\in\R,
$$
and the discrete Young's type inequality \eqref{Young-discr-in} for convolutions with $q=2$,
we obtain
\begin{align*}
\|\Lambda_\#^\theta(gw)-g\Lambda_\#^\theta w\|_{ H_\#^{s -\theta +1}}&=\|\varrho^{s -\theta +1} K\|_{\bs\ell_2}
\le {|\theta |}\left\|\varrho^{s }\{|\widehat{\nabla g}|\star |\widehat w|\}
+ \varrho^{s -\theta +1}\{|\widehat{\nabla g}|\star |\varrho^{\theta -1}\widehat w|\}\right\|_{\bs\ell_2}
\\
 &\le \frac{2^{|s|/2}|\theta |}{(2\pi)^{|s|}}\left\||\varrho^{|s|}\widehat{\nabla g}|\star |\varrho^{s }\widehat w|
+ |\varrho^{|s -\theta +1|}\widehat{\nabla g}|\star |\varrho^{s}\widehat w|\right\|_{\bs\ell_2}\\
&\le \frac{2^{|s|/2}|\theta |}{2\pi}\left(\|\varrho^{|s|}\widehat{\nabla g}\|_{\bs\ell_1}
+ \|\varrho^{|s -\theta +1|}\widehat{\nabla g}\|_{\bs\ell_1}\right)\|\varrho^{s}\widehat w\|_{\bs\ell_2}.
\end{align*}

By the Schwartz inequality for any $\tilde\sigma>|s| + n/2$ we have,
\begin{align}
\|\varrho^{|s|}\widehat{\nabla g}\|_{\bs\ell_1}=\sum_{\bs\xi \in\Z^n} \varrho^{|s|}(\bs\xi)|\widehat{\nabla g}(\bs\xi)| 
\le\left[\sum_{\bs\xi \in\Z^n} \varrho^{2\tilde\sigma}(\bs\xi)|\widehat{\nabla g}(\bs\xi)|^2 \right]^{1/2} 
\left[\sum_{\bs\xi \in\Z^n} \varrho^{2|s|-2\tilde\sigma}(\bs\xi) \right]^{1/2}
\end{align}
Similarly, for any $\tilde\sigma>|s -\theta +1| + n/2$ we have,
\begin{align}
\|\varrho^{|s -\theta +1|}\widehat{\nabla g}\|_{\bs\ell_1}
\le\left[\sum_{\bs\xi \in\Z^n} \varrho^{2\tilde\sigma}(\bs\xi)|\widehat{\nabla g}(\bs\xi)|^2 \right]^{1/2} 
\left[\sum_{\bs\xi \in\Z^n} \varrho^{2|s -\theta +1|-2\tilde\sigma}(\bs\xi) \right]^{1/2}
\end{align}
Then for 
$\displaystyle\tilde\sigma>\tilde\sigma_0:=\max\{|s| + \frac{n}{2},|s -\theta +1| + \frac{n}{2}\}
=\left|s-\frac{\theta -1}{2}\right|+\left|\frac{\theta -1}{2}\right|+\frac{n}{2}$, 
we obtain
\begin{align*}
\|\Lambda_\#^\theta (gw)-g\Lambda_\#^\theta w\|_{ H_\#^{s -\theta +1}}
&\le \bar C_{s,\theta,\tilde\sigma}\|{\nabla g}\|_{H_\#^{\tilde \sigma}}\|\widehat w\|_{H_\#^s}
\le  \bar C_{s,\theta,\tilde\sigma} |{g}|_{H_\#^{\tilde \sigma+1}}\|\widehat w\|_{H_\#^s},
\end{align*}
where
\begin{align*}
\bar C_{s,\theta,\tilde\sigma}:=  \frac{2^{|s|/2}}{2\pi}|\theta |\left[\sum_{\bs\xi \in\Z^n} \varrho^{2\tilde\sigma_0-n-2\tilde\sigma}(\bs\xi) \right]^{1/2}.
\end{align*}
and \eqref{eq:mik13} is taken into account.
\end{proof}

\subsection{Spectrum of the periodic Bessel potential operator}\label{BPOS}
In this section we assume that vector-functions/distributions $\mathbf u$ are generally complex-valued and the Sobolev spaces $\dot{\mathbf H}_{\#\sigma}^{s}$ are complex.
Let us recall the definition 
\begin{align}\label{E3.14sigma}
 \left(\Lambda_\#^{r}\,{\mathbf u}\right)(\mathbf x)
:=\sum_{\bs\xi\in\dot\Z^n}\varrho(\bs\xi)^{r}\widehat {\mathbf u}(\bs\xi)e^{2\pi i \mathbf x\cdot\bs\xi}\quad 
\forall\, \mathbf u\in  \dot{\mathbf H}_{\#\sigma}^{s},\ s,r \in\R.
\end{align}
of  the continuous periodic Bessel potential operator 
$\Lambda_\#^r: \dot{\mathbf H}_{\#\sigma}^s\to \dot{\mathbf H}_{\#\sigma}^{s-r}$, $r\in\R$,
see \eqref{E3.14}, \eqref{E3.15}, \eqref{3.23}.

The following assertion is given in \cite[Theorem 4 and Remark 2]{Mikhailov2024}.
\begin{theorem}\label{TE.1r}
Let $r\in\R$, $r\ne 0$.

(i) Then the operator $\Lambda^r_\#$ in  $\dot{\mathbf H}_{\#\sigma}^{0}$  possesses  a (non-strictly) monotone sequence of real eigenvalues $\lambda^{(r)}_j=\lambda^{r}_j$ and a real orthonormal sequence of associated eigenfunctions $\mathbf w_j$ such that
\begin{align}
\label{S.2r}
&\Lambda^r_\# \mathbf w_j=\lambda^{r}_j \mathbf w_j,\  j \geq 1,\ 
\lambda_j>0,\\  
&\lambda_j \rightarrow+\infty,\  j \rightarrow+\infty;
\\
&\mathbf{w}_j \in \dot{\mathbf C}_{\#\sigma}^{\infty}, \quad
(\mathbf w_j, \mathbf w_k)_{\dot{\mathbf H}_{\#\sigma}^{0}}
=\delta_{j k} \quad
\forall\, j, k >0.
\end{align}

(ii) Moreover, the sequence $\{\mathbf{w}_j\}$ is an orthonormal basis in  $\dot{\mathbf H}_{\#\sigma}^{0}$, that is 
\begin{align}\label{S.9ru2} 
\mathbf u=\sum_{i=1}^\infty \langle \mathbf u, \mathbf w_j\rangle_\T \mathbf w_j
\end{align}
where the series converges in $\dot{\mathbf H}_{\#\sigma}^{0}$ for any $\mathbf u\in \dot{\mathbf H}_{\#\sigma}^{0}$.

(iii) In addition, the sequence $\{\mathbf{w}_j\}$ is also an orthogonal basis in $\dot{\mathbf H}_{\#\sigma}^{r}$ with
$$
(\mathbf w_j, \mathbf w_k)_{\dot{\mathbf H}_{\#\sigma}^{r}}
=\lambda_j^{r}\lambda_k^{r}\delta_{jk}\quad 
\forall\, j, k >0.
$$
and  for any $\mathbf u\in \dot{\mathbf H}_{\#\sigma}^{r}$ series \eqref{S.9ru2}  converges also in $\dot{\mathbf H}_{\#\sigma}^{r}$, that is, the sequence of partial sums 
\begin{align}\label{m-Projector}
P_m\mathbf u:=\sum_{j=1}^m \langle\mathbf u,\mathbf w_j\rangle_\T \mathbf w_j
\end{align}
converges to $\mathbf u$ in $\dot{\mathbf H}_{\#\sigma}^{r}$ as $m\to \infty$.
The operator $P_m$ defined by \eqref{m-Projector} is for any $r\in\R$ the orthogonal projector operator from ${\mathbf H}^{r}_{\#}$ to ${\rm Span}\{\mathbf w_1,\ldots,\mathbf w_m\}$. 
\end{theorem}

\subsection{Isomorphism of divergence and gradient operators in periodic spaces}
The following assertion proved in \cite[Lemma 2]{Mikhailov2024} provides for arbitrary $s\in\R$ and dimension $n\ge 2$ the periodic version of Bogovskii/deRham--type results well known for non-periodic domains and particular values of $s$, see, e.g., \cite{Bogovskii1979},  \cite{AmCiMa2015} and references therein.
\begin{lemma}\label{div-grad-is}
Let $s\in\R$ and $n\ge 2$. 
The following operators are isomorphisms,
\begin{align}
&\div\, :  \dot{\mathbf H}_{\# g}^{s+1}\to  \dot{H}_{\#}^{s},
\label{2.37}
\\
&{\rm grad}\, :  \dot{H}_{\#}^{s}\to  \dot{\mathbf H}_{\# g}^{s-1}.
\label{2.38}
\end{align}
\end{lemma}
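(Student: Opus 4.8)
\textbf{Proof plan for Lemma \ref{div-grad-is}.}

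The plan is to reduce everything to explicit Fourier-series computations, exploiting that on the torus both operators act diagonally (up to the vector structure) on the exponential basis $e^{2\pi i\mathbf x\cdot\bs\xi}$, $\bs\xi\in\dot\Z^n$. First I would recall the Helmholtz--Weyl structure: by the very definition of $\dot{\mathbf H}_{\# g}^{s}$ every $\mathbf w\in\dot{\mathbf H}_{\# g}^{s}$ has the form $\mathbf w=\nabla q$ with $q\in\dot H_\#^{s+1}$, so $\widehat{\mathbf w}(\bs\xi)=2\pi i\bs\xi\,\widehat q(\bs\xi)$ for $\bs\xi\in\dot\Z^n$ and $\widehat{\mathbf w}(\mathbf 0)=\mathbf 0$. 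Conversely, given $q\in\dot H_\#^{s}$, the function $\nabla q$ has Fourier coefficients $2\pi i\bs\xi\,\widehat q(\bs\xi)$, which lie in $\dot{\mathbf H}_{\# g}^{s-1}$ because $|2\pi\bs\xi|\asymp\varrho(\bs\xi)$ on $\dot\Z^n$ by \eqref{eq:mik9}. This already shows \eqref{2.38} is well-defined and bounded; its inverse will be constructed by a Fourier multiplier in the next step.

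Next I would verify that \eqref{2.37} is well-defined and bounded: for $\mathbf w=\nabla q\in\dot{\mathbf H}_{\# g}^{s+1}$ one computes $\div\,\mathbf w=\Delta q$, whose Fourier coefficients are $-|2\pi\bs\xi|^2\widehat q(\bs\xi)$, and $\widehat{\div\,\mathbf w}(\mathbf 0)=0$, so $\div\,\mathbf w\in\dot H_\#^{s}$ with the norm bound following again from \eqref{eq:mik9}, \eqref{eq:mik13}. For injectivity and surjectivity I would exhibit the explicit inverse operator: given $g\in\dot H_\#^{s}$ with $\widehat g(\mathbf 0)=0$, set
\begin{align*}
\widehat q(\bs\xi):=-\frac{1}{|2\pi\bs\xi|^2}\,\widehat g(\bs\xi),\quad \bs\xi\in\dot\Z^n,\qquad \widehat q(\mathbf 0):=0,
\end{align*}
and then $\mathbf w:=\nabla q$. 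Using $\tfrac12\varrho(\bs\xi)^2\le|2\pi\bs\xi|^2\le\varrho(\bs\xi)^2$ one checks $q\in\dot H_\#^{s+2}$, hence $\mathbf w=\nabla q\in\dot{\mathbf H}_{\# g}^{s+1}$ with $\|\mathbf w\|_{\dot{\mathbf H}_\#^{s+1}}\le C\|g\|_{\dot H_\#^{s}}$, and $\div\,\mathbf w=\Delta q=g$ by construction; the bound and the identity together give a bounded two-sided inverse, so \eqref{2.37} is an isomorphism by the open mapping theorem (or directly by the explicit estimates). The analogous computation with the multiplier $\bs\xi\mapsto 2\pi i\bs\xi/|2\pi\bs\xi|^2$ acting on $\dot{\mathbf H}_{\# g}^{s-1}$ (which makes sense because such vectors are of the form $2\pi i\bs\xi\,c(\bs\xi)$) produces the bounded inverse of \eqref{2.38}.

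I do not expect a serious obstacle here: the only mild subtlety is checking that the candidate inverse of \eqref{2.38} is well-defined, i.e.\ that a general element of $\dot{\mathbf H}_{\# g}^{s-1}$ really is a gradient so that dividing the $\bs\xi$-th Fourier vector by $2\pi i\bs\xi$ is unambiguous --- this is immediate from the definition of $\dot{\mathbf H}_{\# g}^{s-1}$, which builds the space out of gradients. One should also confirm that $\div$ and $\mathrm{grad}$ on these subspaces are genuine mutual (up to sign and a Laplacian factor) inverses, which is why I would state both isomorphisms via the single scalar multiplier $|2\pi\bs\xi|^{-2}$ and Parseval's identity \eqref{eq:mik10}, rather than invoking any abstract deRham-type machinery. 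If a cleaner exposition is preferred, one can alternatively deduce \eqref{2.38} from \eqref{2.37}: $\mathrm{grad}$ is, up to the isomorphism $\Lambda_\#^{2}:\dot H_\#^{s}\to\dot H_\#^{s-2}$ and the sign, the composition of $(\div|_{\dot{\mathbf H}_{\# g}^{s+1}})^{-1}$ with the inclusion, so the isomorphism property transfers.
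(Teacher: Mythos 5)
Your proof is correct, and the explicit Fourier-multiplier argument you give is the natural one for periodic Sobolev spaces; the paper itself does not reproduce a proof of this lemma but cites \cite[Lemma 2]{Mikhailov2024} (Part I), which, given the paper's overall Fourier-series framework, proceeds along essentially the same lines. One trivial slip: the inverse of $\nabla$ acts via the multiplier $\bs\xi\mapsto -2\pi i\bs\xi/|2\pi\bs\xi|^2$ rather than $2\pi i\bs\xi/|2\pi\bs\xi|^2$ (so that $(-2\pi i\bs\xi/|2\pi\bs\xi|^2)\cdot(2\pi i\bs\xi)=1$), but this has no effect on the isomorphism property or the norm estimates.
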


\subsection{Some functional analysis results}
Let us provide the Sobolev embedding theorem that can be considered, e.g., as a particular case of \cite[Section 2.2.4, Corollary 2]{Runst-Sickel1996} adapted to the periodic spaces.
\begin{theorem}\label{SET}
Let $n\in\N$ be the dimension, $q_0<q_1<\infty$ and $q_1\ge 1$. The periodic Bessel-potential space $H^s_{\#q}$ is continuously embedded in $L_{\#q_1}$ if and only if \ $\displaystyle \frac{s}{n}\ge\frac{1}{q_0}-\frac{1}{q_1}.$
\end{theorem}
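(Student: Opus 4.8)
The plan is to transfer the statement from the corresponding embedding on $\R^n$ (equivalently, on the cube $[0,1)^n$ regarded as a bounded Lipschitz domain), which is available in the literature, to the flat torus via the equivalence of the periodic and the standard Bessel-potential norms. This equivalence was already used in Section~\ref{S2} for the $L_2$-based scale, with reference to \cite[Section 13.8.1]{Agranovich2015}; the same argument --- based on the boundedness of suitable restriction and periodic-extension operators between the periodic spaces and the $L_{q_0}$-based Bessel-potential spaces on $[0,1)^n$ --- applies to the $L_{q_0}$-based scale $H^s_{\#q_0}$ (built on $L_{q_0\#}$) for every $q_0\in(1,\infty)$ and every real $s$. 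Thus, up to equivalent norms, $H^s_{\#q_0}$ is $H^s_{q_0}([0,1)^n)$ and $L_{q_1\#}$ is $L_{q_1}([0,1)^n)$, so the assertion reduces to the bounded-domain case.

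First I would treat sufficiency. Assuming $\tfrac{s}{n}\ge\tfrac{1}{q_0}-\tfrac{1}{q_1}$ with $q_0<q_1<\infty$, $q_1\ge 1$, the classical Sobolev embedding for Bessel-potential spaces --- taken as \cite[Section 2.2.4, Corollary 2]{Runst-Sickel1996} and restricted to the cube --- gives the continuous inclusion $H^s_{q_0}\hookrightarrow L_{q_1}$ on $[0,1)^n$; the borderline case $\tfrac{s}{n}=\tfrac{1}{q_0}-\tfrac{1}{q_1}$ is admissible exactly because $q_1<\infty$. Composing with the two norm equivalences yields $H^s_{\#q_0}\hookrightarrow L_{q_1\#}$. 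For necessity I would either invoke the same cited result, which is itself stated as an equivalence, through the norm equivalence, or argue directly by contradiction: if the embedding held while $\tfrac{s}{n}<\tfrac{1}{q_0}-\tfrac{1}{q_1}$, test it on a concentrating family $\psi_\lambda(\mathbf x)=\psi(\lambda\mathbf x)$, $\lambda\to\infty$, with $0\ne\psi\in C_c^\infty(B_{1/4}(\mathbf 0))$ (for $s<0$, an appropriate derivative of such a bump), extended by zero and periodised; since these functions are supported in arbitrarily small balls, their periodic norms coincide up to $\lambda$-independent constants with the $\R^n$ norms, so $\|\psi_\lambda\|_{L_{q_1\#}}\simeq\lambda^{-n/q_1}$ and $\|\psi_\lambda\|_{H^s_{\#q_0}}\simeq\lambda^{\,s-n/q_0}$, and the embedding inequality forces $-n/q_1\le s-n/q_0$, a contradiction.

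The difficulty here is not analytic but organisational: one must be sure that the equivalence of periodic and standard Bessel-potential norms on the $n$-cube is available on the whole $L_{q_0}$-scale (not only the Hilbert scale used explicitly elsewhere in the paper) and for all $s\in\R$, and that the endpoint relation $\tfrac{s}{n}=\tfrac{1}{q_0}-\tfrac{1}{q_1}$ in the cited embedding is precisely the equality case of the inequality in the statement. Once these identifications are in place, the proof is the short transfer argument above, and no further estimates are needed.
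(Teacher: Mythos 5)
Your proposal takes essentially the same route as the paper, which simply attributes the result to \cite[Section 2.2.4, Corollary 2]{Runst-Sickel1996} ``adapted to the periodic spaces'' via the norm equivalence cited from \cite[Section 13.8.1]{Agranovich2015}; you spell out this transfer argument and additionally supply a self-contained scaling argument for necessity, but the underlying strategy is identical. The only small caveat is that the statement contains a typo ($H^s_{\#q}$ should read $H^s_{\#q_0}$), which you correctly resolved, and you should note that the $L_{q_0}$-scale requires $q_0>1$ for the Bessel-potential machinery to be in force.
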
 

The following version of the Sobolev interpolation inequality without a multiplicative constant, generalised also to  any real (including negative) smoothness indices, on periodic Bessel-potential spaces was given in \cite[Theorem 5]{Mikhailov2024}.
\begin{theorem}\label{T-SII2}
Let $s$, $s_1$, $s_2$, $\theta_1$, $\theta_2$ be real numbers such that $0\le\theta_1,\theta_2\le 1$,  $\theta_1+\theta_2=1$ and 
$s=\theta_1 s_1+\theta_2 s_2$.
Then for any $g\in H_\#^{s_1}\cap H_\#^{s_2}$,
\begin{align}\label{SII2}
\|g\|_{H_\#^s}\le\|g\|_{H_\#^{s_1}}^{\theta_1}\|g\|_{H_\#^{s_2}}^{\theta_2}.
\end{align}
\end{theorem}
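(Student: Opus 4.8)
The plan is to reduce everything to Hölder's inequality applied to the Fourier series defining the periodic norms. Recall from \eqref{eq:mik10} that $\|g\|_{H_\#^s}^2=\sum_{\bs\xi\in\Z^n}\varrho(\bs\xi)^{2s}|\widehat g(\bs\xi)|^2$. The starting point is the elementary factorization, valid for every $\bs\xi\in\Z^n$, obtained from $s=\theta_1 s_1+\theta_2 s_2$ and $\theta_1+\theta_2=1$:
\begin{align*}
\varrho(\bs\xi)^{2s}|\widehat g(\bs\xi)|^2
=\left(\varrho(\bs\xi)^{2s_1}|\widehat g(\bs\xi)|^2\right)^{\theta_1}
\left(\varrho(\bs\xi)^{2s_2}|\widehat g(\bs\xi)|^2\right)^{\theta_2}.
\end{align*}

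First I would dispose of the degenerate cases. If $\theta_1=0$ then $\theta_2=1$ and $s=s_2$, so \eqref{SII2} is an equality; likewise if $\theta_1=1$. Hence it suffices to treat $0<\theta_1<1$, in which case $p:=1/\theta_1$ and $p':=1/\theta_2$ are conjugate Hölder exponents, $\tfrac1p+\tfrac1{p'}=1$, both in $(1,\infty)$. Summing the displayed identity over $\bs\xi\in\Z^n$ and applying Hölder's inequality for series with these exponents gives
\begin{align*}
\|g\|_{H_\#^s}^2
=\sum_{\bs\xi\in\Z^n}\left(\varrho(\bs\xi)^{2s_1}|\widehat g(\bs\xi)|^2\right)^{\theta_1}
\left(\varrho(\bs\xi)^{2s_2}|\widehat g(\bs\xi)|^2\right)^{\theta_2}
\le\left(\sum_{\bs\xi\in\Z^n}\varrho(\bs\xi)^{2s_1}|\widehat g(\bs\xi)|^2\right)^{\theta_1}
\left(\sum_{\bs\xi\in\Z^n}\varrho(\bs\xi)^{2s_2}|\widehat g(\bs\xi)|^2\right)^{\theta_2},
\end{align*}
that is, $\|g\|_{H_\#^s}^2\le\|g\|_{H_\#^{s_1}}^{2\theta_1}\|g\|_{H_\#^{s_2}}^{2\theta_2}$; taking square roots yields \eqref{SII2}. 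The hypothesis $g\in H_\#^{s_1}\cap H_\#^{s_2}$ guarantees that the two factors on the right are finite, so the application of Hölder's inequality is legitimate and the left-hand sum converges, i.e.\ $g\in H_\#^{s}$.

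There is no real obstacle here; the only points requiring a word of care are the separate treatment of $\theta_1\in\{0,1\}$ and the observation that $\varrho(\bs\xi)>0$ for all $\bs\xi$ (from \eqref{rho}), so the factorization above makes sense termwise even when some exponents are negative. I would present the argument in this order: degenerate cases first, then the factorization identity, then Hölder, then the square root.
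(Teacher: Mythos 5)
Your proof is correct. The factorization
\begin{equation*}
\varrho(\bs\xi)^{2s}|\widehat g(\bs\xi)|^2
=\bigl(\varrho(\bs\xi)^{2s_1}|\widehat g(\bs\xi)|^2\bigr)^{\theta_1}
\bigl(\varrho(\bs\xi)^{2s_2}|\widehat g(\bs\xi)|^2\bigr)^{\theta_2}
\end{equation*}
is verified directly from $s=\theta_1 s_1+\theta_2 s_2$ and $\theta_1+\theta_2=1$, the exponents $1/\theta_1$ and $1/\theta_2$ are indeed H\"older conjugates in the non-degenerate case, and since $\varrho(\bs\xi)\ge 2\pi>0$ each factor is well defined regardless of the signs of $s_1,s_2$. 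The finiteness of the two right-hand sums (which is exactly the hypothesis $g\in H_\#^{s_1}\cap H_\#^{s_2}$) licenses the application of H\"older and, at the same time, gives $g\in H_\#^s$ as a by-product, so the statement is fully proved.

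One remark on the comparison itself: this paper does not reprove the theorem but cites \cite[Theorem 5]{Mikhailov2024} (Part~I of the series) for it. So there is no in-text proof to compare against here. That said, the Fourier-side H\"older argument you give is the canonical proof of this interpolation inequality in $L_2$-based Bessel-potential spaces and is, with near certainty, the argument in the cited reference; it is exactly why no multiplicative constant appears and why arbitrary real $s_1,s_2$ are allowed. Nothing to change.
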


Theorem 3.1 and Remark 3.2 in Chapter 1 of \cite{Lions-Magenes1}  imply the following assertion.
\begin{theorem}\label{LM-T3.1}
Let $X$ and $Y$ be separable Hilbert spaces and $X\subset Y$ with continuous injection.
Let $u\in W^1(0,T;X, Y)$.
Than $u$ almost everywhere on $[0,T]$ equals to a function   
$\tilde u\in \mathcal C^0([0,T];Z)$, where $Z=[X,Y]_{1/2}$ is the intermediate space.
Moreover, the trace $u(0)\in Z$ is well defined as the corresponding value of $\tilde u\in \mathcal C^0([0,T];Z)$ at $t=0$.
\end{theorem}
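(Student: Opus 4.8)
The final statement to be proved is Theorem~\ref{LM-T3.1}, which the excerpt explicitly attributes to \cite[Chapter~1, Theorem~3.1 and Remark~3.2]{Lions-Magenes1}. Accordingly, the plan is not to reprove this classical trace theorem from scratch but to organise the argument as a short deduction from the cited result, filling in the minor translations needed to match the present notation. First I would recall the precise hypotheses of the Lions--Magenes theorem: one needs two Hilbert spaces with a dense continuous injection $X\hookrightarrow Y$; here we are only given a continuous injection, so I would note that one may harmlessly replace $Y$ by the closure of $X$ in $Y$ (this does not change the intermediate space $[X,Y]_{1/2}$ up to isomorphism, nor the space $W^1(0,T;X,Y)$), or simply invoke the version of the theorem that does not require density, as given in the cited Remark.

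The body of the proof then proceeds in three steps. Step one: given $u\in W^1(0,T;X,Y)$, i.e. $u\in L_2(0,T;X)$ and $u'\in L_2(0,T;Y)$, apply \cite[Chapter~1, Theorem~3.1]{Lions-Magenes1} verbatim to conclude that, after modification on a null set, $u$ coincides with a function $\widetilde u\in\mathcal C^0([0,T];Z)$ where $Z=[X,Y]_{1/2}$ is the complex (or real, as needed) interpolation space. Step two: observe that since $\widetilde u$ is continuous from $[0,T]$ into $Z$, the pointwise value $\widetilde u(0)\in Z$ is unambiguously defined, and this is what we designate as the trace $u(0)$; I would add the standard remark that this trace is independent of the particular representative, because any two $\mathcal C^0([0,T];Z)$ representatives of the same $L_2$-class agree everywhere on $[0,T]$ by continuity. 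Step three: record the continuity of the trace map $W^1(0,T;X,Y)\to Z$, $u\mapsto u(0)$, which again is part of the cited statement (it follows from the a priori estimate $\|\widetilde u(0)\|_Z\le C\|u\|_{W^1(0,T;X,Y)}$ contained in the proof of Theorem~3.1). These three steps together give exactly the assertion.

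The only mild subtlety — and it is the step I would flag as requiring a sentence of care rather than genuine difficulty — is the identification of the intermediate space $Z$. In the applications throughout Sections~\ref{S3-Serrin-intro}--\ref{S5ERV} this theorem is used with $X=\dot{\mathbf H}_{\#\sigma}^{s}$, $Y=\dot{\mathbf H}_{\#\sigma}^{s'}$, and the statement is invoked with the conclusion $\widetilde u\in\mathcal C^0([0,T];\dot{\mathbf H}_{\#\sigma}^{(s+s')/2})$; so in the proof of Theorem~\ref{LM-T3.1} itself it suffices to leave $Z=[X,Y]_{1/2}$ abstract, but I would include a short remark (or defer to the periodic function space section) that for the periodic Bessel-potential scale one has $[\dot{\mathbf H}_{\#\sigma}^{s},\dot{\mathbf H}_{\#\sigma}^{s'}]_{1/2}=\dot{\mathbf H}_{\#\sigma}^{(s+s')/2}$ with equivalent norms, which is immediate from the definition \eqref{eq:mik10} of the norms via the weights $\varrho(\bs\xi)^{s}$ and the standard description of interpolation of weighted $\ell_2$ spaces. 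With that identification in hand, no further work is needed, since the heavy lifting — the existence of a continuous representative and the boundedness of the evaluation at $t=0$ — is entirely supplied by \cite{Lions-Magenes1}.
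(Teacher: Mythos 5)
Your proposal is correct and takes the same approach as the paper, which likewise states the result as an immediate consequence of \cite[Chapter~1, Theorem~3.1 and Remark~3.2]{Lions-Magenes1} without further proof. Your supplementary remarks on the density hypothesis and on the identification $[\dot{\mathbf H}_{\#\sigma}^{s},\dot{\mathbf H}_{\#\sigma}^{s'}]_{1/2}=\dot{\mathbf H}_{\#\sigma}^{(s+s')/2}$ via the weights $\varrho(\bs\xi)^s$ in \eqref{eq:mik10} are accurate and, while not spelled out in the paper, are precisely the bridging observations needed when this theorem is invoked in Sections \ref{S3-Serrin-intro}--\ref{S5ERV}.
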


The following assertion was proved in \cite[Lemma 4]{Mikhailov2024}.
\begin{lemma}\label{L4.9}
Let $s,s'\in\R$, $s'\le s$ and $u\in W^1(0,T;H^s_\#,H^{s'}_\#)$ be real-valued. 

(i) Then
\begin{align}\label{E3.69}
\partial_t\|u\|^2_{H^{(s+s')/2}_\#}= 2\langle \Lambda_\#^{s'}u',\Lambda_\#^{s}u\rangle_{\T} = 2\langle \Lambda_\#^{s'+s}u',u\rangle_{\T} 
\end{align}
for a.e. $t\in(0,T)$ and also in the distribution sense on $t\in (0,T)$.

(ii) Moreover, for any real-valued $v\in W^1(0,T;{  H}_{\#}^{-s'}, {  H}_{\#}^{-s})$ and $t\in(0,T]$,
\begin{align}
\label{E3.30}
\int_0^t\left[\langle {  u}' (\tau),{  v} (\tau)\rangle _{\T }
+\langle {  u} (\tau),{  v}' (\tau)\rangle _{\T }\right]d\tau
=\left\langle {  u}(t),{  v}(t)\right\rangle_{\T }
-\left\langle {  u}(0),{  v}(0)\right\rangle _{\T }.
\end{align}
\end{lemma}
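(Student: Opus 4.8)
The plan is to reduce both parts to the case of $\mathbf L_{2\#}$-valued functions (a Gelfand-triple identity) and then transfer the result back through the periodic Bessel-potential operators, using the elementary identity $\langle\Lambda_\#^{a}g,\Lambda_\#^{b}f\rangle_\T=\langle\Lambda_\#^{a+b}g,f\rangle_\T$ for $a,b\in\R$, which is immediate from the Fourier definitions \eqref{E3.14}, \eqref{E3.3} and the evenness of $\varrho$ in \eqref{rho}.

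For part (i), put $\sigma:=(s-s')/2\ge 0$ and $w:=\Lambda_\#^{(s+s')/2}u$. Since $\Lambda_\#^{(s+s')/2}$ maps $H_\#^{s}$ continuously into $H_\#^{\sigma}$ and $H_\#^{s'}$ continuously into $H_\#^{-\sigma}$, commutes with $\partial_t$, and sends real-valued functions to real-valued functions, we get $w\in W^1(0,T;H_\#^{\sigma},H_\#^{-\sigma})$ with $w'=\Lambda_\#^{(s+s')/2}u'$, and $\|u(t)\|_{H_\#^{(s+s')/2}}^2=\|w(t)\|_{\mathbf L_{2\#}}^2=\langle w(t),w(t)\rangle_\T$ (the last step using that $w$ is real). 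So it suffices to establish $\partial_t\|w\|_{\mathbf L_{2\#}}^2=2\langle w',w\rangle_\T$ for $w\in W^1(0,T;H_\#^{\sigma},H_\#^{-\sigma})$, $\sigma\ge 0$, both a.e. on $(0,T)$ and in the distribution sense. I would argue coefficient by coefficient: for each $\bs\xi$, since the spatial Fourier transform commutes with $\partial_t$, one has $\hat w(\bs\xi,\cdot)\in H^1(0,T)$ with $\partial_t\hat w(\bs\xi,\cdot)=\widehat{w'}(\bs\xi,\cdot)$, whence $\partial_t|\hat w(\bs\xi,t)|^2=2\,\RE\big(\widehat{w'}(\bs\xi,t)\,\overline{\hat w(\bs\xi,t)}\big)$ in $W^{1,1}(0,T)$; testing $\|w\|_{\mathbf L_{2\#}}^2=\sum_{\bs\xi\in\Z^n}|\hat w(\bs\xi,\cdot)|^2$ against $\phi\in\mathcal D(0,T)$ and interchanging $\sum_{\bs\xi}$ with the integral before and after the (scalar) integration by parts yields the distributional identity; the interchanges are justified by $\sum_{\bs\xi}|\widehat{w'}(\bs\xi,t)\,\hat w(\bs\xi,t)|\le\|w'(t)\|_{H_\#^{-\sigma}}\|w(t)\|_{H_\#^{\sigma}}$ (Cauchy--Schwarz in $\bs\xi$, using $\varrho\ge1$ and $\sigma\ge0$) together with $\|w'\|_{L_2(0,T;H_\#^{-\sigma})}\|w\|_{L_2(0,T;H_\#^{\sigma})}<\infty$, and since the resulting right-hand side lies in $L_1(0,T)$ the identity also holds a.e. (Alternatively one may cite the classical Lions--Magenes/Temam identity for the Gelfand triple $H_\#^{\sigma}\subset\mathbf L_{2\#}\subset H_\#^{-\sigma}$.) Transferring back via the $\Lambda_\#$-identity above gives \eqref{E3.69}.

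For part (ii), set $w:=\Lambda_\#^{(s+s')/2}u$ and $\tilde v:=\Lambda_\#^{-(s+s')/2}v$; then $w,\tilde v\in W^1(0,T;H_\#^{\sigma},H_\#^{-\sigma})$ with $\sigma=(s-s')/2\ge0$, and the $\Lambda_\#$-identity gives $\langle u',v\rangle_\T=\langle w',\tilde v\rangle_\T$, $\langle u,v'\rangle_\T=\langle w,\tilde v'\rangle_\T$, $\langle u(\tau),v(\tau)\rangle_\T=\langle w(\tau),\tilde v(\tau)\rangle_\T$. By Theorem \ref{LM-T3.1} with $X=H_\#^{\sigma}$, $Y=H_\#^{-\sigma}$, and the interpolation identity $[H_\#^{\sigma},H_\#^{-\sigma}]_{1/2}=\mathbf L_{2\#}$, the functions $w$ and $\tilde v$ admit representatives in $\mathcal C^0([0,T];\mathbf L_{2\#})$, so $\tau\mapsto(w(\tau),\tilde v(\tau))_{\mathbf L_{2\#}}=\langle w(\tau),\tilde v(\tau)\rangle_\T$ is continuous with well-defined values at $\tau=0$ and $\tau=t$. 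Applying part (i) to the two real-valued functions $w+\tilde v$ and $w-\tilde v$ (indices $\sigma,-\sigma$, so the norm there is the $\mathbf L_{2\#}$-norm) and using the polarisation $4(w,\tilde v)_{\mathbf L_{2\#}}=\|w+\tilde v\|_{\mathbf L_{2\#}}^2-\|w-\tilde v\|_{\mathbf L_{2\#}}^2$, the pure $w$- and $\tilde v$-terms cancel and we obtain $\partial_t(w,\tilde v)_{\mathbf L_{2\#}}=\langle w',\tilde v\rangle_\T+\langle w,\tilde v'\rangle_\T$ a.e. on $(0,T)$ and in the distribution sense. The right-hand side is in $L_1(0,T)$ by Cauchy--Schwarz, so $(w(\cdot),\tilde v(\cdot))_{\mathbf L_{2\#}}$ is absolutely continuous on $[0,T]$; the fundamental theorem of calculus, followed by transferring back to $u$ and $v$, produces \eqref{E3.30}.

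The only real difficulty I anticipate is technical bookkeeping: tracking the periodic-Sobolev indices under $\Lambda_\#^{\pm(s+s')/2}$, verifying the interpolation identity $[H_\#^{\sigma},H_\#^{-\sigma}]_{1/2}=\mathbf L_{2\#}$, and rigorously justifying the interchange of the distributional $t$-derivative with the Fourier sum (equivalently, the $\mathcal C^0([0,T];\mathbf L_{2\#})$-regularity and the absolute continuity of $\tau\mapsto\langle w(\tau),\tilde v(\tau)\rangle_\T$). The conceptual content --- reduction to $\mathbf L_{2\#}$ plus polarisation --- is short, and everything else is routine once the dominated-convergence and Cauchy--Schwarz bounds are in place.
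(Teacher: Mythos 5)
Your argument is correct, and the reduction you use — conjugating by $\Lambda_\#^{(s+s')/2}$ to transfer the pivot to $H_\#^0=L_{2\#}$, then establishing the Gelfand-triple identity by coefficientwise integration by parts (for (i)) and polarisation plus the $\mathcal C^0([0,T];L_{2\#})$ regularity from Theorem~\ref{LM-T3.1} (for (ii)) — is the natural route given the Fourier-side machinery this paper sets up for $\Lambda_\#^r$. The paper itself does not contain a proof of this lemma: it is cited from Part~I (\cite[Lemma~4]{Mikhailov2024}), so a line-by-line comparison is not possible here, but the $\Lambda_\#$-conjugation you exploit is exactly the device used throughout Section~\ref{S.5.2C}--\ref{S.5.2} and is surely what the companion proof also does. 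Two minor points of hygiene: the spaces in the lemma are scalar, so you should write $L_{2\#}$ rather than $\mathbf L_{2\#}$ throughout; and the interchange of $\sum_{\bs\xi}$ with $\int_0^T$ in part (i) needs the bound $\|w(t)\|_{L_{2\#}}^2\le\|w(t)\|_{H_\#^{\sigma}}^2$ (true since $\varrho\ge 2\pi$ and $\sigma\ge 0$) so that $\|w(\cdot)\|_{L_{2\#}}^2\in L_1(0,T)$ justifies Tonelli before the integration by parts — you invoke $\varrho\ge 1$, $\sigma\ge 0$ only for the Cauchy--Schwarz bound after the IBP, but both interchanges need justification and the first is where $\sigma\ge 0$ genuinely matters. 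With those details filled in, the proof is complete.
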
 

\subsection{Gronwall's inequalities.}
Gronwall's inequality is well known and can be found, e.g., in \cite[Appendix B.2.j]{Evans1998}, \cite[Lemma A.24]{RRS2016}. 
Here we provide its slightly more general version valid also for arbitrary-sign coefficients.
\begin{lemma}\label{GL}
Let $\eta: [0, T] \to \R$ be an absolutely continuous function that satisfies the differential inequality
\begin{align}
\eta'(t)\le\phi(t)\eta(t)+\psi(t) \label{E15}\quad \text{for a.e. } t\in[0,T],
\end{align}
where $\phi$ and $\psi$ are real integrable functions. 

(a) Then
\begin{align}
\eta(t)\le e^{\int_0^t\phi(r)dr}[\eta(0)+\int_0^te^{-\int_0^s\phi(\rho)d\rho}\psi(s)ds]\quad\forall\, t\in[0,T].
\label{E16}
\end{align}

(b) Moreover, for non-negative $\phi$ and $\psi$ \eqref{E16} implies
\begin{align}
\eta(t)\le e^{\int_0^t\phi(r)dr}[\eta(0)+\int_0^t\psi(s)ds]\quad\forall\, t\in[0,T].
\label{E17}
\end{align}

(c) In particular, if $\eta$ is non-negative, while $\psi\equiv 0$ on $[0,T]$ and $\eta(0)=0$, then $\eta\equiv 0$ on $[0,T]$.
\end{lemma}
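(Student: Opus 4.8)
The plan is to use the classical integrating‑factor argument, taking care to work only with absolutely continuous functions and merely integrable coefficients, as in the hypotheses.

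First I would introduce the integrating factor $\mu(t):=\exp\left(-\int_0^t\phi(r)\,dr\right)$, which is well defined and strictly positive on $[0,T]$ since $\phi$ is integrable there; moreover $\mu$ is absolutely continuous on $[0,T]$ with $\mu'(t)=-\phi(t)\mu(t)$ for a.e.\ $t$. Because $\eta$ is absolutely continuous on the compact interval $[0,T]$, it is bounded there, and likewise $\mu$ is bounded; hence the product $\mu\eta$ is absolutely continuous on $[0,T]$ and the Leibniz rule $(\mu\eta)'=\mu'\eta+\mu\eta'$ holds for a.e.\ $t$. This is the one point that needs an explicit appeal to real‑analysis facts (the product of two absolutely continuous functions on a bounded interval is absolutely continuous, with the a.e.\ product rule), and I expect it to be the main — though entirely routine — obstacle; everything else is elementary algebra with integrals.

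Next I would multiply the differential inequality \eqref{E15} by $\mu(t)>0$ and use $\mu'=-\phi\mu$ to obtain $(\mu\eta)'(t)=\mu(t)\eta'(t)+\mu'(t)\eta(t)\le\mu(t)\psi(t)$ for a.e.\ $t\in[0,T]$. Integrating this inequality from $0$ to $t$ via the fundamental theorem of calculus for absolutely continuous functions yields $\mu(t)\eta(t)-\eta(0)\le\int_0^t\mu(s)\psi(s)\,ds$. Dividing by $\mu(t)$ and inserting $\mu(t)^{-1}=\exp\left(\int_0^t\phi(r)\,dr\right)$ together with $\mu(s)=\exp\left(-\int_0^s\phi(\rho)\,d\rho\right)$ gives precisely \eqref{E16}, which proves part (a).

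For part (b) I would note that if $\phi\ge0$ then $\int_0^s\phi(\rho)\,d\rho\ge0$, hence $\exp\left(-\int_0^s\phi(\rho)\,d\rho\right)\le1$ for every $s\in[0,t]$; if in addition $\psi\ge0$ then $\int_0^te^{-\int_0^s\phi(\rho)\,d\rho}\psi(s)\,ds\le\int_0^t\psi(s)\,ds$, and substituting this bound into \eqref{E16} produces \eqref{E17}. Finally, for part (c), with $\psi\equiv0$ on $[0,T]$ and $\eta(0)=0$, estimate \eqref{E16} collapses to $\eta(t)\le0$ for all $t\in[0,T]$; combined with the standing assumption $\eta\ge0$ this forces $\eta\equiv0$ on $[0,T]$, completing the proof.
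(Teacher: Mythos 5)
Your proposal is correct and follows essentially the same integrating-factor argument as the paper's proof, with $\mu(t)$ identical to the paper's $a(t)=e^{-\int_0^t\phi(r)dr}$. The only difference is that you spell out the routine absolute-continuity and a.e.\ product-rule justifications that the paper leaves implicit, which is sound but not a different route.
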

\begin{proof}
Multiplying \eqref{E15} by 
$$\displaystyle a(t):=e^{-\int_0^t\phi(r)dr}>0,$$ 
we obtain 
$$
\frac{d}{dt}[a(t)\eta(t)]\le a(t)\psi(t).
$$ 
Integration gives
$$a(t)\eta(t)\le a(0)\eta(0)+\int_0^t a(s)\psi(s)ds.$$ 
Dividing by $a(t)$, we arrive at 
\begin{align*}
\eta(t)\le \frac{1}{a(t)}[\eta(0)+\int_0^ta(s)\psi(s)ds]\quad\forall\, t\in[0,T]
\end{align*}
giving \eqref{E16} and thus proving item (a).
Items (b) and (c) follow from \eqref{E16}.
\end{proof}

Let us slightly generalise and give an alternative proof of 
\cite[Lemma 10.3]{RRS2016}.
\begin{lemma}\label{RRS2016-L10.3alt}
Let $\eta: [0, T] \to [0,\infty)$ be an absolutely continuous function  that satisfies the differential inequality
\begin{align}
\eta'(t) +by(t) \le cy(t)\eta(t)+\psi(t),\quad \text{for a.e. } t\in[0,T];\quad \eta(0)=\eta_0, \label{E15a-alpha}
\end{align}
where $\psi, y\ge0$ are integrable real functions, while $b,c>0$  and $\eta_0\ge 0$ are real constants. 

If 
\begin{align}\label{D-def-alpha}
D:=\eta_0+\int_0^T\psi(\tau)d\tau<\frac{b}{ec}
\end{align}
 then 
\begin{align}\label{res-ine}
\sup_{0\le\tau\le T} \eta(\tau)<{D}{e}<\frac{b}{c}\quad\text{and}\quad \int_0^Ty(\tau)d\tau<\frac{1}{c}.
\end{align}
\end{lemma}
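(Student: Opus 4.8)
The plan is to treat \eqref{E15a-alpha} as a perturbation of the linear Gronwall situation, exploiting the dissipative term $by(t)$ to absorb the nonlinear feedback $cy(t)\eta(t)$, and to make this rigorous by a continuity (bootstrap) argument. First I would rewrite \eqref{E15a-alpha} in the equivalent form
\begin{align*}
\eta'(t)\le \psi(t)-y(t)\bigl(b-c\,\eta(t)\bigr)\qquad\text{for a.e. }t\in[0,T].
\end{align*}
The key observation is that on any subinterval of times where $\eta(t)<b/c$ the bracket is positive and $y\ge0$, so the whole correction term is $\le 0$ there and one simply has $\eta'(t)\le\psi(t)$.

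Next I would run the bootstrap. Since $\eta(0)=\eta_0\le D<b/(ec)<b/c$ and $\eta$ is continuous, the set $\{t\in[0,T]:\eta(s)<b/c\ \forall\,s\in[0,t]\}$ is a nondegenerate interval; let $t^*$ be its supremum, so $t^*>0$. On $[0,t^*)$ the previous paragraph gives $\eta'\le\psi$ a.e., and integrating (using absolute continuity of $\eta$ and $\psi\ge0$) yields
\begin{align*}
\eta(t)\le \eta_0+\int_0^t\psi(\tau)\,d\tau\le D\qquad\text{for all }t\in[0,t^*).
\end{align*}
By continuity $\eta(t^*)\le D<b/c$, so if $t^*<T$ the inequality $\eta<b/c$ would persist on a slightly larger interval, contradicting maximality of $t^*$; hence $t^*=T$ and in fact $\sup_{[0,T]}\eta\le D$. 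Since $e>1$ this gives $\sup_{[0,T]}\eta\le D<De$, and $D<b/(ec)$ gives $De<b/c$, which is the first assertion (in the degenerate case $D=0$ one finds $\eta\equiv0$ and $y\equiv0$, and the strict inequalities are then to be read as non-strict).

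Finally, for the integral bound I would use that, by the previous step, $\eta(t)\le D$ on all of $[0,T]$, so $b-c\,\eta(t)\ge b-cD$; from $D<b/(ec)$ one has $cD<b/e$, whence $b-cD>b(e-1)/e>0$. Plugging this into the rewritten inequality,
\begin{align*}
\frac{b(e-1)}{e}\,y(t)\le y(t)\bigl(b-c\,\eta(t)\bigr)\le \psi(t)-\eta'(t)\qquad\text{for a.e. }t\in[0,T],
\end{align*}
and integrating over $[0,T]$, together with $\eta(T)\ge0$, gives $\frac{b(e-1)}{e}\int_0^T y\le \int_0^T\psi+\eta_0=D<b/(ec)$, hence $\int_0^T y<\frac{1}{c(e-1)}<\frac1c$ since $e-1>1$.

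I expect the only genuinely delicate point to be the bootstrap step — ruling out that $\eta$ escapes the region $\{\eta<b/c\}$ before time $T$; the smallness condition $D<b/(ec)$ is precisely what closes the continuity argument. Once $\eta$ is trapped below $D$, the remaining estimates are routine integrations, and the constant $e$ enters only through the comfortable numerical margin $e-1>1$.
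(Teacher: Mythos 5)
Your proof is correct and follows a genuinely different route from the paper's. The paper applies the differential Gronwall inequality (Lemma \ref{GL}(a)) with integrating factor $a(s)=e^{-cY(s)}$, where $Y(s)=\int_0^s y$, arriving at $a(t)\eta(t)+b\int_0^t a(s)y(s)\,ds\le D$; it then exploits the lower bound $\int_0^t a\,y\ge a(t)Y(t)$ and the fact that $\max_{Y\ge 0}Ye^{-cY}=1/(ec)$ to conclude $Y(T)<1/c$, from which $\eta(t)\le De^{cY(t)}<De$ follows. Your argument avoids the integrating factor entirely: a continuity (bootstrap) step shows the set where $\eta<b/c$ is all of $[0,T]$, whence $\eta'\le\psi$ a.e.\ and $\sup\eta\le D$; you then feed this sup bound back into the differential inequality and integrate directly to get $\int_0^T y<\tfrac{1}{c(e-1)}$. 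This is more elementary (no Gronwall weight, no analysis of the function $Ye^{-cY}$) and actually delivers sharper constants than the statement requires: $\sup\eta\le D$ rather than $<De$, and $\int_0^T y<\tfrac{1}{c(e-1)}$ rather than $<\tfrac1c$. What the paper's route buys is a cleaner conceptual picture — the threshold $b/(ec)$ and the limit $Y(T)<1/c$ both emerge at once from the maximum of $Ye^{-cY}$ — whereas in your proof the number $e$ enters as a comfortable margin rather than as a natural extremal constant. Your remark on the degenerate case $D=0$ (where strict inequality in the first claim of \eqref{res-ine} fails) is correct and applies equally to the paper's proof; it is a harmless defect of the statement rather than of either argument.
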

\begin{proof}
By Lemma \ref{GL}(a), inequality \eqref{E15a-alpha} and condition \eqref{D-def-alpha} lead to
\begin{align}\label{D6alt}
a(t)\eta(t)+b\int_0^t a(s)y(s)ds\le \eta_0+\int_0^t a(s)\psi(s)ds\le D,
\end{align} 
where $a(s):=e^{-cY(s)}>0$ and $Y(s):=\int_0^s y(\tau)d\tau$.
Inequality \eqref{D6alt} implies
\begin{align}\label{D6alt2}
be^{-cY(t)}Y(t)\le b\int_0^t a(s)y(s)ds\le D<\frac{b}{ec}\quad\forall\, t\in[0,T].
\end{align} 
Let us consider the function $f(Y):=e^{-cY}Y$ on the interval $0\le Y<\infty$.
One can elementary obtain that $\max_{0\le Y<\infty}f(Y)$ 
is reached at $Y=1/c$ and equals to $1/(ec)$.
But due to \eqref{D6alt2} this maximum for $e^{-cY(t)}Y(t)$ is not reached for $t\in[0,T]$ and hence $Y(T)<1/c$, giving the second inequality in \eqref{res-ine}.

Further, \eqref{D6alt} implies that
$$
\eta(t)\le \frac{D}{a(t)}=De^{cY(t)}<{D}{e}<\frac{b}{c}\quad\forall\, t\in[0,T],
$$
thus giving the first inequality in \eqref{res-ine}.
\end{proof}

Let us give a generalisation of \cite[Lemma 10.3]{RRS2016}
and of Lemma \ref{RRS2016-L10.3alt}.
\begin{lemma}
\label{RRS2016-L10.3phi}
Let $\eta: [0, T] \to [0,\infty)$ be an absolutely continuous function  that satisfies the differential inequality
\begin{align}
\eta'(t) +by(t) \le \left[cy(t)+\phi(t)\right]\eta(t)+\psi(t),\quad  \text{for a.e. }t\in[0,T];\quad \eta(0)=\eta_0, \label{E15a-phi}
\end{align}
where $\phi,\psi, y\ge0$ are integrable real functions, while $b,c>0$  and $\eta_0\ge 0$ are real constants. 

If 
\begin{align}\label{D-def-phi}
D:=\eta_0+\int_0^T e^{-\Phi(\tau)} \psi(\tau)d\tau<\frac{b}{c}e^{-1-\Phi(T)},
\end{align}
where $\Phi(s):=\int_0^s \phi(\tau)d\tau$, then 
\begin{align}\label{res-ine-phi}
\sup_{0\le\tau\le T} \eta(\tau)
<\frac{b}{c}\quad\text{and}\quad \int_0^Ty(\tau)d\tau<\frac{1}{c}.
\end{align}
\end{lemma}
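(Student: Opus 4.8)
The plan is to mimic the proof of Lemma \ref{RRS2016-L10.3alt}, but with an integrating factor that absorbs both the $cy(t)$ term and the $\phi(t)$ term appearing in front of $\eta$ on the right-hand side of \eqref{E15a-phi}. Writing $Y(s):=\int_0^sy(\tau)\,d\tau$ and recalling $\Phi(s):=\int_0^s\phi(\tau)\,d\tau$, I would set $a(t):=e^{-cY(t)-\Phi(t)}>0$, multiply \eqref{E15a-phi} by $a(t)$, and use $\frac{d}{dt}[a(t)\eta(t)]=a(t)\eta'(t)-a(t)[cy(t)+\phi(t)]\eta(t)$ to obtain
\begin{align*}
\frac{d}{dt}\big[a(t)\eta(t)\big]+b\,a(t)y(t)\le a(t)\psi(t)\quad\text{for a.e. }t\in[0,T].
\end{align*}
Integrating over $[0,t]$, using that $0<a(s)\le e^{-\Phi(s)}$ (since $cY(s)\ge0$) and that $\Phi$ is nondecreasing, this yields
\begin{align*}
a(t)\eta(t)+b\int_0^t a(s)y(s)\,ds\le \eta_0+\int_0^T e^{-\Phi(s)}\psi(s)\,ds=D.
\end{align*}

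Next I would extract a bound on $Y$. Since $Y$ and $\Phi$ are nondecreasing, $a(s)=e^{-cY(s)-\Phi(s)}\ge e^{-cY(t)-\Phi(t)}$ for all $s\le t$, so the previous display gives $b\,e^{-cY(t)-\Phi(t)}Y(t)\le D$, that is, $e^{-cY(t)}Y(t)\le \frac{D}{b}e^{\Phi(t)}\le\frac{D}{b}e^{\Phi(T)}<\frac{1}{ec}$ by hypothesis \eqref{D-def-phi}. As in Lemma \ref{RRS2016-L10.3alt}, the function $f(Y):=e^{-cY}Y$ on $[0,\infty)$ attains its maximum $\frac{1}{ec}$ only at $Y=1/c$; since $Y$ is continuous with $Y(0)=0$ and $f(Y(t))<\frac{1}{ec}$ strictly for every $t\in[0,T]$, the intermediate value theorem rules out $Y(T)\ge 1/c$, hence $\int_0^T y(\tau)\,d\tau=Y(T)<\frac1c$, which is the second assertion in \eqref{res-ine-phi}. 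Finally, from $a(t)\eta(t)\le D$ I get $\eta(t)\le D/a(t)=D\,e^{cY(t)+\Phi(t)}$, and using $cY(t)\le cY(T)<1$ and $\Phi(t)\le\Phi(T)$ this is $<D\,e^{1+\Phi(T)}<\frac{b}{c}$ again by \eqref{D-def-phi}, giving the first assertion in \eqref{res-ine-phi}.

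I do not expect a serious obstacle: the argument is a routine variant of the one already carried out for Lemma \ref{RRS2016-L10.3alt}, and the only mildly delicate bookkeeping is keeping the factors $e^{-\Phi}$ and $e^{\Phi}$ on the correct side of each estimate and invoking monotonicity of $Y$ and $\Phi$ to pass from $\int_0^t a(s)y(s)\,ds$ down to $e^{-cY(t)-\Phi(t)}Y(t)$. One should also note that nonnegativity of $\eta$ (assumed in the statement) is what allows discarding the term $a(t)\eta(t)\ge0$ when isolating the bound on $Y$.
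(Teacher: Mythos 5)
Your proof is correct and follows essentially the same route as the paper: both multiply by the integrating factor $a(t)=e^{-cY(t)-\Phi(t)}$, integrate, bound $a(s)\le e^{-\Phi(s)}$ on the right and $a(s)\ge e^{-cY(t)-\Phi(t)}$ on the left, and then analyze the function $f(Y)=e^{-cY}Y$ whose maximum $1/(ec)$ is attained only at $Y=1/c$ to conclude $Y(T)<1/c$ and finally $\eta(t)<b/c$. The only cosmetic difference is that the paper invokes Lemma \ref{GL}(a) for the integrated inequality while you carry out the integrating-factor computation explicitly.
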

\begin{proof}
By Lemma \ref{GL}(a), inequality \eqref{E15a-phi} and condition \eqref{D-def-phi} lead to
\begin{align}\label{D6phi}
a(t)\eta(t)+b\int_0^t a(s)y(s)ds\le \eta_0+\int_0^t a(s)\psi(s)ds\le D,
\end{align} 
where $a(s):=e^{-cY(s)-\Phi(s)}>0$ and $Y(s):=\int_0^s y(\tau)d\tau$.
Inequality \eqref{D6phi} implies
\begin{align}\label{D6phi2}
be^{-\Phi(T)}e^{-cY(t)}Y(t)\le b\int_0^t a(s)y(s)ds\le D<\frac{b}{c}e^{-1-\Phi(T)}\quad\forall\, t\in[0,T].
\end{align} 
Let us consider the function $f(Y):=e^{-cY}Y$ on the interval $0\le Y<\infty$.
One can elementary obtain that $\max_{0\le Y<\infty}f(Y)$ 
is reached at $Y=1/c$ and equals to $1/(ec)$.
But due to \eqref{D6phi2} this maximum of $e^{-cY(t)}Y(t)$ is not reached for $t\in[0,T]$ and hence $Y(T)<1/c$, giving the second inequality {\mg in \eqref{res-ine-phi}.}

Further, \eqref{D6phi} implies that
$$
\eta(t)\le \frac{D}{a(t)}=De^{cY(t)+\Phi(t)}<{D}e^{1+\Phi(T)}<\frac{b}{c}\quad\forall\, t\in[0,T],
$$
thus giving the first inequality \mg in \eqref{res-ine-phi}.
\end{proof}

Let us give a version of integral Gronwall's inequality implied, e.g., by Theorem 1.3 and Remark 1.5 in \cite{Bainov-Semeonov1992}.
\begin{lemma}\label{IGTL}
Let $u$, $b$ and $a$ be measurable functions in $J=[\alpha, \beta]$,  such that $bu, ba \in L_1(J)$. Suppose that $b(t)$ is non-negative a.e. on $J$.
Suppose
\begin{align*}
u(t) \leq a(t)+\int_\alpha^t b(s) u(s) d s, \quad \  \text{a.e. }t \in J.
\end{align*}
Then
\begin{align*}
u(t) \leq a(t)+\int_\alpha^t a(s) b(s)\exp\left(\int_s^t b(\tau) d \tau\right) d s, \quad \  \text{a.e. }t \in J.
\end{align*}
\end{lemma}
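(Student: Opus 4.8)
The plan is to prove the integral Gronwall inequality of Lemma~\ref{IGTL} by reducing it to an auxiliary function that satisfies a genuine differential inequality, and then integrating that. First I would set $v(t):=\int_\alpha^t b(s)u(s)\,ds$, which is absolutely continuous on $J$ since $bu\in L_1(J)$, with $v(\alpha)=0$ and $v'(t)=b(t)u(t)$ a.e. The hypothesis reads $u(t)\le a(t)+v(t)$ a.e., and multiplying by the non-negative factor $b(t)$ gives $v'(t)=b(t)u(t)\le b(t)a(t)+b(t)v(t)$ a.e. on $J$. This is exactly the setting of the differential Gronwall inequality, Lemma~\ref{GL}(a), applied on $[\alpha,\beta]$ with the roles $\phi\rightsquigarrow b$, $\psi\rightsquigarrow ba$, $\eta\rightsquigarrow v$ (a trivial shift of the base point from $0$ to $\alpha$; alternatively one repeats the one-line integrating-factor computation directly).

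Next I would invoke Lemma~\ref{GL}(a) to obtain, for all $t\in J$,
\begin{align*}
v(t)\le e^{\int_\alpha^t b(r)\,dr}\Big[v(\alpha)+\int_\alpha^t e^{-\int_\alpha^s b(\rho)\,d\rho}\,b(s)a(s)\,ds\Big]
=\int_\alpha^t b(s)a(s)\,e^{\int_s^t b(\rho)\,d\rho}\,ds,
\end{align*}
using $v(\alpha)=0$ and combining the exponentials $e^{\int_\alpha^t}e^{-\int_\alpha^s}=e^{\int_s^t}$. Finally, since $u(t)\le a(t)+v(t)$ a.e., substituting this bound for $v(t)$ yields
\begin{align*}
u(t)\le a(t)+\int_\alpha^t a(s)b(s)\exp\!\Big(\int_s^t b(\tau)\,d\tau\Big)\,ds\quad\text{a.e. }t\in J,
\end{align*}
which is the claimed estimate.

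There is no serious obstacle here; the only points requiring a little care are (i) checking that $v$ is indeed absolutely continuous so that Lemma~\ref{GL}(a) applies (this uses $bu\in L_1(J)$), (ii) verifying that the product $ba$ is integrable so that the right-hand side of the differential inequality has an integrable forcing term (this is the hypothesis $ba\in L_1(J)$), and (iii) the bookkeeping of the base point $\alpha$ versus $0$ in Lemma~\ref{GL}, which is a harmless translation. Since Lemma~\ref{GL} is already available in the excerpt, the proof is essentially a two-step reduction and I would keep it to a few lines.
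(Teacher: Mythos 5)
Your proof is correct and is the standard argument; the paper itself does not prove Lemma~\ref{IGTL} but only cites Theorem~1.3 and Remark~1.5 of Bainov--Simeonov, so there is no in-text proof to compare against. Your reduction, via the auxiliary absolutely continuous function $v(t)=\int_\alpha^t b(s)u(s)\,ds$ with $v(\alpha)=0$, multiplying the integral hypothesis by the non-negative $b(t)$ to get $v'\le bv+ba$ a.e., and then invoking Lemma~\ref{GL}(a) (with base point shifted from $0$ to $\alpha$), is exactly the textbook route, and every step checks out, including the exponential bookkeeping $e^{\int_\alpha^t}e^{-\int_\alpha^s}=e^{\int_s^t}$ and the final substitution $u\le a+v$. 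One minor point worth flagging: Lemma~\ref{GL}(a) requires $\phi$ to be integrable, so your application needs $b\in L_1(J)$; the statement of Lemma~\ref{IGTL} lists only $bu,ba\in L_1(J)$ and $b\ge 0$ a.e., not $b\in L_1(J)$ itself. This is almost certainly an implicit assumption (it is also needed for the exponential in the conclusion to be finite), but it would be cleaner to note it explicitly when you invoke Lemma~\ref{GL}(a), e.g.\ by remarking that $b$ is integrable on $J$ in all uses of the lemma in the paper.
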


\end{document}